\pgfplotsset{compat=1.17}
\newtheorem{definition}{Definition}[section]
\newtheorem{lemma}[definition]{Lemma}
\newtheorem{theorem}[definition]{Theorem}
\newtheorem{proposition}[definition]{Proposition}
\newtheorem{corollary}[definition]{Corollary}
\theoremstyle{remark}
\newtheorem{example}[definition]{Example}
\newtheorem{remark}[definition]{Remark}
\newcommand{\N}{\mathbb{N}}
\newcommand{\Z}{\mathbb{Z}}
\newcommand{\R}{\mathbb{R}}
\newcommand{\C}{\mathbb{C}}
\newcommand{\E}{\mathrm{e}}
\newcommand{\I}{\mathrm{i}}
\newcommand{\sph}{\mathbb{S}}
\newcommand{\gauss}[3]{\mathcal{N}_{\R^{#1}}(#2,#3)}
\newcommand{\esper}{\mathbb{E}}
\newcommand{\proba}{\mathbb{P}}
\newcommand{\leb}{\mathrm{L}}
\newcommand{\dkol}{d_\mathrm{Kol}}
\newcommand{\dconv}{d_\mathrm{convex}}
\newcommand{\sinc}{\mathrm{sinc}}
\newcommand{\card}{\mathrm{card}}
\newcommand{\vol}{\mathrm{vol}}
\newcommand{\cov}{\mathrm{cov}}
\newcommand{\eps}{\varepsilon}
\newcommand{\VEC}{\mathbf{V}}
\newcommand{\SEC}{\mathbf{S}}
\newcommand{\XEC}{\mathbf{X}}
\newcommand{\xec}{\mathbf{x}}
\newcommand{\YEC}{\mathbf{Y}}
\newcommand{\yec}{\mathbf{y}}
\newcommand{\zec}{\mathbf{z}}
\newcommand{\XIEC}{\boldsymbol\xi}
\newcommand{\ZIEC}{\boldsymbol\zeta}
\newcommand{\EX}[1]{^{(#1)}}
\newcommand{\SO}{\mathrm{SO}}
\renewcommand{\Re}{\mathrm{Re}}
\renewcommand{\Im}{\mathrm{Im}}
\newcommand{\DD}[1]{\,d\hspace*{-0.3mm}{#1}}
\newcommand{\lle}{\left[\!\left[} 
\newcommand{\rre}{\right]\!\right]} 
\newcommand{\scal}[2]{\left\langle #1\vphantom{#2}\,\right |\left.#2 \vphantom{#1}\right\rangle}
\newcommand{\comment}[1]{}
\setlist[enumerate]{itemsep=10pt,topsep=10pt}
\setlist[itemize]{itemsep=5pt,topsep=5pt}
\title{Gaussian approximations for random vectors}
\author{P.-L. Méliot \and A. Nikeghbali}
\date{\today}
\begin{document}

\begin{abstract}
We present several refinements on the fluctuations of sequences of random vectors (with values in the Euclidean space $\mathbb{R}^d$) which converge after normalization to a multidimensional Gaussian distribution. More precisely we refine such results in two directions: first we give conditions under which one can obtain bounds on the speed of convergence to the multidimensional Gaussian distribution, and then we provide a setting in which one can obtain precise moderate or large deviations (in particular we see at which scale the Gaussian approximation for the tails ceases to hold and how the symmetry of the Gaussian tails is then broken). These results extend some of our earlier works obtained for real valued random variables, but they are not simple extensions, as some new phenomena are observed that could not be visible in one dimension. Even for very simple objects such as the symmetric random walk in $\mathbb{Z}^d$, we observe a loss of symmetry that we can quantify for walks conditioned to be far away from the origin. Also, unlike the one dimensional case where the Kolmogorov distance is natural, in the multidimensional case there is no more such a canonical distance. We choose to work with the so-called convex distance, and as a consequence the geometry of the Borel measurable sets that we consider shall play an important role (also making the proofs more complicated). We illustrate our results with some examples such as correlated random walks, the characteristic polynomials of random unitary matrices, or pattern countings in random graphs.
\end{abstract}
\maketitle
\tableofcontents
\clearpage

\section{Introduction}
Let $d\geq 1$ be a positive integer, and $(\XEC_n)_{n \in \N}$ be an arbitrary sequence of random vectors with values in the finite-dimensional vector space $\R^d$. For many probabilistic models, there exists a renormalisation $\YEC_n = \alpha_n\,\XEC_n$ that admits a limit in law: $\YEC_n \rightharpoonup_{n \to \infty} \YEC$. For instance, if $\XEC_n = \sum_{i=1}^n \mathbf{A}_i$ is a sum of independent and identically distributed random vectors with a second moment, then the multi-dimensional central limit theorem ensures the convergence in law of 
$$\YEC_n = \frac{1}{\sqrt{n}} \left(\XEC_n - n\,\esper[\mathbf{A}_1]\right)$$
to a Gaussian distribution with covariance matrix $K = (\cov(\mathbf{A}_1\EX{i},\mathbf{A}_1\EX{j}))_{1\leq i,j\leq d}$; see \cite[Section VIII.4, Theorem 2]{Fel71} or \cite[Example 2.18]{Vaart00}. Assuming that $K$ is non-degen\-erate, this means that for any Borel set $A \subset \R^d$ whose topological boundary has zero Lebesgue measure,
\begin{equation}
\lim_{n \to \infty} \proba[\YEC_n \in A] = \frac{1}{\sqrt{(2\pi)^{d}\det K}}\int_{\R^d} 1_{\xec \in A}\,\E^{-\frac{\xec^t K^{-1}\xec}{2}} \DD{\xec}.\label{eq:clt}
\end{equation}
There are various ways to go beyond this convergence in law. A first direction consists in describing what happens at the edge of the distribution of $\YEC_n$, that is with Borel subsets $A=A_n$ that vary with $n$ and that grow in size. For instance, one can try to estimate the probability of $A$ being in a spherical sector
$$\proba\!\left[\YEC_n \in [R_n,+\infty)\,B\right],$$
where $B$ is a Borel subset of the Euclidean sphere $\sph^{d-1}$, and $R_n \to +\infty$. If $(R_n)_{n \in \N}$ does not grow too fast, one can guess that the normal approximation will still be valid, whereas if $R_n$ is too large, it will need to be corrected. These results belong to the theory of \emph{large deviations}, and for sums of i.i.d.~random variables, the large deviations are described by Cramer's theorem \cite{Cramer38}; see \cite[Section 2.2.2]{DZ98} for an exposition of the multi-dimensional version of this result.\medskip

Another way to make precise Equation \eqref{eq:clt} is by computing bounds on the difference between $\proba[\YEC_n \in A]$ and its limit. In the one-dimensional case, if $A=[a,b]$ is an interval and $(A_i)_{i\geq 1}$ is as before a sequence of i.i.d.~random variables, then the \emph{Berry--Esseen estimates} \cite{Berry41,Ess45} yield
$$\left|\proba[Y_n \in [a,b]] - \frac{1}{\sqrt{2\pi\, \mathrm{var}(A_1)}}\int_a^b \E^{-\frac{x^2}{2\,\mathrm{var}(A_1)}} \DD{x}\right| \leq \frac{C}{\sqrt{n}},$$
assuming that $A_1$ has a moment of order $3$; see \cite[Section XVI.5]{Fel71}. The multi-dimensional analogue of this result is much less straightforward: one way to state it is by replacing intervals by convex sets, and this approach is followed in \cite{BR10}.\bigskip

In two previous works \cite{FMN16,FMN19}, we proposed a general framework that enabled us to prove large deviation results and estimates on the speed of convergence for sequences $(X_n)_{n\in \N}$ of one-dimensional random variables: the framework of \emph{mod-$\phi$ convergence}. The interest of this theory is that it allowed us to deal with examples that were much more general than sums of i.i.d.~random variables. In this setting we obtained asymptotic results for statistics of random graphs, functionals of Markov chains, arithmetic functions of random integers, characteristic polynomials of random matrices, \emph{etc.}; see the aforementioned works and \cite{BMN19,CDMN20,FMN20,MN22}. In this article, we shall extend this theory to a multi-dimensional setting. This extension is not trivial, as new phenomenons that cannot occur in dimension $1$ take place when $d \geq 2$. In the next paragraphs of this introduction, we fix our notation and we present the hypotheses of multi-dimensional mod-Gaussian convergence. We then give an outline of the main results that we shall establish under these hypotheses (Subsection \ref{subsec:outline}).
\medskip

\subsection{Notational conventions}
We start by fixing some notation that will be used throughout the article. Since we shall work with sequences of random vectors, we need to pay special care to the exponents and indices of the quantities that we manipulate. We shall use  the following conventions:
\begin{itemize}
	\item The numbers in $\R$ are written with regular letters $x,y,X,Y,\ldots$; vectors in $\R^d$ are written with bold letters $\xec,\yec,\XEC,\YEC,\ldots$
	\item Random numbers $X,Y,\ldots$ and random vectors $\XEC,\YEC,\ldots$ are indicated by capital letters. We shall also use capital letters for matrices.
	\item The coordinates of a  vector $\xec$ are written as exponents in parentheses: $$\xec = (x\EX{1},x\EX{2},\ldots,x\EX{d}).$$
	\item The sequences of (random) numbers or vectors are labelled by an index $n\in \N$; thus, for instance, $(\VEC_n)_{n \in \N}$ is a sequence of random vectors in $\R^d$. The previous convention allows one to avoid any ambiguity between the label of a coordinate (exponent) and the label of the sequence (index).
	\item Without loss of generality, we assume that our (random) vectors  take their values in $\R^d$ with $d \geq 2$. Though all our results hold also when $d=1$, one can give shorter, and sometimes more precise proofs in this particular case, see our previous works \cite{FMN16,FMN19}.
\end{itemize}
The vector space $\R^d$ is endowed with the three equivalent norms
\begin{align*}
\|\xec\|_1 &= \sum_{i=1}^d |x\EX{i}|;\\
\|\xec\|_2 &= \sqrt{\sum_{i=1}^d |x\EX{i}|^2};\\
\|\xec\|_\infty &=\max_{i \in \lle 1,d \rre} |x\EX{i}|,
\end{align*}
and we denote: 
\begin{itemize}
	\item $B_{(\xec,\eps)}^d = \{\yec \in \R^d\,|\, \|\xec-\yec\|_2\leq \eps\}$ the Euclidean ball of radius $\eps$ and center $\xec$;
	\item and $C_{(\xec,\eps)}^d = \{ \yec \in \R^d\,|\, \|\xec-\yec\|_\infty \leq \eps\} = \prod_{i=1}^d [x\EX{i}-\eps,x\EX{i}+\eps]$ the hypercube of edge $2\eps$ and centered at $\xec$.
\end{itemize}
As we shall mostly use the Euclidean norm, we simply note $\|\xec\|_2=\|\xec\|$. The corresponding Euclidean distance is denoted by $\|\xec-\yec\|=d(\xec,\yec)$. Finally, $\sph^{d-1}$ is the unit sphere $\{\xec \in \R^d\,|\,\|x\|=1\}$.\bigskip

If $\XEC$ is a random vector in $\R^d$, we denote its complex Laplace transform
$$ \varphi_{\XEC}(\zec) = \esper[\E^{\scal{\zec}{\XEC}}]=\esper\!\left[\E^{\sum_{i=1}^d z\EX{i}\,X\EX{i}}\right]\quad \text{for }\zec= (z\EX{1},\ldots,z\EX{d})\in \C^d.$$
This quantity might  not be well defined for certain values of $\zec$, for instance if the coordinates of $\zec$ have too large real parts; on the other hand the Fourier transform
$$\phi_{\XEC}(\ZIEC) = \varphi_{\XEC}(\I\ZIEC) = \esper\!\left[\E^{\I\sum_{i=1}^d \zeta\EX{i}\,X\EX{i}}\right]$$
is well defined for any vector $\ZIEC$ in $\R^d$. In this article, we shall compare the distribution of random vectors $\VEC_n$ with a reference Gaussian distribution $\gauss{d}{\mathbf{m}}{K}$; let us recall briefly what needs to be known about them. We denote $\mathrm{S}_+(d,\R)$ the set of positive-definite symmetric matrices of size $d \times d$, and we fix $\mathbf{m}\in \R^d$ and $K \in \mathrm{S}_+(d,\R)$. The Gaussian distribution with mean $\mathbf{m}$ and covariance matrix $K$ on $\R^d$ is the probability distribution $\gauss{d}{\mathbf{m}}{K}$ whose density with respect to Lebesgue measure is
$$ \frac{1}{\sqrt{(2\pi)^{d}\,\det K}}\,\E^{-\frac{(\xec-\mathbf{m})^tK^{-1}(\xec-\mathbf{m})}{2}}\DD{\xec}.$$ 
The Laplace transform of a random vector $\XEC$ with Gaussian law $\gauss{d}{\mathbf{m}}{K}$ is given over the complex domain $\C^d$ by the formula
$$\varphi_\XEC(\zec) = \E^{\scal{\mathbf{m}}{\zec}+\frac{\zec^t K \zec}{2}}.$$
In particular, the Fourier transform of $\XEC$ is $\phi_\XEC(\ZIEC) = \E^{\I \scal{\mathbf{m}}{\ZIEC}-\frac{\ZIEC^tK\ZIEC}{2}}$ for any vector $\ZIEC$ in $\R^d$. In our computations, the positive eigenvalues $k\EX{1} \leq k\EX{2} \leq \cdots \leq k\EX{d}$ of $K \in \mathrm{S}_+(d,\R)$ will play an important role. For any vector $\xec \in \R^d$,
$$k\EX{1}\|\xec\|^2 \leq \xec^tK\xec \leq k\EX{d} \|\xec\|^2.$$
The largest eigenvalue $k\EX{d}$ is the spectral norm $\rho(K^{1/2})$ of $K^{1/2}$, and the smallest eigenvalue $k\EX{1}$ is related to the spectral norm of $K^{-1/2}$ by the relation $k\EX{1} = (\rho(K^{-1/2}))^{-1}$.
\medskip

\subsection{Mod-Gaussian convergence}\label{subsec:modgauss}
We now present our main hypothesis on the sequences of random vectors $(\XEC_n)_{n \in \N}$.

\begin{definition}[Mod-Gaussian convergence]\label{def:modgauss}
Let $(\XEC_n)_{n \in \N}$ be a sequence of random vectors in $\R^d$, and $K \in \mathrm{S}_+(d,\R)$. We say that the sequence is mod-Gaussian convergent in the Laplace sense on $\C^d$ with parameters $t_n K$, $t_n \to +\infty$, and limiting function $\psi$ if, locally uniformly on compact sets of $\C^d$,
$$ \esper[\E^{\scal{\zec}{\XEC_n}}]\,\,\exp\!\left(-t_n\,\frac{\zec^t K \zec}{2}\right) = \psi_n(\zec) \to \psi(\zec).$$
Here, $\psi(\zec)$ is a continuous function of $\zec \in \C^d$, with $\psi(\mathbf{0})=1$. We say that the sequence is mod-Gaussian convergent in the Fourier sense on $\R^d$ with the same parameters if the convergence takes place on $\R^d$, that is to say, if there exists a continuous function $\theta(\ZIEC)$ of $\ZIEC \in \R^d$ such that
$$ \esper[\E^{\I\scal{\ZIEC}{\XEC_n}}]\,\,\exp\!\left(t_n\,\frac{\ZIEC^t K \ZIEC}{2}\right) = \theta_n(\ZIEC) \to \theta(\ZIEC)$$
locally uniformly on compact sets of $\R^d$.
\end{definition}
\medskip

\begin{remark}
For the mod-Gaussian convergence in the Laplace sense, it might happen that the convergence only holds on a domain $D \subset \C^d$ of the form $D=\mathcal{S}_{\mathbf{a},\mathbf{b}}$, where $\mathcal{S}_{\mathbf{a},\mathbf{b}}$ is the multi-strip $\mathcal{S}_{(a\EX{1},b\EX{1})} \times \cdots \times \mathcal{S}_{(a\EX{d},b\EX{d})}$, with
$$ \mathcal{S}_{(a,b)} = \{z \in \C\,|\,a<\Re(z) <b\},\quad a,b \in \R \sqcup \{\pm \infty\}. $$
If so, we shall still speak of mod-Gaussian convergence, but making precise this domain of convergence.
\end{remark}
\medskip

In the one-dimensional case, the notion of mod-Gaussian convergence was introduced in \cite{JKN11}; the multi-dimensional definition first appeared in \cite{KN12}. Let us give two important examples:

\begin{example}[Sums of i.i.d.~random vectors]
Consider again a sequence of i.i.d.~random vectors $(\mathbf{A}_i)_{i\geq 1}$, which we assume to be centered, with non-degenerate covariance matrix and with a third moment:
$$\esper[\mathbf{A}_1]=0\qquad;\qquad \left(\cov(\mathbf{A}_1\EX{i},\mathbf{A}_1\EX{j})\right)_{1\leq i,j\leq d} \in \mathrm{S}_+(d,\R) \qquad;\qquad \esper[\|\mathbf{A}_1\|^3]<+\infty.$$
A Taylor expansion of the Fourier transform of $\mathbf{A}_1$ shows that, if 
$$ \XEC_n = \frac{1}{n^{1/3}} \sum_{i=1}^n \mathbf{A}_i,$$
then $(\XEC_n)_{n \in \N}$ is mod-Gaussian convergent in the Fourier sense, with parameters $n^{1/3}\,K$ with $K=\cov(\mathbf{A}_1)$, and limit
$$\theta(\ZIEC) = \exp\!\left(-\frac{\I}{6}\sum_{i,j,k=1}^d \esper[\mathbf{A}_1\EX{i}\mathbf{A}_1\EX{j}\mathbf{A}_1\EX{k}]\,\ZIEC\EX{i}\ZIEC\EX{j}\ZIEC\EX{k}\right).$$
If $\mathbf{A}_1$ has a convergent Laplace transform, then one has in fact a mod-Gaussian convergence in the Laplace sense on $\C^d$.
\end{example}

\begin{example}[Characteristic polynomials of random unitary matrices]
Let $U_n$ be a random matrix in the unitary group $\mathrm{U}(n)$, taken according to the Haar measure of this Lie group. We set 
$$\XEC_n = \log \det(I_n-U_n) = \sum_{i=1}^n \log (1-\E^{\I\theta_i}),$$
where $\E^{\I\theta_1},\ldots,\E^{\I\theta_n}$ are the eigenvalues of $U_n$, all belonging to the unit circle. Notice that on the open unit disk $\{z\,|\,|z|<1\}$, the complex logarithm is given by its Taylor series:
$$\log (1-z) = -\sum_{k=1}^\infty \frac{z^k}{k}.$$
We extend the definition to the unit circle by setting $\log(1-\E^{\I\theta}) = \lim_{r \to 1} \log (1-r\E^{\I \theta});$ the limit exists and is finite if $\E^{\I \theta} \neq 1$. Since the eigenvalues are all different from $1$ with probability $1$ on $\mathrm{U}(n)$, $\XEC_n$ is correctly defined. The random variables $\XEC_n$ take their values in $\C$, which we identify with $\R^2$: $\XEC_n=\XEC_n\EX{1} + \I \XEC_n\EX{2}$. The Laplace transform of $\XEC_n$ is given by the Selberg integral
$$\esper\!\left[\E^{z\EX{1} \XEC_n\EX{1}+z\EX{2}\XEC_n\EX{2}}\right] = \prod_{j=1}^n \frac{\Gamma(j)\,\Gamma(j+z\EX{1})}{\Gamma\!\left(j+\frac{z\EX{1}+\I z\EX{2}}{2}\right)\,\Gamma\!\left(j + \frac{z\EX{1}-\I z\EX{2}}{2}\right)}$$
for $\Re(z\EX{1})>-1$ and $|\Im (z\EX{2})|<1$; see \cite[Formula (71)]{KN00}. Since the $\Gamma$ function has no zero on the complex plane, the right-hand side of this formula is a biholomorphic function on the multi-strip $\mathcal{S}_{(-1,+\infty)} \times \C$, and therefore, the formula above for the Laplace transform of $\XEC_n$ actually holds without restriction on the imaginary part of $z\EX{2}$. Let us introduce Barnes' $G$-function 
$$G(z) = (2\pi)^{\frac{z}{2}}\,\exp\!\left(-z+\frac{z^2(1+\gamma)}{2}\right)\,\prod_{k=1}^\infty \left(1+\frac{z}{k}\right)^k\,\exp\!\left(\frac{z^2}{2k}-z\right),$$
$\gamma$ being the Euler constant. The function $G(z)$ is entire, and it is the solution of the functional equation 
$$G(1)=1\quad;\quad G(z+1) = G(z)\,\Gamma(z).$$
Then, one can rewrite the Laplace transform of $\XEC_n$ as
\begin{align*}
&\esper\!\left[\E^{z\EX{1} \XEC_n\EX{1}+z\EX{2} \XEC_n\EX{2}}\right] \\
& = \frac{G\!\left(1+\frac{z\EX{1}+\I z\EX{2}}{2}\right)\,G\!\left(1+\frac{z\EX{1}-\I z\EX{2}}{2}\right)}{G(1+z\EX{1})}\,\,\frac{G(n+1)\,G(z\EX{1}+n+1)}{G\!\left(\frac{z\EX{1}+\I z\EX{2}}{2}+n+1\right)\,G\!\left(\frac{z\EX{1}-\I z\EX{2}}{2}+n+1\right)}\\
& = \frac{G\!\left(1+\frac{z\EX{1}+\I z\EX{2}}{2}\right)\,G\!\left(1+\frac{z\EX{1}-\I z\EX{2}}{2}\right)}{G(1+z\EX{1})}\,\, n^{\frac{(z\EX{1})^2+(z\EX{2})^2}{4}}\,(1+o(1)),
\end{align*}
see the details in \cite[Section 3]{KN12} for the asymptotics of the ratio of Barnes' functions. Therefore, the sequence of random vectors $(\XEC_n)_{n \in \N}$ is mod-Gaussian convergent in the Laplace sense, on the domain $\mathcal{S}_{(-1,+\infty)} \times \C$, with parameters $t_n I_2= \frac{\log n}{2} \,I_2$ and limiting function 
$$ \psi(\zec) = \frac{G\!\left(1+\frac{z\EX{1}+\I z\EX{2}}{2}\right)\,G\!\left(1+\frac{z\EX{1}-\I z\EX{2}}{2}\right)}{G(1+z\EX{1})}.$$
The real part of $\XEC_n$ has been studied extensively in \cite{MN22}; its mod-Gaussian convergence leads to asymptotic formul{\ae} for $\proba[\Re(\XEC_n) \geq x_n]$ for $x_n$ in a large range of values, up to $O(n)$. The techniques that we shall develop hereafter only deal with the fluctuations of the random vectors $\XEC_n$ in plane domains of size $O(\log n)$, but we shall already see in this setting phenomena that are specific to the multi-dimensional mod-Gaussian convergence.
\end{example}
\medskip

Many other examples of mod-Gaussian convergent sequences will be provided in Section \ref{sec:examples}. In the two previous examples, an adequate renormalisation $\YEC_n$ of $\XEC_n$ admits a limit in law which is a Gaussian distribution. This is a consequence of the following general simple statement:

\begin{proposition}[Central limit theorem]\label{prop:clt}
Let $(\XEC_n)_{n \in \N}$ be a sequence of random vectors that is mod-Gaussian convergent in the Fourier sense with parameters $t_n K$. The rescaled random variables
$$\YEC_n = \frac{\XEC_n}{\sqrt{t_n}}$$
converge in law towards $\gauss{d}{\mathbf{0}}{K}$.
\end{proposition}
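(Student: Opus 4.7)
The plan is to apply Lévy's continuity theorem: it suffices to show that the Fourier transform of $\YEC_n$ converges pointwise to the Fourier transform of $\gauss{d}{\mathbf{0}}{K}$, which we recall is $\exp(-\frac{1}{2}\ZIEC^t K \ZIEC)$.

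First I would write, for any fixed $\ZIEC \in \R^d$,
$$\phi_{\YEC_n}(\ZIEC) = \esper\!\left[\E^{\I\scal{\ZIEC}{\XEC_n/\sqrt{t_n}}}\right] = \phi_{\XEC_n}\!\left(\tfrac{\ZIEC}{\sqrt{t_n}}\right).$$
Invoking the mod-Gaussian hypothesis in the Fourier sense with the test vector $\ZIEC/\sqrt{t_n}$, this equals
$$\theta_n\!\left(\tfrac{\ZIEC}{\sqrt{t_n}}\right)\,\exp\!\left(-t_n\,\frac{(\ZIEC/\sqrt{t_n})^t K (\ZIEC/\sqrt{t_n})}{2}\right) = \theta_n\!\left(\tfrac{\ZIEC}{\sqrt{t_n}}\right)\,\exp\!\left(-\frac{\ZIEC^t K \ZIEC}{2}\right).$$
The Gaussian factor is already the desired limit, so it remains to prove that the prefactor $\theta_n(\ZIEC/\sqrt{t_n})$ tends to $1$.

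The key step is to exploit that the convergence $\theta_n \to \theta$ is \emph{locally uniform}, not merely pointwise. Fix $\ZIEC$ and consider the compact set $C = \{\ZIEC/\sqrt{t_n} : n \geq 1\} \cup \{\mathbf{0}\}$ (compact because $t_n \to +\infty$, so the sequence $\ZIEC/\sqrt{t_n}$ converges to $\mathbf{0}$). Write
$$\left|\theta_n\!\left(\tfrac{\ZIEC}{\sqrt{t_n}}\right) - 1\right| \leq \sup_{\XIEC \in C}\left|\theta_n(\XIEC)-\theta(\XIEC)\right| + \left|\theta\!\left(\tfrac{\ZIEC}{\sqrt{t_n}}\right) - \theta(\mathbf{0})\right|,$$
using that $\theta(\mathbf{0})=1$ (this follows from the definition, since $\theta_n(\mathbf{0})=1$ for every $n$ by setting $\ZIEC=\mathbf{0}$ in the ratio defining $\theta_n$). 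The first term tends to $0$ by the local uniform convergence on $C$, and the second term tends to $0$ by the continuity of $\theta$ at the origin.

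Combining these facts, $\phi_{\YEC_n}(\ZIEC) \to \exp(-\frac{1}{2}\ZIEC^t K \ZIEC)$ for every $\ZIEC \in \R^d$, and Lévy's continuity theorem yields $\YEC_n \rightharpoonup \gauss{d}{\mathbf{0}}{K}$. There is no serious obstacle here; the only point that deserves care is that pointwise convergence of $\theta_n$ would not be enough to conclude — the local uniform clause built into Definition \ref{def:modgauss} is exactly what makes $\theta_n$ robust to evaluation at the moving points $\ZIEC/\sqrt{t_n}$.
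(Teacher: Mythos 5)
Your proof is correct and follows essentially the same route as the paper: factor $\phi_{\YEC_n}(\ZIEC)=\E^{-\frac{\ZIEC^tK\ZIEC}{2}}\,\theta_n(\ZIEC/\sqrt{t_n})$, use the locally uniform convergence of $\theta_n$ to get $\theta_n(\ZIEC/\sqrt{t_n})\to\theta(\mathbf{0})=1$, and conclude by Lévy's continuity theorem. Your extra detail (the compact set $\{\ZIEC/\sqrt{t_n}\}\cup\{\mathbf{0}\}$ and the continuity of $\theta$ at the origin) just makes explicit the step the paper states in one line.
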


\begin{proof}
The Fourier transform of $\YEC_n$ is
$$\phi_{\YEC_n}(\ZIEC) = \phi_{\XEC_n}\!\left(\frac{\ZIEC}{\sqrt{t_n}}\right) = \E^{-\frac{\ZIEC^t K\ZIEC}{2}}\,\theta_n\!\left(\frac{\ZIEC}{\sqrt{t_n}}\right),$$
and the local uniform convergence of the residue $\theta_n$ ensures that $\theta_n(\ZIEC/\sqrt{t_n})$ converges towards $\theta(\mathbf{0})=1$ for any $\ZIEC \in \R^d$. The convergence of the characteristic functions imply the convergence in law by Lévy's continuity theorem.
\end{proof}
\medskip

The purpose of our article is to improve on Proposition \ref{prop:clt} in the two aforementioned directions: estimates of the speed of convergence (Section \ref{sec:speed}) and large deviation results (Section \ref{sec:largedeviation}). Let us mention that a third direction, which is opposite to the one of large deviations and concern estimates of probabilities $\proba[\YEC_n \in A_n]$ with Borel subsets $A_n$ of size going to zero, can be pursued in the framework of multi-dimensional mod-Gaussian convergence. One then obtains \emph{local limit theorems}: see \cite{KN12,DKN15,BMN19}.

\begin{remark}
Let $(\XEC_n)_{n \in \N}$ be a sequence of random vectors that converges mod-Gaussian, for instance in the Laplace sense on $\C^n$, with parameters $t_n K$ and limiting function $\psi(\zec)$. Then, if $\YEC_n = K^{-1/2} \XEC_n$, one has
\begin{align*}
\esper\!\left[\E^{\scal{\zec}{\YEC_n} }\right] &= \esper\!\left[\E^{\scal{K^{-1/2}\zec}{ \XEC_n} }\right]\\
& = \E^{t_n\frac{(K^{-1/2}\zec)^t \,K\, (K^{-1/2}\zec) }{2}}\,\psi_n(K^{-1/2}\zec)\\
& = \E^{t_n\frac{\|\zec\|^2}{2} }\, \psi(K^{-1/2}\zec)\,(1+o(1)).
\end{align*}
So, $(\mathbf{Y}_n)_{n \in \N}$ converges in the mod-Gaussian sense with parameters $t_n I_d$ and limiting function $\psi(K^{-1/2}\zec)$. Thus, every multi-dimensional mod-Gaussian convergence in the sense of Definition \ref{def:modgauss} can be assumed to have as parameters $t_n$ times the identity matrix, $(t_n)_{n \in \N}$ being a sequence increasing to $+\infty$. However, this reduction is not necessarily the most interesting thing to do. Indeed, most of the time, the matrix $t_n K$ is given by the first order asymptotics of the covariance matrix of $\XEC_n$; and it will be more convenient to deal with the joint moments of the coordinates of $\XEC_n$ than with those of $\YEC_n$.
\end{remark}

\begin{remark}
In \cite{FMN16,FMN19} which focused on the one-dimensional case, we worked in a framework which was much more general than mod-\emph{Gaussian} convergence. Hence, for any \emph{infinitely divisible distribution} $\phi$ on $\R$, one can define mod-$\phi$ convergence by replacing in our Definition \ref{def:modgauss} the Lévy exponent $-\zeta^2/2$ of the Gaussian law by an arbitrary Lévy exponent of an infinitely divisible distribution; see \cite[Chapter 2]{Sato99} for an exposition of the Lévy--Khintchine formula. The same thing can be done in dimension $d \geq 2$, since the infinitely divisible distributions have exactly the same classification in dimension $1$ and in higher dimensions \cite[Theorem 8.1]{Sato99}. However, if one wants in this setting large deviation estimates, then the techniques that we shall use require the infinitely divisible reference law to have a convergent Laplace transform on the complex plane; and this happens only for Gaussian distributions. Similarly, if one wants to have estimates on the speed of convergence, then we shall need sufficiently many moments for the reference law, and again we shall be restricted to the Gaussian case. This explains why we only focus on mod-Gaussian convergence when $d \geq 2$. Also, we do not know of many interesting examples of multi-dimensional mod-$\phi$ convergent sequences with $\phi \neq \gauss{d}{\mathbf{0}}{K}$. One interesting example would be related to the convergence in distribution of the marginal law of a process of Ornstein--Uhlenbeck type towards a self-decomposable distribution, see \cite[Chapter 3, Section 17]{Sato99}.
\end{remark}
\medskip

\subsection{The one-dimensional case}
Let us briefly recall  the results of \cite{FMN16,FMN19} that we aim to extend to higher dimensions. We start with the large deviation results, see \cite[Theorem 4.2.1 and Proposition 4.4.1]{FMN16}:
\begin{theorem}[Large deviations in dimension $d=1$]\label{thm:1Dlargedeviation}
Let $(X_n)_{n \in \N}$ be a sequence of real-valued random variables that is mod-Gaussian convergent in the Laplace sense, with domain $\mathcal{S}_{(a,b)}$ with $a<0<b$, parameters $(t_n)_{n \in \N}$ and limit $\psi(z)$. We set $Y_n = \frac{X_n}{\sqrt{t_n}}$; $Y_n \rightharpoonup \gauss{}{0}{1}$. For any $x>0$, 
$$\proba[X_n \geq t_nx]=\proba\!\left[Y_n \geq \sqrt{t_n}x\right] = \frac{\E^{-\frac{t_nx^2}{2}}}{x\sqrt{2\pi t_n}}\,\psi(x)\,(1+o(1)).$$
The estimate remains true as soon as $y=\sqrt{t_n}x$ is a $O(\sqrt{t_n})$ and grows to infinity. If $y =o(\sqrt{t_n})$, then the correction $\psi(x)$ to the Gaussian tail is asymptotic to $1$ and does not appear.
\end{theorem}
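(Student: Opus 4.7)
The plan is to apply the classical \emph{Cramer exponential tilt} at the parameter $h=x>0$, which we assume lies in the interior of the strip of analyticity $(a,b)$ (otherwise an elementary extension argument is needed). Define a tilted probability $\qproba_n$ on the same space via
$$\frac{d\qproba_n}{d\proba} = \frac{\E^{xX_n}}{\esper[\E^{xX_n}]}.$$
Mod-Gaussian convergence gives $\esper[\E^{xX_n}]=\E^{t_nx^2/2}\,\psi_n(x)$ with $\psi_n(x)\to \psi(x)\neq 0$ (by continuity of $\psi$ together with $\psi(0)=1$), so
$$\proba[X_n \geq t_nx] = \E^{\frac{t_nx^2}{2}}\,\psi_n(x)\,\esper_{\qproba_n}\!\left[\E^{-xX_n}\,1_{X_n \geq t_nx}\right].$$

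Next I set $W_n=(X_n-t_nx)/\sqrt{t_n}$ and compute the Fourier transform of $W_n$ under $\qproba_n$ directly from the Radon--Nikodym density and the mod-Gaussian hypothesis:
$$\esper_{\qproba_n}\!\left[\E^{\I\zeta W_n}\right] = \E^{-\frac{\zeta^2}{2}}\,\frac{\psi_n(x+\I\zeta/\sqrt{t_n})}{\psi_n(x)} \xrightarrow[n\to\infty]{} \E^{-\frac{\zeta^2}{2}}$$
locally uniformly in $\zeta\in\R$, so $W_n\rightharpoonup \gauss{}{0}{1}$ under $\qproba_n$. The tilt has effectively \emph{centered} $X_n$ around the atypical value $t_nx$. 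Substituting $X_n=t_nx+\sqrt{t_n}\,W_n$ gives the key identity
$$\proba[X_n \geq t_nx] = \E^{-\frac{t_nx^2}{2}}\,\psi_n(x)\,\esper_{\qproba_n}\!\left[\E^{-x\sqrt{t_n}\,W_n}\,1_{W_n \geq 0}\right].$$

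The remaining task, and the main obstacle, is to prove
$$\esper_{\qproba_n}\!\left[\E^{-x\sqrt{t_n}\,W_n}\,1_{W_n \geq 0}\right] \sim \frac{1}{x\sqrt{2\pi t_n}},$$
after which the asymptotic $\proba[X_n\geq t_nx] \sim \psi(x)\,\E^{-t_nx^2/2}/(x\sqrt{2\pi t_n})$ follows. Weak convergence of $W_n$ to $\gauss{}{0}{1}$ is \textbf{not} sufficient here: the integrand $\E^{-x\sqrt{t_n}w}1_{w\geq 0}$ concentrates on a window of size $1/(x\sqrt{t_n})\to 0$ at the boundary point $w=0$, where the limit distribution carries the mass $1/2$. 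The natural way around this is Fourier inversion: express the target expectation as a contour integral against $\zeta\mapsto \esper_{\qproba_n}[\E^{\I\zeta W_n}]$, plug in the explicit ratio $\psi_n(x+\I\zeta/\sqrt{t_n})/\psi_n(x)$ supplied by mod-Gaussian convergence, and compare the result with the pure Gaussian integral $\int_0^\infty \E^{-x\sqrt{t_n}\,w}\,(2\pi)^{-1/2}\E^{-w^2/2}\DD{w}$, which is exactly $\E^{\frac{t_nx^2}{2}}(1-\Phi(x\sqrt{t_n}))$ and admits the Mill's ratio asymptotic $(x\sqrt{2\pi t_n})^{-1}$. The local uniform convergence of $\psi_n\to\psi$ on a complex neighbourhood of $x$ controls the deviation of the ratio from $1$ on any bounded window of $\zeta$, and standard truncation estimates handle $|\zeta|$ large; the leftover contribution is of lower order than the Gaussian integral.

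Assembling these pieces proves the stated equivalent. Finally, for the moderate-deviation refinement with $y_n=x_n\sqrt{t_n}\to\infty$ but possibly $x_n\to 0$ (hence $y_n= O(\sqrt{t_n})$), the very same argument applies with $h=x_n$, the only change being that $\psi_n(x_n)\to \psi(0)=1$; the prefactor $\psi$ therefore disappears in the regime $y_n=o(\sqrt{t_n})$, while it reappears as $\psi(x)$ as soon as $x_n$ tends to a positive limit $x$.
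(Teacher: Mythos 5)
Your tilting set-up is exactly the strategy of \cite{FMN16} (which this paper quotes rather than reproves) and of Section \ref{sec:largedeviation} in the multi-dimensional case: the identity $\proba[X_n\geq t_nx]=\E^{-t_nx^2/2}\,\psi_n(x)\,\esper_{\qproba_n}\!\left[\E^{-x\sqrt{t_n}W_n}1_{W_n\geq 0}\right]$ and the formula for the tilted characteristic function are correct. The genuine gap is the step you delegate to ``Fourier inversion plus standard truncation'', i.e.\ the proof that $\esper_{\qproba_n}[\E^{-x\sqrt{t_n}W_n}1_{W_n\geq0}]\sim(x\sqrt{2\pi t_n})^{-1}$, which is the entire analytic content of the theorem. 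Integrating by parts against the distribution function of $W_n$ under $\qproba_n$, the error committed when replacing it by the Gaussian one is controlled by the Kolmogorov distance; but the main term is itself of order $(t_n)^{-1/2}$, and a Feller-type truncation of the inversion integral at $|\zeta|\leq T_n$ can only give a bound $O((t_n)^{-1/2})$, which does \emph{not} yield the claimed $(1+o(1))$. This is precisely why the proof (in dimension $1$ in \cite{FMN16}, and here via Theorem \ref{thm:modifiedberryesseen} feeding into Proposition \ref{prop:asymptoticsprobabilitycone}) compares the tilted law not with $\gauss{}{0}{1}$ but with a \emph{modified} Gaussian carrying the first-order correction proportional to $\psi'(x)/(\psi(x)\sqrt{t_n})$, and establishes an $o((t_n)^{-1/2})$ bound for that comparison; the correction itself only contributes $O((t_n)^{-3/2})$ to the exponential integral, so the equivalent follows. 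In addition, your assertion that ``truncation handles $|\zeta|$ large'' hides a real obstruction: for $|\zeta|\gtrsim\sqrt{t_n}$ the mod-Gaussian hypothesis gives no decay of the tilted characteristic function beyond the trivial bound $1$ (lattice-valued examples show none exists), so the naive inversion integral is not absolutely convergent; one must smooth with a kernel whose Fourier transform has compact support (the analogue of Lemma \ref{lem:kernel}), and the smoothing penalty must again be driven below $(t_n)^{-1/2}$, not merely to the same order.

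A secondary but real issue: $\psi(x)\neq 0$ does not follow from continuity and $\psi(\mathbf{0})=1$; positivity of $\psi_n(x)$ only gives $\psi(x)\geq 0$ in the limit. Non-vanishing of $\psi$ on the real part of the strip is an extra hypothesis, assumed explicitly at the start of Section \ref{sec:largedeviation} precisely because the exponential change of measure needs it: if $\psi(x)=0$, the ratio $\psi_n(x+\I\zeta/\sqrt{t_n})/\psi_n(x)$ is no longer under control and the tilted central limit theorem itself may fail. Once the quantitative $o((t_n)^{-1/2})$ lemma and this hypothesis are in place, your reduction is the right one, and the moderate-deviation remark at the end goes through as you indicate, since the comparison lemma is uniform in the tilt parameter over compact subsets of $(a,b)$.
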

\noindent Thus, up to the scale $y=o(\sqrt{t_n})$, the probability $\proba[Y_n \geq y]$ is asymptotically the same as the Gaussian tail; one says that $o(\sqrt{t_n})$ is the \emph{normality zone} of the sequence $(Y_n)_{n \in \N}$. When $y=O(\sqrt{t_n})$, this is not true anymore and the limiting residue $\psi$ of mod-Gaussian convergence appears as a correction of the Gaussian tail.\bigskip

To get estimates of Berry--Esseen type, one only needs a mod-Gaussian convergence in the Fourier sense, but sometimes with precisions on the speed and zone of convergence of 
\begin{equation}
\theta_n(\zeta) = \esper[\E^{\I \zeta X_n}]\,\E^{\frac{t_n\zeta^2}{2}} \label{eq:residue}
\end{equation}
towards its limit $\theta(\zeta)$. Let us start with a result without additional hypotheses \cite[Proposition 4.1.1]{FMN16}:
\begin{theorem}[General Berry--Esseen estimates in dimension $d=1$]\label{thm:1Dgeneralberryesseen}
Let $(X_n)_{n \in \N}$ be a sequence of real-valued random variables that is mod-Gaussian convergent in the Fourier sense, with parameters $(t_n)_{n \in \N}$ and limit $\theta(\zeta)$. We set $Y_n = \frac{X_n}{\sqrt{t_n}}$. Then,
$$\dkol(Y_n,\gauss{}{0}{1})=\sup_{y\in \R}\left|\proba[Y_n \leq y] - \frac{1}{\sqrt{2\pi}}\int_{-\infty}^y \E^{-\frac{x^2}{2}}\DD{x}\right| = O\!\left(\frac{1}{\sqrt{t_n}}\right).$$
\end{theorem}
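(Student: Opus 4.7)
The natural route is via Esseen's smoothing lemma. For any distribution function $F$ and any $T > 0$, this classical inequality states that
$$\dkol(F, \gauss{}{0}{1}) \leq \frac{1}{\pi} \int_{-T}^{T} \left|\frac{\hat F(\zeta) - \E^{-\zeta^2/2}}{\zeta}\right| d\zeta + \frac{C}{T},$$
with an absolute constant $C$ that depends only on the Lipschitz constant $1/\sqrt{2\pi}$ of the standard Gaussian density. Mod-Gaussian convergence provides us with a tailor-made factorisation
$$\phi_{Y_n}(\zeta) - \E^{-\zeta^2/2} = \E^{-\zeta^2/2}\bigl(\theta_n(\zeta/\sqrt{t_n}) - 1\bigr),$$
obtained by writing $\phi_{Y_n}(\zeta) = \phi_{X_n}(\zeta/\sqrt{t_n})$ and plugging in the definition of the residue $\theta_n$ from equation \eqref{eq:residue}.

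The plan is then to take $T = \sqrt{t_n}$, which immediately makes the tail term $C/T$ of the desired order $O(1/\sqrt{t_n})$. On the integration range $\zeta \in [-T,T]$, the argument $\zeta/\sqrt{t_n}$ stays in the fixed compact $[-1,1]$, where the locally uniform convergence $\theta_n \to \theta$ applies. Since $\theta$ is continuous and the convergence is uniform on this compact, the family $\{\theta_n\}_{n \in \N} \cup \{\theta\}$ is equicontinuous there; combined with the identity $\theta_n(0) = 1$ valid for every $n$, this yields a uniform modulus-of-continuity estimate for $|\theta_n(\zeta/\sqrt{t_n}) - 1|$.

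It remains to control the integral
$$\int_{-T}^T \frac{\E^{-\zeta^2/2}\,|\theta_n(\zeta/\sqrt{t_n}) - 1|}{|\zeta|}\,d\zeta.$$
The Gaussian factor $\E^{-\zeta^2/2}$ localises the essential mass to $|\zeta| = O(1)$, that is, to $|\zeta|/\sqrt{t_n} = O(1/\sqrt{t_n})$ in the argument of $\theta_n$. The main obstacle is the $1/|\zeta|$ weight near $\zeta = 0$: one must ensure that $|\theta_n(\zeta/\sqrt{t_n}) - 1|/|\zeta|$ does not blow up at the origin. Exploiting that $\theta_n - 1$ vanishes at $0$ together with the regularity of the continuous limit $\theta$ at the origin, one obtains $|\theta_n(\zeta/\sqrt{t_n}) - 1| = O(|\zeta|/\sqrt{t_n})$ uniformly in $n$ for $|\zeta|$ in a neighbourhood of $0$, so that the singularity is cancelled. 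Integrating the resulting bound against the Gaussian weight contributes $O(1/\sqrt{t_n})$, matching the tail term and yielding the announced estimate.
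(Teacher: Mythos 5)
Your overall route is the classical one --- Esseen's smoothing inequality, the factorisation $\phi_{Y_n}(\zeta)-\E^{-\zeta^2/2}=\E^{-\zeta^2/2}\bigl(\theta_n(\zeta/\sqrt{t_n})-1\bigr)$, and the cut-off $T\asymp\sqrt{t_n}$ --- and this is indeed the strategy behind the cited one-dimensional result (the present paper recalls the statement from \cite{FMN16} without reproving it). The genuine gap is the step where you claim $|\theta_n(\zeta/\sqrt{t_n})-1|=O(|\zeta|/\sqrt{t_n})$ uniformly in $n$ near the origin. Locally uniform convergence of the continuous functions $\theta_n$ towards $\theta$, together with $\theta_n(0)=1$, gives equicontinuity, hence a uniform modulus of continuity $\omega$ with $\omega(r)\to 0$; it does \emph{not} give a linear modulus. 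With only $|\theta_n(u)-1|\leq\omega(|u|)$, the contribution of the singularity is $\int_{|\zeta|\leq 1}\omega(|\zeta|/\sqrt{t_n})\,|\zeta|^{-1}\DD{\zeta}=\int_{|u|\leq t_n^{-1/2}}\omega(|u|)\,|u|^{-1}\DD{u}$, which may be much larger than $t_n^{-1/2}$ or even divergent; moreover, without the bound $|\theta_n(u)-1|\lesssim|u|$ on a fixed neighbourhood of $0$, the range $1\leq|\zeta|\leq\sqrt{t_n}$ only yields $O(1)$, since $\sup_{|u|\leq 1}|\theta_n(u)-1|$ is merely bounded. The correct splitting is $|\zeta|\leq\delta\sqrt{t_n}$ (where the linear bound gives $O(t_n^{-1/2})$ after integrating the Gaussian weight) versus $|\zeta|\geq\delta\sqrt{t_n}$ (where $\E^{-\zeta^2/2}$ is exponentially small), and the linear bound is exactly the missing ingredient.

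This is not a removable technicality: take $X_n=\sqrt{t_n}\,G+Z$ with $G\sim\gauss{}{0}{1}$ and $Z$ an independent symmetric $\alpha$-stable variable with $\alpha\in(0,1)$. Then $\theta_n(\zeta)=\theta(\zeta)=\E^{-|\zeta|^{\alpha}}$ for every $n$, so the sequence is mod-Gaussian in the Fourier sense, yet $\dkol(Y_n,\gauss{}{0}{1})\geq c\,t_n^{-\alpha/2}\gg t_n^{-1/2}$ (compare the distribution functions at $y=1$, use the symmetry of $Z$ and $\proba[|Z|\geq\sqrt{t_n}]\asymp t_n^{-\alpha/2}$). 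Hence the Taylor-type control of the residue at $0$ must be imported from stronger hypotheses, which is what the original reference and the present paper actually use: mod-Gaussian convergence in the Laplace sense on a band containing $0$ (analyticity plus Cauchy estimates give $|\theta_n(u)-1|\leq C|u|$ near $0$ uniformly in $n$, cf.\ Hypothesis \ref{hyp:berryesseen1}), or convergence of the derivatives of $\theta_n$ as in Hypothesis \ref{hyp:berryesseen2} --- this is precisely what the constant $M(B)$ controls in the multidimensional Theorem \ref{thm:generalberryesseen} --- or a zone of control with exponent $v\geq 1$ as in Theorem \ref{thm:1Dzoneofcontrol}. Once such an input is granted, your decomposition of the Esseen integral does deliver the announced $O(t_n^{-1/2})$; without it, the argument (and in fact the statement, taken literally in the bare Fourier sense) only yields the rate dictated by the modulus of continuity of $\theta$ at the origin.
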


This result is usually not optimal, and one can get better estimates by controlling the size of $\theta_n(\zeta)$ on a zone that grows with $n$. The precise hypotheses are the following \cite[Theorem 15]{FMN19}:

\begin{theorem}[Berry--Esseen estimates with a zone of control]\label{thm:1Dzoneofcontrol}
Let $(X_n)_{n \in \N}$ be a sequence of real-valued random variables. We suppose that there exists a zone $[-K(t_n)^\gamma,K(t_n)^\gamma]$, such that the residue given by Equation \eqref{eq:residue} satisfies for any $\zeta$ in this zone:
$$|\theta_n(\zeta)-1|\leq K_1|\zeta|^v\,\exp(K_2|\zeta|^w),$$
with $w \geq 2$, $v>0$ and $-\frac{1}{2}<\gamma \leq \min(\frac{v-1}{2},\frac{1}{w-2})$. Then, there is a constant $C(\gamma,K,K_1,K_2)$ such that
$$\dkol(Y_n,\gauss{}{0}{1}) \leq C(\gamma,K,K_1,K_2)\,\frac{1}{(t_n)^{\frac{1}{2}+\gamma}}.	$$
\end{theorem}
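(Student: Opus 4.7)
The plan is to combine the Berry--Esseen smoothing inequality with the zone-of-control estimate and then optimize the cutoff. Recall that for any real random variable $Y_n$ compared to the standard Gaussian, whose density is bounded by $1/\sqrt{2\pi}$, one has the classical smoothing bound
$$\dkol(Y_n,\gauss{}{0}{1}) \leq \frac{1}{\pi}\int_{-T}^T \left|\frac{\phi_{Y_n}(\zeta)-\E^{-\zeta^2/2}}{\zeta}\right|\DD{\zeta} \,+\, \frac{C}{T}$$
valid for any $T>0$, with an explicit universal constant $C$. Since $Y_n=X_n/\sqrt{t_n}$, we have $\phi_{Y_n}(\zeta)=\phi_{X_n}(\zeta/\sqrt{t_n})=\E^{-\zeta^2/2}\,\theta_n(\zeta/\sqrt{t_n})$, so the integrand rewrites as
$$\left|\frac{\phi_{Y_n}(\zeta)-\E^{-\zeta^2/2}}{\zeta}\right| = \E^{-\zeta^2/2}\,\frac{|\theta_n(\zeta/\sqrt{t_n})-1|}{|\zeta|}.$$

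I would choose the cutoff so that $\zeta/\sqrt{t_n}$ stays inside the zone of control, namely $T=K\,(t_n)^{\frac{1}{2}+\gamma}$. For $|\zeta|\leq T$, the hypothesis yields
$$|\theta_n(\zeta/\sqrt{t_n})-1| \leq \frac{K_1\,|\zeta|^v}{(t_n)^{v/2}}\,\exp\!\left(\frac{K_2\,|\zeta|^w}{(t_n)^{w/2}}\right).$$
The crucial step is to absorb the exponential factor into the Gaussian decay. Since $|\zeta|\leq K\,(t_n)^{\frac{1}{2}+\gamma}$, we have $|\zeta|^w/(t_n)^{w/2} \leq K^{w-2}\,|\zeta|^2\,(t_n)^{\gamma(w-2)-1+1} = K^{w-2}\,|\zeta|^2\,(t_n)^{\gamma(w-2)}$, and the hypothesis $\gamma\leq 1/(w-2)$ (with the convention $\gamma\leq +\infty$ if $w=2$) exactly says that the prefactor $(t_n)^{\gamma(w-2)}$ remains bounded; by choosing $K$ small enough one can ensure $K_2\,|\zeta/\sqrt{t_n}|^w \leq \zeta^2/4$ throughout the zone. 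Consequently
$$\frac{1}{\pi}\int_{-T}^T \E^{-\zeta^2/2}\,\frac{|\theta_n(\zeta/\sqrt{t_n})-1|}{|\zeta|}\DD{\zeta} \leq \frac{K_1}{\pi\,(t_n)^{v/2}}\int_{\R} |\zeta|^{v-1}\,\E^{-\zeta^2/4}\DD{\zeta} = \frac{C_1(v,K_1)}{(t_n)^{v/2}}.$$

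It remains to compare the two scales. The tail contribution is $C/T = C/(K\,(t_n)^{\frac{1}{2}+\gamma})$, while the integral bound is $O((t_n)^{-v/2})$; the assumption $\gamma\leq (v-1)/2$ is precisely $v/2\geq \frac{1}{2}+\gamma$, so the latter is dominated by the former. Putting the two pieces together gives
$$\dkol(Y_n,\gauss{}{0}{1}) \leq \frac{C(\gamma,K,K_1,K_2)}{(t_n)^{\frac{1}{2}+\gamma}},$$
which is the claimed bound. The main delicate point is the calibration of $K$ and the verification that the exponential factor $\exp(K_2|\zeta/\sqrt{t_n}|^w)$ is tamed by the Gaussian, which is exactly what the two constraints $\gamma\leq(v-1)/2$ and $\gamma\leq 1/(w-2)$ encode; the rest is a routine application of Berry--Esseen smoothing.
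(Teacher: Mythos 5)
Your argument is correct and follows essentially the same route as the source: the present paper only quotes this statement from \cite{FMN19}, and the proof there is precisely a smoothing inequality of Esseen type with a cutoff $T$ of order $(t_n)^{\frac{1}{2}+\gamma}$, the zone-of-control bound inside the zone, absorption of the factor $\exp(K_2|\zeta/\sqrt{t_n}|^w)$ into the Gaussian decay thanks to $\gamma\leq\frac{1}{w-2}$, and the scale comparison $\frac{v}{2}\geq\frac{1}{2}+\gamma$ coming from $\gamma\leq\frac{v-1}{2}$; the only cosmetic difference is that \cite{FMN19} works with its own compactly supported smoothing kernel (the one-dimensional ancestor of Lemma \ref{lem:kernel}) rather than the classical Esseen lemma, which only affects the universal constant.

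Two small repairs. First, your displayed exponent is off: for $|\zeta|\leq K(t_n)^{\frac{1}{2}+\gamma}$ one gets $\frac{|\zeta|^w}{(t_n)^{w/2}}\leq K^{w-2}\,|\zeta|^2\,(t_n)^{\gamma(w-2)-1}$, not $(t_n)^{\gamma(w-2)}$; it is the corrected exponent $\gamma(w-2)-1\leq 0$, equivalent to $\gamma\leq\frac{1}{w-2}$, that makes the prefactor bounded, so your absorption claim is right but its written justification is not. Second, $K$ is a constant given by the hypothesis, so instead of ``choosing $K$ small'' you should shrink the cutoff, taking $T=K'(t_n)^{\frac{1}{2}+\gamma}$ with $K'=\min(K,K_0)$ and $K_2(K_0)^{w-2}\leq\frac{1}{4}$ when $w>2$ (the control is inherited on the smaller zone and only the final constant changes); when $w=2$ no choice of constant helps, but the absorption holds as soon as $t_n\geq 4K_2$, which suffices since $t_n\to+\infty$ and the finitely many remaining indices are absorbed into $C(\gamma,K,K_1,K_2)$ because the Kolmogorov distance is bounded by $1$.
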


\noindent In many situations including sums of i.i.d.~random variables, one has a zone of control with $v=w=3$ and $\gamma=1$, leading to a Berry--Esseen estimate of size $O(\frac{1}{(t_n)^{3/2}})$.
\medskip

Our goal will be to give analogues of Theorems \ref{thm:1Dlargedeviation}, \ref{thm:1Dgeneralberryesseen} and \ref{thm:1Dzoneofcontrol} in higher dimensions. Our hypotheses when $d\geq 2$ will be extremely similar to the previous ones, but the results that we shall obtain are somewhat more subtle, and their proofs will be much more complicated. \medskip

\subsection{Main results and outline of the article}\label{subsec:outline}
In Section \ref{sec:speed}, we focus on the speed of convergence, and we introduce a distance between probability measures on $\R^d$ that is akin to the Kolmogorov distance, and that can be controlled under the assumption of mod-Gaussian convergence. This distance allows one to compare the distribution of $\YEC_n$ and the Gaussian limit $\gauss{d}{\mathbf{0}}{K}$ on any Borel measurable \emph{convex} subset of $\R^d$. In general, we obtain a bound of order $O((t_n)^{-1/2})$, see Theorem \ref{thm:generalberryesseen}. However, if the limit $\psi$ of mod-Gaussian convergence has a certain form, and if one knows more about the convergence of the residues $\psi_n(\zec)\to \psi(\zec)$, then one can prove bounds of order $o((t_n)^{-1/2})$, or even $O((t_n)^{-3/2})$ (Theorems \ref{thm:modifiedberryesseen} and \ref{thm:superspeed}).\medskip

In Section \ref{sec:largedeviation}, we study the large deviations of a mod-Gaussian convergent sequence of random vectors. In the multi-dimensional setting, the open intervals $t_n[b,+\infty)$ of Theorem \ref{thm:1Dlargedeviation} are replaced by spherical or ellipsoidal sectors $t_n ( S \times [b,+\infty))$, where $S$ is a part of the $K$-sphere $\{\xec \in \R^d\,|\, \xec^tK^{-1}\xec = 1\}$. If $S$ is a sufficiently regular subset of this sphere (Jordan measurable), then one obtains in Theorem \ref{thm:largedeviation} a large deviation principle, which involves the surface integral of the residue $\psi$ on this subset $S$. Therefore, the residue $\psi$ measures intuitively a loss of symmetry when looking at the fluctuations of a mod-Gaussian convergent sequence $(\XEC_n)_{n\in \N}$ at the scale $t_n$, and when comparing these fluctuations with the Gaussian fluctuations. The two examples presented in Section \ref{subsec:modgauss} will show clearly this phenomenon. \medskip

Finally, in Section \ref{sec:examples}, we study examples and applications of the main theorems of Sections \ref{sec:speed}-\ref{sec:largedeviation}. A large class of examples relies on the method of cumulants developed in \cite[Sections 5 and 9]{FMN16} and \cite[Sections 4-5]{FMN19}, and of which we propose a multi-dimensional generalisation in Section \ref{subsec:cumulant}. The method of cumulants allows one to study the fluctuations of $d$-dimensional random walks with dependent steps (Section \ref{subsec:dependencygraph}), and those of the empirical measures of finite Markov chains (Section \ref{subsec:markov}).
\medskip

\subsection*{Acknowledgement}
We are extremely grateful to Valentin F\'eray for countless discussions on the subject and we thank him for sharing so generously with us his very helpful insights.

\bigskip

\section{Estimates on the speed of convergence}\label{sec:speed}

If $\mu$ and $\nu$ are two probability measures on $\R^d$, then a general way to measure a distance between $\mu$ and $\nu$ consists in fixing a class $\mathscr{F}$ of bounded measurable functions, and considering 
$$d_{\mathscr{F}}(\mu,\nu) = \sup_{f \in \mathscr{F}} |\mu(f)-\nu(f)|=\sup_{f \in \mathscr{F}} \left|\int_{\R^d}f(x)\,\mu(\!\DD{x})-\int_{\R^d}f(x)\,\nu(\!\DD{x})\right|.$$
In particular, given a family $\mathscr{B}$ of Borel subsets of $\R^d$, one can consider the distance associated to the class of indicator functions $\mathscr{F} = \{1_B,\,\,B \in \mathscr{B}\}$:
$$d_{\mathscr{B}}(\mu,\nu) = \sup_{B \in \mathscr{B}} |\mu(B)-\nu(B)|.$$
In dimension $1$, the Kolmogorov distance $\dkol$ is defined by this mean with the family $\mathscr{B} = \{(-\infty,s],\,\,s\in \R\}$. In higher dimension $d \geq 2$, the analogous distance
$$d_{\mathrm{Kol},\R^d}(\mu,\nu) = \sup_{(s\EX{1},\ldots,s\EX{d}) \in \R^d} \left|\int_{-\infty}^{s\EX{1}}\cdots \int_{-\infty}^{s\EX{d}} \mu(\!\DD{\xec}) - \nu(\!\DD{\xec})\right|$$
is not very convenient to deal with: for instance, it says nothing of the differences
$$\mu(B_{(\xec,\eps)}^d)-\nu(B_{(\xec,\eps)}^d)$$
of the measures of Euclidean balls, since these probabilities are not directly accessible from the $d$-dimensional cumulative distribution functions. In the setting of mod-Gaussian convergence, a much better distance is given by the class of \emph{convex Borel sets}. If $\nu$ is a probability measure that is absolutely continuous with respect to the Lebesgue measure and sufficiently isotropic, then the convex sets define a distance $\dconv$ that is compatible with the convergence in law to $\nu$, see \cite[Theorems 2.11 and 3.1]{BR10} and Section \ref{subsec:convex} hereafter. We shall then prove that, given a mod-Gaussian convergent sequence of random vectors $(\XEC_n)_{n \in \N}$, 
the convex distance between $\YEC_n = \XEC_n/\sqrt{t_n}$ and its limiting distribution $\gauss{d}{\mathbf{0}}{K}$ can be bounded by a negative power of $t_n$ (Section \ref{subsec:berryesseen}, Theorems \ref{thm:generalberryesseen} and \ref{thm:modifiedberryesseen}). The proof of these results rely on various inequalities relating Fourier transforms and the convex distance to a Gaussian distribution (Sections \ref{subsec:smooth} and \ref{subsec:fourierdistance}).\medskip

\subsection{Convex distance between probability measures}\label{subsec:convex}

Recall that a subset $C \subset \R^d$ is called convex if, for all $\xec,\yec \in C$, the whole segment $[\xec,\yec]=\{t\xec+(1-t)\yec\,,\,t \in [0,1]\}$ is included into $C$. Let $\mu$ and $\nu$ be probability measures on $\R^d$.

\begin{definition}[Convex distance]
The convex distance between $\mu$ and $\nu$ by
$$\dconv(\mu,\nu) = \sup_{\substack{C\text{ convex Borel} \\ \text{subset of }\R^d}}|\mu(C) - \nu(C)|.$$
\end{definition}

\noindent If $d=1$, then the convex sets of $\R$ are the intervals. Therefore, when $d=1$, the convex distance is a metric on probability measures that is equivalent to the Kolmogorov distance: 
$$\dkol(\mu,\nu) \leq \dconv(\mu,\nu) \leq 2\,\dkol(\mu,\nu).$$ 
For the reasons stated in the introduction of this section, we consider the convex distance to be the correct multi-dimensional generalisation of the Kolmogorov distance. This convex distance controls the convergence in law:

\begin{proposition}[Convex distance and weak convergence, direct implication]\label{prop:convexdistanceweaktopology}
Consider probability measures $(\mu_n)_{n \in \N}$ and $\nu$ on $\R^d$, such that $\dconv(\mu_n,\nu) \to 0$. Then, $\mu_n$ converges in law to $\nu$.
\end{proposition}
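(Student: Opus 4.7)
The plan is to verify the Portmanteau criterion for weak convergence, namely that $\int_{\R^d} f(\xec)\,\mu_n(\!\DD{\xec}) \to \int_{\R^d} f(\xec)\,\nu(\!\DD{\xec})$ for every bounded continuous function $f : \R^d \to \R$.

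First I would establish tightness of $(\mu_n)_{n \in \N}$. Since the Euclidean balls $B_{(\mathbf{0},R)}^d$ are convex Borel sets, the convex-distance hypothesis yields $|\mu_n(B_{(\mathbf{0},R)}^d) - \nu(B_{(\mathbf{0},R)}^d)| \leq \dconv(\mu_n,\nu) \to 0$ for each fixed $R>0$. Given $\eps>0$, pick $R$ large enough that $\nu(\R^d \setminus B_{(\mathbf{0},R)}^d) \leq \eps$, then $n_0$ large enough that $\mu_n(\R^d \setminus B_{(\mathbf{0},R)}^d) \leq 2\eps$ for $n \geq n_0$; since the individual measures $\mu_0,\ldots,\mu_{n_0-1}$ are automatically tight, the full sequence is tight. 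The same argument works verbatim with the hypercube $C_{(\mathbf{0},R)}^d$, which is likewise convex.

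Next, fix a bounded continuous $f$ and $\eps>0$. Choose $R$ such that $\mu_n(\R^d \setminus C_{(\mathbf{0},R)}^d) \leq \eps$ for every sufficiently large $n$ and also $\nu(\R^d \setminus C_{(\mathbf{0},R)}^d) \leq \eps$. Since $f$ is uniformly continuous on the compact hypercube $C_{(\mathbf{0},R)}^d$, one can subdivide it into a finite pairwise-disjoint family of half-open hyperrectangles $R_1,\ldots,R_N$ of the form $\prod_{i=1}^d [a_i,b_i)$, of diameter small enough that the simple function
\[
g(\xec) = \sum_{j=1}^N f(\xec_j)\,1_{R_j}(\xec),\qquad \xec_j \in R_j \text{ arbitrary},
\]
satisfies $\sup_{\xec \in C_{(\mathbf{0},R)}^d} |f(\xec)-g(\xec)| \leq \eps$. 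Crucially, each $R_j$ is a Cartesian product of convex subsets of $\R$, hence is convex and Borel; so the convex-distance hypothesis gives $|\mu_n(R_j) - \nu(R_j)| \leq \dconv(\mu_n,\nu) \to 0$ for every $j$.

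Combining the tail control with the uniform approximation of $f$ by $g$ on $C_{(\mathbf{0},R)}^d$ yields, for $n$ large enough,
\[
\left|\int_{\R^d} f(\xec)\,\mu_n(\!\DD{\xec}) - \int_{\R^d} f(\xec)\,\nu(\!\DD{\xec})\right| \leq (2+2\|f\|_\infty)\,\eps + \sum_{j=1}^N |f(\xec_j)|\,|\mu_n(R_j)-\nu(R_j)| \leq (2+2\|f\|_\infty)\,\eps + N\,\|f\|_\infty\,\dconv(\mu_n,\nu).
\]
Letting $n \to \infty$ with $\eps$ and the partition fixed shows that the left-hand side is eventually at most $(3+2\|f\|_\infty)\,\eps$; since $\eps>0$ was arbitrary, the convergence of integrals, and hence the weak convergence, follows. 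The argument is essentially routine; the only mildly delicate point is the use of \emph{half-open} hyperrectangles, which remain convex (as $[a,b)$ is convex in $\R$ and Cartesian products of convex sets are convex) and are disjoint, thereby sidestepping the inclusion-exclusion issues that would otherwise arise from closed boxes possibly carrying atoms of $\mu_n$ or $\nu$ on their shared boundaries.
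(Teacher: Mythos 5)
Your argument is correct and rests on the same basic observation as the paper's proof --- namely that rectangular boxes are convex, so the hypothesis forces $\mu_n$ and $\nu$ to agree asymptotically on them --- but it completes the proof along a different route. The paper notes that the open hypercubes $\prod_{i=1}^d (a\EX{i},b\EX{i})$ form a $\pi$-system on which $\mu_n(H)\to\nu(H)$, that every open set is a countable union of such hypercubes, and then delegates the conclusion to Billingsley's Theorem 2.2 on convergence-determining classes. You instead verify the definition of weak convergence by hand: tightness from the convexity of balls, uniform approximation of a bounded continuous $f$ by a simple function built on convex boxes, and the uniform bound $|\mu_n(R_j)-\nu(R_j)|\le\dconv(\mu_n,\nu)$ on each box. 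This is more self-contained (no external convergence-determining-class theorem) at the cost of being longer. One small imprecision: a closed hypercube $C^d_{(\mathbf{0},R)}$ cannot be written as a disjoint union of half-open boxes $\prod_{i=1}^d[a_i,b_i)$ --- the upper faces are left uncovered, and since $\nu$ is arbitrary these faces could carry mass. This is harmless and easily repaired, e.g.\ by letting the boxes tile a slightly larger half-open cube $[-R-1,R+1)^d\supset C^d_{(\mathbf{0},R)}$ (with $f$ uniformly continuous on the corresponding closed cube), or by choosing $R$ so that $\nu$ puts no mass on the $2d$ bounding hyperplanes, which excludes at most countably many values of $R$; with that adjustment the proof is complete.
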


\begin{proof}
The set of open hypercubes $\prod_{i=1}^d (a\EX{i},b\EX{i})$ is a $\pi$-system $\mathscr{H}$ in $\R^d$, and every open set of $\R^d$ is a countable union of such hypercubes. If $\dconv(\mu_n,\nu)\to 0$, then for every $H \in \mathscr{H}$, $\mu_n(H) \to \nu(H)$ since $H$ is convex. Then, by \cite[Theorem 2.2]{Bil99}, $\mu_n \rightharpoonup \nu$.
\end{proof}
\medskip

Though the converse statement is false, it becomes true if $\nu$ is a probability measure that is \emph{regular with respect to the class $\mathscr{C}$ of Borel convex sets}. Let us explain this notion of regularity. If $\eps>0$ and $C \subset \R^d$, we set
$$C^\eps = \{\xec \in \R^d\,\,|\,\,d(\xec,C) \leq \eps\}.$$
If $C$ is a convex set, then $C^\eps$ is a closed convex set. We also define 
$$C^{-\eps} = \R^d \setminus (\R^d \setminus C)^\eps = \{\xec \in \R^d \,\,|\,\,d(\xec,\R^d \setminus C) > \eps\};$$
this is a part of $C$, and if $C$ is a convex set, then $C^{-\eps}$ is an open convex set. The \emph{$\eps$-boundary} of a (convex) set is $\partial^\eps C = C^\eps \setminus C^{-\eps}$: it is a closed set, which contains for every $\eps>0$ the topological boundary $\partial C$ of $C$ (see Figure \ref{fig:epsboundary}).

\begin{figure}[ht]
\begin{center}		
\begin{tikzpicture}[scale=2.5]
\fill [rounded corners,Red!50!white] (0,0) -- (1.7,-0.5) -- (2.5,0) -- (3,1.5) -- (0.4,1) -- cycle;
\draw [rounded corners,blue!20!white,line width=7mm] (0,0) -- (1.7,-0.5) -- (2.5,0) -- (3,1.5) -- (0.4,1) -- cycle;
\draw [rounded corners,thick,Red] (0,0) -- (1.7,-0.5) -- (2.5,0) -- (3,1.5) -- (0.4,1) -- cycle;
\draw [<->,thick] (2,1.31) -- (2,1.17);
\draw [<->,thick] (1.9,1.29) -- (1.9,1.43);
\foreach \x in {(2.1,1.25),(1.8,1.35)}
\draw \x node {$\eps$};
\draw [thick,Red] (4,1.3) -- (3.7,1.3);
\draw (4.2,1.3) node {$\partial C$};
\fill [blue!20!white] (3.7,0.8) rectangle (4,1);
\draw (4.2,0.9) node {$\partial^\eps C$};
\draw (4.2,0.5) node {$C^{-\eps}$};
\fill [Red!50!white] (3.7,0.4) rectangle (4,0.6);
\end{tikzpicture}
\caption{The $\eps$-boundary $\partial^\eps C$ of a convex set $C$.\label{fig:epsboundary}}
\end{center}
\end{figure}
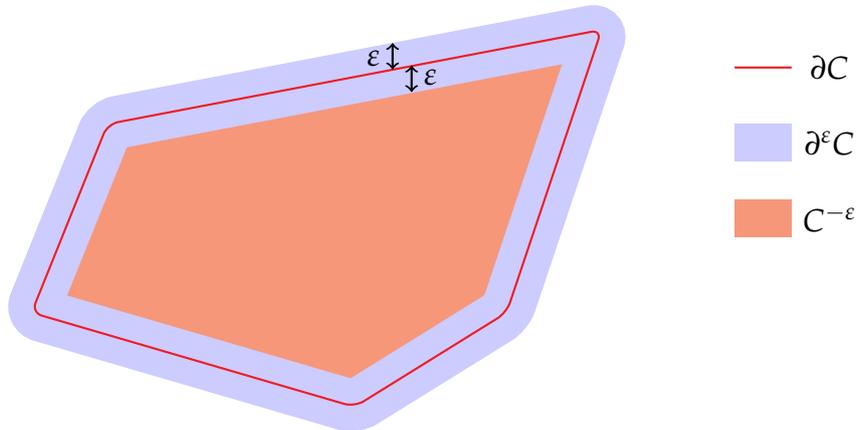

\begin{definition}[Regular probability measures]
A probability measure $\mu$ on $\R^d$ is said regular with respect to the class of convex sets if there exists a constant $R\geq 0$ such that, for every Borel convex set $C$, 
$$\mu(\partial^\eps C) \leq R\eps .$$
\end{definition}

\begin{example}[Regular measures in dimension $d=1$]
In dimension $1$, a measure is regular with respect to the class of convex sets if and only it is absolutely continuous with respect to the Lebesgue measure, and with bounded density. Indeed, if $\mu \in \mathscr{P}(\R)$ and $\frac{d\mu}{dx} \leq \frac{R}{4}$, then for any interval $[a,b]$,
$$\mu(\partial^\eps [a,b]) \leq \mu([a-\eps,a+\eps]) + \mu([b-\eps,b+\eps]) \leq 4\,\frac{R}{4}\,\eps = R\eps,$$
so the measure is regular with respect to the class of convex sets. Conversely, suppose that $\mu$ is regular with respect to the class of convex sets, of constant $R$; and denote $F_\mu$ its cumulative distribution function. Notice then that $F_\mu$ is an absolutely continuous function: if $([a_i,b_i])_{i \in I}$ is a finite family of disjoint intervals, then for any $\eps>0$,
$$\sum_{i \in I} |F_\mu(b_i)-F_\mu(a_i)| \leq \sum_{i\in I} \,\mu\!\left(\partial^{\frac{b_i-a_i+\eps}{2}}\left\{\frac{a_i+b_i}{2}\right\}\right) \leq R\, \sum_{i \in I} \frac{b_i-a_i+\eps}{2}, $$
so making $\eps$ go to $0$,
$$\sum_{i \in I} |F_\mu(b_i)-F_\mu(a_i)| \leq \frac{R}{2} \sum_{i\in I}|b_i-a_i|.$$
As a consequence, $F_\mu$ is almost everywhere derivable, and 
$$\frac{dF_\mu(x)}{dx} = \lim_{\eps \to 0} \,\frac{F_\mu(x+\eps)-F_\mu(x-\eps)}{2\eps} \leq \frac{R}{2},$$
so $\mu$ is absolutely continuous with respect to the Lebesgue measure, with density bounded by $\frac{R}{2}$.
\end{example}

\begin{example}[Non-regular measures in dimension $d \geq 2$]
In dimension $d \geq 2$, one can construct probability measures that have boun\-ded density with respect to the Lebesgue measure, but that are not regular with respect to the class of convex sets. Let $g$ be a continuous non-negative function on $\R_+$, such that:
\begin{itemize}
	\item $\int_{0}^\infty g(r)\,r^{d-1}\DD{r} < + \infty$.
	
	\item $g$ is bounded by some constant $M$ on $\R_+$, and there exists a level $L>0$ such that $\limsup_{r \to \infty} g(r) > L$.
	
\end{itemize}
Hence, we want both $g(r)$ to integrate $r^{d-1}$ and  to reach a fixed level $L>0$ an infinite number of times (on very small intervals). It is easily seen that such functions do exist. Let $\mu$ be the measure on $\R^d$ with density $g(\|\xec\|)\DD{\xec}$; up to a renormalisation of $g$, one can assume $\mu$ to be a probability measure. Then, $\mu$ has density bounded by $M$. On the other hand, there exists a sequence $r_n \to + \infty$ such that $g(r_n) \geq L$ for every $n$. Since $g$ is continuous, one can find for every $r_n$ an interval $[r_n-\eps_n,r_n+\eps_n]$ such that $g(r) \geq \frac{L}{2}$ for every $r$ in this interval. Then,
$$\mu\!\left(\partial^{\eps_n}B_{(\mathbf{0},r_n)}^d\right) = \vol(\sph^{d-1})\, \int_{r_n-\eps_n}^{r_n+\eps_n} g(r)\,r^{d-1}\DD{r} \geq L\,\vol(\sph^{d-1})\,(r_n)^{d-1}\,\eps_n .$$
Since $(r_n)^{d-1}$ can be taken as large as wanted, $\mu$ is not a regular measure with respect to convex sets.
\end{example}
\medskip

The interest of the notion of regularity comes from:
\begin{proposition}[Convex distance and weak convergence, reciprocal implication]
Let $\nu$ be a probability on $\R^d$ that is regular with respect to the class of convex sets. If $(\mu_n)_{n\in \N}$ is a sequence of probability measures with $\mu_n \rightharpoonup \nu$ (convergence in law), then $\dconv(\mu_n,\nu) \to 0$.
\end{proposition}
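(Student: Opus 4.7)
My plan would be to upgrade the pointwise convergence provided by the Portmanteau theorem into a uniform estimate over the infinite family of Borel convex sets, using the regularity hypothesis to absorb approximation errors on $\eps$-boundaries and the Blaschke selection theorem to reduce the approximation of convex bodies to finitely many representatives.

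First I would fix $\delta>0$ and pick $\eps>0$ with $R\eps<\delta/8$, where $R$ is the regularity constant of $\nu$. Next, invoking tightness of the weakly convergent sequence (via Prokhorov's theorem), I would choose a closed ball $B=B_{(\mathbf{0},M)}^d$ with $\nu(\R^d\setminus B)<\delta/8$ and $\mu_n(\R^d\setminus B)<\delta/4$ for $n$ large. Since the intersection of two convex sets is convex, this reduces the task to bounding $|\mu_n(C)-\nu(C)|$ uniformly over Borel convex $C\subset B$, at the cost of a controlled $\delta/2$-tail error.

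At that point, I would invoke Blaschke's selection theorem: the space of non-empty closed convex subsets of $B$ equipped with the Hausdorff distance is compact, hence admits for each $\eps>0$ a finite $\eps$-net $C_1,\dots,C_N$. For any closed convex $C\subset B$ with $d_H(C,C_i)<\eps$, a short argument using convexity and the nearest-point projection onto $C$ gives the sandwich
\[
 C_i^{-\eps}\subset C\subset C_i^\eps,
\]
with $C_i^\eps$ closed and $C_i^{-\eps}$ open, both convex. Applying Portmanteau to the \emph{finite} family of sets $\{C_i^\eps\}$ and $\{C_i^{-\eps}\}$ produces a uniform-in-$i$ bound for $n$ large enough, and combining the sandwich with the regularity estimate $\nu(\partial^\eps C_i)\leq R\eps$ then yields $|\mu_n(C)-\nu(C)|<\delta/2$ uniformly over closed convex $C\subset B$. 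A small additional argument handles non-closed convex sets, since regularity forces $\nu(\partial C)=0$ for every convex $C$, while $\mu_n(C)$ always lies between $\mu_n(\mathring C)$ and $\mu_n(\overline C)$.

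The hard part is really the uniformity step: the Portmanteau theorem is inherently set-by-set, so without a quantitative hypothesis on $\nu$ one cannot hope for a uniform bound over the uncountable class of convex sets, as the limit could concentrate mass near the boundary of a specific convex set. Regularity provides exactly the right such hypothesis by controlling $\nu(\partial^\eps C)$ uniformly in $C$, and Blaschke's compactness is what makes this hypothesis actually usable, replacing an uncountable approximation problem by a finite one. Putting everything together gives $\dconv(\mu_n,\nu)\leq\delta$ for $n$ large, and since $\delta$ was arbitrary the conclusion follows.
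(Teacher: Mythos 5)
Your proof is correct, and it does not follow the paper's route, because the paper gives no argument at all for this proposition: it simply defers to \cite[Theorem 2.11]{BR10}, which proves the more general fact that weak convergence implies uniform convergence of $\mu_n(A)-\nu(A)$ over any class of Borel sets $A$ whose $\eps$-boundaries have uniformly small $\nu$-measure. Your argument is a self-contained alternative that exploits convexity in an essential way: tightness of the weakly convergent sequence reduces matters to convex subsets of a fixed ball $B$; the Blaschke selection theorem makes the family of nonempty compact convex subsets of $B$ compact, hence totally bounded, in the Hausdorff metric, so a finite $\eps$-net suffices; the sandwich $C_i^{-\eps}\subset C\subset C_i^{\eps}$ is indeed valid (your separating-hyperplane/nearest-point argument shows $C_i\subset C^\eps$ forces $C_i^{-\eps}\subset C$ for closed convex $C$, and the identity $\mathrm{int}(\overline C)=\mathrm{int}(C)$ for convex sets, together with $\nu(\partial C)=0$ coming from regularity, handles non-closed $C$); Portmanteau applied to the finitely many closed sets $C_i^{\eps}$ and open sets $C_i^{-\eps}$ gives uniformity in $i$, and regularity pays the price $\nu(\partial^\eps C_i)\le R\eps$ of the approximation. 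The trade-off is clear: the cited theorem is more general (no convexity needed, only uniform smallness of $\nu(\partial^\eps A)$ over the class), whereas your proof is shorter, geometric, and tailored exactly to the class of convex sets that the paper actually uses; the only bookkeeping point worth making explicit is that after intersecting with $B$ you should choose the net element close to $\overline{C\cap B}$ (the empty case being trivial), which your sketch implicitly does.
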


\begin{proof}
See \cite[Theorem 2.11]{BR10}. Later, we shall see that if $\nu$ is regular, and if sufficiently many derivatives of $\widehat{\mu}_n(\ZIEC)$ converge to those of $\widehat{\nu}(\ZIEC)$ locally uniformly, then we have indeed $\dconv(\mu_n,\nu) \to 0$. We shall even be able to quantify this; see the remark at the end of Section \ref{subsec:fourierdistance}.
\end{proof}

In the sequel, we shall need to know that the Gaussian distributions $\gauss{d}{\mathbf{m}}{K}$ are always regular with respect to the class of convex sets. This is a consequence of the following:
\begin{lemma}\label{lem:regularity}
Consider a probability measure $\nu(\!\DD{\xec}) = g(\|\xec\|)\DD{\xec}$, with $\int_{0}^\infty r^{d-1}\,|g'(r)|\DD{r} = c_1 <+\infty$. Then, for any convex set $C \subset \R^d$, and any $\eps>0$
$$\nu(\partial^\eps C) \leq 2\, c_1\, \vol(\sph^{d-1})\,\eps= \frac{4\pi^{d/2}}{\Gamma(d/2)}\,c_1\,\eps. $$
\end{lemma}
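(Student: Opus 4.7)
The plan is to represent $g$ radially as an integral of $g'$ along the half-line (a layer-cake in $r$), then reduce the question to a convex-geometric estimate on the volume of the $\eps$-boundary of $C$ intersected with a centered Euclidean ball. Since $\nu$ is a probability and $g\geq 0$ is continuous with $\int_0^\infty r^{d-1}g(r)\DD{r}<\infty$, one has $\liminf_{r\to\infty}g(r)=0$; combined with $\int_0^\infty r^{d-1}|g'(r)|\DD{r}=c_1<\infty$, this forces
\begin{equation*}
g(r) \,=\, -\!\int_r^\infty g'(s)\DD{s} \,\leq\, \int_r^\infty |g'(s)|\DD{s}.
\end{equation*}
Substituting into $\nu(\partial^\eps C)=\int_{\partial^\eps C}g(\|\xec\|)\DD{\xec}$ and exchanging the order of integration via Fubini yields
\begin{equation*}
\nu(\partial^\eps C)\,\leq\, \int_0^\infty |g'(s)|\,\vol\!\left(\partial^\eps C \cap B_{(\mathbf{0},s)}^d\right)\DD{s}.
\end{equation*}

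The heart of the proof is then the geometric inequality
\begin{equation*}
\vol\!\left(\partial^\eps C \cap B_{(\mathbf{0},s)}^d\right) \,\leq\, 2\eps\,\vol(\sph^{d-1})\,s^{d-1}.
\end{equation*}
To establish it I would introduce the signed distance function $f(\xec)=d(\xec,C)-d(\xec,\R^d\setminus C)$, which is $1$-Lipschitz with $|\nabla f|=1$ almost everywhere, satisfies $\partial^\eps C=\{|f|\leq\eps\}$, and whose sublevel sets $K_t=\{f\leq t\}$ are all convex: for $t>0$, $K_t=C^t$ is a Minkowski sum of two convex sets, and for $t\leq 0$ convexity follows from the concavity on $C$ of $\xec\mapsto d(\xec,\R^d\setminus C)$, which is itself an immediate consequence of the convexity of $C$. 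The coarea formula then gives
\begin{equation*}
\vol\!\left(\partial^\eps C\cap B_{(\mathbf{0},s)}^d\right) \,=\, \int_{-\eps}^{\eps} \mathcal{H}^{d-1}\!\left(\partial K_t \cap B_{(\mathbf{0},s)}^d\right)\DD{t}.
\end{equation*}

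Each integrand is bounded by $s^{d-1}\vol(\sph^{d-1})$: the set $K_t \cap B_{(\mathbf{0},s)}^d$ is a convex body contained in the ball $B_{(\mathbf{0},s)}^d$, so the classical monotonicity of the surface area for convex bodies (read off, e.g., from Cauchy's formula expressing $\mathcal{H}^{d-1}(\partial K)$ as an average of $(d-1)$-dimensional projections) gives
\begin{equation*}
\mathcal{H}^{d-1}(\partial K_t\cap B_{(\mathbf{0},s)}^d) \,\leq\, \mathcal{H}^{d-1}(\partial(K_t\cap B_{(\mathbf{0},s)}^d)) \,\leq\, \mathcal{H}^{d-1}(\partial B_{(\mathbf{0},s)}^d) \,=\, s^{d-1}\vol(\sph^{d-1}).
\end{equation*}
Integrating over $t\in[-\eps,\eps]$ yields the geometric estimate; plugging back into the Fubini display and using $\int_0^\infty s^{d-1}|g'(s)|\DD{s}=c_1$ together with $\vol(\sph^{d-1})=2\pi^{d/2}/\Gamma(d/2)$ gives the stated bound. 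The main obstacle is the geometric step, and within it the combination of the coarea formula with surface area monotonicity; the remaining points (the radial representation of $g$, the convexity of every $K_t$, and degenerate cases such as $C$ having empty interior where $\partial^\eps C=C^\eps$ and one argues either directly or by approximating $C$ by $C+\delta B_{(\mathbf{0},1)}^d$ and letting $\delta\to 0$) are essentially standard.
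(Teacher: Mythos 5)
Your argument is correct, but it is not the paper's: the paper gives no proof at all for this lemma and simply defers to \cite[Theorem 3.1]{BR10}, so what you have written is a self-contained substitute for the outsourced reference. Your route -- writing $g(r)=-\int_r^\infty g'(s)\DD{s}$ (legitimate because $\int_1^\infty|g'|<\infty$ forces $g$ to have a limit at infinity, which must be $0$ by integrability of $r^{d-1}g(r)$), applying Tonelli to reduce everything to the bound $\vol\!\left(\partial^\eps C\cap B_{(\mathbf{0},s)}^d\right)\leq 2\eps\,\vol(\sph^{d-1})\,s^{d-1}$, and proving that bound by the coarea formula for the signed distance $f$ together with monotonicity of surface area of nested convex bodies -- does yield exactly the constant $2\,c_1\,\vol(\sph^{d-1})$ of the statement, and each ingredient is sound: $\partial^\eps C=\{|f|\leq\eps\}$, every sublevel set $K_t$ is convex (Minkowski sum for $t>0$, concavity of $\xec\mapsto d(\xec,\R^d\setminus C)$ on $C$ for $t\leq 0$), $\partial K_t\cap B\subset\partial(K_t\cap B)$, and $\mathcal{H}^{d-1}(\partial K)\leq\mathcal{H}^{d-1}(\partial L)$ for convex $K\subset L$. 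Two technical points you pass over quickly deserve a sentence each in a written-up version: first, $|\nabla f|=1$ only almost everywhere (Rademacher plus the fact that $\{f=0\}=\partial C$ and the non-differentiability sets are Lebesgue-null), which is exactly what the coarea formula needs; second, the coarea formula produces $\mathcal{H}^{d-1}(\{f=t\}\cap B)$ rather than $\mathcal{H}^{d-1}(\partial K_t\cap B)$, and these coincide for all but at most two values of $t$ (namely $t=0$ and $t=-\max_{\xec\in C}d(\xec,\R^d\setminus C)$, the only level at which the concave inner distance can be locally constant), which is harmless since one integrates in $t$. With those remarks, and your suggested treatment of the degenerate case where $C$ has empty interior, the proof is complete and arguably more transparent than chasing the constant through the cited reference.
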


\begin{proof}
See \cite[Theorem 3.1]{BR10}.
\end{proof}

\begin{corollary}[Regularity of the Gaussian distributions]
If $\nu$ is a Gaussian distribution on $\R^d$ with covariance matrix $K$, then $\nu$ is regular with constant $R=2\sqrt{(d+1)\,\rho(K^{-1})}$: for any convex Borel set $C \subset \R^d$ and any $\eps>0$,
$$\nu(\partial^\eps C) \leq 2\sqrt{(d+1)\,\rho(K^{-1})}\,\eps.$$
\end{corollary}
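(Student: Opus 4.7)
My plan is to reduce to the case of the standard Gaussian by an affine change of coordinates that isotropizes the covariance, and then to invoke Lemma~\ref{lem:regularity} directly on the resulting radial density. Define $T\colon\xec\mapsto K^{-1/2}(\xec-\mathbf{m})$; the pushforward of $\nu=\gauss{d}{\mathbf{m}}{K}$ by $T$ is $\gauss{d}{\mathbf{0}}{I_d}$, whose density $g(r)=(2\pi)^{-d/2}\,\E^{-r^2/2}$ depends only on $\|\xec\|$. Since $T$ is an affine bijection, $T(C)$ is a Borel convex set whenever $C$ is, and $\nu(\partial^\eps C)=\gauss{d}{\mathbf{0}}{I_d}(T(\partial^\eps C))$, so it suffices to estimate the measure of $T(\partial^\eps C)$ under the standard Gaussian.

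The key geometric step is the inclusion $T(\partial^\eps C)\subseteq \partial^{\eps'}T(C)$ with $\eps'=\sqrt{\rho(K^{-1})}\,\eps$. Here $T$ is $\sqrt{\rho(K^{-1})}$-Lipschitz, since $\|T\xec-T\yec\|\leq\rho(K^{-1/2})\,\|\xec-\yec\|$ and $\rho(K^{-1/2})=\sqrt{\rho(K^{-1})}$. If $\xec\in C^\eps$, picking $\yec\in C$ with $\|\xec-\yec\|\leq\eps$ gives $\|T\xec-T\yec\|\leq\sqrt{\rho(K^{-1})}\,\eps$, so $T\xec\in T(C)^{\eps'}$. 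Symmetrically, $\xec\notin C^{-\eps}$ provides $\yec\in\R^d\setminus C$ with $\|\xec-\yec\|\leq\eps$; since $T$ bijects $\R^d\setminus C$ onto $\R^d\setminus T(C)$, the same Lipschitz argument yields $d(T\xec,\R^d\setminus T(C))\leq\sqrt{\rho(K^{-1})}\,\eps$, i.e.\ $T\xec\notin T(C)^{-\eps'}$. Combining, $T\xec\in\partial^{\eps'}T(C)$.

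Applying Lemma~\ref{lem:regularity} to the standard Gaussian density, and using $g'(r)=-r\,g(r)$, gives
\[
\gauss{d}{\mathbf{0}}{I_d}\bigl(\partial^{\eps'}T(C)\bigr)\leq 2\,c_1\,\vol(\sph^{d-1})\,\eps',\qquad c_1=(2\pi)^{-d/2}\int_0^\infty r^d\,\E^{-r^2/2}\,\DD r.
\]
Evaluating the Gaussian moment as $2^{(d-1)/2}\Gamma((d+1)/2)$ and using $\vol(\sph^{d-1})=2\pi^{d/2}/\Gamma(d/2)$, the prefactor reduces to $R_0=2\sqrt{2}\,\Gamma((d+1)/2)/\Gamma(d/2)$. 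Log-convexity of $\Gamma$ yields $\Gamma((d+1)/2)^2\leq\Gamma(d/2)\,\Gamma((d+2)/2)=(d/2)\,\Gamma(d/2)^2$, whence $R_0\leq 2\sqrt{d}\leq 2\sqrt{d+1}$. Substituting $\eps'=\sqrt{\rho(K^{-1})}\,\eps$ gives the stated bound $\nu(\partial^\eps C)\leq 2\sqrt{(d+1)\,\rho(K^{-1})}\,\eps$.

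The only delicate step is the geometric containment $T(\partial^\eps C)\subseteq \partial^{\eps'}T(C)$, which requires treating the interior and exterior of $C$ separately and carefully tracking the operator norms of $T$ and $T^{-1}$. The rest is routine: direct computation of a Gaussian radial integral and a mild Gamma-function inequality. I do not expect a genuine obstacle, since once the Gaussian is isotropized the claim is essentially a restatement of Lemma~\ref{lem:regularity}.
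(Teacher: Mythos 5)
Your proof is correct and follows essentially the same route as the paper: isotropize by $K^{-1/2}$, transfer the $\eps$-boundary of a convex set with the Lipschitz factor $\rho(K^{-1/2})=\sqrt{\rho(K^{-1})}$, and apply Lemma \ref{lem:regularity} to the standard Gaussian, whose constant $c_1=\frac{1}{\sqrt{2}\,\pi^{d/2}}\Gamma\!\left(\frac{d+1}{2}\right)$ you compute identically. The only (welcome) additions are the explicit log-convexity argument giving $2\sqrt{2}\,\Gamma(\frac{d+1}{2})/\Gamma(\frac{d}{2})\leq 2\sqrt{d}\leq 2\sqrt{d+1}$, which the paper asserts without justification, and the handling of a nonzero mean via the affine map $T$.
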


\begin{proof}
If $\nu$ is the standard Gaussian distribution on $\R^d$ ($K=I_d$), then 
$$
c_1 = \frac{1}{(2\pi)^{d/2}}\int_0^\infty r^{d}\,\E^{-\frac{r^2}{2}}\,dr = \frac{2^{(d-1)/2}}{(2\pi)^{d/2}}\int_0^\infty u^{\frac{d+1}{2}-1}\,\E^{-u}\,du  = \frac{1}{\sqrt{2}\,\pi^{d/2}}\,\Gamma\!\left(\frac{d+1}{2}\right).
$$
Therefore, $\nu$ is regular with respect to convex sets, of constant
$$R \leq 2\sqrt{2}\,\frac{\Gamma(\frac{d+1}{2})}{\Gamma(\frac{d}{2})} \leq 2 \sqrt{d+1}. $$
More generally, suppose that $\nu=\mathcal{N}_{\R^d}(\mathbf{0},K)$ is an arbitrary non-degenerated Gaussian distribution on $\R^d$. If $\|\xec-\yec\| \leq \eps$, then 
$$\|K^{-1/2}(\xec-\yec)\| \leq (k\EX{1})^{-1/2} \|\xec-\yec\| \leq (k\EX{1})^{-1/2}\,\eps, $$
where $k\EX{1} \leq k\EX{2} \leq \cdots \leq k\EX{d}$ are the positive eigenvalues of $K$. It follows that $K^{-1/2}(C^\eps) \subset (K^{-1/2}C)^{\eps(k\EX{1})^{-1/2}}$, and similarly, $K^{-1/2}(C^{-\eps}) \supset (K^{-1/2}C)^{-\eps(k\EX{1})^{-1/2}}$. As a consequence, if $\nu=\mathcal{N}_{\R^d}(\mathbf{0},K)$ and $\widetilde\nu=\mathcal{N}_{\R^d}(\mathbf{0},I_d)$, then
\begin{align*}
\nu(\partial^\eps C) &=\frac{1}{\sqrt{(2\pi)^d\,\det K}} \left(\int_{C^\eps} -\int_{C^{-\eps}}\right)\,\left(\E^{-\frac{\xec^t K^{-1}\xec}{2} } \DD{\xec}  \right)\\
& \leq \frac{1}{\sqrt{(2\pi)^d}} \left(\int_{(K^{-1/2}C)^{\eps(k\EX{1})^{-1/2}}} -\int_{(K^{-1/2}C)^{-\eps(k\EX{1})^{-1/2}}}\right)\,\left(\E^{-\frac{\|\yec\|^2}{2} } \DD{\yec}  \right)\\
&\leq \widetilde{\nu}\left(\partial^{\eps(k\EX{1})^{-1/2}}(K^{-1/2}C)\right) \leq 2\sqrt{d+1}\,(k\EX{1})^{-1/2}\, \eps.
\end{align*}
Indeed, if $C$ is a convex set, then $K^{-1/2}C$ is also a convex set. 
\end{proof}
\medskip

\subsection{Smoothing techniques}\label{subsec:smooth}
The regularity of the Gaussian distributions enables us to use smoothing techniques in order to compute $\dconv(\mu,\mathcal{N}_{\R}(\mathbf{0},K))$. We first introduce an adequate smoothing kernel, which is the multi-dimensional analogue of the kernel of \cite[Lemma 17]{FMN19}:

\begin{lemma}\label{lem:kernel}
There exists a kernel $\rho$ on $\R^d$ such that:
\begin{enumerate}
	\item The kernel is non-negative, $\int_{\R^d}\rho(\xec)\DD{\xec} = 1$, and $\int_{B_{(\mathbf{0},(2d+2)^{3/2})}^d}\rho(\xec)\DD{\xec} \geq c_2$ with
	 $$2c_2-1 \geq 1-\frac{4}{9\pi}.$$
	\item The kernel $\rho(\xec)$ integrates $(\|\xec\|_1)^{d+1}$:
	$$\int_{\R^d} \rho(\xec)\,(\|\xec\|_1)^{d+1}\DD{\xec} < +\infty. $$
	\item The Fourier transform $\widehat{\rho}(\ZIEC)=\int_{\R^d} \E^{\I \scal{\ZIEC}{\xec}} \rho(\!\DD{\xec})$ of $\rho$ is compactly supported on $[-1,1]^d$.
	\item The Fourier transform of $\rho$ is of class $\mathscr{C}^{2d}$, and for any multi-index $\boldsymbol\beta$ of weight $|\boldsymbol\beta| = \sum_{i=1}^d \beta\EX{i} \leq 2d$,
$$\left\|\frac{\partial^{|\boldsymbol\beta|} \widehat{\rho}(\ZIEC)}{\partial \ZIEC^{\boldsymbol\beta}}\right\|_{\infty} \leq 2^{1+\frac{d}{2}}\,\pi^{-\frac{d}{2}}\,(2d+2)^{|\boldsymbol\beta|+\frac{d}{2}}.$$
\end{enumerate}
\end{lemma}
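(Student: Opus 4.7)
The natural construction is as a product of one-dimensional sinc-type kernels. Setting $m = d+1$ and
\[f(x) = \frac{1}{N}\left(\frac{\sin(x/(2m))}{x/(2m)}\right)^{2m}, \qquad \rho(\xec) = \prod_{i=1}^d f(x\EX{i}),\]
with $N$ chosen so that $f$ integrates to $1$ on $\R$, yields a non-negative function with total mass $1$. Property~(3) is then essentially free: the Fourier transform of $\sinc(x/(2m))$ is proportional to $1_{[-1/(2m),\,1/(2m)]}$, so $\widehat{f}$ is proportional to the $(2m)$-fold convolution of this indicator, hence supported in $[-1,1]$; the product structure of $\rho$ gives $\widehat{\rho}$ supported in $[-1,1]^d$. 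Property~(2) is handled by the polynomial tail $f(x) \sim |x|^{-2m}$: expanding $\|\xec\|_1^{d+1}$ by the multinomial theorem, $\int \rho(\xec)\|\xec\|_1^{d+1}\DD{\xec}$ is a finite sum of products of absolute moments $\int |x|^\alpha f(x)\DD{x}$ with $\alpha \leq d+1 < 2m-1$, all finite.

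For property~(4), I would differentiate under the integral:
\[\frac{\partial^{|\boldsymbol\beta|}\widehat\rho(\ZIEC)}{\partial\ZIEC^{\boldsymbol\beta}} = \I^{|\boldsymbol\beta|}\int_{\R^d} \xec^{\boldsymbol\beta}\,\E^{\I\scal{\ZIEC}{\xec}}\,\rho(\xec)\DD{\xec},\]
so $\|\partial^{\boldsymbol\beta}\widehat\rho\|_\infty \leq \prod_{i=1}^d \int_{\R}|x|^{\beta\EX{i}}f(x)\DD{x}$. The substitution $u = x/(2m)$ yields
\[\int_{\R}|x|^{\beta\EX{i}}f(x)\DD{x} = (2m)^{\beta\EX{i}}\,\frac{\int_\R |u|^{\beta\EX{i}}\sinc^{2m}(u)\DD{u}}{\int_\R \sinc^{2m}(u)\DD{u}},\]
and these normalized absolute moments are uniformly bounded for $\beta\EX{i} \leq 2d$ by use of the Gaussian profile $\sinc^{2m}(u) \approx \E^{-mu^2/3}$ near the origin. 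Collecting a factor $(2m)^{\beta\EX{i}}$ per coordinate gives the dominant scaling $(2d+2)^{|\boldsymbol\beta|}$, while the residual numerical prefactor $2^{1+d/2}\pi^{-d/2}(2d+2)^{d/2}$ comes from the $d$ normalization factors, since $N = (2m)\int\sinc^{2m}(u)\DD{u} \sim 2m\sqrt{3\pi/m}$ by the same Gaussian approximation.

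Property~(1) is the main obstacle because it requires the sharp numerical bound $c_2 \geq 1 - \frac{2}{9\pi}$. A Chebyshev estimate already gives the right order of magnitude: by the independence of coordinates under $\rho$ and the substitution above, $\esper_\rho[\|\xec\|^2] = d(2m)^2\sigma_m^2$ with $\sigma_m^2 \sim \tfrac{3}{2m}$ the normalized variance of $\sinc^{2m}$, whence
\[\rho\!\left(\R^d \setminus B_{(\mathbf{0},(2d+2)^{3/2})}^d\right) \leq \frac{\esper_\rho[\|\xec\|^2]}{(2d+2)^3} = O\!\left(\frac{1}{d+1}\right),\]
so $c_2\to 1$ as $d\to\infty$. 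However, for small dimensions this bound does not reach the stated threshold $\tfrac{2}{9\pi}\approx 0.0707$, and one must tighten it, either by a fourth-moment inequality (the bounded kurtosis of the near-Gaussian $\sinc^{2m}$ density giving an $O((d+1)^{-2})$ rate) or by computing $\int u^{2k}\sinc^{2m}(u)\DD{u}$ explicitly via Fejér's piecewise polynomial formula and verifying the constant by hand. This numerical verification is the only delicate point; the structural construction is the standard smoothing kernel used in multi-dimensional Berry--Esseen arguments.
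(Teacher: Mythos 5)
Your construction is exactly the paper's kernel ($\rho(\xec)\propto\prod_{i=1}^d(\sinc(x\EX{i}/(2d+2)))^{2d+2}$), and your treatment of points (2)--(4) follows the same route: compact support of $\widehat{\rho}$ via the convolution power of the indicator, integrability from the $|x|^{-(2d+2)}$ tails, and derivative bounds on $\widehat{\rho}$ via absolute moments of the kernel. On (4) your bookkeeping is loose: the normalized moments $\int|u|^{\beta}\sinc^{2d+2}(u)\DD{u}\big/\int\sinc^{2d+2}(u)\DD{u}$ are \emph{not} uniformly bounded when $\beta$ is close to $2d$ (the Gaussian profile is irrelevant there; the tails dominate and the ratio grows like $\sqrt{d}$), and you then attribute the factor $(2d+2)^{d/2}$ to the normalization a second time. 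The clean mechanism, which is what the paper uses, is: the numerator $\int|\sin u|^{2d+2}|u|^{-(2d+2-\beta)}\DD{u}$ is bounded by an absolute constant (split at $|u|=1$), while the denominator is bounded below by $\int_{-\pi/2}^{\pi/2}(\cos u)^{2d+2}\DD{u}=\sqrt{\pi}\,\Gamma(d+\tfrac32)/\Gamma(d+2)$, and the $d$-fold product of these ratios produces the $(2d+2)^{d/2}$.

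The genuine gap is point (1), which you yourself flag but do not close, and it is not a cosmetic constant: the bound $2c_2-1\geq 1-\tfrac{4}{9\pi}$ is precisely what feeds the factor $\frac{1}{1-4/(9\pi)}$ in Proposition \ref{prop:rao} and all subsequent Berry--Esseen estimates. Your Chebyshev bound gives (with $\esper_\rho[\|\xec\|^2]\approx 6d(d+1)$) a tail mass about $\tfrac{3d}{4(d+1)^2}$, i.e.\ $3/16$ at $d=1$, far above the required $\tfrac{2}{9\pi}\approx 0.071$; and of your two proposed repairs, the fourth-moment/kurtosis route is actually impossible in the critical case $d=1$, since the one-dimensional density $\propto(\sinc(x/4))^{4}$ has tails $|x|^{-4}$ and therefore no finite fourth moment (moments exist only up to order $<2d+1$). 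The paper closes this point without any numerics: the Euclidean ball $B^d_{(\mathbf{0},\eps)}$ contains the hypercube $C^d_{(\mathbf{0},\eps/\sqrt d)}$, so the mass outside the ball is at most $d$ times a one-dimensional tail; that tail is bounded using $\sinc_{2d+2}(x)\leq (2d+2)^{2d+2}/|x|^{2d+2}$ in the numerator and the Wallis lower bound $\sqrt{\pi}\,\Gamma(d+\tfrac32)/\Gamma(d+2)$ in the denominator, giving, for $\eps=(2d+2)^{3/2}$,
\begin{equation*}
\frac{2d\,\Gamma(d+2)}{\sqrt{\pi}\,(2d+1)\,\Gamma(d+\tfrac32)}\left(\frac{d}{2d+2}\right)^{d+\frac12},
\end{equation*}
which is maximal at $d=1$ where it equals exactly $\tfrac{2}{9\pi}$. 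Without this (or an equivalent explicit verification for every $d\geq 1$, including small $d$), your proposal does not establish the stated lemma.
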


\begin{proof}
We set $\sinc(x)=\frac{\sin x}{x}$, $\sinc_{2d+2}(x) = (\sinc(\frac{x}{2d+2}))^{2d+2}$, and finally
$$\rho(\xec) = \frac{\prod_{i=1}^d \sinc_{2d+2} (x\EX{i})}{\int_{\R^d} \prod_{i=1}^d \sinc_{2d+2} (x\EX{i})\DD{\xec}}. $$
\begin{enumerate}
	\item Since $|\tan x| \geq |x|$ on $[-\frac{\pi}{2},\frac{\pi}{2}]$, one has
$$\int_{\R} (\sinc(x))^{2d+2}\DD{x} \geq \int_{-\frac{\pi}{2}}^\frac{\pi}{2} (\cos x)^{2d+2}\DD{x}  = \sqrt{\pi}\,\frac{\Gamma(d+\frac{3}{2})}{\Gamma(d+2)}.$$
Then, since the Euclidean ball $B_{(\mathbf{0},\eps)}^d$ contains the hypercube $C_{(\mathbf{0},\frac{\eps}{\sqrt{d}})}^d$,
\begin{align*}
\frac{\int_{\R^d \setminus B_{(\mathbf{0},\eps)}^d} \prod_{i=1}^d \sinc_{2d+2}(x\EX{i})\DD{\xec}}{\int_{\R^d} \prod_{i=1}^d \sinc_{2d+2}(x\EX{i})\DD{\xec}} &\leq \frac{\int_{\R^d \setminus C_{(\mathbf{0},\frac{\eps}{\sqrt{d}})}^d} \prod_{i=1}^d \sinc_{2d+2}(x\EX{i})\DD{\xec}}{\int_{\R^d} \prod_{i=1}^d \sinc_{2d+2}(x\EX{i})\DD{\xec}} \\
&\leq d\,\, \frac{\int_{\R \setminus [-\frac{\eps}{\sqrt{d}},\frac{\eps}{\sqrt{d}}]} \sinc_{2d+2}(x)\DD{x}}{\int_{\R} \sinc_{2d+2}(x)\DD{x}}\\
&\leq \frac{2d\,\Gamma(d+2)}{\sqrt{\pi}\,\Gamma(d+\frac{3}{2})}\, \int_{\frac{\eps}{\sqrt{d}\,(2d+2)}}^{+\infty} \,\frac{\!\DD{y}}{y^{2d+2}}\\
&\leq \frac{2d\,\Gamma(d+2)}{\sqrt{\pi}\,(2d+1)\,\Gamma(d+\frac{3}{2})}\,\left(\frac{\sqrt{d}\,(2d+2)}{\eps}\right)^{2d+1}\\
&\leq \frac{2d\,\Gamma(d+2)}{\sqrt{\pi}\,(2d+1)\,\Gamma(d+\frac{3}{2})}\,\left(\frac{d}{2d+2}\right)^{d+\frac{1}{2}}
\end{align*}
if $\eps=(2d+2)^{3/2}$.
This last expression takes its maximal value $\frac{2}{9\pi}$ for $d=1$. Therefore,
$$\int_{B_{(\mathbf{0},(2d+2)^{3/2})}^d} \rho(\xec)\DD{\xec} \geq 1-\frac{2}{9\pi} \qquad;\qquad 2c_2-1 \geq 1-\frac{4}{9\pi}.$$

\item Note that 
\begin{align*}
\prod_{i=1}^d \sinc_{2d+2}(x\EX{i}) &\leq \sinc_{2d+2}\left(\max_{1 \leq i \leq d} |x\EX{i}|\right) \\
&\leq \frac{(2d+2)^{2d+2}}{\max_{1 \leq i \leq d} |x\EX{i}|^{2d+2}} \leq \frac{(d(2d+2))^{2d+2}}{(\|\xec\|_1)^{2d+2}}.
\end{align*}
Therefore, the kernel indeed integrates $(\|\xec\|_1)^{d+1}$.

	\item One has 
\begin{align*}
\widehat{\rho}(\ZIEC) &= Z\,\prod_{i=1}^d \left(\int_{\R} \sinc_{2d+2}(x)\,\E^{\I x\zeta\EX{i}}\DD{x} \right) \\
&= Z'\,\prod_{i=1}^d \left(\int_{\R} (\sinc(x))^{2d+2}\,\E^{\I x \frac{\zeta\EX{i}}{2d+2}}\DD{x} \right) \\
&= Z'\,\prod_{i=1}^d \left(\widehat{\sinc}\right)^{\!*(2d+2)} \left(\frac{\zeta\EX{i}}{2d+2}\right),
\end{align*}
where $Z$ and $Z'$ are normalisation constants, and $f^{*(2d+2)}$ is the $(2d+2)$-th convolution power of an integrable function $f$. However, the Fourier transform of the sine cardinal is $\widehat{\sinc}(\zeta)=\pi\,1_{|\zeta|\leq 1}$, which is supported on $[-1,1]$. Therefore, $(\widehat{\sinc})^{*(2d+2)}$ is supported on $[-(2d+2),(2d+2)]$, and $\widehat{\rho}$ is indeed supported on $[-1,1]^d$.

	\item If $f(x) = (\sinc(x))^{2d+2}$, then $\frac{\partial^{k}\widehat{f}}{\partial \zeta^k}(\zeta) = \I^k \widehat{(x^k\,f)}(\zeta)$, so
	\begin{align*}
	\left\|\frac{\partial^{k}\widehat{f}}{\partial \zeta^k}(\zeta)\right\|_\infty &\leq \int_{\R} \frac{\sin^{2d+2}(x)}{|x|^{2d+2-k}}\DD{x} \leq \int_{-1}^1 |x|^{k} \DD{x} + 2 \int_{1}^\infty \frac{1}{x^{2d+2-k}}\DD{x} \\
	&\leq 2\left(\frac{1}{k+1}+\frac{1}{2d+1-k}\right) \leq 2\,\frac{2d+2}{2d+1}
	\end{align*}
	for any $k \in \lle 0,2d\rre$. Since $\widehat{\rho}(\ZIEC) = \prod_{i=1}^d\frac{\widehat{f}((2d+2)\zeta\EX{i})}{\widehat{f}(0)}$, it follows that
	\begin{align*}
	\left\|\frac{\partial^{|\boldsymbol\beta|} \widehat{\rho}(\ZIEC)}{\partial \ZIEC^{\boldsymbol\beta}}\right\|_\infty &\leq (2d+2)^{|\boldsymbol\beta|}\, \left(\frac{2(2d+2)\,\Gamma(d+2)}{\sqrt{\pi}\,(2d+1)\,\Gamma(d+\frac{3}{2})}\right)^d\\
	&\leq 2^{1+\frac{d}{2}}\,\pi^{-\frac{d}{2}}\,(2d+2)^{|\boldsymbol\beta|+\frac{d}{2}}
	\end{align*}
	by using Stirling estimates on the last line.\qedhere
 \end{enumerate}
\end{proof}
\medskip

For $\eps>0$, we note 
$$\rho_{\eps}(\xec) = \left(\frac{(2d+2)^{3/2}}{\eps}\right)^{\!d} \,\,\rho\!\left(\frac{(2d+2)^{3/2}\,\xec}{\eps}\right).$$
For every $d\geq 1$, the two first points in Lemma \ref{lem:kernel} translate into the following properties for the smoothing kernel $\rho_\eps$:
\begin{enumerate}[label=(SK\arabic*)]
	\item\label{hyp:multikernel1} $\rho_{\eps}(\xec)\geq 0$ on $\R^d$, and $\int_{\R^d} \rho_\eps(\xec)\DD{\xec}=1$.
	\item\label{hyp:multikernel2} $\rho_{\eps}$ gives a mass close to $1$ to the Euclidean ball $B_{(\mathbf{0},\eps)}^d$:
	$$\int_{B_{(\mathbf{0},\eps)}^d}\rho_{\eps}(\xec)\DD{\xec}=c_2 > \frac{1}{2}.$$ 
	\item the Fourier transform of $\rho_\eps$ is compactly supported on $\left[-\frac{(2d+2)^{3/2}}{\eps},\frac{(2d+2)^{3/2}}{\eps}\right]^d$.
\end{enumerate}

\noindent The discussion hereafter holds for any kernel with the two properties \ref{hyp:multikernel1} and \ref{hyp:multikernel2}. Let $f$ be a bounded measurable function on $\R^d$. It is well known that, if the \emph{oscillations} of $f$ are not too large with respect to a probability measure $\nu$, then one can use smoothing techniques in order to control $\int_{\R^d} f(\xec)\,(\mu(\!\DD{\xec}) - \nu(\!\DD{\xec}))$. More precisely, consider the following quantities, all related to the oscillations of the function $f$:
\begin{align*}
M(f,\xec,\eps)&=\sup\{f(\yec)\,|\,d(\xec,\yec) \leq\eps\};\\
m(f,\xec,\eps)&=\inf\{f(\yec)\,|\,d(\xec,\yec) \leq\eps\}; \\
\omega(f,\xec,\eps) &= M(f,\xec,\eps)-m(f,\xec,\eps).
\end{align*}
We then denote $f_{\mathbf{t}}(\xec)=f(\xec+\mathbf{t})$ the translate of $f$ by a vector $\mathbf{t} \in \R^d$, and 
\begin{align*}
\omega(f) &= \sup\{|f(\xec)-f(\yec)|,\,\xec,\yec \in \R^d\};\\
\omega(f,\eps)&=\int_{\R^d} \omega(f,\xec,\eps)\,\nu(\!\DD{\xec});\\
\omega^*(f,\eps) &=\sup\{\omega(f_t,\eps)\,|\,t \in \R^d\}.
\end{align*}

\begin{lemma}
For any kernel $\rho_{\eps}$ that satisfies the two properties \ref{hyp:multikernel1} and \ref{hyp:multikernel2}, one has
$$\left|\int_{\R^d} f(\xec)\,(\mu(\!\DD{\xec})-\nu(\!\DD{\xec}))\right| \leq \frac{1}{2c_2-1}\,\left(\frac{\omega(f)}{2}\,\|(\mu-\nu)*\rho_\eps\|+\omega^*(f,2\eps)\right),$$
where $\|(\mu-\nu)*\rho_\eps\|=((\mu-\nu)*\rho_\eps)_+(\R^d) + ((\mu-\nu)*\rho_\eps)_-(\R^d)$ is the norm of the signed measure $(\mu-\nu)*\rho_\eps$.
\end{lemma}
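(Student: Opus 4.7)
The strategy is a multi-dimensional adaptation of the classical smoothing approach (the analogue of \cite[Lemma~17]{FMN19}): replace $f$ by its upper and lower envelopes at scale $\eps$, push the comparison through $\rho_\eps$ so that $(\mu-\nu)*\rho_\eps$ appears, and pay for the smoothing with an oscillation term controlled by $\omega^*(f,2\eps)$.

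First I would introduce the envelopes $f^{+}(\xec) := \sup_{\|\yec\|\leq\eps} f(\xec+\yec)$ and $f^{-}(\xec) := \inf_{\|\yec\|\leq\eps} f(\xec+\yec)$, which sandwich $f$ pointwise and satisfy $f^{+} - f^{-} = \omega(f,\cdot,\eps)$. The decisive observation is that for $\|\yec\|\leq\eps$ the shift $-\yec$ lies in $B(\mathbf{0},\eps)$, so $f^{+}(\xec+\yec)\geq f(\xec)$ and dually $f^{-}(\xec+\yec)\leq f(\xec)$. Averaging against $\rho_\eps$ and splitting the integral according to $\|\yec\|\leq\eps$ or $>\eps$, property \ref{hyp:multikernel2} yields
\[ P_\eps f^{+}(\xec) \geq c_2\,f(\xec) + (1-c_2)\inf f, \qquad P_\eps f^{-}(\xec) \leq c_2\,f(\xec) + (1-c_2)\sup f, \]
with $P_\eps g(\xec) = \int g(\xec+\yec)\,\rho_\eps(\yec)\DD{\yec}$.

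Integrating the first inequality against $\mu$ and the second against $\nu$ and subtracting (after shifting $f$ by a constant so that $\sup f + \inf f = 0$, which does not alter $\int f\,d(\mu-\nu)$), one gets
\[ c_2 \int f\,d(\mu-\nu) \;\leq\; \int P_\eps f^{+}\,d\mu - \int P_\eps f^{-}\,d\nu + (1-c_2)\,\omega(f). \]
The central algebraic step is the decomposition
\[ \int P_\eps f^{+}\,d\mu - \int P_\eps f^{-}\,d\nu \;=\; \int f^{+}\,d\bigl((\mu-\nu)*\rho_\eps\bigr) + \int \omega(f,\cdot,\eps)\,d(\nu*\rho_\eps). \]
The first summand is bounded by $\tfrac{\omega(f)}{2}\|(\mu-\nu)*\rho_\eps\|$: because $(\mu-\nu)*\rho_\eps$ has total mass zero, one may subtract any constant from $f^{+}$ without changing the integral, and $\omega(f^{+})\leq\omega(f)$. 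For the second summand, Fubini and the very definition of $\omega^*$ give
\[ \int \omega(f,\cdot,\eps)\,d(\nu*\rho_\eps) \;=\; \int \rho_\eps(\yec)\,\omega(f_{\yec},\eps)\DD{\yec} \;\leq\; \omega^*(f,\eps) \;\leq\; \omega^*(f,2\eps), \]
where $f_{\yec}(\xec)=f(\xec+\yec)$.

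Applying the same argument with $\mu$ and $\nu$ swapped yields a matching two-sided bound for $c_2\,|\!\int f\,d(\mu-\nu)|$. The last step, and the main technical obstacle, is the bookkeeping that promotes the naive prefactor $1/c_2$ to the sharper $1/(2c_2-1)$: the residual slack $(1-c_2)\omega(f)$ (coming from the portion of $\rho_\eps$ outside $B(\mathbf{0},\eps)$) must be rebalanced into the left-hand side using the trivial estimate $|\!\int f\,d(\mu-\nu)|\leq\omega(f)$ together with the algebraic identity $c_2 - (1-c_2) = 2c_2-1$, so that the tail mass $1-c_2$ contributes only through the prefactor and not as an additive error. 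This is exactly where the hypothesis $c_2>\tfrac12$ (guaranteed by our specific kernel) is essential: without it the prefactor would blow up and the inequality would become vacuous.
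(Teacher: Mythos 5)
Your argument is correct up to and including the decomposition $\int P_\eps f^{+}\,d\mu-\int P_\eps f^{-}\,d\nu=\int f^{+}\,d((\mu-\nu)*\rho_\eps)+\int\omega(f,\cdot,\eps)\,d(\nu*\rho_\eps)$ and the bounds on its two summands, but the last step is a genuine gap, not a bookkeeping detail. What you have actually proved is
\[
c_2\left|\int_{\R^d} f\,d(\mu-\nu)\right|\;\le\;\frac{\omega(f)}{2}\,\|(\mu-\nu)*\rho_\eps\|+\omega^*(f,2\eps)+(1-c_2)\,\omega(f),
\]
and the parasitic term $(1-c_2)\,\omega(f)$ cannot be absorbed into the left-hand side by the trivial estimate $|\int f\,d(\mu-\nu)|\le\omega(f)$: that estimate goes in the wrong direction (it only gives $(1-c_2)\omega(f)\ge(1-c_2)|\int f\,d(\mu-\nu)|$, whereas absorption would require the reverse inequality, which fails whenever $\int f\,d(\mu-\nu)$ is small while $\omega(f)$ is not). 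For the intended application $f=1_C$, $\omega(f)=1$, your inequality therefore leaves an additive error of order $\frac{1-c_2}{c_2}$ that does not tend to $0$ with $\eps$, and Proposition \ref{prop:rao} (hence all the Berry--Esseen bounds built on it) would collapse to a trivial constant bound.

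The missing idea — which is how the factor $2c_2-1$ genuinely arises, and is the mechanism of \cite[Lemma 11.4 and Corollary 11.5]{BR10}, the paper giving no proof beyond this reference — is to estimate the translation-invariant quantity $\beta:=\sup_{\mathbf{t}\in\R^d}\left|\int f_{\mathbf{t}}\,d(\mu-\nu)\right|$ instead of $\int f\,d(\mu-\nu)$ itself (note $\beta\le\omega(f)<\infty$ and every term on the right-hand side is unchanged under translation of $f$). Choose $\mathbf{t}_0$ with $\int f_{\mathbf{t}_0}\,d(\mu-\nu)\ge\beta-\eta$ (replacing $f$ by $-f$ if necessary), test $(\mu-\nu)*\rho_\eps$ against $(f_{\mathbf{t}_0})^{+}$, and split the kernel integral as you did; for $\|\yec\|\le\eps$ use $(f_{\mathbf{t}_0+\yec})^{+}-f_{\mathbf{t}_0}\in[0,\omega(f_{\mathbf{t}_0},\cdot,2\eps)]$ integrated against $\nu$, which produces $\omega^*(f,2\eps)$, while for $\|\yec\|>\eps$ bound $\int (f_{\mathbf{t}_0+\yec})^{+}\,d(\mu-\nu)\ge\int f_{\mathbf{t}_0+\yec}\,d(\mu-\nu)-\omega^*(f,\eps)\ge-\beta-\omega^*(f,2\eps)$, i.e.\ by the very quantity being estimated rather than by $-\omega(f)$. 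This yields $(2c_2-1)\beta\le\frac{\omega(f)}{2}\|(\mu-\nu)*\rho_\eps\|+\omega^*(f,2\eps)$ after letting $\eta\to0$, and the lemma follows from $|\int f\,d(\mu-\nu)|\le\beta$. This also explains why $\omega^*$ is defined as a supremum over \emph{all} translates $\mathbf{t}\in\R^d$: the near-optimal translate $\mathbf{t}_0$ can lie anywhere, a feature your proof never uses — a sign that the decisive mechanism is absent from it.
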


\begin{proof}
\emph{Cf.} \cite[Corollary 11.5]{BR10}.
\end{proof}
\bigskip

Suppose that $f=1_C$ is the indicator function of a Borel convex set $C \subset \R^d$, and that $\nu$ is a probability measure that is regular with constant $R$. Then, $\omega(f)=1$, and $\omega(f,\xec,\eps) = 1_{\xec \in \partial^\eps C}$. As a consequence, 
$$\omega^*(f,2\eps) =\int_{\R^d} 1_{\xec \in \partial^{2\eps}C}\, \nu(\!\DD{\xec})\leq 2R\,\eps .$$
Combining the previous lemma with the estimate of $c_2$ given in Lemma \ref{lem:kernel}, we conclude that:

\begin{proposition}[Convex distance to a regular measure]\label{prop:rao}
Let $\mu$ be an arbitrary probability measure on $\R^d$, and $\nu$ a probability measure that is regular with respect to the class of convex sets, with constant $R$. One has the following inequality:
$$\dconv(\mu,\nu) \leq \frac{1}{1-\frac{4}{9\pi}}\,\left(\frac{\|(\mu-\nu)*\rho_{\eps}\|}{2} + 2R\,\eps\right).$$
\end{proposition}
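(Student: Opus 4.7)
The plan is to instantiate the smoothing lemma stated just above with $f = 1_C$, where $C$ ranges over convex Borel subsets of $\R^d$, and then bound each oscillation quantity in terms of $R$ and $\eps$. First I would record the trivial identity $\omega(1_C) = 1$, together with the pointwise identification $\omega(1_C, \xec, \eps) = 1_{\xec \in \partial^\eps C}$: the oscillation of an indicator over $B_{(\xec,\eps)}^d$ equals $1$ exactly when the ball meets both $C$ and its complement, which is precisely the defining condition $\xec \in C^\eps \setminus C^{-\eps} = \partial^\eps C$.

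The key step is the uniform control of $\omega^*(1_C, 2\eps) = \sup_{\mathbf{t} \in \R^d} \omega((1_C)_{\mathbf{t}}, 2\eps)$. The crucial observation is that $(1_C)_{\mathbf{t}} = 1_{C - \mathbf{t}}$, and translation preserves convexity, so $C - \mathbf{t}$ is again a convex Borel set. Applying the regularity hypothesis on $\nu$ to each translated convex set yields
$\omega((1_C)_{\mathbf{t}}, 2\eps) = \nu(\partial^{2\eps}(C - \mathbf{t})) \leq 2R\eps$,
uniformly in $\mathbf{t}$. Note that this works even though $\nu$ itself need not be translation invariant; only the class of convex sets on which regularity is asserted needs to be.

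Finally, plugging $\omega(1_C) = 1$ and $\omega^*(1_C, 2\eps) \leq 2R\eps$ into the smoothing lemma gives, for every fixed convex Borel $C$, the bound $|\mu(C) - \nu(C)| \leq \frac{1}{2c_2 - 1}\left(\frac{1}{2}\|(\mu - \nu) * \rho_\eps\| + 2R\eps\right)$. Substituting the estimate $2c_2 - 1 \geq 1 - \frac{4}{9\pi}$ from Lemma \ref{lem:kernel}(1) and taking the supremum over convex Borel $C$ yields the claimed inequality. The main obstacle here is conceptual rather than computational: one must recognize that although $\nu$ may fail to be translation invariant, the \emph{class} of convex sets is, and it is exactly this invariance that makes the regularity hypothesis applicable to all translates simultaneously.
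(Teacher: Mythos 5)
Your proof is correct and follows essentially the same route as the paper: apply the smoothing lemma to $f=1_C$, identify $\omega(1_C,\xec,\eps)=1_{\xec\in\partial^\eps C}$, bound $\omega^*(1_C,2\eps)\leq 2R\eps$ via regularity, and use $2c_2-1\geq 1-\frac{4}{9\pi}$ from Lemma \ref{lem:kernel}. Your explicit remark that $(1_C)_{\mathbf{t}}=1_{C-\mathbf{t}}$ with $C-\mathbf{t}$ still convex — so regularity applies uniformly over translates — is exactly the point the paper leaves implicit when it writes the bound on $\omega^*$.
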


\noindent In particular, if $\nu$ a Gaussian distribution with covariance matrix $K>0$, then 
$$\dconv(\mu,\nu) \leq \frac{1}{1-\frac{4}{9\pi}}\,\left(\frac{\|(\mu-\nu)*\rho_{\eps}\|}{2} + 4\sqrt{(d+1)\,\rho(K^{-1})}\,\eps\right).$$
\medskip

\subsection{Distance between Fourier transforms}\label{subsec:fourierdistance}
Given two probability measures $\mu$ and $\nu$ on $\R^d$, we now explain how to relate the total variation norm $\|(\mu-\nu)*\rho_\eps\|$ to certain properties of the Fourier transforms $\widehat{\mu}(\ZIEC)$ and $\widehat{\nu}(\ZIEC)$. In the remainder of this paragraph, we assume that $\mu$ and $\nu$ have moments of all order (or at least of all order $\leq d+1$); therefore, $\widehat{\mu}$ and $\widehat{\nu}$ have derivatives of all order. We then denote 
$$\Delta_{\eps}(\widehat{\mu},\widehat{\nu}) = \max_{|\boldsymbol\beta|\in \lle 0, d+1\rre} \int_{\left[-\frac{(2d+2)^{3/2}}{\eps},\frac{(2d+2)^{3/2}}{\eps}\right]^d} \left|\frac{\partial^{|\boldsymbol\beta|}(\widehat{\mu}-\widehat{\nu})}{\partial \ZIEC^{\boldsymbol\beta}}(\ZIEC)\right|\DD{\ZIEC},$$
the maximum being taken over multi-indices $\boldsymbol\beta=(\beta\EX{1},\ldots,\beta\EX{d})$ such that $|\boldsymbol\beta| \in \lle 0,d+1\rre$. We also abbreviate
$$C^d_{(\mathbf{0},(2d+2)^{3/2}/\eps)}= \left[-\frac{(2d+2)^{3/2}}{\eps},\frac{(2d+2)^{3/2}}{\eps}\right]^d = D^d_\eps.$$

\begin{proposition}[Variation norm and distance between Fourier transforms]\label{prop:fouriertovariationnorm}
For any probability measures $\mu$ and $\nu$ with moments of all order, if $\eps \leq \frac{1}{\sqrt{2d+2}}$, then
$$\|(\mu-\nu)*\rho_{\eps}\| \leq 4 \,(d+1)^{\frac{d+1}{2}}\,\Delta_{\eps}(\widehat{\mu},\widehat{\nu}).$$
\end{proposition}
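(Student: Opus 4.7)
The plan is to express $(\mu-\nu)*\rho_\eps$ via its Lebesgue density $h$ and estimate $\|h\|_{L^1(\R^d)}=\|(\mu-\nu)*\rho_\eps\|$ by Fourier methods. Because $\mu$ and $\nu$ admit moments of all orders, $\widehat{h}=(\widehat\mu-\widehat\nu)\,\widehat{\rho_\eps}$ is of class $\mathscr{C}^{d+1}$, and by Lemma \ref{lem:kernel}(3) it is compactly supported on $D^d_\eps$; Fourier inversion therefore yields
$$h(\xec) = \frac{1}{(2\pi)^d}\int_{D^d_\eps}(\widehat\mu-\widehat\nu)(\ZIEC)\,\widehat{\rho_\eps}(\ZIEC)\,\E^{-\I\scal{\ZIEC}{\xec}}\DD{\ZIEC}.$$

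The first key step is a pointwise decay estimate for $h$. Integrating by parts $|\boldsymbol\beta|$ times in $\ZIEC$---the boundary terms vanish thanks to the compact support of $\widehat h$---gives, for every multi-index with $|\boldsymbol\beta|\leq d+1$,
$$\I^{|\boldsymbol\beta|}\,\xec^{\boldsymbol\beta}\,h(\xec) = \frac{1}{(2\pi)^d}\int_{D^d_\eps}\frac{\partial^{|\boldsymbol\beta|}\widehat{h}}{\partial\ZIEC^{\boldsymbol\beta}}(\ZIEC)\,\E^{-\I\scal{\ZIEC}{\xec}}\DD{\ZIEC}.$$
Specialising to $\boldsymbol\beta=\mathbf{0}$ and to the $d$ multi-indices of the form $(d+1)\boldsymbol{e}_j$, using $\|\xec\|_\infty^{d+1}\leq\sum_{j=1}^d|x\EX{j}|^{d+1}$, and summing the resulting inequalities, I would obtain
$$(1+\|\xec\|_\infty^{d+1})\,|h(\xec)| \leq \frac{d+1}{(2\pi)^d}\,\max_{\boldsymbol\beta\in\{\mathbf{0}\}\cup\{(d+1)\boldsymbol{e}_j\}_j}\int_{D^d_\eps}\bigg|\frac{\partial^{|\boldsymbol\beta|}\widehat{h}}{\partial\ZIEC^{\boldsymbol\beta}}\bigg|\DD{\ZIEC}.$$

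Next I would replace $\widehat{h}$ by $\widehat\mu-\widehat\nu$ via Leibniz's formula: $\partial^{\boldsymbol\beta}\widehat h$ expands as a sum, with binomial coefficients totalling $2^{|\boldsymbol\beta|}\leq 2^{d+1}$, of products $\partial^{\boldsymbol\gamma}(\widehat\mu-\widehat\nu)\cdot\partial^{\boldsymbol\beta-\boldsymbol\gamma}\widehat{\rho_\eps}$. The scaling identity $\widehat{\rho_\eps}(\ZIEC)=\widehat\rho(\eps\ZIEC/(2d+2)^{3/2})$ combined with Lemma \ref{lem:kernel}(4) provides the bound
$$\bigg\|\frac{\partial^{|\boldsymbol\alpha|}\widehat{\rho_\eps}}{\partial\ZIEC^{\boldsymbol\alpha}}\bigg\|_\infty \leq 2^{1+\frac{d}{2}}\,\pi^{-\frac{d}{2}}\,(2d+2)^{\frac{d}{2}}\left(\frac{\eps}{\sqrt{2d+2}}\right)^{\!|\boldsymbol\alpha|},$$
and the hypothesis $\eps\leq(2d+2)^{-1/2}$ makes the $\eps$-dependent factor at most $1$, so every $\|\partial^{\boldsymbol\alpha}\widehat{\rho_\eps}\|_\infty$ is dominated by $c_3(d):=2^{1+d/2}\pi^{-d/2}(2d+2)^{d/2}$. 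Hence the maximum in the pointwise bound above is majorised by $2^{d+1}\,c_3(d)\,\Delta_\eps(\widehat\mu,\widehat\nu)$.

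To conclude, I would integrate the pointwise inequality against the multiplier $(1+\|\xec\|_\infty^{d+1})^{-1}$, whose Lebesgue integral on $\R^d$ decomposes along the $\ell^\infty$-spheres $\{\|\xec\|_\infty=r\}$ of $(d-1)$-volume $d\cdot 2^d\,r^{d-1}$ and equals at most $(d+1)\,2^d$. Multiplying all the numerical factors together produces an inequality of the form $\|h\|_{L^1(\R^d)}\leq C(d)\,\Delta_\eps(\widehat\mu,\widehat\nu)$ with explicit $C(d)$. The main obstacle I anticipate is the careful bookkeeping of constants needed to verify that $C(d)\leq 4\,(d+1)^{(d+1)/2}$: the exponent $(d+1)/2$ is set by the factor $(2d+2)^{d/2}=2^{d/2}(d+1)^{d/2}$ from Lemma \ref{lem:kernel}(4), and the remaining $\sqrt{d+1}$ must emerge after collecting the contributions $(2\pi)^{-d}$ from Fourier inversion, $2^{d+1}$ from Leibniz, $2^{1+d/2}\pi^{-d/2}$ from the kernel estimate, and $(d+1)\,2^d$ from the multiplier integral---a simplification reminiscent of the Stirling estimate carried out at the end of the proof of Lemma \ref{lem:kernel}.
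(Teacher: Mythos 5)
Your route is genuinely different from the paper's in its first step, and it is structurally sound. The paper first proves a standalone lemma (following Bhattacharya--Rao) bounding $\|g\|_{\leb^1}$ by $\max_{|\boldsymbol\beta|\in\{0,d+1\}}\|\partial^{\boldsymbol\beta}\widehat g\|_{\leb^1}$ with constant $\sqrt{\pi d/2}\,(\E/\pi)^d$: it splits $\R^d$ into quadrants and into the sets $\{g\geq 0\}$, $\{g<0\}$, works with the weight $d^{d+1}+(\|\xec\|_1)^{d+1}$ and the auxiliary functions $h_{\boldsymbol\alpha}$, and computes $\int_{\R^d}\frac{\DD{\xec}}{d^{d+1}+(\|\xec\|_1)^{d+1}}$ exactly before invoking Stirling. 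You instead use Fourier inversion (legitimate here, since $\widehat h=(\widehat\mu-\widehat\nu)\widehat{\rho_\eps}$ is continuous and compactly supported, and the boundary terms in your integrations by parts vanish because $\widehat\rho$ is $\mathscr{C}^{2d}$ with compact support) to get the pointwise decay $(1+\|\xec\|_\infty^{d+1})|h(\xec)|\leq \frac{d+1}{(2\pi)^d}\max_{\boldsymbol\beta}\int_{D_\eps^d}|\partial^{\boldsymbol\beta}\widehat h|$, using only the pure derivatives $(d+1)\boldsymbol e_j$, and then integrate the $\ell^\infty$-weight. This is more elementary — no quadrant/sign decomposition — and it only feeds into $\Delta_\eps$ derivatives that are anyway covered by its definition; the second half (Leibniz plus the bound on $\partial^{\boldsymbol\alpha}\widehat{\rho_\eps}$ from Lemma \ref{lem:kernel}) is the same as in the paper.

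The one real issue is exactly the point you flagged: with your choices the constant does \emph{not} come out below $4(d+1)^{\frac{d+1}{2}}$. Discarding the factor $(\eps/\sqrt{2d+2})^{|\boldsymbol\alpha|}$ and bounding the Leibniz sum by $2^{d+1}$ gives, after multiplying $(d+1)2^d$, $\frac{d+1}{(2\pi)^d}$, $2^{d+1}$ and $c_3(d)=2^{1+d}(d+1)^{d/2}\pi^{-d/2}$, the total $4\,(d+1)^{2+\frac{d}{2}}\left(\frac{4}{\pi^{3/2}}\right)^{d}$, whose ratio to the target is $(d+1)^{3/2}(4/\pi^{3/2})^d$; this exceeds $1$ (by a factor up to about $3$) for all $2\leq d\leq 11$, so the claimed inequality would not be established in precisely the dimensions of interest. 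The fix is available inside your own argument: keep the factor $(\eps/\sqrt{2d+2})^{|\boldsymbol\alpha|}\leq (2d+2)^{-|\boldsymbol\alpha|}$ when summing Leibniz, so that the binomial sum is $\left(1+\tfrac{1}{2d+2}\right)^{d+1}\leq \E^{1/2}$ rather than $2^{d+1}$ — this is the same device the paper uses. With that replacement the total becomes $2\E^{1/2}(d+1)^{2+\frac{d}{2}}\left(\frac{2}{\pi^{3/2}}\right)^d$, and the ratio $\tfrac{\E^{1/2}}{2}(d+1)^{3/2}(2/\pi^{3/2})^d$ is below $1$ for every $d\geq 1$, so your proof then delivers the stated bound (your estimate $(d+1)2^d$ for the multiplier integral is fine, since its exact value is $d\,2^d\,\frac{\pi}{(d+1)\sin(\pi/(d+1))}$).
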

\medskip

\begin{lemma}
Let $g : \R^d \to \R$ be a function such that $\int_{\R^d} |g(\xec)|\,(\|\xec\|_1)^{d+1}\DD{\xec} < +\infty$. Then,
$$\int_{\R^d} |g(\xec)|\DD{\xec} \leq \sqrt{\frac{\pi d}{2}}\,\left(\frac{\E}{\pi}\right)^d\,\max_{|\boldsymbol\beta|\in \{0, d+1\}} \int_{\R^d} \left|\frac{\partial^{|\boldsymbol\beta|} \widehat{g}}{\partial \ZIEC^{\boldsymbol\beta}}(\ZIEC)\right|\DD{\ZIEC}.$$
\end{lemma}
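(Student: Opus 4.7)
The plan is to pass from derivatives of $\widehat g$ back to pointwise control on $g$ via Fourier inversion, build an envelope for $|g(\xec)|$ that decays like $\|\xec\|_1^{-(d+1)}$ away from the origin, and then integrate that envelope. Denote by $A$ the right-hand maximum over $|\boldsymbol\beta|\in\{0,d+1\}$ of $\int_{\R^d}|\partial^{\boldsymbol\beta}\widehat g(\ZIEC)|\DD{\ZIEC}$. The hypothesis $\int |g(\xec)|\,(\|\xec\|_1)^{d+1}\DD{\xec}<\infty$ combined with boundedness of $|g|$ (a consequence of $A<\infty$, see below) will guarantee that $g$ is integrable enough for the classical Fourier inversion identity to apply.

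First I would use Fourier inversion to write $g(\xec)=(2\pi)^{-d}\int_{\R^d}\widehat g(\ZIEC)\,\E^{-\I\scal{\ZIEC}{\xec}}\DD{\ZIEC}$, whence $\|g\|_\infty\leq(2\pi)^{-d}A$. Then, for each multi-index $\boldsymbol\beta$ with $|\boldsymbol\beta|=d+1$, the standard identity $\widehat{\xec^{\boldsymbol\beta}g}(\ZIEC)=\I^{-|\boldsymbol\beta|}\,\partial^{\boldsymbol\beta}\widehat g(\ZIEC)$ together with the same inversion gives the pointwise estimate $|\xec^{\boldsymbol\beta}g(\xec)|\leq(2\pi)^{-d}A$. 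Summing these bounds weighted by the multinomial coefficients, and using $(\sum_i|x\EX{i}|)^{d+1}=\sum_{|\boldsymbol\beta|=d+1}\binom{d+1}{\boldsymbol\beta}|\xec^{\boldsymbol\beta}|$ together with $\sum_{|\boldsymbol\beta|=d+1}\binom{d+1}{\boldsymbol\beta}=d^{d+1}$, I obtain
$$(\|\xec\|_1)^{d+1}\,|g(\xec)|\leq \frac{A\,d^{d+1}}{(2\pi)^d}.$$
Combining with the $L^\infty$ bound yields the single envelope $|g(\xec)|\leq\frac{A}{(2\pi)^d}\,\min\!\bigl(1,d^{d+1}/\|\xec\|_1^{d+1}\bigr)$.

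It remains to integrate this envelope. Using the $\ell^1$-ball volume $\vol(\{\|\xec\|_1\leq r\})=(2r)^d/d!$ (and hence surface measure $2^d r^{d-1}/(d-1)!$), one splits the integral at $\|\xec\|_1=d$: the interior contributes $(2d)^d/d!$ and the exterior contributes $\int_d^\infty d^{d+1}r^{-2}\cdot 2^d/(d-1)!\,\DD{r}=(2d)^d/(d-1)!$. Summing gives $\int_{\R^d}\min(1,d^{d+1}/\|\xec\|_1^{d+1})\DD{\xec}=(d+1)(2d)^d/d!$, so
$$\int_{\R^d}|g(\xec)|\DD{\xec}\leq \frac{A\,(d+1)\,(2d)^d}{(2\pi)^d\,d!}.$$

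Finally I would convert this combinatorial expression into the stated form via Stirling's lower bound $d!\geq\sqrt{2\pi d}\,(d/\E)^d$, which simplifies the right-hand side to $\frac{d+1}{\sqrt{2\pi d}}\,\left(\frac{\E}{\pi}\right)^d A$. The announced constant $\sqrt{\pi d/2}\,(\E/\pi)^d$ then follows from the elementary inequality $(d+1)/\sqrt{2\pi d}\leq\sqrt{\pi d/2}$, i.e.\ $d+1\leq\pi d$, which holds for every $d\geq 1$. The main obstacle is the clean derivation of the envelope in the second paragraph: the multinomial weight packaging has to reproduce $\|\xec\|_1^{d+1}$ exactly so that the pointwise bounds from individual derivatives glue into a single isotropic decay rate suitable for integration; everything afterwards is volume bookkeeping on $\ell^1$-balls and one Stirling estimate.
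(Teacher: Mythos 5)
Your proof is correct and reaches the stated constant, but by a genuinely different (and somewhat more elementary) route than the paper's. You apply Fourier inversion termwise, once to $g$ and once to each $\xec^{\boldsymbol\beta}g$ with $|\boldsymbol\beta|=d+1$, and resum with multinomial weights to get the pointwise envelope $|g(\xec)|\leq\frac{A}{(2\pi)^d}\min\bigl(1,\,d^{d+1}\|\xec\|_1^{-(d+1)}\bigr)$, which you then integrate by splitting at $\|\xec\|_1=d$ using the $\ell^1$-ball volume $(2r)^d/d!$; the resulting constant $(d+1)(2d)^d/((2\pi)^d\,d!)$ fits under $\sqrt{\pi d/2}\,(\E/\pi)^d$ by Stirling together with $d+1\leq\pi d$ (all of these computations check out). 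The paper instead keeps $|g|$ inside a single integral: it decomposes $\R^d$ into quadrants and the sets $\{g\geq 0\}$, $\{g<0\}$, multiplies and divides by $d^{d+1}+(\|\xec\|_1)^{d+1}$ so that on each quadrant the weighted function $h_{\boldsymbol\alpha}$ has Fourier transform given by an explicit combination of $\widehat g$ and its order-$(d+1)$ derivatives, bounds the inverted integral by $2d^{d+1}$ times the same maximum, and evaluates $\int_{\R^d}(d^{d+1}+(\|\xec\|_1)^{d+1})^{-1}\DD{\xec}=\frac{2^d\pi}{(d+1)!\,\sin(\pi/(d+1))}$ exactly before applying Stirling. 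Your version avoids both the quadrant bookkeeping and this Beta-type integral, at the price of a slightly lossier intermediate constant which still closes because $(d+1)/\sqrt{2\pi d}\leq\sqrt{\pi d/2}$. One small caveat, shared with the paper: the argument implicitly assumes $g\in\leb^1(\R^d)$ so that $\widehat g$ and the inversion formula make sense; your remark that boundedness of $g$ ``guarantees integrability'' is slightly circular, since inversion already needs $g\in\leb^1$, but this is harmless here because in the intended application $g$ is the smooth integrable density of $(\mu-\nu)*\rho_\eps$.
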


\begin{proof}
We follow the proof of \cite[Lemma 11.6]{BR10}, but we make the constants more explicit. Let $G_+=\{\xec \in \R^d\,|\,g(\xec) \geq 0\}$, and $G_- = \R^d \setminus G_+$. For any sequences of signs $\boldsymbol\alpha \in \{\pm 1\}^d$, we denote $Q_{\boldsymbol\alpha}=\{\xec\in \R^d\,|\,\forall i \in \lle 1,d\rre,\,\,\mathrm{sgn}(x\EX{i})=\alpha\EX{i}\}$ the corresponding quadrant of $\R^d$, with by convention $\mathrm{sgn}(0)=+1$. We then write:
\begin{align*}
\int_{\R^d} &|g(\xec)|\DD{\xec} \\
&= \sum_{\boldsymbol\alpha \in \{\pm 1\}^d}\left(\int_{Q_{\boldsymbol\alpha} \cap G_+} - \int_{Q_{\boldsymbol\alpha} \cap G_-}\right) g(\xec)\,(d^{d+1}+(\|\xec\|_1)^{d+1}) \,\frac{1}{d^{d+1}+(\|\xec\|_1)^{d+1}}\DD{\xec} \\
&=\sum_{\boldsymbol\alpha \in \{\pm 1\}^d}\left(\int_{Q_{\boldsymbol\alpha} \cap G_+} - \int_{Q_{\boldsymbol\alpha} \cap G_-}\right) \left(\frac{1}{(2\pi)^d} \int_{\R^d} \widehat{h_{\boldsymbol\alpha}}(\ZIEC)\,\E^{-\I \scal{\xec}{\ZIEC}}\DD{\ZIEC} \right)\frac{1}{d^{d+1}+(\|\xec\|_1)^{d+1}}\DD{\xec} ,
\end{align*}
where 
$$h_{\boldsymbol\alpha}(\xec) = \left(d^{d+1} + \left(\sum_{i=1}^d\,\alpha\EX{i}\,x\EX{i}\right)^{\!d+1}\right)\,g(\xec).$$
The Fourier transform of $h_{\boldsymbol\alpha}(\xec)$ is:
$$\widehat{h_{\boldsymbol\alpha}}(\ZIEC) = \left(d^{d+1} + (-\I)^{d+1}\sum_{|\boldsymbol\beta|=d+1} \binom{d+1}{\beta\EX{1},\ldots,\beta\EX{d}}\,\boldsymbol\alpha^{\boldsymbol\beta}\,\frac{\partial^{|\boldsymbol\beta|}}{\partial\ZIEC^{\boldsymbol\beta} } \right)\widehat{g}(\ZIEC).$$
Therefore,
$$\left|\int_{\R^d} \widehat{h_{\boldsymbol\alpha}}(\ZIEC)\,\E^{-\I \scal{\xec}{ \ZIEC}}\DD{\ZIEC}\right| \leq 2\,d^{d+1}\,\max_{|\boldsymbol\beta| \in \{0,d+1\}} \left\|\frac{\partial^{|\boldsymbol\beta|}\widehat{g}(\ZIEC)}{\partial \ZIEC^{\boldsymbol\beta}}\right\|_{\leb^1}.$$
It follows that
$$\|g\|_{\leb^1} \leq \frac{2\,d^{d+1}}{(2\pi)^d}\,\left(\int_{\R^d}\frac{1}{d^{d+1}+(\|\xec\|_1)^{d+1}}\DD{\xec}\right)\,\max_{|\boldsymbol\beta| \in \{0,d+1\}} \left\|\frac{\partial^{|\boldsymbol\beta|}\widehat{g}(\ZIEC)}{\partial \ZIEC^{\boldsymbol\beta}}\right\|_{\leb^1}.$$
Last we compute $\int_{\R^d} \frac{1}{d^{d+1}+(\|\xec\|_1)^{d+1}}\DD{\xec}$ as follows:
\begin{align*}
\int_{\R^d} \frac{1}{d^{d+1}+(\|\xec\|_1)^{d+1}}\DD{\xec} &= \frac{2^d}{d^{d+1}} \int_{(\R_+)^d} \frac{1}{1+\left(\frac{x\EX{1}+\cdots+x\EX{d}}{d}\right)^{d+1}}\DD{\xec} \\
&= \frac{2^d}{d} \int_{(\R_+)^d} \frac{1}{1+(y\EX{1}+\cdots+y\EX{d})^{d+1}}\DD{\yec} \\
&= \frac{2^d}{d!} \int_{t=0}^{+\infty} \frac{t^{d-1}}{1+t^{d+1}}\DD{t} = \frac{2^d\,\pi}{(d+1)!\,\sin(\frac{\pi}{d+1})}.
\end{align*}
We conclude by using Stirling's approximation for $(d+1)!$:
\begin{align*}
\|g\|_{\leb^1} 
&\leq \sqrt{\frac{\pi d}{2}}\,\left(\frac{\E}{\pi}\right)^d \,\max_{|\boldsymbol\beta| \in \{0,d+1\}} \left\|\frac{\partial^{|\boldsymbol\beta|}\widehat{g}(\ZIEC)}{\partial \ZIEC^{\boldsymbol\beta}}\right\|_{\leb^1}.\qedhere
\end{align*}
\end{proof}

\begin{proof}[Proof of Proposition \ref{prop:fouriertovariationnorm}]
Since $\rho_{\eps}$ is a smoothing kernel, the signed measure obtained by the convolution $(\mu-\nu)*\rho_{\eps}$ has a (smooth) density $g$ with respect to the Lebesgue measure, and one can apply to it the previous Lemma:
$$\|(\mu-\nu)*\rho_{\eps}\| = \|g\|_{\leb^1} \leq C(d)\,\max_{|\boldsymbol\beta| \in \{0,d+1\} }\left\|\frac{\partial^{|\boldsymbol\beta|}}{\partial \ZIEC^{\boldsymbol\beta}} \left((\widehat{\mu}-\widehat{\nu})\,\widehat{\rho_\eps}\right)\right\|_{\leb^1}.$$
Indeed, 
\begin{align*}
&\int_{\R^d} |g(\xec)|\,(\|\xec\|_1)^{d+1}\DD{\xec} \\
&\leq \int_{\R^d}\int_{\R^d} (\mu+\nu)(\!\DD{\yec}) \,\rho_\eps(\xec-\yec)\,(\|\xec\|_1)^{d+1}\DD{\xec} \\
&\leq 2^{d+1} \int_{\R^d}\int_{\R^d} (\mu+\nu)(\!\DD{\yec}) \,\rho_\eps(\xec-\yec) ((\|\xec-\yec\|_1)^{d+1} +(\|\mathbf{y}\|_1)^{d+1})\DD{\xec}  \\
&\leq 2^{d+2} \left(\int_{\R^d} (\|\xec\|_1)^{d+1}\,\rho_\eps(\xec)\DD{\xec}\right) + 2^{d+1} \left(\int_{\R^d} (\|\xec\|_1)^{d+1}\,(\mu+\nu)(\!\DD{\xec}) \right)
\end{align*}
which is finite since $\mu$ and $\nu$ have moments of all order, and $\rho_{\eps}$ integrates $(\|\mathbf{x}\|_1)^{d+1}$. Now, if $|\boldsymbol\beta|$ is a fixed multi-index of total weight $|\boldsymbol\beta|=d+1$, then
\begin{align*}
&\left\|\frac{\partial^{|\boldsymbol\beta|}}{\partial \ZIEC^{\boldsymbol\beta}} ((\widehat{\mu}-\widehat{\nu})\,\widehat{\rho_\eps})\right\|_{\leb^1} \\
&\leq \sum_{\alpha\EX{1}=0}^{\beta\EX{1}}\cdots \sum_{\alpha\EX{d}=0}^{\beta\EX{d}} \prod_{i=1}^d \binom{\beta\EX{i}}{\alpha\EX{i}} \int_{D^d_{\eps}} \left|\frac{\partial^{|\boldsymbol\beta-\boldsymbol\alpha|} (\widehat{\mu}-\widehat{\nu})(\ZIEC)}{\partial \ZIEC^{\boldsymbol\beta-\boldsymbol\alpha}}\,\frac{\partial^{|\boldsymbol\alpha|} \widehat{\rho_\eps}(\ZIEC)}{\partial \ZIEC^{\boldsymbol\alpha}}\right|\DD{\ZIEC}\\
&\leq \sum_{\alpha\EX{1}=0}^{\beta\EX{1}}\cdots \sum_{\alpha\EX{d}=0}^{\beta\EX{d}} \prod_{i=1}^d \binom{\beta\EX{i}}{\alpha\EX{i}} \left\|\frac{\partial^{|\boldsymbol\alpha|}\widehat{\rho_\eps}}{\partial\ZIEC^{\boldsymbol\alpha}}\right\|_\infty \,\Delta_\eps(\widehat{\mu},\widehat{\nu})\\
&\leq 2 \,\left(\sqrt{\frac{4d+4}{\pi}}\right)^{\!d} \,\left(1+\frac{\eps}{\sqrt{2d+2}}\right)^{d+1}\,\Delta_\eps(\widehat{\mu},\widehat{\nu}) \\
&\leq 2\E^{1/2}\,\left(\sqrt{\frac{4d+4}{\pi}}\right)^{\!d}\,\Delta_\eps(\widehat{\mu},\widehat{\nu})
\end{align*}
by using the bound on the derivatives of $\widehat{\rho_\eps}$ given by Lemma \ref{lem:kernel}. Since the constant before $\Delta_{\eps}(\widehat{\mu},\widehat{\nu})$ is always larger than $1$, the same estimate holds when $|\boldsymbol\beta|=0$, so we get
\begin{align*}
\|(\mu-\nu)*\rho_\eps\| &\leq 2\E^{1/2}\,\sqrt{\frac{\pi d}{2}}\,\left(\frac{\E}{\pi}\,\sqrt{\frac{4d+4}{\pi}}\right)^d \, \Delta_{\eps}(\widehat{\mu},\widehat{\nu}) \\
&\leq \frac{\pi^{2}}{\sqrt{2\E}} \, \left(\frac{2\E}{\pi^{3/2}}\,\sqrt{d+1}\right)^{\!d+1} \Delta_{\eps}(\widehat{\mu},\widehat{\nu}) \\
&\leq 4 \,\left(\sqrt{d+1}\right)^{d+1}\,\Delta_{\eps}(\widehat{\mu},\widehat{\nu}),
\end{align*}
by simplyfing a bit the constants for the last inequality.
\end{proof}

\begin{corollary}[Convex distance and distance between Fourier transforms]\label{cor:fouriertoconvdistance}
Let $\mu$ be an arbitrary probability measure on $\R^d$, and $\nu$ be a probability distribution that is regular with respect to the class of convex sets, with constant $R$. We assume that $\mu$ and $\nu$ have moments of any order smaller than $d+1$. Then, 
$$\dconv(\mu,\nu) \leq \frac{2}{1-\frac{4}{9\pi}}\,\left((d+1)^{\frac{d+1}{2}}\,\Delta_{\eps}(\widehat{\mu},\widehat{\nu}) + R\,\eps\right)$$
for any $\eps < \frac{1}{\sqrt{2d+2}}$.
\end{corollary}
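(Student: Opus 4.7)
The corollary is essentially the immediate concatenation of the two preceding results, so the plan is purely combinatorial glue rather than new analysis.

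First I would invoke Proposition \ref{prop:rao}, which gives for any probability measure $\mu$ and any $\nu$ regular with respect to convex sets with constant $R$, the bound
$$\dconv(\mu,\nu) \leq \frac{1}{1-\frac{4}{9\pi}}\,\left(\frac{\|(\mu-\nu)*\rho_{\eps}\|}{2} + 2R\,\eps\right).$$
This holds for every $\eps>0$, with $\rho_\eps$ the smoothing kernel of Lemma \ref{lem:kernel}.

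Second, under the hypothesis that both $\mu$ and $\nu$ possess moments up to order $d+1$ (which is exactly what is required to make $\widehat{\mu}$ and $\widehat{\nu}$ of class $\mathscr{C}^{d+1}$ and to apply the integrability statements in the proof of Proposition \ref{prop:fouriertovariationnorm}), and provided $\eps \leq \frac{1}{\sqrt{2d+2}}$ so the hypothesis of Proposition \ref{prop:fouriertovariationnorm} is met, I substitute in the Fourier bound
$$\|(\mu-\nu)*\rho_{\eps}\| \leq 4\,(d+1)^{\frac{d+1}{2}}\,\Delta_{\eps}(\widehat{\mu},\widehat{\nu}).$$
Plugging this into the previous inequality yields
$$\dconv(\mu,\nu) \leq \frac{1}{1-\frac{4}{9\pi}}\,\left(2\,(d+1)^{\frac{d+1}{2}}\,\Delta_{\eps}(\widehat{\mu},\widehat{\nu}) + 2R\,\eps\right),$$
which, after factoring out the $2$, is precisely the claimed bound.

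There is no real obstacle here: the only things to check carefully are the compatibility of hypotheses (the moment assumption is what allows $\Delta_{\eps}(\widehat{\mu},\widehat{\nu})$ to be finite via differentiation under the integral sign, and the constraint $\eps<\frac{1}{\sqrt{2d+2}}$ is inherited from Proposition \ref{prop:fouriertovariationnorm}) and the arithmetic of combining the two estimates. The regularity constant $R$ enters only through the $\partial^\eps C$ term handled in Proposition \ref{prop:rao}, and for the case $\nu = \gauss{d}{\mathbf{0}}{K}$ one may afterwards specialise $R=2\sqrt{(d+1)\,\rho(K^{-1})}$ using the corollary on the regularity of Gaussian distributions.
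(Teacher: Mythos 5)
Your proposal is correct and is exactly the argument the paper intends: the corollary is the direct concatenation of Proposition \ref{prop:rao} with Proposition \ref{prop:fouriertovariationnorm}, and your bookkeeping of the constants (including the remark that moments up to order $d+1$ suffice for the Fourier-side estimate, and the specialisation $R=2\sqrt{(d+1)\,\rho(K^{-1})}$ in the Gaussian case) matches the text. Nothing further is needed.
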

\noindent In particular, 
$$\dconv(\mu,\,\mathcal{N}_{\R^d}(\mathbf{0},K)) \leq \frac{2}{1-\frac{4}{9\pi}}\,\left((d+1)^{\frac{d+1}{2}}\,\Delta_{\eps}\!\left(\widehat{\mu},\E^{-\frac{\ZIEC^tK\ZIEC}{2}}\right) + 2\,\sqrt{(d+1)\,\rho(K^{-1})}\,\eps\right).$$
\begin{remark}
Corollary \ref{cor:fouriertoconvdistance} implies the reciprocal of Proposition \ref{prop:convexdistanceweaktopology} if one assumes that, in addition to the convergence in law $\mu_n \rightharpoonup \nu$, which is equivalent to the local uniform convergence $\widehat{\mu}_n(\ZIEC) \to \widehat{\nu}(\ZIEC)$, one also has  convergence of the partial derivatives of these Fourier transforms up to order $d+1$. Actually, given a measure $\nu$ regular with respect to convex sets, one does not need this additional assumption to have $\dconv(\mu_n,\nu)\to 0$, but this is required if one wants quantitative estimates.
\end{remark}
\medskip

\subsection{Berry--Esseen type estimates}\label{subsec:berryesseen}
We now apply the preliminary results of the previous paragraphs to a mod-Gaussian convergent sequence of random variables $(\XEC_n)_{n \in \N}$.

\subsubsection{Normal approximation of a mod-Gaussian convergent sequence} We start with the following general hypotheses:
\begin{enumerate}[label=(BE\arabic*)]
	\item\label{hyp:berryesseen1} The sequence of random vectors $(\XEC_n)_{n\in \N}$ is mod-Gaussian convergent in the Laplace sense, on $\C^d$ or on a multi-strip $\mathcal{S}_{(\mathbf{a},\mathbf{b})}$ such that $\mathbf{0} \in \prod_{i=1}^d (a\EX{i},b\EX{i})$, with parameters $t_nK$ and limit $\psi(\zec)$. We denote as before $\theta_n(\ZIEC)=\psi_n(\I \ZIEC)$ and $\theta(\ZIEC) = \psi(\I \ZIEC)$ for $\ZIEC \in \R^d$. 
	\item\label{hyp:berryesseen2} Or, the sequence $(\XEC_n)_{n\in \N}$ is mod-Gaussian convergent in a \emph{strong} Fourier sense: the residue $\theta_n$ \emph{and all its partial derivatives up to order $d+1$} converge locally uniformly on $\R$ towards $\theta$ and all its partial derivatives up to order $d+1$. 
\end{enumerate}
The hypothesis \ref{hyp:berryesseen1} is stronger than \ref{hyp:berryesseen2}. Indeed, if the holomorphic functions $\psi_n(\zec)$ converge locally uniformly towards $\psi(\zec)$, then $\psi(\zec)$ is holomorphic on the domain of convergence, and one has automatically a local uniform convergence of the complex derivatives up to any order. In particular, by restriction to the domain $D=(\I\R)^d$, one gets the content of Hypothesis \ref{hyp:berryesseen2}, which is itself just a bit stronger than Fourier mod-Gaussian convergence in the sense of Definition \ref{def:modgauss}.
\bigskip

Until the end of this paragraph, $(\XEC_n)_{n \in \N}$ is a fixed mod-Gaussian convergent sequence that satisfies Hypothesis \ref{hyp:berryesseen2}. We set
$$M(B)=M((\theta_n)_{n \in \N},B) = \sup_{n \in \N}\, \max_{|\boldsymbol\alpha| \in \lle 0,d+1 \rre}\, \max_{\ZIEC \in C^d_{(\mathbf{0},B(2d+2)^{3/2})}} \left|\frac{\partial^{|\boldsymbol\alpha|} \theta_n (\ZIEC)}{\partial \ZIEC^{\boldsymbol \alpha}}\right|.$$
Because $\theta_n \to \theta$ uniformly on $C^d_{(\mathbf{0},B(2d+2)^{3/2})}$, as well as their derivatives up to order $d+1$, $M(B)$ is finite. On the other hand, we denote $\mu_n$ the law of $\XEC_n/\sqrt{t_n}$, and $\nu=\gauss{d}{\mathbf{0}}{K}$. 

\begin{lemma}\label{lem:technicdeltaeps}
Under the hypothesis \ref{hyp:berryesseen2}, assuming also that $d \geq 2$ and that $\det K =1$, one has
$$\Delta_{\frac{1}{B\sqrt{t_n}}}(\widehat{\mu}_n,\widehat{\nu}) \leq \left(\frac{2\pi}{\E}\right)^\frac{d}{2}\, (d+1)^{d + \frac{11}{4}}\, (\rho(K^{-1}))^{\frac{d}{2}+1}\,\,\frac{M(B)}{\sqrt{t_n}}$$
for any fixed $B>0$, and for $n$ large enough.
\end{lemma}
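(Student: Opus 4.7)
The approach is to factor
\[
\widehat{\mu}_n(\ZIEC) - \widehat{\nu}(\ZIEC) \;=\; \E^{-\ZIEC^t K \ZIEC/2}\bigl(\theta_n(\ZIEC/\sqrt{t_n}) - 1\bigr),
\]
which is a direct consequence of the mod-Gaussian hypothesis together with the identity $\widehat{\mu}_n(\ZIEC) = \phi_{\XEC_n}(\ZIEC/\sqrt{t_n})$. The Leibniz rule for multi-index derivatives then expresses $\partial^{|\boldsymbol\beta|}(\widehat{\mu}_n-\widehat{\nu})/\partial \ZIEC^{\boldsymbol\beta}$ as a sum, over sub-multi-indices $\boldsymbol\alpha \leq \boldsymbol\beta$, of products of a Gaussian derivative and a residue derivative, and we bound each piece separately.

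For the residue factor, writing $\boldsymbol\gamma = \boldsymbol\beta - \boldsymbol\alpha$: when $|\boldsymbol\gamma|\geq 1$ the chain rule gives
\[
\frac{\partial^{|\boldsymbol\gamma|}}{\partial \ZIEC^{\boldsymbol\gamma}}\bigl[\theta_n(\ZIEC/\sqrt{t_n})\bigr] \;=\; \frac{1}{(\sqrt{t_n})^{|\boldsymbol\gamma|}}\,(\partial^{\boldsymbol\gamma}\theta_n)(\ZIEC/\sqrt{t_n}),
\]
and for $\ZIEC \in D^d_{1/(B\sqrt{t_n})}$ the argument $\ZIEC/\sqrt{t_n}$ lies in $C^d_{(\mathbf{0},B(2d+2)^{3/2})}$, so its absolute value is bounded by $M(B)/(\sqrt{t_n})^{|\boldsymbol\gamma|}$ by definition of $M(B)$. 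For $\boldsymbol\gamma = \mathbf{0}$ the fundamental theorem of calculus along the segment from $\mathbf{0}$ to $\ZIEC/\sqrt{t_n}$, combined with $\theta_n(\mathbf{0})=1$, produces the mean-value estimate $|\theta_n(\ZIEC/\sqrt{t_n})-1| \leq M(B)\|\ZIEC\|_1/\sqrt{t_n}$. Both cases thus extract at least one factor $1/\sqrt{t_n}$, and for $n$ large enough (so that $t_n\geq 1$) the terms with $|\boldsymbol\gamma|\geq 2$ contribute less than those with $|\boldsymbol\gamma|\leq 1$.

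For the Gaussian derivative factor, since the integrals, the quadratic form $\ZIEC^tK\ZIEC$ and the spectrum of $K$ are all invariant under an orthogonal change of coordinates, we may assume $K = \diag(\lambda_1,\ldots,\lambda_d)$ with $\prod_i\lambda_i = 1$. Then
\[
\frac{\partial^{|\boldsymbol\alpha|}\E^{-\ZIEC^tK\ZIEC/2}}{\partial \ZIEC^{\boldsymbol\alpha}} \;=\; (-1)^{|\boldsymbol\alpha|}\prod_{i=1}^d \lambda_i^{\alpha^{(i)}/2}\,H_{\alpha^{(i)}}\bigl(\sqrt{\lambda_i}\zeta^{(i)}\bigr)\,\E^{-\lambda_i(\zeta^{(i)})^2/2},
\]
with $H_k$ the probabilist's Hermite polynomial. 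Integrating coordinate-by-coordinate after the substitution $u = \sqrt{\lambda_i}\zeta^{(i)}$, and using the $L^1$-Hermite estimate $\int_\R|H_k(u)|\,\E^{-u^2/2}\DD{u} \leq \sqrt{2\pi\,k!}$ (Cauchy--Schwarz together with the orthogonality relation $\int H_k^2\E^{-u^2/2}\DD{u} = \sqrt{2\pi}\,k!$), one arrives at
\[
\int_{\R^d}\left|\frac{\partial^{|\boldsymbol\alpha|}\E^{-\ZIEC^tK\ZIEC/2}}{\partial \ZIEC^{\boldsymbol\alpha}}\right|\DD{\ZIEC} \;\leq\; (2\pi)^{d/2}\sqrt{|\boldsymbol\alpha|!}\,\prod_i\lambda_i^{\alpha^{(i)}/2},
\]
after absorbing the factor $\prod_i\lambda_i^{-1/2} = 1$ coming from each Jacobian. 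A parallel computation for the $\boldsymbol\alpha = \boldsymbol\beta$ term, which carries an additional $\|\ZIEC\|_1$ factor from the mean-value bound, contributes an extra $\sum_j \lambda_j^{-1/2} \leq d\sqrt{\rho(K^{-1})}$ multiplicative factor.

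Finally, upper bounding $D^d_{1/(B\sqrt{t_n})}$ by $\R^d$ and summing the at most $2^{d+1}$ Leibniz terms (with binomial coefficients absorbed into the global constant) yields a bound of the announced form $O(M(B)/\sqrt{t_n})$. The main obstacle is the combinatorial bookkeeping of the remaining constants: one has to balance the product $\prod_i \lambda_i^{\alpha^{(i)}/2}$ (which a priori involves $\rho(K)$) against the $\det K = 1$ normalisation and the $\sqrt{\rho(K^{-1})}$ produced by the $\|\ZIEC\|_1$ factor, and then apply Stirling's formula to rewrite $\sqrt{(d+1)!}$ as $(d+1)^{d+11/4}\,\E^{-d/2}$ up to a universal factor, so as to recover the stated constants $(2\pi/\E)^{d/2}\,(d+1)^{d+11/4}\,(\rho(K^{-1}))^{d/2+1}$.
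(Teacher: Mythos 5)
Your skeleton coincides with the paper's: the factorisation $\widehat{\mu}_n-\widehat{\nu} = \E^{-\ZIEC^t K\ZIEC/2}\,(\theta_n(\ZIEC/\sqrt{t_n})-1)$, the Leibniz expansion over $\boldsymbol\alpha\leq\boldsymbol\beta$, the Taylor bound $|\theta_n(\ZIEC/\sqrt{t_n})-1|\leq M(B)\,\|\ZIEC\|_1/\sqrt{t_n}$ for the zero-order residue term, the bound $M(B)\,(t_n)^{-|\boldsymbol\gamma|/2}$ for the higher ones, and the $\leb^1$-Hermite estimates; so the extraction of the overall factor $M(B)/\sqrt{t_n}$ is sound. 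The problems are in the two places where you depart from the paper. First, the reduction ``we may assume $K=\diag(\lambda_1,\dots,\lambda_d)$'' is not licit as stated: $\Delta_{\eps}$ is built from coordinate partial derivatives up to order $d+1$ and an integration over an axis-aligned hypercube, and $M(B)$ is a supremum of coordinate derivatives of $\theta_n$ over an axis-aligned hypercube, none of which is invariant under an orthogonal change of coordinates. Rotating to an eigenbasis of $K$ forces you to expand each coordinate derivative into up to $d^{|\boldsymbol\beta|}$ rotated derivatives and to enlarge the control hypercube (replacing $M(B)$ by something like $M(\sqrt{d}\,B)$), which already alters the explicit constant the lemma asserts.

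Second, and more seriously, the step you defer as ``combinatorial bookkeeping'' is exactly where the stated constant has to come from, and your route does not deliver it. In the diagonal picture your Gaussian estimate carries $\prod_i\lambda_i^{\alpha^{(i)}/2}\leq \rho(K)^{|\boldsymbol\alpha|/2}$ with $|\boldsymbol\alpha|$ as large as $d+1$, and under $\det K=1$ the only general comparison with $\rho(K^{-1})$ is $\rho(K)\leq(\rho(K^{-1}))^{d-1}$ (for instance $K=\diag(t^2,t^{-1},t^{-1})$ has $\rho(K)=(\rho(K^{-1}))^2$), so this factor cannot be traded for the announced $(\rho(K^{-1}))^{\frac{d}{2}+1}$ by the normalisation alone; your own computation in fact indicates that the natural dependence along this route is on $\rho(K)$, so the discrepancy is not mere bookkeeping. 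The paper reaches its constant by a different device for this part: it keeps $K$ general, substitutes $\XIEC=K^{1/2}\ZIEC$ so that the Gaussian becomes isotropic, and controls the chain-rule coefficients through row sums of the substitution matrix, bounded by $\sqrt{d\,\rho(K^{-1})}$, before applying the Hermite bounds and Stirling's estimate for $(d+1)!$. To prove the lemma as stated you would need either to reproduce an argument of that type or to show that your eigenvalue products are dominated by the stated quantity, which in general they are not; as written, your proposal establishes a bound of order $M(B)/\sqrt{t_n}$ but not the inequality with the claimed constant.
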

Before proving this result, let us discuss the assumption that $K$ is \emph{normalised}:
$$\det K = 1 .$$
First, this assumption can be made without loss of generality, by replacing $K$ by 
$K_{\text{norm}} = (\det K)^{-1/d}\,K,$
and $t_n$ by $(\det K)^{1/d}\,t_n$. On the other hand, since $\det K = k\EX{1}k\EX{2}\cdots k\EX{d}$ is the product of the eigenvalues of $\det K$, if $\det K = 1$, then $\rho(K) \geq 1$ and $\rho(K^{-1}) \geq 1$. For $K \in \mathrm{S}_+(d,\R)$, set $\tau(K) = \rho(K^{-1})\,\rho(K)$. This quantity is invariant by the transformations $K \mapsto \lambda K$, and it is always larger than $1$, since
$$1 \leq \rho((K_{\mathrm{norm}})^{-1}) = (\det K)^{1/d}\,\rho(K^{-1}) \leq \rho(K)\,\rho(K^{-1}) = \tau(K).$$
Because of this sequence of inequalities, if an upper bound involving $\rho(K^{-1})$ is proven under the additional assumption $\det K =1$, then most of the time, an equivalent upper bound will hold without the assumption $\det K =1$, replacing $\rho(K^{-1})$ by $\tau(K)$.

\begin{proof}[Proof of Lemma \ref{lem:technicdeltaeps}]
Fix a multi-index $\boldsymbol\beta$ of weight $\leq d+1$, and set $\eps=1/(B\sqrt{t_n})$. We introduce the Hermite polynomials
$$ H_{n}(x) = (-1)^n\,\E^{\frac{x^2}{2}}\,\frac{d^n}{dx^n}\left(\E^{-\frac{x^2}{2}}\right),\quad n \geq 0.$$
If $\boldsymbol\alpha$ is a multi-index, then $$\E^{\frac{\|\ZIEC\|^2}{2}}\frac{\partial^{\boldsymbol\alpha}}{\partial\ZIEC^{\boldsymbol\alpha}}\left(\E^{-\frac{\|\ZIEC\|^2}{2}}\right) = (-1)^{|\boldsymbol\alpha|} \prod_{i=1}^d H_{\alpha\EX{i}}(\zeta\EX{i}),$$ which we abbreviate as $(-1)^{|\boldsymbol\alpha|} H_{\boldsymbol\alpha}(\ZIEC)$. In the following, given two multi-indices $\boldsymbol\alpha$ and $\boldsymbol\beta$, we write $\boldsymbol\alpha \leq \boldsymbol\beta$ if $\alpha\EX{i} \leq \beta\EX{i}$ for all $i \in \lle 1,d\rre$. We then set $\binom{\boldsymbol\beta}{\boldsymbol\alpha} = \prod_{i=1}^d \binom{\beta\EX{i}}{\alpha\EX{i}}$. One has
\begin{align*}
&\int_{D^d_{\eps}} \left|\frac{\partial^{|\boldsymbol\beta|}(\widehat{\mu}_n-\widehat{\nu})}{\partial\ZIEC^{\boldsymbol\beta}}(\ZIEC)\right|\DD{\ZIEC} \\
&\leq \int_{D^d_{1/B\sqrt{t_n}}} \left|\frac{\partial^{|\boldsymbol\beta|}\widehat{\nu}(\ZIEC)}{\partial\ZIEC^{\boldsymbol\beta}}\right|\,\left|\theta_n\left(\frac{\ZIEC}{\sqrt{t_n}}\right)-1\right|\DD{\ZIEC}\\
&\quad + \sum_{0<\boldsymbol\alpha \leq \boldsymbol\beta} \,\left(\frac{1}{\sqrt{t_n}}\right)^{\!|\boldsymbol\alpha|}\,\binom{\boldsymbol\beta}{\boldsymbol \alpha}\,\int_{D^d_{1/B\sqrt{t_n}}} \left|\frac{\partial^{|\boldsymbol\beta-\boldsymbol\alpha|}\widehat{\nu}}{\partial\ZIEC^{\boldsymbol\beta - \boldsymbol\alpha}}(\ZIEC) \right|\,\left|\frac{\partial^{|\boldsymbol\alpha|}\theta_n}{\partial\ZIEC^{\boldsymbol\alpha}}\left(\frac{\ZIEC}{\sqrt{t_n}}\right)\right|\DD{\ZIEC} \\
&\leq \frac{M(B)}{\sqrt{t_n}}\,\int_{\R^d} \left|\frac{\partial^{|\boldsymbol\beta|}\widehat{\nu}(\ZIEC)}{\partial\ZIEC^{\boldsymbol\beta}}\right|\,\|\ZIEC\|_1\DD{\ZIEC}\,\,+\,\, M(B)\sum_{0<\boldsymbol\alpha \leq \boldsymbol\beta} \,\left(\frac{1}{\sqrt{t_n}}\right)^{\!|\boldsymbol\alpha|}\,\binom{\boldsymbol\beta}{\boldsymbol \alpha}\,\int_{\R^d} \left|\frac{\partial^{|\boldsymbol\beta-\boldsymbol\alpha|}\widehat{\nu}}{\partial\ZIEC^{\boldsymbol\beta - \boldsymbol\alpha}}(\ZIEC) \right|\DD{\ZIEC}.
\end{align*}
Recall that $\widehat{\nu}(\ZIEC) = \E^{-\frac{\ZIEC^t K \ZIEC}{2}}$. To evaluate the integrals above, we make the change of variables $\XIEC=K^{1/2}\ZIEC$. If $M$ is an invertible matrix in $\mathrm{GL}(\R^d)$, and $\ZIEC=M\,\XIEC$, then
$$\frac{\partial f}{\partial \zeta\EX{i}} =  \sum_{j=1}^d M_{ij}\,\frac{\partial f}{\partial \xi\EX{j}} .$$
Therefore, with $M=K^{-1/2}$,
\begin{align*}
\left|\frac{\partial^{|\boldsymbol\beta|}\widehat{\nu}(\ZIEC)}{\partial\ZIEC^{\boldsymbol\beta}}\right| = \left|\left(\prod_{i=1}^d\frac{\partial^{\beta\EX{i}}}{\partial\zeta^{\beta\EX{i}}}\right)\,\E^{-\frac{\|\XIEC\|^2}{2}}\right| 
&\leq \sum_{\substack{j_{11},\ldots,j_{1\beta\EX{1}} \\ \vdots \\ j_{d1},\ldots,j_{d\beta\EX{d}}}} \left|\left(\prod_{i=1}^d \prod_{k=1}^{\beta\EX{i}} M_{ij_{ik}} \frac{\partial}{\partial \xi\EX{j_{ik}}}\right) \E^{-\frac{\|\XIEC\|^2}{2}} \right| \\
&\leq \sum_{\substack{j_{11},\ldots,j_{1\beta\EX{1}} \\ \vdots \\ j_{d1},\ldots,j_{d\beta\EX{d}}}} \left(\prod_{i=1}^d \prod_{k=1}^{\beta\EX{i}} |M_{ij_{ik}}|\right)\,|H_{\boldsymbol\alpha(\mathbf{j})}(\XIEC)| \,\E^{-\frac{\|\XIEC\|^2}{2}} ,
\end{align*}
where the multi-indices $\boldsymbol\alpha(\mathbf{j})$ depend on the choice of the indices $j_{11},\ldots,j_{d\beta\EX{d}} \in \lle 1,d\rre$, and are all of total weight $|\boldsymbol\beta|$. However, for any multi-indices $\boldsymbol\alpha$ and $\boldsymbol\beta$,
\begin{align*}
  \int_{\R^d} |H_{\boldsymbol\alpha}(\XIEC)| |H_{\boldsymbol\beta}(\XIEC)|\, \E^{-\frac{\|\XIEC\|^2}{2}}\DD{\XIEC} 
  &\leq \sqrt{\int_{\R^d} (H_{\boldsymbol\alpha}(\XIEC))^2\,\E^{-\frac{\|\XIEC\|^2}{2}}\DD{\XIEC}}\,\sqrt{\int_{\R^d} (H_{\boldsymbol\beta}(\XIEC))^2\,\E^{-\frac{\|\XIEC\|^2}{2}}\DD{\XIEC}} \\
  &\leq \sqrt{(2\pi)^d\,\boldsymbol\alpha! \,\boldsymbol\beta!}
 \end{align*}
 where $\boldsymbol\alpha! = \prod_{i=1}^d (\alpha\EX{i})!$ if $\boldsymbol\alpha$ is a multi-index of integers. Notice that if $\boldsymbol\alpha$ is of total weight $|\boldsymbol\alpha|$, then $\boldsymbol\alpha!\leq |\boldsymbol\alpha|!$. So,
 \begin{align*}
 \int_{\R^d} \left|\frac{\partial^{|\boldsymbol\beta|}\widehat{\nu}(\ZIEC)}{\partial\ZIEC^{\boldsymbol\beta}}\right| \|\ZIEC\|_1\DD{\ZIEC} 
 &\leq \sum_{k,l}\sum_{j_{11},\ldots,j_{d\beta\EX{d}}} |M_{kl}|\left(\prod_{i=1}^d \prod_{k=1}^{\beta\EX{i}} |M_{ij_{ik}}|\right)\int_{\R^d}\!|H_{\boldsymbol\alpha(\mathbf{j})}(\XIEC)|\,|\XIEC\EX{l}| \,\E^{-\frac{\|\XIEC\|^2}{2}}\DD{\XIEC}\\
 &\leq d\,\left(\max_{i \in \lle 1,d\rre} \sum_{j=1}^d |M_{ij}|\right)^{\!1+|\boldsymbol\beta|}\,\sqrt{(2\pi)^d\,|\boldsymbol\beta|!}
 \end{align*}
Moreover, 
 \begin{align*}
 \max_{i \in \lle 1,d\rre} \sum_{j=1}^d |M_{ij}| &= \max_{\|\mathbf{v}\|_\infty \leq 1} \|M\mathbf{v}\|_\infty \leq \max_{\|\mathbf{v}\|_2\leq \sqrt{d}} \|M\mathbf{v}\|_2 = \sqrt{d}\,\rho(K^{-1/2}).
 \end{align*}
Thus,
\begin{align*}
  \int_{\R^d} \left|\frac{\partial^{|\boldsymbol\beta|}\widehat{\nu}(\ZIEC)}{\partial\ZIEC^{\boldsymbol\beta}}\right|\,\|\ZIEC\|_1\DD{\ZIEC}  &\leq d^{\frac{3+|\boldsymbol\beta|}{2}}\,\left(\rho(K^{-1})\right)^{\frac{1+|\boldsymbol\beta|}{2}}\,\sqrt{(2\pi)^d\,|\boldsymbol\beta|!},\\
  \int_{\R^d} \left|\frac{\partial^{|\boldsymbol\beta-\boldsymbol\alpha|}\widehat{\nu}}{\partial\ZIEC^{\boldsymbol\beta - \boldsymbol\alpha}}(\ZIEC) \right|\DD{\ZIEC} &\leq \left(d\,\rho(K^{-1})\right)^{\frac{|\boldsymbol\beta-\boldsymbol\alpha|}{2}} \sqrt{(2\pi)^d\, |\boldsymbol\beta-\boldsymbol\alpha|!}\,.
 \end{align*}
We conclude that
\begin{align*}
&\int_{D^d_{\eps}} \left|\frac{\partial^{|\boldsymbol\beta|}(\widehat{\mu}_n-\widehat{\nu})}{\partial\ZIEC^{\boldsymbol\beta}}(\ZIEC)\right|\DD{\ZIEC} \\
&\leq \sqrt{(2\pi)^d\,|\boldsymbol\beta|!}\, \left(d\, \rho(K^{-1})\right)^{\!\frac{|\boldsymbol\beta|}{2}}\,M(B)\,\left(\frac{d^{\frac{3}{2}}\rho(K^{-\frac{1}{2}})}{\sqrt{t_n}} +  \left(1+\frac{1}{\sqrt{t_n\,d\,\rho(K^{-1})}} \right)^{|\boldsymbol\beta|}-1 \right)\\
&\leq_{n \to \infty} \sqrt{(2\pi)^d\,|\boldsymbol\beta|!}\, \left(d\, \rho(K^{-1})\right)^{\!\frac{|\boldsymbol\beta|}{2}}\,M(B)\,\left(\frac{d^{\frac{3}{2}}\rho(K^{-\frac{1}{2}})}{\sqrt{t_n}} +  \frac{|\boldsymbol\beta|+1}{\sqrt{t_n\,d\,\rho(K^{-1})}} \right)\\
&\leq_{n \to \infty} \sqrt{(2\pi)^d\,(d+1)!}\,\left(d\, \rho(K^{-1})\right)^{\!\frac{d+2}{2}}\,M(B)\,\,\frac{d + \frac{d+2}{d}}{\sqrt{t_n}}
\end{align*}
where the symbol $\leq_{n \to \infty}$ means that the inequality holds for $n$ large enough. The inequality follows by using Stirling's estimate $$(d+1)! \leq \sqrt{2\pi}\,\frac{(d+1)^{d+\frac{3}{2}}}{\E^d},$$ 
and by simplifying a bit the constants.
\end{proof}
\medskip

\begin{theorem}[General Berry--Esseen estimates]\label{thm:generalberryesseen}
Let $(\XEC_n)_{n \in \N}$ be a sequence of random variables that satisfies the hypothesis \ref{hyp:berryesseen2}. We denote $\mu_n$ the law of $\frac{\XEC_n}{\sqrt{t_n}}$ and $\nu = \gauss{d}{\mathbf{0}}{K}$. One has
$$\dconv(\mu_n,\nu) = O\!\left(\frac{1}{\sqrt{t_n}}\right).$$
If $d \geq 2$, one can take for constant in the $O(\cdot)$
$$\frac{2\,\rho(K^{-1/2})}{1-\frac{4}{9\pi}}\,\left(\left(\frac{2\pi}{\E}\right)^\frac{d}{2}\,(d+1)^{\frac{3d}{2}+\frac{13}{4}}\,(\tau(K))^{\frac{d}{2}+1}\,M(B)+ 2\,\sqrt{(d+1)\,\tau(K)}\,\frac{1}{B}\right)$$
for any $B>0$.
\end{theorem}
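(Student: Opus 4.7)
My plan is to chain together Corollary \ref{cor:fouriertoconvdistance} and Lemma \ref{lem:technicdeltaeps}, after a preliminary normalisation that reduces the general case to $\det K = 1$ (where Lemma \ref{lem:technicdeltaeps} applies). Since the explicit constant is claimed only for $d\geq 2$, I restrict to that case; the $d=1$ statement is already contained in Theorem \ref{thm:1Dgeneralberryesseen} (or can be obtained from the same scheme, using the one-dimensional refinements of \cite{FMN19}).

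\emph{Normalised case.} When $\det K = 1$, apply Corollary \ref{cor:fouriertoconvdistance} to $\mu_n$ and $\nu=\gauss{d}{\mathbf{0}}{K}$, using the regularity constant $R = 2\sqrt{(d+1)\rho(K^{-1})}$ provided by the corollary to Lemma \ref{lem:regularity}, and the choice $\eps = 1/(B\sqrt{t_n})$ (which lies below $1/\sqrt{2d+2}$ once $n$ is large enough). This gives
\[
\dconv(\mu_n,\nu)\leq \frac{2}{1-\frac{4}{9\pi}}\,\Bigl[(d+1)^{\frac{d+1}{2}}\,\Delta_\eps(\widehat{\mu}_n,\widehat{\nu}) + \frac{2\sqrt{(d+1)\rho(K^{-1})}}{B\sqrt{t_n}}\Bigr].
\]
Lemma \ref{lem:technicdeltaeps} bounds $\Delta_\eps(\widehat{\mu}_n,\widehat{\nu})$ by $(2\pi/\E)^{d/2}(d+1)^{d+11/4}(\rho(K^{-1}))^{d/2+1}\,M(B)/\sqrt{t_n}$. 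Adding the exponents $(d+1)/2 + d + 11/4 = 3d/2 + 13/4$ combines the two contributions into a single bound in $1/\sqrt{t_n}$ which matches the theorem, but with $\rho(K^{-1})$ in place of $\tau(K)$ and without the overall prefactor $\rho(K^{-1/2})$.

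\emph{Reduction to $\det K = 1$.} Set $\tilde K = (\det K)^{-1/d}\,K$ and $\tilde t_n = (\det K)^{1/d}\,t_n$, so that $\tilde t_n \tilde K = t_n K$ and $\det \tilde K = 1$. The sequence $(\XEC_n)_{n\in\N}$ is then mod-Gaussian with parameters $(\tilde t_n,\tilde K)$ and with the \emph{same} residue $\theta_n$, so the quantity $M(B)$ is unchanged. The positive homothety $\xec \mapsto (\det K)^{-1/(2d)}\xec$ sends convex sets bijectively to convex sets, hence $\dconv(\XEC_n/\sqrt{t_n},\gauss{d}{\mathbf{0}}{K}) = \dconv(\XEC_n/\sqrt{\tilde t_n},\gauss{d}{\mathbf{0}}{\tilde K})$, and the normalised bound above applies to the right-hand side. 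Two AM/GM inequalities on the eigenvalues of $K$ finish the job: $(\det K)^{1/d}\geq k\EX{1} = 1/\rho(K^{-1})$ gives $1/\sqrt{\tilde t_n}\leq \rho(K^{-1/2})/\sqrt{t_n}$, producing the overall factor $\rho(K^{-1/2})$ of the statement; and $(\det K)^{1/d}\leq k\EX{d}=\rho(K)$ gives $\rho(\tilde K^{-1}) = (\det K)^{1/d}\rho(K^{-1})\leq \tau(K)$, so every power of $\rho(\tilde K^{-1})$ in the intermediate bound is dominated by the corresponding power of $\tau(K)$.

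The real technical work is already concentrated in Lemma \ref{lem:technicdeltaeps}, so I expect no further obstacle here. The only care required is the bookkeeping of two separate sources of the eigenvalue factors: the regularity constant of the target Gaussian (linear in $\sqrt{\rho(K^{-1})}$) and the Fourier-integral bounds coming from $L^1$-norms of Hermite polynomials against the Gaussian density (responsible for the power $(\rho(K^{-1}))^{d/2+1}$ in the dominant term). Once these are combined and rescaled back to the original $K$, the stated constant falls out.
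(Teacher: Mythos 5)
Your proposal is correct and follows essentially the same route as the paper: the case $\det K=1$ is handled by combining Corollary \ref{cor:fouriertoconvdistance} (with $\eps=1/(B\sqrt{t_n})$ and the Gaussian regularity constant) with Lemma \ref{lem:technicdeltaeps}, and the general case is reduced to it via $K_{\mathrm{norm}}=(\det K)^{-1/d}K$, the inequalities $\rho((K_{\mathrm{norm}})^{-1})\leq\tau(K)$ and $(\det K)^{1/d}\geq 1/\rho(K^{-1})$ producing exactly the stated constant with the prefactor $\rho(K^{-1/2})$. Your observation that the residues $\theta_n$, hence $M(B)$, are unchanged under this renormalisation (since $\tilde t_n\tilde K=t_nK$) is the only step the paper leaves implicit, and your exponent bookkeeping $(d+1)/2+d+11/4=3d/2+13/4$ matches.
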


\begin{proof}
Suppose first that $\det K =1$. Then, combining the previous Lemma \ref{lem:technicdeltaeps} with Corollary \ref{cor:fouriertoconvdistance}, and setting $\eps=\frac{1}{B\sqrt{t_n}}$, one obtains 
\begin{align*}
&\dconv(\mu_n,\nu) \\
&\leq \frac{2}{1-\frac{4}{9\pi}}\,\left(\left(\frac{2\pi}{\E}\right)^\frac{d}{2}\,(d+1)^{\frac{3d}{2}+\frac{13}{4}}\,(\rho(K^{-1}))^{\frac{d}{2}+1}\,M(B)+ 2\,\sqrt{(d+1)\,\rho(K^{-1})}\,\frac{1}{B}\right)\,\frac{1}{\sqrt{t_n}}
\end{align*}
for any $B > 0$.\medskip

If $\det K \neq 1$, then 
\begin{align*}
&\sqrt{t_n\,(\det K)^{1/d}}\,\,\dconv\!\left(\frac{\XEC_n}{\sqrt{t_n}},\,\gauss{d}{\mathbf{0}}{K})\right)\\
&=\sqrt{t_n\,(\det K)^{1/d}}\,\,\dconv\!\left(\frac{\XEC_n}{\sqrt{t_n\,(\det K)^{1/d}}},\,\gauss{d}{\mathbf{0}}{K_{\mathrm{norm}}})\right) \\
&\leq \frac{2}{1-\frac{4}{9\pi}}\,\left(\left(\frac{2\pi}{\E}\right)^\frac{d}{2}\,(d+1)^{\frac{3d}{2}+\frac{13}{4}}\,(\tau(K))^{\frac{d}{2}+1}\,M(B)+ 2\,\sqrt{(d+1)\,\tau(K)}\,\frac{1}{B}\right)\\
&\leq \frac{2}{1-\frac{4}{9\pi}}\,\left(\left(\frac{2\pi}{\E}\right)^\frac{d}{2}\,(d+1)^{\frac{3d}{2}+\frac{13}{4}}\,(\tau(K))^{\frac{d}{2}+1}\,M(B)+ 2\,\sqrt{(d+1)\,\tau(K)}\,\frac{1}{B}\right)
\end{align*}
for any $B >0$, by using the inequality $\rho((K_{\mathrm{norm}})^{-1}) \leq \tau(K)$ previously mentioned. Finally, $(\det K)^{1/d} \geq \frac{1}{\rho(K^{-1})}$.
\end{proof}
\medskip

\subsubsection{Modification of the normal approximation}
In \cite{FMN16}, the one-dimensional equivalent of Theorem \ref{thm:generalberryesseen} was an important tool in the proof of the large deviation results (Theorem \ref{thm:1Dlargedeviation}). We shall use in Section \ref{sec:largedeviation} the same ideas, but we shall then need a slightly better Berry--Esseen bound $o((t_n)^{-1/2})$, instead of $O((t_n)^{-1/2})$. Under the general hypothesis \ref{hyp:berryesseen2}, it is possible to write such an upper bound, but then one needs to replace $\nu = \gauss{d}{\mathbf{0}}{K}$ by a \emph{signed} measure $\nu_n$, which is a small modification of the Gaussian distribution $\nu$. This paragraph is devoted to the construction of the measures $\nu_n$, and to the proof of the upper bound $\dconv(\mu_n,\nu_n)=o((t_n)^{-1/2})$. Given a function $f$ of $\ZIEC \in \R^d$, we denote
$$(\nabla f) (\ZIEC) = \left(\frac{\partial f(\ZIEC)}{\partial \zeta\EX{1}},\ldots,\frac{\partial f(\ZIEC)}{\partial \zeta\EX{d}}\right).$$
Given a sequence $(\XEC_n)_{n \in \N}$ that satisfies Hypothesis \ref{hyp:berryesseen2}, notice that 
$$
\frac{\partial\theta_n}{\partial \zeta\EX{i}}(\mathbf{0}) = \frac{\partial}{\partial \zeta\EX{i}} \left(\esper\!\left[\E^{\I \scal{\ZIEC}{\XEC_n}}\right] \E^{\frac{\ZIEC^t K \ZIEC}{2}} \right)_{|\ZIEC=\mathbf{0}} = \I\,\, \esper\!\left[\XEC_n\EX{i}\right] $$
is a purely imaginary number. Therefore, the gradient vector $\nabla \theta (\mathbf{0})$ belongs to $(\I \R)^d$. We set
$$\frac{d\nu_n}{d\xec} = \frac{1}{\sqrt{(2\pi)^d \det K}}\, \E^{-\frac{\xec^t K^{-1}\xec}{2}} \left(1-\I\,\frac{\xec^t\,K^{-1}\nabla\theta(\mathbf{0})}{\sqrt{t_n}}\right).$$
This is the density of a signed measure of total mass $1$ and with Fourier transform
$$\int_{\R^d} \E^{\I \scal{\ZIEC}{ \xec}}\, \nu_n(\!\DD{\xec}) = \E^{-\frac{\ZIEC^t K \ZIEC}{2}}\,\left(1+\frac{\ZIEC^t\,\nabla\theta(\mathbf{0})}{\sqrt{t_n}}\right).$$

\begin{lemma}\label{lem:regularitymodifiedgaussian}
The modified Gaussian distribution $\nu_n$ is regular with respect to the class of Borel convex sets, with constant 
$$R \leq_{n\to \infty} 2\sqrt{(d+1)\,\rho(K^{-1})}.$$
\end{lemma}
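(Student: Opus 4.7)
The plan is to split $\nu_n = \nu + \sigma_n$, where $\nu = \gauss{d}{\mathbf{0}}{K}$ and $\sigma_n$ carries the order $1/\sqrt{t_n}$ correction, and to estimate $|\nu_n|(\partial^\eps C) \leq \nu(\partial^\eps C) + |\sigma_n|(\partial^\eps C)$ for every convex Borel set $C$. The first term is already handled by the corollary to Lemma \ref{lem:regularity}: one has $\nu(\partial^\eps C) \leq 2\sqrt{(d+1)\rho(K^{-1})}\,\eps$. It therefore suffices to show that $|\sigma_n|(\partial^\eps C) = O(\eps/\sqrt{t_n})$ uniformly in $C$, so that the regularity constant of $\nu_n$ is bounded asymptotically by the desired value.

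First I would check that $\nu_n$ is actually a real signed measure, by invoking the observation made just before the statement: $\partial_{\zeta\EX{i}}\theta_n(\mathbf{0}) = \I\,\esper[\XEC_n\EX{i}] \in \I\R$, and this persists in the limit, so $\nabla \theta(\mathbf{0}) = \I\mathbf{v}$ for some $\mathbf{v} \in \R^d$. The correction factor in the definition of $\nu_n$ then rewrites as $-\I\,\xec^tK^{-1}(\I\mathbf{v})/\sqrt{t_n} = \xec^t K^{-1}\mathbf{v}/\sqrt{t_n}$, and the density of $\sigma_n$ with respect to Lebesgue measure is the real-valued function $(\xec^tK^{-1}\mathbf{v}/\sqrt{t_n})\,g_\nu(\xec)$, where $g_\nu$ is the Gaussian density.

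Next I would bound $|\sigma_n|(\partial^\eps C)$ by mimicking the change of variables $\yec = K^{-1/2}\xec$ used in the proof of the corollary on Gaussian regularity. Under this substitution $g_\nu(\xec)\DD{\xec}$ becomes the standard Gaussian density $\tilde g(\yec) = (2\pi)^{-d/2}\E^{-\|\yec\|^2/2}$, and Cauchy--Schwarz gives $|\xec^tK^{-1}\mathbf{v}| \leq \|K^{-1/2}\mathbf{v}\|\,\|\yec\|$. Using the inclusion $K^{-1/2}(\partial^\eps C) \subset \partial^{\eps/\sqrt{k\EX{1}}}(K^{-1/2}C)$ and the convexity of $K^{-1/2}C$, I obtain
\[
|\sigma_n|(\partial^\eps C) \;\leq\; \frac{\|K^{-1/2}\mathbf{v}\|}{\sqrt{t_n}}\int_{\partial^{\eps/\sqrt{k\EX{1}}}(K^{-1/2}C)} \|\yec\|\,\tilde g(\yec)\DD{\yec}.
\]
The radial function $h(r) = r\,(2\pi)^{-d/2}\E^{-r^2/2}$ satisfies $\int_0^\infty r^{d-1}|h'(r)|\DD{r} < +\infty$, so an elementary extension of Lemma \ref{lem:regularity} to finite (not necessarily probability) radial measures bounds the integral by $C_d\,\eps/\sqrt{k\EX{1}}$. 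Combining everything with $\|K^{-1/2}\mathbf{v}\|^2 \leq \rho(K^{-1})\|\mathbf{v}\|^2$ and $1/k\EX{1} = \rho(K^{-1})$ gives
\[
|\nu_n|(\partial^\eps C) \;\leq\; 2\sqrt{(d+1)\,\rho(K^{-1})}\,\eps \;+\; \frac{C_d\,\rho(K^{-1})\,\|\mathbf{v}\|}{\sqrt{t_n}}\,\eps,
\]
which is the stated asymptotic bound on $R$.

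The only real obstacle is purely technical: confirming that the proof of Lemma \ref{lem:regularity} (which is quoted from \cite{BR10} for probability measures) goes through for the non-probability radial density $r \mapsto r\,\E^{-r^2/2}$. Its proof bounds the mass of an $\eps$-shell by integrating $|g'(r)|\,r^{d-1}$, and this step is indifferent to whether the radial weight has unit total mass; consequently the extension causes no difficulty, and everything else is routine bookkeeping with the change of variables $\yec = K^{-1/2}\xec$ and Cauchy--Schwarz.
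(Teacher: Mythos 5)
Your argument is sound and lands very close to the paper's own proof: both reduce the lemma to the radial estimate of Lemma \ref{lem:regularity} applied to a density that is not a probability density (this is indeed harmless, since the total mass never enters the bound of \cite[Theorem 3.1]{BR10}), and both pass from the isotropic case to general $K$ by the same $K^{-1/2}$ change of variables together with the inclusion $K^{-1/2}(\partial^\eps C)\subset \partial^{\eps (k\EX{1})^{-1/2}}(K^{-1/2}C)$. The only difference of route is packaging: instead of splitting $\nu_n=\nu+\sigma_n$ and invoking the Gaussian corollary plus a separate estimate for $\sigma_n$, the paper bounds the absolute value of the density of $\nu_n$ (for $K=I_d$) by the single radial majorant $(2\pi)^{-d/2}\,\E^{-\|\xec\|^2/2}\bigl(1+\|\xec\|\,\|\nabla\theta(\mathbf{0})\|/\sqrt{t_n}\bigr)$ and applies Lemma \ref{lem:regularity} once, obtaining the constant $2\sqrt{2}\,\Gamma(\tfrac{d+1}{2})/\Gamma(\tfrac{d}{2})+O((t_n)^{-1/2})$.

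One point of bookkeeping is worth fixing. As written, your final bound is $2\sqrt{(d+1)\rho(K^{-1})}\,\eps$ \emph{plus} a strictly positive term of order $\eps/\sqrt{t_n}$, so it gives the stated constant only in a $\limsup$ sense, not literally ``$R\leq 2\sqrt{(d+1)\rho(K^{-1})}$ for $n$ large enough'', which is how $\leq_{n\to\infty}$ is used in the paper. The remedy is exactly the paper's computation: keep the sharp Gaussian constant $2\sqrt{2}\,\Gamma(\tfrac{d+1}{2})/\Gamma(\tfrac{d}{2})\,\sqrt{\rho(K^{-1})}$ for the main term rather than the rounded $2\sqrt{(d+1)\rho(K^{-1})}$; since $\sqrt{2}\,\Gamma(\tfrac{d+1}{2})/\Gamma(\tfrac{d}{2})<\sqrt{d}<\sqrt{d+1}$ with a gap independent of $n$, the $O((t_n)^{-1/2})$ contribution of $\sigma_n$ is absorbed once $t_n$ is large, and the constant as stated follows. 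This wrinkle has no effect on the use of the lemma in Theorem \ref{thm:modifiedberryesseen}, where either form of the bound suffices.
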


\begin{proof}
Suppose first that $K=I_d$. The density of $\nu_n$ is then bounded in absolute value by 
$$(2\pi)^{-\frac{d}{2}}\,\E^{-\frac{\|\xec\|^2}{2}}\,\left(1+\frac{\|\xec\|\,\|\nabla\theta(\mathbf{0})\|}{\sqrt{t_n}}\right),$$
which satisfies the hypotheses of Lemma \ref{lem:regularity}:
\begin{align*}
\int_{r=0}^\infty |g'(r)|\,r^{d-1}\DD{r} &\leq \frac{1}{(2\pi)^{d/2}}\,\int_0^\infty \E^{-\frac{r^2}{2}}\,\left(r+\frac{(r^2+1)\,\|\nabla\theta(\mathbf{0})\|}{\sqrt{t_n}}\right)\,r^{d-1}\DD{r} \\
&= \frac{1}{\sqrt{2}\,\pi^{d/2}}\,\Gamma\left(\frac{d+1}{2}\right) + O\left(\frac{1}{\sqrt{t_n}}\right).
\end{align*}
Hence, $\nu_n$ is regular with constant 
$$R=2\sqrt{2}\,\frac{\Gamma(\frac{d+1}{2})}{\Gamma(\frac{d}{2})} + O\!\left(\frac{1}{\sqrt{t_n}}\right),$$
which is smaller than $2\sqrt{d+1}$ for $t_n$ large enough. The general case is then obtained by the same techniques of change of variables as in the proof of Lemma \ref{lem:regularity}.
\end{proof}

\begin{lemma}\label{lem:technicdeltaeps2}
Under the hypothesis \ref{hyp:berryesseen2}, assuming also $\det K =1$, one has
$$\Delta_{\frac{1}{B\sqrt{t_n}}}(\widehat{\mu}_n,\widehat{\nu}_n) \leq C_1(d,\rho(K^{-1}))\,\frac{\|\nabla\theta_n(\mathbf{0})-\nabla\theta(\mathbf{0})\|_\infty}{\sqrt{t_n}} +C_2(d,\rho(K^{-1}))\,\frac{M(B)}{t_n} $$
for any $B >0$, where $C_1(d,\rho(K^{-1}))$ and $C_2(d,\rho(K^{-1}))$ are some constants that depend only on $d$ and on the spectral radius $\rho(K^{-1})$.
\end{lemma}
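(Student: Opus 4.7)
The plan is to mimic the proof of Lemma \ref{lem:technicdeltaeps}, but to take advantage of a second-order cancellation once the linear term in $\theta_n$ has been absorbed into the signed density $\nu_n$. Write $\widehat{\mu}_n(\ZIEC)=\E^{-\frac{\ZIEC^tK\ZIEC}{2}}\,\theta_n(\ZIEC/\sqrt{t_n})$ and $\widehat{\nu}_n(\ZIEC)=\E^{-\frac{\ZIEC^tK\ZIEC}{2}}\,(1+\frac{1}{\sqrt{t_n}}\,\ZIEC^t\nabla\theta(\mathbf{0}))$, so that
\[\widehat{\mu}_n(\ZIEC)-\widehat{\nu}_n(\ZIEC) = \E^{-\frac{\ZIEC^tK\ZIEC}{2}}\, R_n(\ZIEC),\qquad R_n(\ZIEC):=\theta_n\!\left(\tfrac{\ZIEC}{\sqrt{t_n}}\right)-1-\tfrac{1}{\sqrt{t_n}}\,\ZIEC^t\nabla\theta(\mathbf{0}).\]
Using $\theta_n(\mathbf{0})=1$, Taylor's formula with integral remainder at order $2$ yields
\[R_n(\ZIEC)=\tfrac{1}{\sqrt{t_n}}\,\ZIEC^t(\nabla\theta_n(\mathbf{0})-\nabla\theta(\mathbf{0}))+\tfrac{1}{t_n}\!\int_0^1(1-s)\sum_{i,j}\zeta\EX{i}\zeta\EX{j}\,\tfrac{\partial^2\theta_n}{\partial\zeta\EX{i}\partial\zeta\EX{j}}\!\left(\tfrac{s\ZIEC}{\sqrt{t_n}}\right)ds.\]
On $D^d_{1/(B\sqrt{t_n})}$ we have $\ZIEC/\sqrt{t_n}\in D^d_{1/B}$, so the second-derivative bound in the remainder is controlled by $M(B)$.

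Next I would differentiate this identity. For a multi-index $\boldsymbol\alpha$ of weight $\geq 2$, one simply has $\frac{\partial^{|\boldsymbol\alpha|} R_n}{\partial \ZIEC^{\boldsymbol\alpha}}(\ZIEC)=t_n^{-|\boldsymbol\alpha|/2}\,\frac{\partial^{|\boldsymbol\alpha|}\theta_n}{\partial\ZIEC^{\boldsymbol\alpha}}(\ZIEC/\sqrt{t_n})$, which is pointwise $\leq M(B)/t_n$. For $|\boldsymbol\alpha|=1$, say $\boldsymbol\alpha=\mathbf{e}_i$, splitting
\[\tfrac{\partial R_n}{\partial\zeta\EX{i}}(\ZIEC)=\tfrac{1}{\sqrt{t_n}}\!\left(\tfrac{\partial\theta_n}{\partial\zeta\EX{i}}(\mathbf{0})-\tfrac{\partial\theta}{\partial\zeta\EX{i}}(\mathbf{0})\right)+\tfrac{1}{\sqrt{t_n}}\!\left(\tfrac{\partial\theta_n}{\partial\zeta\EX{i}}(\ZIEC/\sqrt{t_n})-\tfrac{\partial\theta_n}{\partial\zeta\EX{i}}(\mathbf{0})\right)\]
gives a constant of order $\|\nabla\theta_n(\mathbf{0})-\nabla\theta(\mathbf{0})\|_\infty/\sqrt{t_n}$ plus a second-order Taylor remainder of order $\|\ZIEC\|_\infty\,M(B)/t_n$. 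The case $\boldsymbol\alpha=\mathbf{0}$ is the expression for $R_n$ itself, with the leading linear piece carrying a $\|\ZIEC\|_1$ factor times $\|\nabla\theta_n(\mathbf{0})-\nabla\theta(\mathbf{0})\|_\infty/\sqrt{t_n}$, plus the quadratic remainder of order $\|\ZIEC\|_\infty^2\,M(B)/t_n$.

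Then I would apply the Leibniz rule to $\E^{-\frac{\ZIEC^tK\ZIEC}{2}} R_n(\ZIEC)$ and integrate over $D^d_{1/(B\sqrt{t_n})}\subset\R^d$, exactly as in the proof of Lemma \ref{lem:technicdeltaeps}. Bounding partial derivatives of $\widehat{\nu}$ by Hermite-polynomial-times-Gaussian quantities and performing the Cauchy--Schwarz / change of variables $\XIEC=K^{1/2}\ZIEC$, all integrals of the form $\int_{\R^d}|H_{\boldsymbol\gamma}(\XIEC)|\,\|\XIEC\|_1^p\,\E^{-\|\XIEC\|^2/2}\,d\XIEC$ (for $p\in\{0,1,2\}$ and $|\boldsymbol\gamma|\leq d+1$) are finite and produce constants depending only on $d$ and on $\rho(K^{-1})$. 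Collecting terms, the contributions coming from $|\boldsymbol\alpha|\in\{0,1\}$ linear parts assemble into the $C_1\,\|\nabla\theta_n(\mathbf{0})-\nabla\theta(\mathbf{0})\|_\infty/\sqrt{t_n}$ piece, while all remaining contributions (Taylor remainders plus the $|\boldsymbol\alpha|\geq 2$ terms) assemble into the $C_2\,M(B)/t_n$ piece.

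The only real obstacle is bookkeeping: one must verify that each of the many Leibniz sub-terms, after multi-dimensional change of variables, yields an integral bounded uniformly in $n$ by a constant depending only on $d$ and $\rho(K^{-1})$, and that no hidden factor grows with $t_n$. This is routine once one notices that the Taylor expansion at $\mathbf{0}$ is being used precisely so that the zeroth and first order terms in the expansion of $R_n$ cancel against the ansatz defining $\widehat{\nu}_n$ up to the explicit $\|\nabla\theta_n(\mathbf{0})-\nabla\theta(\mathbf{0})\|_\infty$ residue, so that everything else is genuinely of order $1/t_n$ (rather than $1/\sqrt{t_n}$ as in Lemma \ref{lem:technicdeltaeps}).
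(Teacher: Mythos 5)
Your proposal is correct and follows essentially the same route as the paper: the same factorisation $\widehat{\mu}_n-\widehat{\nu}_n=\widehat{\nu}\cdot\delta_n(\ZIEC/\sqrt{t_n})$ with $\delta_n$ the residue after subtracting the constant and the linear term $\ZIEC^t\nabla\theta(\mathbf{0})/\sqrt{t_n}$, the same three-way treatment of the Leibniz terms ($\boldsymbol\alpha=\mathbf{0}$ via a second-order Taylor bound, $|\boldsymbol\alpha|=1$ via the split into $\nabla\theta_n(\mathbf{0})-\nabla\theta(\mathbf{0})$ plus an $M(B)\|\ZIEC\|/t_n$ increment, $|\boldsymbol\alpha|\geq 2$ absorbed into $M(B)/t_n$), and the same Hermite/Gaussian integral estimates with the change of variables $\XIEC=K^{1/2}\ZIEC$ already used for Lemma \ref{lem:technicdeltaeps}. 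The only cosmetic difference is your use of the integral form of the Taylor remainder instead of the Lagrange form, which changes nothing.
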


\begin{proof}
As in Lemma \ref{lem:technicdeltaeps}, we fix a multi-index $\boldsymbol\beta$ of total weight smaller than $d+1$, and we set $\eps=1/(B\sqrt{t_n})$. We have
$$\widehat{\mu}_n(\ZIEC)-\widehat{\nu}_n(\ZIEC) = \E^{-\frac{\ZIEC^t K \ZIEC}{2}}\,\left(\theta_n\!\left(\frac{\ZIEC}{\sqrt{t_n}}\right) - 1 - \frac{\ZIEC^t\, \nabla\theta(\mathbf{0})}{\sqrt{t_n}}\right) = \widehat{\nu}(\ZIEC)\,\delta_{n}\!\left(\frac{\ZIEC}{\sqrt{t_n}}\right).$$
In the expansion of $$\frac{\partial^{|\boldsymbol\beta|}(\widehat{\mu}_n - \widehat{\nu}_n)}{\partial \ZIEC^{\boldsymbol\beta}} = \sum_{\mathbf{0}\leq \boldsymbol\alpha \leq \boldsymbol\beta} \left(\frac{1}{\sqrt{t_n}}\right)^{|\boldsymbol\alpha|}\binom{\boldsymbol\beta}{\boldsymbol\alpha} \left(\frac{\partial^{|\boldsymbol\beta-\boldsymbol\alpha|} \widehat{\nu}}{\partial\ZIEC^{\boldsymbol\beta-\boldsymbol\alpha}}(\ZIEC)\right) \,\left(\frac{\partial^{|\boldsymbol\alpha|} \delta_n}{\partial\ZIEC^{\boldsymbol\alpha}}\!\left(\frac{\ZIEC}{\sqrt{t_n}}\right)\right),$$
we separate the multi-indices $\boldsymbol\alpha$ in three categories:

\begin{enumerate}
	\setcounter{enumi}{-1}
	\item If $\boldsymbol\alpha = \mathbf{0}$, then 
	\begin{align*}
	&\int_{D^d_{\eps}} \left|\frac{\partial^{|\boldsymbol\beta|} \widehat{\nu}}{\partial\ZIEC^{\boldsymbol\beta}}(\ZIEC)\right|\,\, \left|\delta_n\!\left(\frac{\ZIEC}{\sqrt{t_n}}\right)\right|\DD{\ZIEC} \\
	&\leq \int_{D^d_{1/B\sqrt{t_n}}} \left|\frac{\partial^{|\boldsymbol\beta|} \widehat{\nu}}{\partial\ZIEC^{\boldsymbol\beta}}(\ZIEC)\right|\,\,\left( \left|\frac{\ZIEC^t(\nabla\theta_n(\mathbf{0})-\nabla\theta(\mathbf{0}))}{\sqrt{t_n}}\right|+\frac{M(B)\,(\|\ZIEC\|_1)^2}{2t_n}\right)\DD{\ZIEC} \\
	&\leq \frac{\|\nabla\theta_n(\mathbf{0})-\nabla\theta(\mathbf{0})\|_\infty}{\sqrt{t_n}} \int_{\R^d} \left|\frac{\partial^{|\boldsymbol\beta|} \widehat{\nu}}{\partial\ZIEC^{\boldsymbol\beta}}(\ZIEC)\right| \|\ZIEC\|_1\DD{\ZIEC} + \frac{d\,M(B)}{2t_n} \int_{\R^d} \left|\frac{\partial^{|\boldsymbol\beta|} \widehat{\nu}}{\partial\ZIEC^{\boldsymbol\beta}}(\ZIEC)\right| (\|\ZIEC\|_2)^2\DD{\ZIEC} \
	\end{align*}
	 by using a Taylor--Lagrange expansion of order $2$ on the second line. The first integral has been bounded in the proof of Lemma \ref{lem:technicdeltaeps} by
	 $$d^{\frac{d+4}{2}}\,\left(\rho(K^{-1})\right)^{\frac{d+2}{2}}\,\sqrt{(2\pi)^d\,(d+1)!}\,.$$
	 On the other hand, setting $M=K^{-1/2}$ and $\ZIEC=M\XIEC$, the second integral is smaller than
	 \begin{align*}
	 &\rho(K^{-1})\sum_{l}\sum_{j_{11},\ldots,j_{d\beta\EX{d}}} \left(\prod_{i=1}^d \prod_{k=1}^{\beta\EX{i}} |M_{ij_{ik}}|\right)\,\int_{\R^d}|H_{\boldsymbol\alpha(\mathbf{j})}(\XIEC)|\,|\XIEC\EX{l}|^2 \,\E^{-\frac{\|\XIEC\|^2}{2}}\DD{\XIEC} \\
	 &\leq d\,\rho(K^{-1})\sum_{j_{11},\ldots,j_{d\beta\EX{d}}} \left(\prod_{i=1}^d \prod_{k=1}^{\beta\EX{i}} |M_{ij_{ik}}|\right)\,\sqrt{(2\pi)^d\,(d+1)!}\\
	 &\leq d^{\frac{d+3}{2}}\,\left(\rho(K^{-1})\right)^{\frac{d+3}{2}} \,\sqrt{(2\pi)^d\,(d+1)!}\,.
	 \end{align*}
	 So, the term with multi-index $\boldsymbol\alpha = \mathbf{0}$ gives a contribution smaller than
	 $$ \left(\frac{\|\nabla\theta_n(\mathbf{0})-\nabla\theta(\mathbf{0})\|_\infty}{\sqrt{t_n}}  + \frac{\sqrt{d\,\rho(K^{-1})}\,M(B)}{2t_n}\right) d^{\frac{d+4}{2}}\,\left(\rho(K^{-1})\right)^{\frac{d+2}{2}} \,\sqrt{(2\pi)^d\,(d+1)!} \,.$$

	 \item If $|\boldsymbol\alpha|=1$, then $\boldsymbol\alpha = (0,\ldots,0,1_i,0,\ldots,0)$ for some $i \in \lle 1,d\rre$. The derivative of $\delta_n$ is then 
	 $$
	 \left|\frac{\partial \delta_n(\ZIEC)}{\partial \ZIEC^{\boldsymbol\alpha}} \right| = \left|\frac{\partial\theta_n(\ZIEC)}{\partial \zeta\EX{i}} -\frac{\partial\theta(\mathbf{0})}{\partial \zeta\EX{i}}\right| \leq  \|\nabla\theta_n(\mathbf{0})-\nabla\theta(\mathbf{0})\|_\infty + M(B)\,\|\ZIEC\|_1
	 $$
	 if $\ZIEC \in C^d_{(\mathbf{0},B(2d+2)^{3/2})}$. Therefore, the term with multi-index $\boldsymbol\alpha =  (0,\ldots,1_i,\ldots,0)$ gives a contribution to the sum smaller than
	 \begin{align*}
	 & \frac{\beta\EX{i}}{\sqrt{t_n}}\,\int_{\R^d} \left|\frac{\partial^{|\boldsymbol\beta-\boldsymbol\alpha|}\widehat{\nu}(\ZIEC)}{\partial \ZIEC^{\boldsymbol\beta-\boldsymbol\alpha}}\right|\,\left(\|\nabla\theta_n(\mathbf{0})-\nabla\theta(\mathbf{0})\|_\infty + \frac{M(B)\,\|\ZIEC\|_1}{\sqrt{t_n}}\right) \DD{\ZIEC} \\
	 &\leq \beta\EX{i} \left( \frac{\|\nabla\theta_n(\mathbf{0})-\nabla\theta(\mathbf{0})\|_\infty}{\sqrt{t_n}} + \frac{\sqrt{d^3\,\rho(K^{-1})}\,M(B)}{t_n}\right) \, (d\rho(K^{-1}))^{\frac{d}{2}}\,\sqrt{(2\pi)^d\,d!}
	 \end{align*}
	 
	 \item Finally, if $|\boldsymbol\alpha|\geq 2$, then the derivative with multi-index $\boldsymbol\alpha$ of $\delta_n$ is the same as the derivative of $\theta_n$, so one can use the same bounds as in Lemma \ref{lem:technicdeltaeps}. Thus, the term with multi-index $\boldsymbol\alpha$ gives a contribution smaller than
	 $$M(B)\,\binom{\boldsymbol\beta}{\boldsymbol\alpha}\,\left(\frac{1}{\sqrt{t_n\,d\,\rho(K^{-1})}}\right)^{|\boldsymbol\alpha|}\,(d\,\rho(K^{-1}))^{\frac{d+1}{2}}\,\sqrt{2\pi\,|\boldsymbol\beta-\boldsymbol\alpha|!}. $$ 
\end{enumerate}
The claim follows by summing over the three kind of multi-indices $\boldsymbol\alpha$.
\end{proof}
\medskip

\begin{theorem}[Convex distance to the modified normal approximation]\label{thm:modifiedberryesseen}
Let $(\XEC_n)_{n \in \N}$ be a sequence of random variables that satisfies Hypothesis \ref{hyp:berryesseen2}. We denote $\mu_n$ the law of $\frac{\XEC_n}{\sqrt{t_n}}$, and $\nu_n$ the modified Gaussian distribution whose Fourier transform is
$$ \widehat{\nu}_n(\ZIEC) = \E^{-\frac{\ZIEC^tK\ZIEC}{2}} \left(1+\frac{\ZIEC^t \nabla \theta(\mathbf{0})}{\sqrt{t_n}}\right).$$ 
One has
$$\dconv(\mu_n,\nu_n) = o\!\left(\frac{1}{\sqrt{t_n}}\right).$$
\end{theorem}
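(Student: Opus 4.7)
The plan is to mimic the argument of Theorem \ref{thm:generalberryesseen}, but with the measures $\nu$ replaced by the signed measures $\nu_n$, and using the sharper Fourier estimate of Lemma \ref{lem:technicdeltaeps2} instead of Lemma \ref{lem:technicdeltaeps}. The new ingredient is that one must let the parameter $B$ grow with $n$, so that every term in the final bound is genuinely $o(1/\sqrt{t_n})$ rather than $O(1/\sqrt{t_n})$.

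First, one reduces to the normalized case $\det K = 1$ by the scaling $\XEC_n \mapsto (\det K)^{-1/(2d)}\XEC_n$ used in the proof of Theorem \ref{thm:generalberryesseen}; this lets us replace $\tau(K)$ by $\rho(K^{-1})$ without loss. Next, Lemma \ref{lem:regularitymodifiedgaussian} guarantees that for $n$ large enough, $\nu_n$ is regular with respect to convex sets with constant $R \leq 2\sqrt{(d+1)\rho(K^{-1})}$, so the smoothing proposition, Proposition \ref{prop:rao}, and the Fourier-to-variation bound of Proposition \ref{prop:fouriertovariationnorm} yield, exactly as in Corollary \ref{cor:fouriertoconvdistance},
\begin{equation*}
\dconv(\mu_n, \nu_n) \;\leq\; \frac{2}{1-\frac{4}{9\pi}}\Bigl((d+1)^{\frac{d+1}{2}}\,\Delta_{\eps}(\widehat{\mu}_n, \widehat{\nu}_n) + R\,\eps\Bigr)
\end{equation*}
for every $\eps < 1/\sqrt{2d+2}$. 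Here one must check that the derivation of Corollary \ref{cor:fouriertoconvdistance} goes through for signed measures whose density has bounded $\leb^1$-norms of the required moments; this is immediate from the explicit density of $\nu_n$ and Lemma \ref{lem:regularitymodifiedgaussian}.

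Now set $\eps_n = 1/(B_n \sqrt{t_n})$ for a sequence $B_n \to +\infty$ to be chosen. Lemma \ref{lem:technicdeltaeps2} applied with $B = B_n$ gives
\begin{equation*}
\Delta_{\eps_n}(\widehat{\mu}_n, \widehat{\nu}_n) \;\leq\; C_1\,\frac{\|\nabla \theta_n(\mathbf{0}) - \nabla \theta(\mathbf{0})\|_\infty}{\sqrt{t_n}} \;+\; C_2\,\frac{M(B_n)}{t_n},
\end{equation*}
so multiplying through by $\sqrt{t_n}$ we obtain three contributions: $\|\nabla \theta_n(\mathbf{0}) - \nabla \theta(\mathbf{0})\|_\infty$, which tends to $0$ by Hypothesis \ref{hyp:berryesseen2} (local uniform convergence of $\theta_n$ together with its partial derivatives of order $\leq d+1$); the boundary term $R/B_n$, which tends to $0$ since $B_n \to \infty$; and the term $C_2 (d+1)^{(d+1)/2} M(B_n)/\sqrt{t_n}$.

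The only real obstacle is to control this last term, namely to choose $B_n \to +\infty$ slowly enough that $M(B_n) = o(\sqrt{t_n})$. This is handled by a diagonal extraction: for each integer $k$, Hypothesis \ref{hyp:berryesseen2} ensures that $M(k)$ is finite, so one can find $n_k$ strictly increasing such that $M(k) \leq \sqrt{t_n}/k$ for every $n \geq n_k$. Setting $B_n = k$ on $n_k \leq n < n_{k+1}$ produces a sequence with $B_n \to \infty$ and $M(B_n)/\sqrt{t_n} \to 0$, as required. Combining the three vanishing contributions yields $\sqrt{t_n}\,\dconv(\mu_n, \nu_n) \to 0$, which is the desired $o(1/\sqrt{t_n})$ bound.
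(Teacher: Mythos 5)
Your proposal is correct and follows essentially the same route as the paper: reduction to $\det K=1$, regularity of $\nu_n$ from Lemma \ref{lem:regularitymodifiedgaussian}, the bound of Corollary \ref{cor:fouriertoconvdistance} with $\eps=1/(B\sqrt{t_n})$, and the refined Fourier estimate of Lemma \ref{lem:technicdeltaeps2}, with the gradient term vanishing by Hypothesis \ref{hyp:berryesseen2}. The only difference is in the final bookkeeping: the paper keeps $B=1/\eta$ fixed (so the $M(B)/t_n$ term is trivially $o(1/\sqrt{t_n})$) and lets $\eta\to 0$ afterwards, whereas you let $B_n\to\infty$ slowly via a diagonal extraction so that $M(B_n)=o(\sqrt{t_n})$ — an equivalent and equally valid way to conclude.
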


\begin{proof}
One can assume without loss of generality that $\det K =1$. Fix $\eta>0$, and take $B= \frac{1}{\eta}$, and $\eps = \frac{1}{B\sqrt{t_n}} = \frac{\eta}{\sqrt{t_n}}$. By Corollary \ref{cor:fouriertoconvdistance} and Lemma \ref{lem:regularitymodifiedgaussian},
$$\dconv(\mu_n,\nu_n) \leq_{n \to \infty} \frac{2}{1-\frac{4}{9\pi}} \left((d+1)^{\frac{d+1}{2}}\,\Delta_{\frac{1}{B\sqrt{t_n}}}(\widehat{\mu}_n,\widehat{\nu}_n) + 2\sqrt{(d+1)\,\rho(K^{-1})}\,\frac{\eta}{\sqrt{t_n}}\right).$$
By hypothesis, $\nabla(\theta_n-\theta)$ converges locally uniformly to $0$. Therefore, in the estimate of $\Delta_{\frac{1}{B\sqrt{t_n}}}(\widehat{\mu}_n,\widehat{\nu}_n)$ given by Lemma \ref{lem:technicdeltaeps2}:
\begin{itemize}
	\item The term that is proportional to $\frac{\|\nabla\theta_n(\mathbf{0}) - \nabla\theta(\mathbf{0})\|_\infty}{\sqrt{t_n}}$ gets smaller than $\frac{\eta}{\sqrt{t_n}}$ for $t_n$ large enough.
	\item This is also trivially true for the term proportional to $\frac{M(B)}{t_n}$, since $B$ is fixed.
\end{itemize}
Thus, there is a constant $C_3(d,\rho(K^{-1}))$ such that, for $n$ large enough,
$$ \dconv(\mu_n,\nu_n) \leq C_3(d,\rho(K^{-1}))\,\frac{\eta}{\sqrt{t_n}}.$$
Since this is true for any $\eta>0$, the result is proven.
\end{proof}
\medskip

\begin{remark}
There is an important precision that one can make to the statement of Theorem \ref{thm:modifiedberryesseen}: if $\eps_n = \sqrt{t_n}\,\dconv(\mu_n,\nu_n)$, then the speed of convergence of $(\eps_n)_{n \in \N}$ to $0$ only depends on $d$, on $\rho(K^{-1})$ and on the speed of convergence to $0$ of $\|\nabla (\theta_n-\theta)(\mathbf{0})\|_\infty$. By this, we mean that if $((\mu_n\EX{i},\nu_n\EX{i})_{n \in \N})_{i \in I}$ are families of sequences of distributions on $\R^d$ that all satisfy the hypotheses above, and with a uniform bound on $\rho((K\EX{i})^{-1})$ and on $\|\nabla (\theta_n\EX{i}-\theta\EX{i})(\mathbf{0})\|_\infty$, then the corresponding sequences $(\eps_n\EX{i})_{i\in I}$ converge uniformly to $0$: 
$$\lim_{n \to \infty} \left(\sup_{i \in I} \eps_n\EX{i}\right) = 0. $$
We shall use this precision during the proof of Theorem \ref{thm:largedeviation}.
\end{remark}
\medskip

If $\nabla \theta(\mathbf{0})=0$, then Theorem \ref{thm:modifiedberryesseen} ensures that the distance computed in Theorem \ref{thm:1Dgeneralberryesseen} is a $o(1/\sqrt{t_n})$ instead of a $O(1/\sqrt{t_n})$. In this situation, one can develop a theory of zones of control, which is analogous to the hypotheses of Theorem \ref{thm:1Dzoneofcontrol} and, and which gives upper bounds 
$$\dconv(\mu_n,\gauss{d}{\mathbf{0}}{K})=O\!\left((t_n)^{-\frac{1}{2}-\gamma}\right).$$
As the main examples that we have in mind for this theory all rely on the method of cumulants, we postpone the exposition of these better estimates to Section \ref{sec:examples}.
\bigskip

\section{Large deviation results}\label{sec:largedeviation}

In this section, we fix a sequence $(\XEC_n)_{n \in \N}$ of random vectors in $\R^d$ that is mod-Gaussian convergent in the Laplace sense on a multi-strip $\mathcal{S}_{\mathbf{a},\mathbf{b}}$, with $\mathbf{0} \in \prod_{i=1}^d (a\EX{i},b\EX{i})$. As before, $(t_n K)_{n \in \N}$ is the sequence of parameters, and the limiting function is denoted by $\psi$. We also suppose that $\psi$ does not vanish on the real part of the multi-strip $\mathcal{S}_{\mathbf{a},\mathbf{b}}$; this hypothesis will be useful in certain arguments of exponential change of measure. We are interested in the asymptotics of the probabilities
$$\proba[\XEC_n \in t_n B],$$
where $B = [b,+\infty) \times S$, and $S$ is some measurable part of the $(d-1)$-dimensional $K$-ellipsoid 
$\sph^{d-1}(K) = K^{-1/2}(\sph^{d-1}) = \{\xec \in \R^d\,|\,\xec^t K \xec \leq 1\}$.
For instance, when $d=2$, assuming to simplify that $K=I_2$, we want to be able to deal with angular sectors
$$B = \{z = r\E^{\I\theta} \in \C\,\,|\,\,r \geq b,\,\,\theta \in (\theta_1,\theta_2)\}.$$
This problem is the multi-dimensional generalisation of the discussion of \cite[Section 4]{FMN16} and of Theorem \ref{thm:1Dlargedeviation}. In Subsection \ref{subsec:toymodel}, we study a toy model in order to make a correct conjecture for the asymptotics of $\proba[\XEC_n \in t_n B]$. In Subsection \ref{subsec:cone}, we use Theorem \ref{thm:modifiedberryesseen} and a method of tilting of measures in order to obtain the precise asymptotics of the probabilities of certain cones. 
Finally, in Subsection \ref{subsec:approximationcone}, we detail the approximation of spherical sectors $B = [b,+\infty) \times S$ by unions of cones, thereby getting the asymptotic formula conjectured in Subsection \ref{subsec:toymodel} (Theorem \ref{thm:largedeviation}). Examples will be examined in Section \ref{sec:examples}.\medskip

\subsection{Sum of a Gaussian noise and an independent random variable}\label{subsec:toymodel}
In order to make a correct conjecture, we consider the trivial example of mod-Gaussian convergence 
$$\XEC_n = \sqrt{t_n}\,\mathbf{G}+\YEC,$$
where $\mathbf{G}$ is a random vector with law $\mathcal{N}_{\R^d}(0,I_d)$, and $\YEC$ is a bounded random vector independent from $\mathbf{G}$. If $\esper[\E^{\scal{\zec}{\YEC}}] = \psi(\zec)$, then $\psi(\zec)$ is well-defined over $\C^d$ since $\YEC$ is bounded, and
$$\psi_n(\zec) = \esper\!\left[\E^{\scal{\zec}{\XEC_n}}\right]\,\E^{-\frac{t_n\|\zec\|^2}{2}} = \psi(\zec). $$
Therefore, $(\XEC_n)_{n \in \N}$ is mod-Gaussian convergent on $\C^d$ with parameters $t_nI_d$ and limit $\psi(\zec)$. To simplify a bit, we also assume that $\YEC$ has a density $\nu(\yec)\DD{\yec}$ with respect to the Lebesgue measure. Then, a Borel set $B$ being fixed, we have
\begin{align*}\proba[\XEC_{n} \in t_nB]&=\frac{1}{(2\pi)^{\frac{d}{2}}}\int_{\xec \in \R^{d}}\int_{\yec \in \R^{d}} \E^{-\frac{\|\xec\|^2}{2}} \,1_{(\sqrt{t_n}\xec+\yec \,\in\, t_nB)}\, \nu(\yec)\DD{\xec}\DD{\yec} \\
&=\left(\frac{t_n}{2\pi}\right)^{\frac{d}{2}}\int_{\mathbf{u} \in \R^{d}} \int_{\mathbf{y} \in \R^{d}} \E^{-\frac{t_n\,\|\mathbf{u}\|^2}{2}} \,1_{( \mathbf{u}+(t_n)^{-1}\yec\,\in\, B)}\, \nu(\yec)\DD{\mathbf{u}}\DD{\yec}\\
&=\left(\frac{t_n}{2\pi}\right)^{\frac{d}{2}}\int_{\mathbf{b} \in B} \E^{-\frac{t_n\,\|\mathbf{b}\|^2}{2}} \DD{\mathbf{b}} \left(\int_{\mathbf{y} \in \R^{d}}  \E^{\scal{\mathbf{b}}{\yec}}\, \E^{-\frac{\|\yec\|^2}{2t_n}}\,\nu(\yec)\DD{\yec} \right). 
\end{align*}
By Lebesgue's dominated convergence theorem, the term in parentheses converges to the integral $\int_{\R^{d}} \E^{\scal{\mathbf{b}}{\yec}}\,\nu(\yec)\DD{\yec}=\psi(\mathbf{b})$. This convergence is locally uniform in the parameter $\mathbf{b}$. Consequently, on any bounded subset of $\R^d$, one can write
$$\int_{\mathbf{y} \in \R^{d}}  \E^{\scal{\mathbf{b}}{\yec}}\, \E^{-\frac{\|\yec\|^2}{2t_n}}\,\nu(\yec)\DD{\yec} = \psi(\mathbf{b})\,(1+o(1)) $$
with a uniform $o(1)$. We now suppose that $B=[b,+\infty)\times S$, where $b>0 $ and $S$ is a measurable part of the sphere $\sph^{d-1}$. We split $B$ in two parts
$$B_{<b+\eps} = [b,b+\eps) \times S \qquad;\qquad B_{\geq b+\eps} = [b+\eps,+\infty) \times S,$$
where $\eps$ is a small parameter to be chosen later. We denote $I_{<b+\eps}$ and $I_{\geq b+\eps}$ the corresponding parts of the integral form of $\proba[\XEC_{n} \in t_{n}B]$ computed above. 
For the integral $I_{< b+\eps}$, recall that for any non-negative measurable function $f$ over $\R^{d}$, one can make the polar change of coordinates
$$\int_{\R^{d}} f(\xec)\DD{\xec} = \int_{r=0}^{\infty} \left(\int_{\sph^{d-1}} \!f(r\mathbf{s})\,\, \mu_{\sph^{d-1}}(\!\DD{\mathbf{s}})\right)r^{d-1}\DD{r},$$
where $\mu_{\sph^{d-1}}$ is the unique $\SO(d)$-invariant measure on the sphere with total mass $\frac{2\,\pi^{d/2}}{\Gamma(d/2)}$. Therefore,
\begin{align*}
(1+o(1))\,I_{< b+\eps}&= \left(\frac{t_{n}}{2\pi}\right)^{\!\frac{d}{2}} \int_{\mathbf{b} \in B_{<b+\eps}}\E^{-\frac{t_{n}\,\|\mathbf{b}\|^{2}}{2}}\,\psi(\mathbf{b})\DD{\mathbf{b}} \\
&=\left(\frac{t_{n}}{2\pi}\right)^{\!\frac{d}{2}} \int_{r=b}^{b+\eps} \left(\int_{S} \psi(r\mathbf{s})\,\mu_{\sph^{d-1}}(\!\DD{\mathbf{s}})\right) r^{d-1}\,\E^{-\frac{t_nr^2}{2}}\DD{r}.
\end{align*}
In the formula above, the multiplicative factor $(1+o(1))$ corresponds to the replacement of the integral $\int_{\mathbf{y} \in \R^{d}}  \E^{\scal{\mathbf{b}}{\yec}}\, \E^{-\|\yec\|^2/2t_n}\,\nu(\yec)\DD{\yec}$ by $\psi(\mathbf{b})$. Since $\psi$ is a continuous function, one can write uniformly on the sphere $\psi(r\mathbf{s})=\psi(b\mathbf{s})\,(1+o(1))$ for any $r \in [b,b+\eps)$ if $\eps=o(1)$. So,
\begin{align*}
  (1+o(1))\,I_{< b+\eps} &= \left(\frac{t_{n}}{2\pi}\right)^{\!\frac{d}{2}} \left(\int_{r=b}^{b+\eps} r^{d-1}\,\E^{-\frac{t_nr^2}{2}}\DD{r}\right)\left(\int_{S} \psi(b\mathbf{s})\,\mu_{\sph^{d-1}}(\!\DD{\mathbf{s}}) \right) \\
  &= \left(\frac{t_n}{2\pi}\right)^{\!\frac{d}{2}}\, \frac{b^{d-2}}{t_n}\, \left(\E^{-\frac{t_nb^2}{2}} - \E^{-\frac{t_n(b+\eps)^2}{2}}\right) \left(\int_{S} \psi(b\mathbf{s})\,\mu_{\sph^{d-1}}(\!\DD{\mathbf{s}}) \right) \\
  &= \left(\frac{t_n}{2\pi}\right)^{\!\frac{d}{2}}\, \frac{b^{d-2}}{t_n}\, \E^{-\frac{t_nb^2}{2}} \left(1-\E^{-\frac{t_n\eps^2}{2} -t_nb\eps}\right)\left(\int_{S} \psi(b\mathbf{s})\,\mu_{\sph^{d-1}}(\!\DD{\mathbf{s}}) \right).
 \end{align*}
Consider now the integral $I_{\geq b+\eps}$. Since $B_{\geq b+\eps} \subset [b+\eps,+\infty) \times \sph^{d-1}$ and $\|\YEC\|$ is bounded almost surely by some constant $C$, we have
\begin{align*}I_{\geq b+\eps}&\leq \left(\frac{t_{n}}{2\pi}\right)^{\!\frac{d}{2}}\int_{\|\mathbf{b}\|\geq b+\eps} \E^{-\frac{t_{n}\|\mathbf{b}\|^{2}}{2}}\,\E^{C\|\mathbf{b}\|}\DD{\mathbf{b}}\\
&\leq \frac{2}{\Gamma(\frac{d}{2})}\left(\frac{t_{n}}{2}\right)^{\!\frac{d}{2}} \int_{b+\eps}^{\infty} w^{d-1}\,\E^{-\frac{t_{n}r^{2}}{2}}\,\E^{Cr}\DD{r} \\
&\leq \frac{2\,\E^{\frac{C^{2}}{2\,t_{n}}}}{\Gamma(\frac{d}{2})}\left(\frac{t_{n}}{2}\right)^{\!\frac{d}{2}} \int_{b+\eps-\frac{C}{t_{n}}}^{\infty} \left(x+\frac{C}{t_{n}}\right)^{\!d-1}\E^{-\frac{t_{n}x^{2}}{2}}\DD{x}\\
&\leq  \frac{2\,\E^{\frac{C^{2}}{2\,t_{n}}}}{\Gamma(\frac{d}{2})}\left(\frac{t_{n}}{2}\right)^{\!\frac{d}{2}} \left(1+\frac{C}{t_{n}b}\right)^{\!d-1} \int_{b+\eps-\frac{C}{t_{n}}}^{\infty} x^{d-1}\,\E^{-\frac{t_{n}x^{2}}{2}}\DD{x}\\
&\leq \frac{1}{\Gamma(\frac{d}{2})}\left(\frac{t_{n}}{2}\right)^{\!\frac{d}{2}-1}  \left(1+\frac{C}{t_{n}b}\right)^{\!d-1} \left(1-\frac{d-2}{t_{n}b^{2}}\right)^{\!-1} \E^{-\frac{t_{n}(b+\eps)^{2}}{2}}\,\E^{C(b+\eps)}\,(b+\eps)^{d-2}
\end{align*}
assuming $\frac{C}{t_{n}} \leq \eps$, and using integration by parts at the end to estimate the Gaussian integral. For $t_n$ large enough, we have $t_{n}b \geq C$ and $t_{n}b^{2}\geq 2(d-2)$, which leads to
$$ I_{\geq b+\eps} \leq M(d)\,(t_{n})^{\frac{d}{2}-1}\,\E^{-\frac{t_{n}(b+\eps)^{2}}{2}}\,\E^{C(b+\eps)}\,(b+\eps)^{d-2},$$
where $M(d)$ is a certain constant that depends only on $d$. 
We take $\eps=(t_n)^{-1/3}$. For $n$ large enough, we have indeed $\frac{C}{t_n} \leq \eps$, and from the previous estimates:
\begin{align*}
I_{\geq b+\eps} &\leq  M(d)\,(t_{n})^{\frac{d}{2}}\,\frac{b^{d-2}}{t_n}\,\E^{Cb}\,\E^{-\frac{t_nb^2}{2}}\,\E^{-\frac{(t_{n})^{1/3}}{2}};\\
(1+o(1))\,I_{< b+\eps} &= \left(\frac{t_n}{2\pi}\right)^{\!\frac{d}{2}}\, \frac{b^{d-2}}{t_n}\, \E^{-\frac{t_nb^2}{2}} \left(\int_{S} \psi(b\mathbf{s})\,\mu_{\sph^{d-1}}(\!\DD{\mathbf{s}})\right).
\end{align*}
Therefore, assuming that $S$ has a non-zero surface measure in $\sph^{d-1}$, $I_{\geq b+\eps}$ is negligible in comparison to the other integral $I_{<b+\eps}$, and we obtain the asymptotic formula
$$
\proba[\XEC_n \in t_n B] \simeq  \left(\frac{t_n}{2\pi}\right)^{\!\frac{d}{2}}\, \frac{b^{d-2}}{t_n}\, \E^{-\frac{t_nb^2}{2}} \left(\int_{S} \psi(b\mathbf{s})\,\mu_{\sph^{d-1}}(\!\DD{\mathbf{s}})\right) .
$$
It is convenient to rewrite the spherical integral as an integral over the sphere of radius $b$, which we denote $\sph^{d-1}(b)$. We also denote $\mu_{\mathrm{surface}}$ the Lebesgue surface measure on it, which has total mass $$ \mu_{\mathrm{surface}}(\sph^{d-1}(b))=\frac{2\,\pi^{\frac{d}{2}}\,b^{d-1}}{\Gamma(\frac{d}{2})}.$$
Then, the previous formula rewrites as 
$$
\proba[\XEC_n \in t_n B] \simeq  \left(\frac{t_n}{2\pi}\right)^{\!\frac{d}{2}}\, \frac{1}{t_n b}\, \E^{-\frac{t_nb^2}{2}} \left(\int_{S_b} \psi(\mathbf{s})\,\mu_{\mathrm{surface}}(\!\DD{\mathbf{s}})\right),
$$
where $S_b = S\times b = \{b\mathbf{s},\,\,\mathbf{s} \in S\}$ is the base of the spherical sector $B = S \times [b,+\infty)$ under consideration. Our goal will then be to show that this asymptotic formula actually holds in the general setting of multi-dimensional mod-Gaussian convergence.
\medskip

\subsection{Asymptotics of the probabilities of cones}\label{subsec:cone}
Until further notice, we assume that $K=I_d$; the general case will be treated by using the arguments of the remark at the end of Subsection \ref{subsec:modgauss}. In this paragraph, we shall compute the asymptotics of the probabilities $\proba[\XEC_n \in t_n C]$, where 
$$C = [1,+\infty) \times D =\{r\mathbf{d},\,\,r\geq 1\text{ and }\mathbf{d} \in D\}$$
and $D$ is a convex domain of an hyperplane $H$ that does not contain the origin $\mathbf{0}$ of $\R^d$; see Figure \ref{fig:cone}.
\begin{center}
\begin{figure}[ht]
\begin{tikzpicture}[scale=0.7]
\draw (-8.3,10.5) node {\textcolor{gray}{$H$}};
\draw (-2.5,9) node {\textcolor{violet}{$D$}};
\draw [thick,gray] (7,10) -- (9,8) -- (-6,8) -- (-8,10);
\fill [NavyBlue!15!white] (-3,16.4) -- (3.6,16) -- (7,14.2) -- (4.5,12.875) -- (-4,13);
\fill [NavyBlue!30!white] (7,14.2) -- (4.5,12.875) -- (-4,13) -- (-2,9) -- (2,8.5) -- (3.5,9.6);
\fill [pattern = north west lines, pattern color = violet] (-2,9) -- (2,8.5) -- (3.5,9.6) -- (1.8,10.5) -- (-1.5,10.7);
\draw [dashed] (-2,9) -- (0,5) -- (0,9.7);
\draw [dashed] (3.5,9.6) -- (0,5) -- (2,8.5);
\draw [dashed] (-1.5,10.7) -- (0,5) -- (1.8,10.5);
\draw [thick,violet] (-2,9) -- (2,8.5) -- (3.5,9.6);
\draw [thick,dashed,violet] (3.5,9.6) -- (1.8,10.5) -- (-1.5,10.7) -- (-2,9);
\draw [thick,NavyBlue] (4.5,12.875) -- (2,8.5);
\draw [thick,NavyBlue] (-2,9) -- (-4,13);
\draw [thick,NavyBlue] (3.5,9.6) -- (7,14.2);
\draw [thick,dashed,NavyBlue] (1.8,10.5) -- (3.6,16);
\draw [thick,dashed,NavyBlue] (-1.5,10.7) -- (-3,16.4);
\fill (0,5) circle (3pt);
\draw [thick] (-0.15,9.55) -- (0.15,9.85);
\draw [thick] (0.15,9.55) -- (-0.15,9.85);
\draw (5.3,12.7) node {\textcolor{NavyBlue}{$C$}};
\draw (-0.5,5) node {$\mathbf{0}$};
\end{tikzpicture}
\caption{Cone $C$ based on a convex domain $D$ of an hyperplane $H$.\label{fig:cone}}
\end{figure}
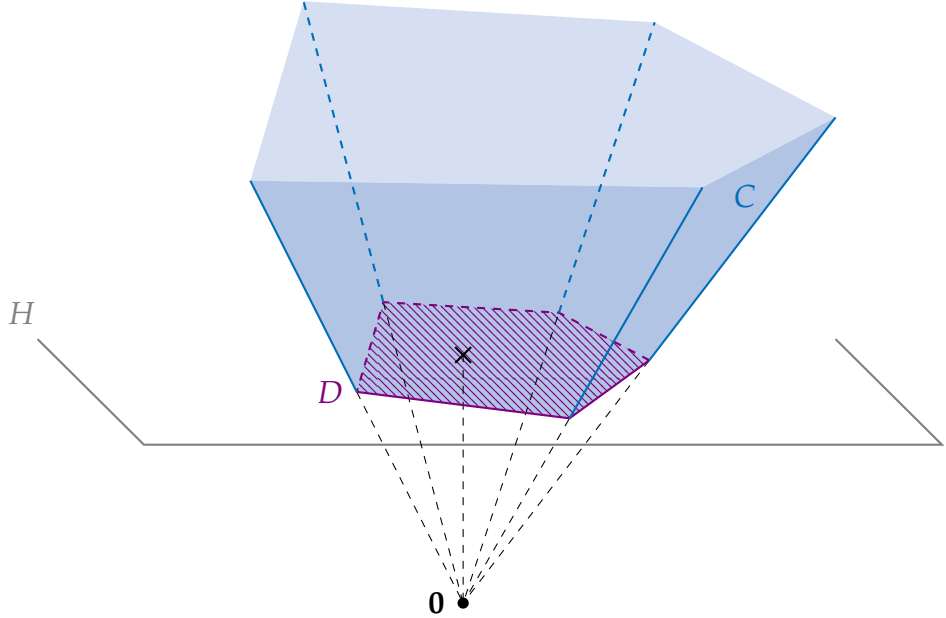
\end{center}
We denote $\mathbf{h}$ the orthogonal projection of the origin $\mathbf{0}$ on the affine hyperplane $H$, and we assume that it is contained in the hypercube $\prod_{i=1}^d (a\EX{i},b\EX{i})$, where $\mathcal{S}_{\mathbf{a},\mathbf{b}}$ is the multi-strip of mod-Gaussian convergence of the sequence $(\XEC_n)_{n \in \N}$. 
If $\rho_n$ is the law of $\XEC_n$, we denote $\widetilde{\rho}_n$ the new probability measure
$$\widetilde{\rho}_n(\!\DD{\xec}) = \frac{\E^{\scal{\mathbf{h}}{\xec}}}{\esper[\E^{\scal{\mathbf{h}}{\XEC_n}}]}\,\rho_n(\!\DD{\xec}).$$
If $\widetilde{\XEC}_n$ follows the law $\widetilde{\rho}_n$, then the new sequence $(\widetilde{\XEC}_n-t_n\mathbf{h})_{n \in \N}$ is again mod-Gaussian convergent in the Laplace sense on the multi-strip $\mathcal{S}_{\mathbf{a}-\mathbf{h},\mathbf{b}-\mathbf{h}}$:
\begin{align*}
\esper\!\left[\E^{\scal{\zec}{\widetilde{\XEC}_n}}\right] &= \frac{\esper[\E^{\scal{\zec + \mathbf{h}}{\XEC_n}} ]}{\esper[\E^{\scal{\mathbf{h}}{\XEC_n}}]} = \E^{\frac{t_n\|\zec\|^2}{2}}\,\E^{t_n\scal{\zec}{\mathbf{h}}}\, \frac{\psi_n(\zec+\mathbf{h})}{\psi_n(\mathbf{h})};\\
\E^{-\frac{t_n\|\zec\|^2}{2}}\,\esper\!\left[\E^{\scal{\zec}{\widetilde{\XEC}_n-t_n\mathbf{h}}}\right] &= \frac{\psi_n(\zec+\mathbf{h})}{\psi_n(\mathbf{h})} \to_{n \to \infty} \frac{\psi(\zec+\mathbf{h})}{\psi(\mathbf{h})}.
\end{align*}
Then,
\begin{align*}
\proba[\XEC_n \in t_nC] &= \int_{\R^d} 1_{(\xec \in t_nC)}\,\rho_n(\!\DD{\xec}) = \esper\!\left[\E^{\scal{\mathbf{h}}{\XEC_n}}\right]\int_{\R^d} 1_{(\xec \in t_nC)}\,\E^{-\scal{\mathbf{h}}{\xec}}\,\widetilde{\rho}_n(\!\DD{\xec}) \\
&= \E^{-\frac{t_n\|\mathbf{h}\|^2}{2}}\,\psi_n(\mathbf{h})\,\int_{\R^d} 1_{(\yec \in \sqrt{t_n}C_0)}\,\E^{-\sqrt{t_n}\scal{\mathbf{h}}{\yec}}\,\mu_n(\!\DD{\yec})
\end{align*}
where $\mu_n$ denotes the law of $(\widetilde{\XEC}_n-t_n\mathbf{h})/\sqrt{t_n}$, and $C_0 = C - \mathbf{h} = \{\yec \in \R^d\,|\, \yec+\mathbf{h} \in C\}$. To compute the integral $I$, notice that if $D_0 = D-\mathbf{h}$, then every element of $D_0$ is orthogonal to $\mathbf{h}$, and on the other hand,
$$C_0 = \{\yec = s\mathbf{h}+(s+1)\mathbf{d}\,|\,s\in \R_+ \text{ and }\mathbf{d} \in D_0 \}.$$
We set $C_{(0,s)} = \{\yec = t\mathbf{h}+(t+1)\mathbf{d}\,|\,t\in [0,s] \text{ and }\mathbf{d} \in D_0 \}$; by definition, $C_0 = C_{(0,+\infty)}$. On the other hand, 
\begin{align*}
I &= \int_{s=0}^\infty \frac{d(\mu_n(\sqrt{t_n}\,C_{(0,s)}))}{ds}\,\E^{-t_ns\|\mathbf{h}\|^2}\DD{s} \\ 
&= t_n\|\mathbf{h}\|^2\,\int_{s=0}^\infty \mu_n(\sqrt{t_n}\,C_{(0,s)})\,\E^{-t_ns\|\mathbf{h}\|^2}\DD{s}
\end{align*}
by using on the first line the identity $\E^{-\sqrt{t_n}\scal{\mathbf{h}}{\yec}} = \E^{-t_ns\|\mathbf{h}\|^2}$ if $\yec = \sqrt{t_n}(s\mathbf{h}+(s+1)\mathbf{d})$, and on the second line an integration by parts. We can now introduce the modified Gaussian measure
$$\nu_n(\!\DD{\xec}) = \frac{1}{(2\pi)^{\frac{d}{2}}}\,\E^{-\frac{\|\xec\|^2}{2}}\,\left(1+\frac{\scal{\nabla\psi(\mathbf{h})}{\xec}}{\psi(\mathbf{h})\sqrt{t_n}}\right)\DD{\xec}.$$
By Theorem \ref{thm:modifiedberryesseen}, for every Borel convex subset of $\R^d$, $\mu_n(C) = \nu_n(C) +  o(\frac{1}{\sqrt{t_n}})$, with a $o(\cdot)$ that is uniform over the class of all Borel convex subsets. Therefore,
$$I = \int_{s=0}^\infty \nu_n(\sqrt{t_n}\,C_{(0,s)})\,t_n\|\mathbf{h}\|^2\,\E^{-t_ns\|\mathbf{h}\|^2}\DD{s} + o\!\left(\frac{1}{\sqrt{t_n}}\right)$$
with a $o(\cdot)$ that does not depend on the cone $C$. Denote $J$ the last integral; we have
\begin{align*}
&\frac{J}{(t_n)^{\frac{d}{2}} \|\mathbf{h}\|}\\
&=\int_{\substack{ t\geq 0\,\,\,\, \\ \mathbf{d} \in D_0}} \frac{(t+1)^{d-1}}{(2\pi)^{\frac{d}{2}}} \left(1+\frac{\scal{\nabla\psi(\mathbf{h})}{t\mathbf{h}+(t+1)\mathbf{d}}}{\psi(\mathbf{h})}\right) \E^{-t_n\left((t+\frac{t^2}{2})(\|\mathbf{h}\|^2+\|\mathbf{d}\|^2) +\frac{\|\mathbf{d}\|^2}{2}\right)}\DD{t}\DD{\mathbf{d}}.
\end{align*}
We apply the Laplace method to 
\begin{align*}
&\int_{t=0}^{\infty}\frac{(t+1)^{d-1}}{(2\pi)^{\frac{d}{2}}}\,\left(1+\frac{\scal{\nabla\psi(\mathbf{h})}{t\mathbf{h}+(t+1)\mathbf{d}}}{\psi(\mathbf{h})}\right)\,\E^{-t_n\left((t+\frac{t^2}{2})(\|\mathbf{h}\|^2+\|\mathbf{d}\|^2) +\frac{\|\mathbf{d}\|^2}{2}\right)}\DD{t} \\
&=\frac{1}{(2\pi)^{\frac{d}{2}}}\,\left(1+\frac{\scal{\nabla\psi(\mathbf{h})}{\mathbf{d}}}{\psi(\mathbf{h})}+O\!\left(\frac{1}{t_n}\right)\right)\,\frac{\E^{-\frac{t_n\|\mathbf{d}\|^2}{2}}}{t_n(\|\mathbf{h}\|^2+\|\mathbf{d}\|^2)},
\end{align*}
see \cite[\S19.2]{Zor04}. In this approximation, it is easily seen that if $D_0$ is a bounded domain and if $\mathbf{h}$ also stays bounded, then the constant in the $O(\cdot)$ of the remainder can be taken uniformly. We conclude:

\begin{proposition}[Asymptotics of the probabilities of cones]\label{prop:asymptoticsprobabilitycone}
Consider a mod-Gaussian convergent sequence $(\XEC_n)_{n\in \N}$ with parameters $t_nI_d$. We fix a bound $M>0$, and we suppose that:
\begin{itemize}
 	\item The vector $\mathbf{h}$ belongs to $\prod_{i=1}^d (a\EX{i},b\EX{i})$, and$\|\mathbf{h}\| \leq M$,
 	\item The domain $D_0$ is a convex part of $\mathbf{h}^\perp$ such that $\sup\{\|\mathbf{d}\| \in D_0\} \leq M$.
 \end{itemize}  If $C$ is the cone based on the domain $D=D_0 + \mathbf{h}$, then
\begin{align*}
&(2\pi)^{\frac{d}{2}}\,\E^{\frac{t_n\|\mathbf{h}\|^2}{2}}\,\frac{1}{\psi_n(\mathbf{h})}\,\proba[\XEC_n \in t_n C] \\
&= (t_n)^{\frac{d}{2}-1}\int_{\mathbf{d}\in D_0}\!\! \left(1+\frac{\scal{\nabla\psi(\mathbf{h})}{\mathbf{d}}}{\psi(\mathbf{h})}+O\!\left(\frac{1}{t_n}\right)\right)\frac{\|\mathbf{h}\|\,\E^{-\frac{t_n\|\mathbf{d}\|^2}{2}}}{\|\mathbf{h}\|^2+\|\mathbf{d}\|^2}\DD{\mathbf{d}}+ o\!\left(\frac{1}{\sqrt{t_n}}\right)
\end{align*}
with constants in the $O(\frac{1}{t_n})$ and the $o(\frac{1}{\sqrt{t_n}})$ that only depend on the bound $M$ (and on the sequence $(\XEC_n)_{n \in \N}$).
\end{proposition}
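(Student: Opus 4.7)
The plan is to combine an exponential change of measure at the tilt $\mathbf{h}$, familiar from large deviation theory, with the modified Berry--Esseen bound of Theorem \ref{thm:modifiedberryesseen}. Concretely, set
$$\widetilde{\rho}_n(\!\DD{\xec}) = \frac{\E^{\scal{\mathbf{h}}{\xec}}}{\esper[\E^{\scal{\mathbf{h}}{\XEC_n}}]}\,\rho_n(\!\DD{\xec}),$$
so that the shifted tilted sequence $(\widetilde{\XEC}_n - t_n\mathbf{h})_{n\in\N}$ is again mod-Gaussian convergent on the translated multi-strip $\mathcal{S}_{\mathbf{a}-\mathbf{h},\mathbf{b}-\mathbf{h}}$, with parameters $t_n I_d$ and limit $\psi(\cdot+\mathbf{h})/\psi(\mathbf{h})$. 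Denoting $\mu_n$ the law of $(\widetilde{\XEC}_n-t_n\mathbf{h})/\sqrt{t_n}$, a direct manipulation yields
$$\proba[\XEC_n \in t_nC] = \E^{-t_n\|\mathbf{h}\|^2/2}\,\psi_n(\mathbf{h})\int_{\R^d} 1_{\yec \in \sqrt{t_n}C_0}\,\E^{-\sqrt{t_n}\scal{\mathbf{h}}{\yec}}\,\mu_n(\!\DD{\yec}),$$
with $C_0 = C-\mathbf{h}$.

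Next, exploit the decomposition $C_0 = \{s\mathbf{h}+(s+1)\mathbf{d}\,:\,s\ge0,\,\mathbf{d}\in D_0\}$, which is available because $\mathbf{h}$ is the foot of the perpendicular from $\mathbf{0}$ to $H$ and is therefore orthogonal to $D_0=D-\mathbf{h}$. The exponential weight simplifies to $\E^{-t_n s\|\mathbf{h}\|^2}$, and an integration by parts in $s$ transforms the inner integral into
$$t_n\|\mathbf{h}\|^2\int_0^{\infty}\mu_n(\sqrt{t_n}\,C_{(0,s)})\,\E^{-t_n s\|\mathbf{h}\|^2}\,\DD{s},$$
where $C_{(0,s)}$ denotes the truncation of $C_0$ at level $s$, a bounded convex set. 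Apply Theorem \ref{thm:modifiedberryesseen} to the tilted sequence to replace each $\mu_n(\sqrt{t_n}\,C_{(0,s)})$ by $\nu_n(\sqrt{t_n}\,C_{(0,s)})$, where $\nu_n$ is the signed Gaussian measure with density $(2\pi)^{-d/2}\,\E^{-\|\xec\|^2/2}\,(1+\scal{\nabla\psi(\mathbf{h})}{\xec}/(\psi(\mathbf{h})\sqrt{t_n}))$. Uniformity of $\dconv(\mu_n,\nu_n)=o(1/\sqrt{t_n})$ over convex sets ensures that the substitution costs a total error of order $o(1/\sqrt{t_n})$, which survives the $s$-averaging.

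Once this substitution is made, the remaining integral is explicit. Reparametrise $\yec = t\mathbf{h}+(t+1)\mathbf{d}$ with $t\ge0$ and $\mathbf{d}\in D_0$; the exponent in the Gaussian weight becomes $-t_n((t+t^2/2)(\|\mathbf{h}\|^2+\|\mathbf{d}\|^2)+\|\mathbf{d}\|^2/2)$, and Laplace's method in $t$ at the boundary point $t=0$ yields
$$\frac{1}{t_n(\|\mathbf{h}\|^2+\|\mathbf{d}\|^2)}\left(1+\frac{\scal{\nabla\psi(\mathbf{h})}{\mathbf{d}}}{\psi(\mathbf{h})}+O\!\left(\frac{1}{t_n}\right)\right)\,\E^{-t_n\|\mathbf{d}\|^2/2},$$
with an $O(1/t_n)$ uniform in $(\mathbf{h},\mathbf{d})$ on the bounded region. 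Integrating over $\mathbf{d}\in D_0$ and collecting the prefactors $(2\pi)^{-d/2}$, $(t_n)^{d/2}$ and $\|\mathbf{h}\|$ that arise from the Gaussian density and the Jacobian of the parametrisation produces the stated formula.

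The principal obstacle is the uniformity in $\mathbf{h}$ of the $o(1/\sqrt{t_n})$ from the Berry--Esseen substitution. By the remark following Theorem \ref{thm:modifiedberryesseen}, this reduces to a uniform bound on $\rho(K^{-1})$ (here equal to $1$) and on the rate at which the gradient at $\mathbf{0}$ of the tilted residue $\psi_n(\cdot+\mathbf{h})/\psi_n(\mathbf{h})$ converges to that of $\psi(\cdot+\mathbf{h})/\psi(\mathbf{h})$. The latter follows from the locally uniform holomorphic convergence $\psi_n \to \psi$ on the multi-strip via Cauchy's integral formula for first derivatives, combined with the standing assumption that $\psi$ does not vanish on the real part of $\mathcal{S}_{\mathbf{a},\mathbf{b}}$, which keeps $\psi(\mathbf{h})$ bounded away from $0$ on the compact set $\{\|\mathbf{h}\|\le M\}$. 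The remaining check on the uniformity of Laplace's expansion is straightforward since the quadratic coefficient $\|\mathbf{h}\|^2+\|\mathbf{d}\|^2$ stays bounded below by $\|\mathbf{h}\|^2>0$ on the parameter set.
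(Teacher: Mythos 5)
Your proposal is correct and follows essentially the same route as the paper: exponential tilting at $\mathbf{h}$, the cone decomposition $C_0=\{s\mathbf{h}+(s+1)\mathbf{d}\}$ with integration by parts in $s$, substitution of $\mu_n$ by the modified signed Gaussian $\nu_n$ via Theorem \ref{thm:modifiedberryesseen} with the uniformity remark, and Laplace's method in $t$. Your extra justification of the uniformity of the tilted residues' gradient convergence (via Cauchy's formula and the non-vanishing of $\psi$) only makes explicit what the paper leaves to the reader.
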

\begin{proof}
The only non-trivial fact that remains to be proven after the previous discussion is that the $o(1/\sqrt{t_n})$ can be taken uniformly if $\mathbf{h}$ and $D_0$ stay bounded by $M$. This is a consequence of the remark following Theorem \ref{thm:modifiedberryesseen}.
\end{proof}
\medskip

\subsection{Approximation of spherical sectors by unions of cones}\label{subsec:approximationcone}
We still assume until further notice that $K=I_d$. Recall that our final goal is to compute the asymptotics of the probabilities $\proba[\XEC_n \in t_nB]$, where $B$ is a \emph{spherical sector}, that is to say that it can be written as $B=[b,+\infty) \times S$ where $S$ is some measurable part of the sphere $\sph^{d-1}$. After Proposition \ref{prop:asymptoticsprobabilitycone}, a natural method consists in approximating the spherical sector $B$ by a disjoint union of cones with small bases $D_0$ and vectors $\mathbf{h}$ placed on the sphere of radius $b$; see Figure \ref{fig:approximationsectors}. This is only possible if $S$ is a sufficiently regular subset of $\sph^{d-1}$. We develop hereafter an \emph{ad hoc} notion of regularity, which is akin to \emph{Jordan mesurability} in $\R^d$, but with respect to the sphere. We were not able to find an adequate reference for the notion of Jordan mesurability on manifolds; for the Euclidean case, we refer to \cite[Section 1.1.2]{Tao11} and \cite[Chapter 3]{Spi65}.

\begin{center}
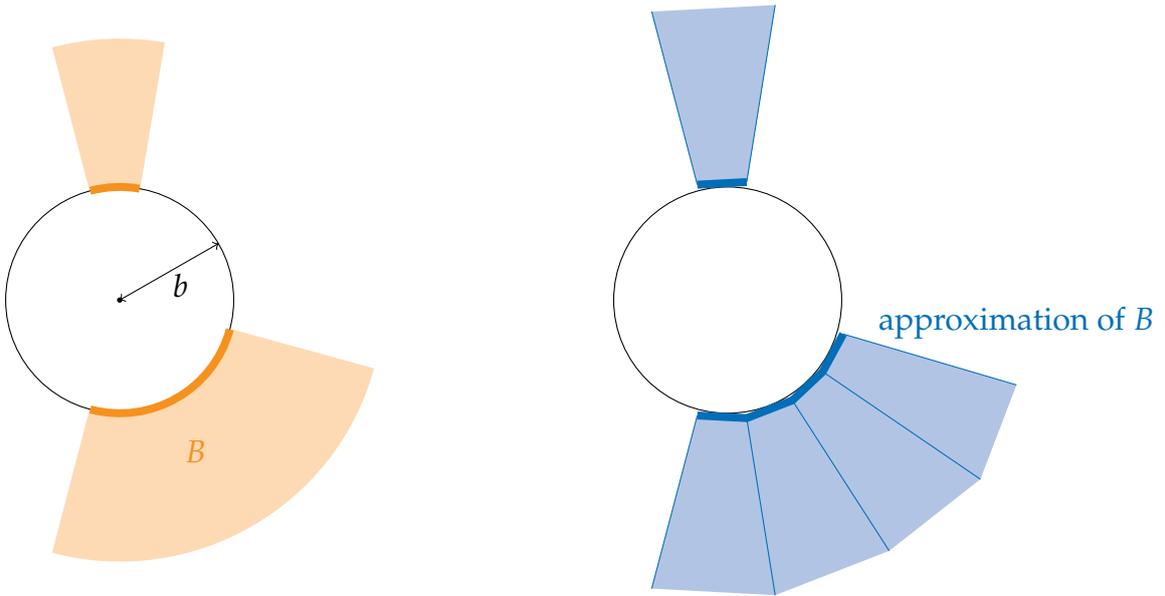
\begin{figure}[ht]
\begin{tikzpicture}[scale=1]
\draw (0,0) circle (1.5cm);
\draw (8,0) circle (1.5cm);
\draw [BurntOrange!30!white, line width=55] ([shift=(80:2.5cm)]0,0) arc (80:105:2.5cm);
\draw [BurntOrange, line width=3] ([shift=(80:1.5cm)]0,0) arc (80:105:1.5cm);
\draw [BurntOrange!30!white, line width=55] ([shift=(255:2.5cm)]0,0) arc (255:345:2.5cm);
\draw [BurntOrange, line width=3] ([shift=(255:1.5cm)]0,0) arc (255:345:1.5cm);
\draw [<->] (0,0) -- (1.30,0.75);
\fill (0,0) circle (1pt);
\draw (0.8,0.2) node {$b$};
\draw (1,-2) node {\textcolor{BurntOrange}{$B$}};
\fill [NavyBlue!30!white] (7.6,1.53) -- (8.25,1.565) -- (8.625,3.9125) -- (7,3.825);
\draw [NavyBlue, line width=3] (7.6,1.53) -- (8.25,1.565);
\fill [NavyBlue!30!white] (7.6,-1.53) -- (8.25,-1.565) -- (8.85,-1.33) -- (9.25,-0.95) -- (9.52,-0.45) -- (11.8,-1.125) -- (11.325,-2.375) -- (10.125,-3.325) -- (8.625,-3.9125) -- (7,-3.825) ;
\draw [NavyBlue, line width=3] (7.6,-1.53) -- (8.25,-1.565) -- (8.85,-1.33) -- (9.25,-0.95) -- (9.52,-0.45);
\draw [NavyBlue] (7.6,1.53) -- (7,3.825);
\draw [NavyBlue] (8.25,1.565) -- (8.625,3.9125);
\draw [NavyBlue] (7.6,-1.53) -- (7,-3.825);
\draw [NavyBlue] (8.25,-1.565) -- (8.625,-3.9125);
\draw [NavyBlue] (8.85,-1.33) -- (10.125,-3.325);
\draw [NavyBlue] (9.25,-0.95) -- (11.325,-2.375);
\draw [NavyBlue] (9.52,-0.45) -- (11.8,-1.125);
\draw (11.8,-0.3) node {\textcolor{NavyBlue}{approximation of $B$}};
\end{tikzpicture}
\caption{Approximation of a spherical sector $B = [b+\infty) \times S$ by a disjoint union of cones.\label{fig:approximationsectors}}
\end{figure}
\end{center}

Let $E$ be the hypercube $[-1,1]^d$. If $A\subset \partial E$ is a rectangular part on one of the face of $\partial E$, we call \emph{hypercubic facet} associated to $A$ on the sphere of radius $b$ the set of vectors 
$$A_b=\{\mathbf{v}\in \R^d\,|\, \|\mathbf{v}\|=b \text{ and }\mathbf{v}\text{ is colinear to a vector in }A\},$$
see Figure \ref{fig:hypercubicfacet} for an example in dimension $d=3$. 

\begin{center}
\begin{figure}[ht]
\begin{tikzpicture}[scale=1.5]
\draw [dashed] (0,0) -- (1,0) -- (1.5,0.5) -- (1.5,1.5) -- (0.5,1.5) -- (0,1) -- (0,0);
\draw [dashed] (0,1) -- (1,1) -- (1,0);
\draw [dashed] (1,1) -- (1.5,1.5);
\fill [Red!30!white] (1.1,0.6) -- (1.25,0.75) -- (1.25,1.0) -- (1.1,0.85); 
\draw [Red] (1.1,0.6) -- (1.25,0.75) -- (1.25,1.0) -- (1.1,0.85) -- (1.1,0.6); 
\draw (0.75,0.75) circle (2cm);
\draw ([shift=(251.3:6.25cm)]0.75,6.7) arc (251.3:288.7:6.25cm);
\draw [Red] (1.1,0.6) -- (1.73,0.53);
\draw [Red] (1.1,0.85) -- (1.7,1.13);
\draw [Red] (1.25,1) -- (2.55,1.62);
\draw [Red] (1.25,0.75) -- (2.75,0.77);
\filldraw [Purple!30!white] ([shift=(8.1:3cm)]-1.25,0.75) arc (8.1:-4.5:3cm) -- ([shift=(288.7:6.25cm)]0.75,6.7) arc (288.7:279:6.25cm) -- ([shift=(0:2cm)]0.75,0.75) arc (0:25:2cm) -- ([shift=(302.2:6.25cm)]-0.75,6.9) arc (302.2:293.2:6.25cm);
\draw [Purple, line width = 2] ([shift=(8.1:3cm)]-1.25,0.75) arc (8.1:-4.5:3cm)  ([shift=(288.7:6.25cm)]0.75,6.7) arc (288.7:279:6.25cm) ([shift=(0:2cm)]0.75,0.75) arc (0:25:2cm) ([shift=(302.2:6.25cm)]-0.75,6.9) arc (302.2:293.2:6.25cm);
\draw (2.85,2.1) node {$\sph^{d-1}(b)$};
\draw (2.3,1) node {$\textcolor{Purple}{A_b}$};
\end{tikzpicture}
\caption{An hypercubic facet on a $2$-dimensional sphere.\label{fig:hypercubicfacet}}
\end{figure}
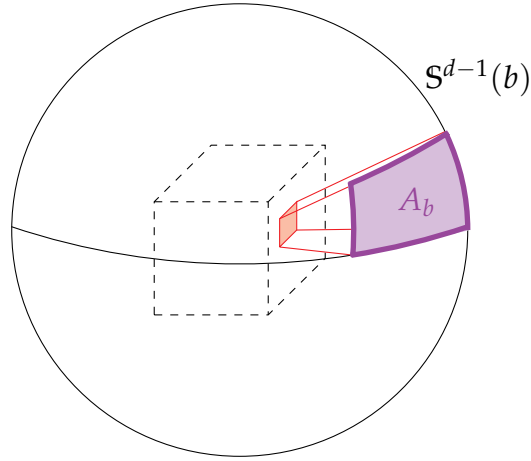
\end{center}

The notion of hypercubic facet allows one to transform the rectangular partitions of the bound\-ary $\partial E$ of the hypercube into partitions of the sphere of radius $b$. Since rectangles can be used to define Jordan mesurability on $\R^d$ and thereby on $\partial E$, by projection, we are led to the analogous notion for subsets of the sphere:
\begin{definition}[Measurability by hypercubic facets]\label{def:measurabilityhypercubic}
Let $S$ be a measurable part of $\sph^{d-1}(b)$. One says that $S$ is measurable by hypercubic facets if, for every $\eps>0$, there exists two subsets $T_1$ and $T_2$ of the sphere such that:
\begin{enumerate}
	\item $T_1$ and $T_2$ are finite unions of hypercubic facets on $\sph^{d-1}(b)$;
	\item $T_1 \subset S \subset T_2$;
	\item $\mu_{\mathrm{surface}}(T_2\setminus T_1) \leq \eps$.
\end{enumerate}
\end{definition}
\begin{proposition}[Topological criterion for measurability]
A subset $S$ of the sphere of radius $b>0$ is mesurable by hypercubic facets if and only if its topological boundary $\partial S$ has zero Lebesgue measure: $\mu_{\mathrm{surface}}(\partial S)=0$.
\end{proposition}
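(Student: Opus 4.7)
The plan is to reduce this statement to the classical fact that a subset of Euclidean space is Jordan measurable if and only if its topological boundary has Lebesgue measure zero, transferring everything from the boundary of the hypercube $E=[-1,1]^d$ to the sphere via the radial projection $\pi:\partial E \to \sph^{d-1}(b)$, $\mathbf{v}\mapsto b\mathbf{v}/\|\mathbf{v}\|$. This map is a bi-Lipschitz homeomorphism, and on each open face of $\partial E$ it restricts to a smooth diffeomorphism onto its image. From this I extract two inputs: first, for every $\delta>0$ one can partition $\sph^{d-1}(b)$ into finitely many closed hypercubic facets of diameter at most $\delta$ (by subdividing each face of $\partial E$ into rectangles of small diameter and applying $\pi$); second, the topological boundary (inside the sphere) of any finite union of closed hypercubic facets is the $\pi$-image of a finite union of boundaries of rectangles, hence a $(d-2)$-dimensional set, so it carries zero surface measure.

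For the direct implication, I would fix $\eps>0$ and pick closed approximations $T_1 \subset S \subset T_2$ with $\mu_{\mathrm{surface}}(T_2 \setminus T_1) \leq \eps$. Combining $\mathrm{int}(T_1) \subset \mathrm{int}(S)$ with $\bar{S} \subset T_2$ yields
$$\partial S = \bar{S}\setminus\mathrm{int}(S) \subset T_2 \setminus \mathrm{int}(T_1) \subset (T_2\setminus T_1)\cup \partial T_1,$$
and the second input above gives $\mu_{\mathrm{surface}}(\partial T_1)=0$, so $\mu_{\mathrm{surface}}(\partial S)\leq \eps$. Letting $\eps\to 0$ concludes this direction.

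For the reverse implication, assuming $\mu_{\mathrm{surface}}(\partial S)=0$ and fixing $\eps>0$, I would use outer regularity of the surface measure together with the compactness of $\partial S$ (it is closed in the compact sphere) to find an open set $V\supset \partial S$ with $\mu_{\mathrm{surface}}(V)<\eps$, and some $\delta>0$ such that the closed $\delta$-neighbourhood of $\partial S$ lies in $V$. Using the first input above, I pick a partition $\{F_1,\ldots,F_N\}$ of $\sph^{d-1}(b)$ into closed hypercubic facets of diameter at most $\delta$, and set
$$ T_1 = \bigcup\{F_i\,:\,F_i\subset S\}\qquad\text{and}\qquad T_2 = \bigcup\{F_i\,:\,F_i\cap S \neq \emptyset\}. $$
Any $F_i \subset T_2\setminus T_1$ meets both $S$ and $S^c$; since $F_i$ is connected and $(\partial S)^c$ splits as the disjoint open union $\mathrm{int}(S)\sqcup \mathrm{int}(S^c)$, $F_i$ must intersect $\partial S$ and therefore lies entirely in $V$. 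Hence $\mu_{\mathrm{surface}}(T_2\setminus T_1)<\eps$, which is the required approximation.

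The main obstacle I anticipate is the second input of the first paragraph, namely the claim that sphere-boundaries of finite unions of hypercubic facets have zero surface measure. Since $\pi$ is Lipschitz (it is the restriction of $\mathbf{v}\mapsto b\mathbf{v}/\|\mathbf{v}\|$ to the compact set $\partial E$, on which $\|\mathbf{v}\|\geq 1$), it sends null sets to null sets, and the claim reduces to the trivial fact that the boundaries of rectangles on $\partial E$ are $(d-2)$-dimensional and carry no $(d-1)$-volume. I would spell this out carefully but expect no genuine difficulty; the rest of the argument is then essentially the classical proof of Jordan measurability from measure-zero boundary, transported through $\pi$.
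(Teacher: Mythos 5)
Your proof is correct, and it takes a somewhat different route from the paper's. The paper argues by pure transfer: it observes that the radial projection $\psi:\partial E\to\sph^{d-1}(b)$ is a homeomorphism sending null sets to null sets in both directions, that $S$ is measurable by hypercubic facets if and only if $\psi^{-1}(S)$ is Jordan measurable in $\partial E$, and that $\partial(\psi^{-1}(S))=\psi^{-1}(\partial S)$; it then cites the classical criterion (Jordan measurable $\iff$ null boundary) to conclude. You instead re-prove the criterion intrinsically on the sphere: you use the projection only to manufacture, for every $\delta>0$, a finite cover of $\sph^{d-1}(b)$ by facets of diameter at most $\delta$ with null overlaps, and to check that boundaries of finite unions of facets are surface-null (Lipschitz images of $(d-2)$-dimensional sets); the forward direction is then the inclusion $\partial S\subset (T_2\setminus T_1)\cup\partial T_1$ (up to the null set $\overline{T_2}\setminus T_2$ if your facets are not closed, which is the only small point to add, since closing up $T_1$ could break $T_1\subset S$ — but you only use $\mathrm{int}(T_1)\subset\mathrm{int}(S)$, so nothing is lost), and the reverse direction combines compactness of $\partial S$, outer regularity of $\mu_{\mathrm{surface}}$, and the connectedness argument showing that any facet meeting both $S$ and $S^c$ meets $\partial S$. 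What the paper's route buys is brevity, at the cost of leaving implicit that small Lebesgue-measure differences on $\partial E$ and small surface-measure differences on the sphere control each other (bounded densities of the pushforward), and that null sets are preserved in both directions; your route is longer but self-contained, needs only the one-directional fact that Lipschitz maps send null sets to null sets, and essentially reconstructs the classical Spivak/Tao argument in the spherical setting.
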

\begin{proof}
The map $\psi$ which projects the hypercube $\partial E$ onto the sphere $\sph^{d-1}(b)$ is an homeomorphism which sends sets of Lebesgue measure zero to sets of Lebesgue surface measure zero, and conversely. On the other hand, $S \subset \sph^{d-1}(b)$ is measurable by hypercubic facets if and only if $\psi^{-1}(S)$ is a subset of $\partial E$ measurable by rectangles, that is to say Jordan measurable in the usual sense. However, it is well known that a bounded set of $\R^d$ is Jordan measurable if and only if its boundary has Lebesgue measure equal to zero; see \cite[p.~56]{Spi65}. So, $S$ is measurable by hypercubic facets if and only if $\partial(\psi^{-1}(S))=\psi^{-1}(\partial S)$ has zero Lebesgue measure, and by the remark at the beginning of the proof, this is equivalent to the fact that $\partial S$ has zero Lebesgue surface measure.
\end{proof}

Let us now relate the notion of mesurability by hypercubic facets to the problem of approximation of spherical sectors by unions of cones. To begin with, notice that if $S_b$ is an hypercubic facet on $\sph^{d-1}(b)$, then the corresponding spherical sector is naturally approximated by two cones with convex bases. Indeed, the boundary of the hypercubic facet $S_b$ is the convex domain in the sense of geodesics of $\sph^{d-1}$ that is delimited by $2^{d-1}$ points $\mathbf{v}_1,\ldots,\mathbf{v}_{2^{d-1}}$, which are the projections of the corners of the rectangle $A$ included in $\partial E$ that corresponds to $S_b=A_b$. Let $\mathbf{h}$ be the barycenter of these $2^{d-1}$ points (in the Euclidean sense); it is easily seen by recurrence on the dimension that $\|\mathbf{h}\| \leq b$, the inequality being strict as soon as the hypercubic facet is non-degenerate (\emph{i.e.} with positive surface measure). For every positive real number $r>0$, we can consider the domain $D(r)$ that is defined as the part of the affine hyperplane
$$H(r)=\left\{r\frac{\mathbf{h}}{\|\mathbf{h}\|}+\mathbf{d},\,\,\mathbf{d} \in \mathbf{h}^\perp\right\}$$
which is the convex hull of the $2^{d-1}$ points of 
$$H(r)\cap \left(\bigcup_{i=1}^{2^{d-1}} \R_+\mathbf{v}_i\right).$$
For $r$ small enough, the cone $C(r)$ based on the domain $D(r)$ contains $S_b$ and therefore the whole spherical sector $S_b \times[1,+\infty)$. We call \emph{negative approximation} of the spherical sector $B=S_b \times [1,+\infty)$ the cone $C(r)$ based on the domain $D(r)$, with $r \in (0,b]$ maximal such that 
$$ S_b \times [1,+\infty) \subset C(r).$$
We then denote $C(r)=C_{-}(B)$ this negative approximation; it satisfies $B \subset C_{-}(B)$, and in some sense it is the smallest possible cone with this property. On the other hand, note that the domain $D(b)$ lies on the outside of the sphere $\sph^{d-1}(b)$, because it is a part of an hyperplane tangent to the sphere. Therefore, the spherical sector $B=S_b \times [1,+\infty)$ contains the cone $C(b)$, which we call the \emph{positive approximation} of the spherical sector, and denote $C(b)=C_{+}(B)$. Thus, we have
$$C_+(B) \subset B \subset C_-(B).$$ 
We refer to Figure \ref{fig:negativepositive} for drawings in dimension $d=2$ that clarify the discussion above (in dimension $d=2$, hypercubic facets are simply arcs of circles). The terminology negative/positive recalls that the cone $C_-(B)$ is based on a domain $D(r)$ with $r\leq b$ (beware that the negative approximation is the largest cone).
\begin{center}
\begin{figure}[ht]
\begin{tikzpicture}[scale=0.85]
\draw (0,0) circle (1.5cm);
\draw [dashed,BurntOrange] (10:1.5cm) -- (55:1.5cm);
\draw [<->] (0,0) -- (-20:1.5cm);
\draw (0.7,-0.5) node {$b$};
\draw [BurntOrange!30!white, line width=50] ([shift=(10:2.5cm)]0,0) arc (10:55:2.5cm);
\draw [BurntOrange, line width=2] ([shift=(10:1.5cm)]0,0) arc (10:55:1.5cm);
\draw (6,0) circle (1.5cm);
\draw (12,0) circle (1.5cm);
\fill (10:1.5cm) circle (1.5pt);
\fill (55:1.5cm) circle (1.5pt);
\fill (32.5:1.385cm) circle (1.5pt);
\draw (32.5:1.15cm) node {$\mathbf{h}$};
\draw (1.8,0.05) node {$\mathbf{v}_1$};
\draw (0.57,1.17) node {$\mathbf{v}_2$};
\draw (2.1,1.4) node {\textcolor{BurntOrange}{$B$}};
\begin{scope}[shift={(6cm,0)}]
\fill [Red!30!white] (10:3.5cm) -- (10:1.5cm) -- (55:1.5cm) -- (55:3.5cm);
\draw [Red, line width = 2] (10:1.5cm) -- (55:1.5cm);
\draw (2.1,1.4) node {\textcolor{Red}{$C_-(B)$}};
\draw [dashed] (0,0) -- (55:3.7cm);
\draw [dashed] (0,0) -- (10:3.7cm);
\draw (1.8,0.05) node {$\mathbf{v}_1$};
\draw (0.57,1.17) node {$\mathbf{v}_2$};
\fill (10:1.5cm) circle (1.5pt);
\fill (55:1.5cm) circle (1.5pt);
\end{scope}
\begin{scope}[shift={(12cm,0)}]
\fill [Purple!30!white] (10:3.5cm) -- (10:1.65cm) -- (55:1.65cm) -- (55:3.5cm);
\draw [Purple, line width = 2] (10:1.65cm) -- (55:1.65cm);
\draw (2.1,1.4) node {\textcolor{Purple}{$C_+(B)$}};
\draw [dashed] (0,0) -- (55:3.7cm);
\draw [dashed] (0,0) -- (10:3.7cm);
\draw (1.8,0.05) node {$\mathbf{v}_1$};
\draw (0.57,1.17) node {$\mathbf{v}_2$};
\fill (10:1.5cm) circle (1.5pt);
\fill (55:1.5cm) circle (1.5pt);
\end{scope}
\end{tikzpicture}
\caption{Negative and positive approximations of a spherical sector based on an hypercubic facet.\label{fig:negativepositive}}
\end{figure}
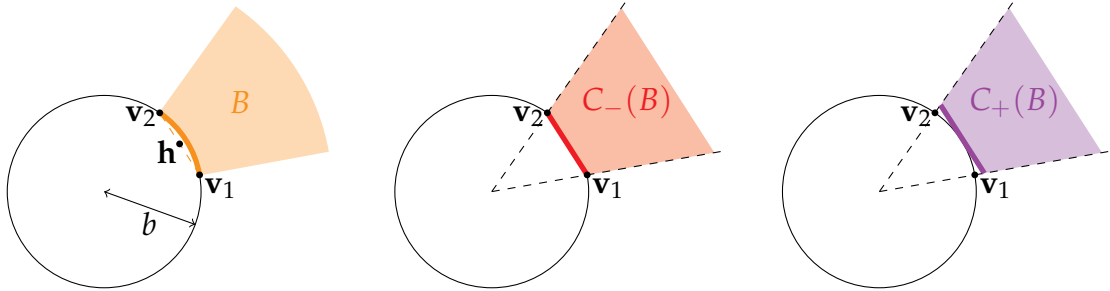
\end{center}
Consider an hypercubic facet $S_b$, with $C_-(B)$ and $C_+(B)$ negative and positive approximations of the spherical sector $B=S_b \times [1,+\infty)$. We say that the pair $(C_-(B),C_+(B))$ is an approximation of \emph{level} $\eps>0$ of $B$ if
\begin{align*}
\min \{\|\mathbf{v}\|,\,\,\mathbf{v} \in D_-(B)\} &\geq b-\eps;\\
\max \{\|\mathbf{v}\|,\,\,\mathbf{v} \in D_+(B)\} &\leq b+\eps,
\end{align*}
where $D_-(B)$ and $D_+(B)$ are the two convex bases on which the cones $C_-(B)$ and $C_+(B)$ are based. The radius $b$ being fixed, if the diameter (for instance in the geodesic sense) of an hypercubic facet $S_b$ is smaller than $\eps$, then the corresponding pair of approximations is of level $O(\eps^2)$, where the constants in the $O(\cdot)$ depend only on $b$ and the dimension $d$.\medskip

We are now ready to deal with approximations of general spherical sectors (not necessarily based on hypercubic facets). Let $S$ be a part of $\sph^{d-1}$ that is measurable by hypercubic facets, and $B= S \times [b,+\infty)$. A negative approximation of the spherical sector $B$ is given by:
\begin{enumerate}
 	\item a finite union $T_2$ of hypercubic facets $A_{b}\EX{1},\ldots,A_{b}\EX{m}$ of $\sph^{d-1}(b)$, such that 
 	$$T_2=\bigcup_{i=1}^m A_{b}\EX{i} \supset S_b; $$
 	\item and a radius $b_- \in (0,b)$, such that for every $i \in \lle 1,m\rre$, 
 	$$b_- \leq r\EX{i}.$$
 	Here, $r\EX{i}$ is the radius such that $C\EX{i}(r\EX{i})=C_-(B\EX{i})$; $B\EX{i}$ (respectively, $(C\EX{i}(r))_{r \in \R_+}$) is the spherical sector (resp., the decreasing family of cones) based on the hypercubic facet $A_b\EX{i}$.
 
 \end{enumerate} 
Combining the two hypotheses, we see that if $((A_b\EX{1},\ldots,A_b\EX{m}),b_-)$ is a negative approximation of the spherical sector $B$, then one has
$$B = S \times [b,+\infty) \subset \bigcup_{i=1}^m \left(A_{b}\EX{i} \times [1,+\infty)\right) \subset \bigcup_{i=1}^m C_-(B\EX{i}) \subset \bigcup_{i=1}^m C\EX{i}(b_-). $$
On the other hand, given a negative approximation, we can assume without loss of generality that the interiors of the hypercubic facets $A_b\EX{i}$ are all disjoint. Similarly, a positive approximation of a general spherical sector $B = S \times [b,\infty)$ with $S$ measurable by hypercubic facets is given by a finite union $T_1$ of hypercubic facets $A_{b}\EX{1},\ldots,A_{b}\EX{l}$ of $\sph^{d-1}(b)$, such that 
$$T_1=\bigcup_{i=1}^l A_{b}\EX{i} \subset S_b.$$
Then, 
$$\bigcup_{i=1}^l C_+(B\EX{i}) = \bigcup_{i=1}^l C\EX{i}(b) \subset \bigcup_{i=1}^l \left(A_b\EX{i} \times [1,+\infty) \right) \subset S \times [b,+\infty) = B.$$
For instance, Figure \ref{fig:approximationsectors} represents a positive approximation of a spherical sector $B$.
\medskip

\begin{proposition}[Existence of negative and positive approximations]\label{prop:goodapprox}
Fix $\eta>0$. For every part $S \subset \sph^{d-1}$ that is measurable by hypercubic facets, and for every $\eps>0$ small enough (this condition depending on $S$ and $\eta$), there exists a positive approximation $T_1$ and a negative approximation $(T_2,b_-)$ of the spherical sector $B=S\times [b,+\infty)$, with:
\begin{enumerate}
	\item $\mu_{\mathrm{surface}}(T_2\setminus T_1) \leq \eta$;
	\item the interiors of the hypercubic facets in $T_1$ (respectively, in $T_2$) are disjoint;
	\item the number of hypercubic facets in $T_1$ and $T_2$ is a $O(\frac{1}{\eps^{d-1}})$;
	\item these hypercubic facets are all of diameter smaller than $O(\eps)$, and $b-b_-=O(\eps^2)$, where the constants in the $O(\cdot)$'s only depend on $b$ and $d$.
\end{enumerate}
\end{proposition}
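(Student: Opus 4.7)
The plan is to build both approximations simultaneously from a single uniform rectangular grid transported from the boundary of the hypercube $E=[-1,1]^d$ to the sphere $\sph^{d-1}(b)$ via the radial projection $\psi:\partial E\to\sph^{d-1}(b)$. Concretely, I would choose a small scale $\delta=c_0(b,d)\,\eps$ and partition each of the $2d$ faces of $\partial E$ into a uniform rectangular grid with cells of side $\delta$. Projecting via $\psi$ yields a family $\mathscr{F}$ of $N=O(1/\eps^{d-1})$ hypercubic facets of $\sph^{d-1}(b)$, with pairwise disjoint interiors and whose union is the entire sphere. Since $\psi$ is smooth with derivative uniformly bounded in terms of $b$ and $d$, choosing $c_0$ small enough ensures every facet has diameter strictly less than $\eps$.

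Next, let $T_1$ be the union of the facets in $\mathscr{F}$ entirely contained in $S_b$, and $T_2$ the union of those that meet $S_b$. Then $T_1\subset S_b\subset T_2$, property (2) holds by construction, and (3) follows from Step~1. For (1), every facet in $T_2\setminus T_1$ must meet $\partial S_b$, so it is contained in the closed $\eps$-neighbourhood $(\partial S_b)^\eps$. By the topological criterion proved just above, $\mu_{\mathrm{surface}}(\partial S_b)=0$. By outer regularity of the Lebesgue surface measure, I fix once and for all (depending on $S$ and $\eta$) an open set $U\supset\partial S_b$ with $\mu_{\mathrm{surface}}(U)<\eta$; compactness of $\partial S_b$ yields $(\partial S_b)^\rho\subset U$ for some $\rho>0$, and imposing $\eps<\rho$ gives $\mu_{\mathrm{surface}}(T_2\setminus T_1)\leq\mu_{\mathrm{surface}}(U)<\eta$.

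The delicate step is controlling the level of the approximation. For a facet with vertices $\mathbf{v}_1\EX{i},\ldots,\mathbf{v}_{2^{d-1}}\EX{i}$ on the sphere of radius $b$ and of diameter at most $\eps$, an elementary computation using that the sphere has second-order contact with its tangent hyperplanes yields $b-\|\mathbf{h}\EX{i}\|=O(\eps^2/b)$ and $1-\cos\theta_k\EX{i}=O(\eps^2/b^2)$, where $\mathbf{h}\EX{i}$ is the Euclidean barycenter of the vertices and $\theta_k\EX{i}$ is the angle between $\mathbf{v}_k\EX{i}$ and $\mathbf{h}\EX{i}$. Following the definition of $C\EX{i}(r)$, the line $\R_+\mathbf{v}_k\EX{i}$ meets the affine hyperplane $H\EX{i}(r)$ outside the sphere of radius $b$ if and only if $r\geq b\cos\theta_k\EX{i}$, hence the critical radius is $r\EX{i}=b\max_k\cos\theta_k\EX{i}\geq b-C_1\eps^2$; setting $b_-:=b-C_1\eps^2$ uniformly over $i$, with $C_1$ depending only on $b$ and $d$, produces a valid negative approximation. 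The corresponding positive cone $C\EX{i}(b)$ has base-vertices at norms $b/\cos\theta_k\EX{i}\leq b+O(\eps^2)$, so the approximation is of level $O(\eps^2)$ in both directions, establishing (4).

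The main obstacle is the third step: the quadratic (not merely linear) scaling of $b-b_-$ in $\eps$ is what ultimately allows the approximations to be genuinely useful in Subsection~\ref{subsec:approximationcone}, and it relies on a second-order expansion of the sphere around each facet's barycenter that must be carried out uniformly in $i$ with a constant depending only on $b$ and $d$. The rest of the argument is bookkeeping: matching the number and size of facets coming from the grid with the topological zero-measure property of $\partial S_b$, and checking that the order in which $\eta$, $S$ and $\eps$ are chosen is compatible with the announced uniformity.
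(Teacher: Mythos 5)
Your construction is correct, but it reaches the conclusion by a somewhat different route than the paper. The paper's proof starts from the definition of measurability by hypercubic facets: it first chooses a pair $(T_1,T_2)$ of unions of grid facets with $T_1\subset S_b\subset T_2$ and $\mu_{\mathrm{surface}}(T_2\setminus T_1)\leq\eta$ at some coarse mesh, and then subdivides each grid cell into $2^{d-1}$ subcells until the mesh is smaller than $\eps$; the sets $T_1,T_2$ themselves do not change, only their decomposition into facets, so the measure bound is automatic and no further measure-theoretic argument is needed, while the level estimate $b-b_-=O(\eps^2)$ is simply quoted from the remark preceding the proposition. You instead fix the mesh-$\eps$ grid first, take $T_1$ (resp.\ $T_2$) to be the union of the facets contained in (resp.\ meeting) $S_b$, and control $\mu_{\mathrm{surface}}(T_2\setminus T_1)$ through the topological criterion $\mu_{\mathrm{surface}}(\partial S_b)=0$ combined with outer regularity and a compactness argument; this is a valid alternative which essentially re-derives the Jordan approximation directly at the fine scale, at the cost of invoking the equivalence between the two characterisations of measurability, whereas the paper's subdivision trick works straight from the definition. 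You also reprove the level-$O(\eps^2)$ remark via the barycenter and angle expansion, which the paper takes as already established. One slip worth fixing: the cone $C\EX{i}(r)$ contains the spherical sector if and only if $r\leq b\cos\theta(u)$ for \emph{every} direction $u$ of the facet, so the critical radius is $r\EX{i}=b\min_k\cos\theta_k\EX{i}$ (the binding corner is the one with the largest angle to $\mathbf{h}\EX{i}$, and corners suffice by quasiconvexity of the angle function), not $b\max_k\cos\theta_k\EX{i}$; since every point of the facet lies within $O(\eps)$ of $\mathbf{h}\EX{i}$, all these cosines are $1-O(\eps^2/b^2)$, so your bound $r\EX{i}\geq b-C_1\eps^2$ and the uniform choice $b_-=b-C_1\eps^2$ remain correct.
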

\begin{proof}
Fix $\eta>0$. In the definition of Jordan mesurability of a part of the hypercube $\partial E$, we can take approximations of sets by unions of cells that have disjoint interiors, and are all taken from a regular grid traced on $\partial E$. However, the notion of measurability by hypercubic facets (Definition \ref{def:measurabilityhypercubic}) derives from the notion of Jordan mesurability by the projection $\psi : \partial E \to \sph^{d-1}(b)$. Consequently, if $S \subset \sph^{d-1}(b)$ is measurable by hypercubic facets, then for every $\eta >0$, there exists a pair $(T_1,T_2)$ of unions of hypercubic facets such that $T_1 \subset S_b \subset T_2$ and $\mu_{\mathrm{surface}}(T_2\setminus T_1) \leq \eta$, and moreover, one can assume that the hypercubic facets in $T_1$ and $T_2$ have disjoint interiors, and that they all come from the projection on $\sph^{d-1}(b)$ of a regular grid on $\partial E$. Now, given a regular grid on $\partial E$, one can split every cell of the grid in $2^{d-1}$ subcells with half the size of the original cell. Thus, given $\eps>0$ smaller than the size of the cells of the original grid, one can assume without loss of generality that the hypercubic facets of $T_1$ and $T_2$ all come from a regular grid on $\partial E$ of mesh smaller than $\eps$. The number of cells in such a grid is $O(\frac{1}{\eps^{d-1}})$, so $T_1$ and $T_2$ have less than $O(\frac{1}{\eps^{d-1}})$ hypercubic facets. Finally, each hypercubic facet $A_b$ of $T_1$ or $T_2$ has diameter smaller than $O(\eps)$, and then, the pair of approximations $(C_-(B),C_+(B))$ of the spherical sector based on such an hypercubic facet has level $O(\eps^2)$ by a previous remark, so in particular one can choose $b_-\geq b-O(\eps^2)$.
\end{proof}
\medskip

We can finally prove our large deviation results. We start with the case when $K=I_d$:
\begin{proposition}[Large deviations in the isotropic case $K=I_d$]\label{prop:largedeviationisotropic}
Let $(\XEC_n)_{n \in \N}$ be a sequence of random vectors in $\R^d$, that converges mod-Gaussian in the Laplace sense on the multi-strip $\mathcal{S}_{\mathbf{a},\mathbf{b}}$, with parameters $t_nI_d$ and limit $\psi$. We assume that $\sph^{d-1}(b) \subset \prod_{i=1}^{d} (a\EX{i},b\EX{i})$, and we consider a Borel subset $S \subset \sph^{d-1}$ that is measurable by hypercubic facets, and of non-zero surface measure. Then, if $B$ is the spherical sector $S \times [b,+\infty)$, we have
 $$\proba[\XEC_n \in t_n B] = \left(\frac{t_n}{2\pi}\right)^{\!\frac{d}{2}}\, \frac{1}{t_n b}\, \E^{-\frac{t_nb^2}{2}} \left(\int_{S_b} \psi(\mathbf{s})\,\mu_{\mathrm{surface}}(\!\DD{\mathbf{s}})\right)\,(1+o(1)),$$
 where $S_b = \{b\mathbf{s},\,\,\mathbf{s} \in S\}$.
\end{proposition}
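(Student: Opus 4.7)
My plan is a sandwich argument that combines Proposition~\ref{prop:goodapprox} with the per-cone asymptotics of Proposition~\ref{prop:asymptoticsprobabilitycone}. Fix $\eta > 0$ and, for each small enough $\eps > 0$, invoke Proposition~\ref{prop:goodapprox} to obtain a positive approximation $T_1$ and a negative approximation $(T_2, b_-)$ of $B$ such that $\mu_{\mathrm{surface}}(T_2\setminus T_1) \leq \eta$, $b - b_- = O(\eps^2)$, and both approximations comprise $O(\eps^{-(d-1)})$ hypercubic facets. Since the associated cones have pairwise disjoint interiors and sandwich $B$,
\begin{equation*}
\sum_{i\in T_1} \proba[\XEC_n \in t_n\,C_+(B^{(i)})] \;\leq\; \proba[\XEC_n \in t_n B] \;\leq\; \sum_{i\in T_2} \proba[\XEC_n \in t_n\,C^{(i)}(b_-)].
\end{equation*}

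For each cone I apply Proposition~\ref{prop:asymptoticsprobabilitycone} with tip $\mathbf{h}_i^{*} = b\,\mathbf{h}_i/\|\mathbf{h}_i\|$ on $\sph^{d-1}(b)$ (positive side) or $\mathbf{h}_i^{*} = b_-\,\mathbf{h}_i/\|\mathbf{h}_i\|$ (negative side). Crucially, the remark following Theorem~\ref{thm:modifiedberryesseen} guarantees that the $O(1/t_n)$ and $o(1/\sqrt{t_n})$ remainders appearing in Proposition~\ref{prop:asymptoticsprobabilitycone} can be taken uniform over the whole family of cones, since their tips and bases $D_0^{(i)}$ stay in a fixed compact set. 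This uniformity is essential because the number of cones grows like $\eps^{-(d-1)}$. After summation, and absorbing the factor $\E^{-t_n(b_-^2 - b^2)/2} = \E^{O(t_n \eps^2)}$ in the negative bound, both bounds take the form
\begin{equation*}
\frac{(t_n)^{d/2-1}\,\E^{-t_n b^2/2}}{(2\pi)^{d/2}}\bigl(1+o(1)\bigr)\sum_i \psi(\mathbf{h}_i^{*}) \int_{D_0^{(i)}} \!\frac{\|\mathbf{h}_i^{*}\|\,\E^{-t_n\|\mathbf{d}\|^2/2}}{\|\mathbf{h}_i^{*}\|^2 + \|\mathbf{d}\|^2}\DD{\mathbf{d}}.
\end{equation*}

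I then identify the resulting sum as a Riemann sum for the target surface integral. For $\|\mathbf{d}\|\leq \eps \ll b$, the factor $\|\mathbf{h}_i^{*}\|/(\|\mathbf{h}_i^{*}\|^2 + \|\mathbf{d}\|^2)$ is $1/b + O(\eps^2)$, and the $(d-1)$-Lebesgue volume of $D_0^{(i)}$ matches the surface area of the corresponding hypercubic facet of $\sph^{d-1}(b)$ up to the same accuracy. In a regime where $\eps\sqrt{t_n}$ is small the Gaussian factor is essentially $1$ over $D_0^{(i)}$, so each inner integral collapses to $\vol(D_0^{(i)})/b$; summing produces the Riemann sum $(1/b)\sum_i \psi(\mathbf{h}_i^{*})\,\vol(D_0^{(i)})$, which converges to $(1/b)\int_{S_b}\psi\,d\mu_{\mathrm{surface}}$ as $\eps\to 0$, modulo an error of size $\eta$ from the gap $T_2 \setminus T_1$.

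The hard part will be the simultaneous calibration of the three parameters $t_n$, $\eps$, $\eta$: the Laplace/Riemann approximation above requires $\eps\sqrt{t_n}$ to be small, whereas the accumulated error $O(\eps^{-(d-1)})\cdot o((t_n)^{-1/2})$ coming from summing the uniform Berry--Esseen remainders over all cones must remain negligible compared with the main term $(t_n)^{d/2-1}\E^{-t_n b^2/2}$. A diagonal extraction $\eps = \eps_n \to 0$, tuned to the $\eta$-dependent quantitative rate of Theorem~\ref{thm:modifiedberryesseen} and to the uniform $o$-term in Proposition~\ref{prop:asymptoticsprobabilitycone}, resolves this balance. Sending $\eta \to 0$ at the very end squeezes $\proba[\XEC_n \in t_n B]$ between two quantities both equivalent to $\frac{(t_n)^{d/2-1}}{(2\pi)^{d/2}\,b}\,\E^{-t_n b^2/2}\int_{S_b}\psi\,\mu_{\mathrm{surface}}(\!\DD{\mathbf{s}})$, which is the claimed formula.
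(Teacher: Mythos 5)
Your proposal is correct and follows essentially the same route as the paper's proof: sandwiching $B$ between the cone unions from Proposition~\ref{prop:goodapprox}, applying Proposition~\ref{prop:asymptoticsprobabilitycone} with the uniform remainders guaranteed by the remark after Theorem~\ref{thm:modifiedberryesseen}, identifying the resulting sum as a surface Riemann sum, and calibrating $\eps_n$ so that $\eps_n\sqrt{t_n}\to 0$ while the accumulated $O(\eps_n^{-(d-1)})\cdot o((t_n)^{-1/2})$ error stays negligible. This is exactly the paper's argument, which makes your diagonal choice explicit via $\eps_n\sqrt{t_n}/(\delta_n)^{1/(d-1)}\to\infty$.
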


\begin{proof}
Fix $\eta>0$. For every cone $C$ based on a domain $D$ bounded by $M>0$,  Proposition \ref{prop:asymptoticsprobabilitycone} enables us to write 
\begin{align*}
&(2\pi)^{\frac{d}{2}}\,\E^{\frac{t_n \|\mathbf{h}\|^2}{2}}\,\proba[\XEC_n \in t_n C] \\
&= \psi_n(\mathbf{h}) \left((t_n)^{\frac{d}{2}-1}\!\int_{\mathbf{d}\in D_0} \!\!\left(1+\frac{\scal{\nabla\psi(\mathbf{h})}{\mathbf{d}}}{\psi(\mathbf{h})}+O\!\left(\frac{1}{t_n}\right)\right)\frac{\|\mathbf{h}\|\,\E^{-\frac{t_n\|\mathbf{d}\|^2}{2}}}{\|\mathbf{h}\|^2+\|\mathbf{d}\|^2}\DD{\mathbf{d}}+ o\!\left(\frac{1}{\sqrt{t_n}}\right)\right),
\end{align*}
where $\mathbf{h}$ is the projection of the origin $\mathbf{0}$ on the affine hyperplane containing $D$, and where the $o(\frac{1}{\sqrt{t_n}})$ writes more precisely as 
$$\frac{\delta_n(D)}{\sqrt{t_n}}$$
with $|\delta_n(D)| \leq \delta_n$, $(\delta_n)_{n \in \N}$ being a positive sequence that converges to $0$, and that depends only on the choice of the bound $M$. In the following, we fix a positive sequence $(\eps_n)_{n \in \N}$ such that
$$\lim_{n \to \infty} \eps_n\,\sqrt{t_n} = 0 \qquad;\qquad \lim_{n \to \infty} \frac{\eps_n\,\sqrt{t_n}}{(\delta_n)^{\frac{1}{d-1}}} = +\infty .$$
For $n$ large enough, $\eps=\eps_n$ is small enough and one can choose corresponding positive and negative approximations $T_1$ and $(T_2,b_-)$ of the spherical sector $B$, as in Proposition \ref{prop:goodapprox}. We have obviously
$\proba[\XEC_n \in t_n T_1] \leq \proba[\XEC_n \in t_n B] \leq \proba[\XEC_n \in t_n T_2],$
and on the other hand, we can evaluate the probabilities related to $T_1$ and $T_2$ by using the estimates on cones. We write 
$$T_1 = \bigcup_{i=1}^l A_b\EX{i} \qquad;\qquad T_2 = \bigcup_{j=1}^m \widetilde{A}_b\EX{j},$$
where the $A_b\EX{i}$'s (resp., the $\widetilde{A}_b\EX{j}$'s) are hypercubic facets of $\sph^{d-1}(b)$ with disjoint interiors. We denote $D\EX{i}$ the convex domain on which is based the cone $C\EX{i}(b)$ corresponding to the hypercubic facet $A_b\EX{i}$, and $\widetilde{D}\EX{j}$ the convex domain on which is based the cone $\widetilde{C}\EX{j}(b_-)$ corresponding to the hypercubic facet $\widetilde{A}_b\EX{j}$. We also denote $\mathbf{h}_i$ (resp., $\widetilde{\mathbf{h}}_j$) the orthogonal projection of $\mathbf{0}$ on the affine hyperplane that contains $D\EX{i}$ (respectively, $\widetilde{D}\EX{j}$). Notice that $\|\mathbf{h}_i\|=b$, whereas $\|\widetilde{\mathbf{h}}_j\|=b_-$. Moreover, the vectors in the domains $D\EX{i}_0=D\EX{i} - \mathbf{h}_i$ and $\widetilde{D}\EX{j}_0=\widetilde{D}\EX{j} - \widetilde{\mathbf{h}}_j$ have norm bounded by $O(\eps)$. By the discussion at the beginning of the proof,
\begin{align*}
&(2\pi)^{\frac{d}{2}}\,\E^{\frac{t_n b^2}{2}}\,\proba[\XEC_n \in t_nT_1]\\
&= \sum_{i=1}^l (2\pi)^{\frac{d}{2}}\,\E^{\frac{t_n b^2}{2}}\,\proba[\XEC_n \in t_n C\EX{i}(b)] \\
&= \sum_{i=1}^l \psi_n(\mathbf{h}_i)\left((t_n)^{\frac{d}{2}-1}\!\int_{\mathbf{d}\in D\EX{i}_0} \!\!\left(1+\frac{\scal{\nabla\psi(\mathbf{h}_i)}{\mathbf{d}}}{\psi(\mathbf{h}_i)}+O\!\left(\frac{1}{t_n}\right)\!\right)\frac{b\,\E^{-\frac{t_n\|\mathbf{d}\|^2}{2}}}{b^2+\|\mathbf{d}\|^2}\DD{\mathbf{d}}+ \frac{\delta_n(D\EX{i})}{\sqrt{t_n}}\right) \\
&= \sum_{i=1}^l \frac{\psi_n(\mathbf{h}_i)}{b}\left((t_n)^{\frac{d}{2}-1}\!\int_{\mathbf{d}\in D\EX{i}_0} \!\!\left(1+O(\eps)+O\!\left(\frac{1}{t_n}\right)\right)\frac{1+O(t_n\eps^2)}{1+O(\eps^2)}\DD{\mathbf{d}}\right) \\
&\quad+ \sum_{i=1}^l \psi_n(\mathbf{h}_i)\,\frac{\delta_n(D\EX{i})}{\sqrt{t_n}}
\end{align*}
where the $O(\cdot)$'s on the last line are uniform with respect to the index $l$. The second part on the last line is smaller than a constant times
$$\frac{1}{(\eps_n)^{d-1}}\,\frac{\delta_n}{\sqrt{t_n}} = \frac{(t_n)^{\frac{d}{2}-1}}{\left(\frac{\eps_n\,\sqrt{t_n}}{(\delta_n)^{\frac{1}{d-1}}}\right)^{d-1}}  = o\!\left((t_n)^{\frac{d}{2}-1}\right)$$
by the hypotheses made on $(\eps_n)_{n\in \N}$. On the other hand, the first part is
\begin{align*}
&\sum_{i=1}^l \frac{\psi_n(\mathbf{h}_i)\,(t_n)^{\frac{d}{2}}}{t_nb}\left(\int_{\mathbf{d} \in D\EX{i}_0}(1+o(1))\DD{\mathbf{d}}\right) \\
&= \frac{(t_n)^{\frac{d}{2}}}{t_nb} \left(\int_{T_1} \psi_n(\mathbf{s})\,\mu_{\text{surface}}(\!\DD{\mathbf{s}})\right) + o\!\left((t_n)^{\frac{d}{2}-1}\right).
\end{align*}
Similarly, for the upper approximation $T_2$, we get
$$(2\pi)^{\frac{d}{2}}\,\E^{\frac{t_n (b_-)^2}{2}}\,\proba[\XEC_n \in t_nT_1] = \frac{(t_n)^{\frac{d}{2}}}{t_nb_-} \left(\int_{T_2} \psi_n(\mathbf{s})\,\mu_{\text{surface}}(\!\DD{\mathbf{s}})\right) + o\!\left((t_n)^{\frac{d}{2}-1}\right).$$
Since $b-b_-=O(\eps^2)=o(\frac{1}{t_n})$, and since $\psi_n$ converges locally uniformly towards $\psi$, we conclude that 
\begin{align*}
\int_{T_1}\! \psi(\mathbf{s})\,\mu_{\text{surface}}(\!\DD{\mathbf{s}}) + o(1) &\leq \left(\frac{2\pi}{t_n}\right)^{\!\frac{d}{2}}\,t_nb\,\E^{\frac{t_n b^2}{2}}\,\proba[\XEC_n \in t_nB]\\
&\leq \int_{T_2}\! \psi(\mathbf{s})\,\mu_{\text{surface}}(\!\DD{\mathbf{s}}) + o(1).
\end{align*}
As $T_1 \subset S_b \subset T_2$ and $\mu_{\text{surface}}(T_2\setminus T_1) \leq \eta$ can be taken as small as wanted, we obtain finally
$$\left(\frac{2\pi}{t_n}\right)^{\!\frac{d}{2}}\,t_nb\,\E^{\frac{t_n b^2}{2}}\,\proba[\XEC_n \in t_nB] = \int_{S_b}\! \psi(\mathbf{s})\,\mu_{\text{surface}}(\!\DD{\mathbf{s}}) + o(1),$$
which ends the proof of the estimate of large deviations.
\end{proof}
\medskip

\begin{theorem}[Large deviations]\label{thm:largedeviation}
Let $(\XEC_n)_{n \in \N}$ be a sequence of random vectors that is mod-Gaussian convergent in the Laplace sense, with parameters $t_nK$ and limit $\psi(\zec)$. We consider an ellipsoidal sector $B = S \times [b,+\infty)$, where $S$ is a Borel subset of $K^{1/2}(\sph^{d-1})$ that is the image by $K^{1/2}$ of a subset of $\sph^{d-1}$ that is measurable by hypercubic facets, and with non-zero surface measure. Then,
$$
 \proba[\XEC_n \in t_n B]= \left(\frac{t_n}{2\pi}\right)^{\!\frac{d}{2}}\, \frac{1}{t_n b}\, \E^{-\frac{t_nb^2}{2}} \left(\int_{(K^{-1/2}(S))_b} \psi(K^{-1/2}\mathbf{s})\,\mu_{\mathrm{surface}}(\!\DD{\mathbf{s}})\right)\,(1+o(1)).
$$
\end{theorem}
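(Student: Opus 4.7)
The plan is to reduce the general anisotropic case to the isotropic case $K = I_d$ already established in Proposition~\ref{prop:largedeviationisotropic}, via the linear change of variable $\YEC_n := K^{-1/2}\XEC_n$, as suggested by the remark following Proposition~\ref{prop:clt}. A direct computation of the Laplace transform gives
$$ \esper[\E^{\scal{\zec}{\YEC_n}}] = \esper\!\left[\E^{\scal{K^{-1/2}\zec}{\XEC_n}}\right] = \E^{\frac{t_n\,\|\zec\|^2}{2}}\,\psi_n(K^{-1/2}\zec), $$
so $(\YEC_n)_{n \in \N}$ is mod-Gaussian convergent in the Laplace sense with \emph{isotropic} parameters $t_n\,I_d$ and limiting function $\widetilde{\psi}(\zec) := \psi(K^{-1/2}\zec)$, on the domain $K^{1/2}(\mathcal{S}_{\mathbf{a},\mathbf{b}})$.

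Next I would transport the event: $\{\XEC_n \in t_n B\}$ coincides with $\{\YEC_n \in t_n \widetilde{B}\}$ where $\widetilde{B} := K^{-1/2}(B)$. Writing $B = \{r\,\mathbf{s}\,|\,r \geq b,\ \mathbf{s} \in S\}$ and using linearity, $\widetilde{B} = \widetilde{S} \times [b,+\infty)$ with $\widetilde{S} := K^{-1/2}(S) \subset \sph^{d-1}$. By the very hypothesis on $S$, the set $\widetilde{S}$ is measurable by hypercubic facets and has non-zero surface measure, so the requirements on the angular base for Proposition~\ref{prop:largedeviationisotropic} are met. The implicit containment $\sph^{d-1}(b) \subset K^{1/2}(\mathcal{S}_{\mathbf{a},\mathbf{b}})$ needed for the new isotropic Laplace domain is the image under $K^{1/2}$ of the standing assumption that the $K$-ellipsoid of radius $b$ lies in $\mathcal{S}_{\mathbf{a},\mathbf{b}}$.

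Finally, applying Proposition~\ref{prop:largedeviationisotropic} to $(\YEC_n)_{n \in \N}$, to the spherical sector $\widetilde{B}$ and to the limiting function $\widetilde{\psi}$ yields
$$ \proba[\XEC_n \in t_n B] = \proba[\YEC_n \in t_n \widetilde{B}] = \left(\frac{t_n}{2\pi}\right)^{\!\frac{d}{2}}\frac{1}{t_n b}\,\E^{-\frac{t_n b^2}{2}}\left(\int_{\widetilde{S}_b} \widetilde{\psi}(\mathbf{s})\,\mu_{\mathrm{surface}}(\!\DD{\mathbf{s}})\right)(1+o(1)), $$
and substituting $\widetilde{\psi}(\mathbf{s}) = \psi(K^{-1/2}\mathbf{s})$ and $\widetilde{S}_b = (K^{-1/2}(S))_b$ produces exactly the announced formula. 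The argument is thus essentially formal once Proposition~\ref{prop:largedeviationisotropic} is in hand; there is no genuine analytic obstacle, only the bookkeeping of checking that the residue, the domain of Laplace convergence and the hypercubic-facet regularity of the angular base all transform consistently under the isometry $K^{-1/2}$.
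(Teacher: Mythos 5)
Your reduction to the isotropic case via $\YEC_n = K^{-1/2}\XEC_n$, the transport of the sector $B$ to $\widetilde{B}=K^{-1/2}(B)$, and the application of Proposition \ref{prop:largedeviationisotropic} is exactly the paper's own proof of Theorem \ref{thm:largedeviation}. The argument is correct, and your extra bookkeeping on the transformed Laplace domain and the hypercubic-facet regularity only makes explicit what the paper leaves implicit.
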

\begin{proof}
Consider $\YEC_n = K^{-1/2}\XEC_n$; it is mod-Gaussian convergent in the Laplace sense, with parameters $t_nI_d$ and limit $\psi(K^{-1/2}\zec)$. By Proposition \ref{prop:largedeviationisotropic}, if $$K^{-1/2}(B) = B'=S'\times [b,+\infty)$$ with $S' = K^{-1/2}(S) \subset \sph^{d-1}$, then
$$\proba[\YEC_n \in t_nB'] =\left(\frac{t_n}{2\pi}\right)^{\!\frac{d}{2}}\, \frac{1}{t_n b}\, \E^{-\frac{t_nb^2}{2}} \left(\int_{(S')_b} \psi(K^{-1/2}\mathbf{s})\,\mu_{\mathrm{surface}}(\!\DD{\mathbf{s}})\right)\,(1+o(1)).$$
Finally, $\proba[\YEC_n \in t_nB'] = \proba[K^{-1/2}\XEC_n \in t_n K^{-1/2}(B)] = \proba[\XEC_n \in t_nB]$.
\end{proof}
\medskip

\begin{remark}
The case $d=1$ of the asymptotic formula of Theorem \ref{thm:largedeviation} allows one to recover Theorem \ref{thm:1Dlargedeviation} if one agrees that the surface measure on the zero-dimensional sphere $\sph^0 = \{+1,-1\}$ is the counting measure.
\end{remark}
\bigskip

\section{Examples of multi-dimensional convergent sequences}\label{sec:examples}

In this section, we give applications of the theory developed in Sections \ref{sec:speed} and \ref{sec:largedeviation}.\medskip

\subsection{First examples}
We start by looking at the two examples of multi-dimensional mod-Gaussian convergence proposed in the introductory Section \ref{subsec:modgauss}.

\begin{example}[Sums of i.i.d.~random vectors]
Let $\mathbf{A}$ be a random variable in $\R^{d}$ with entire Laplace transform $\esper[\E^{\scal{\zec}{\mathbf{A}}}]$, and $(\mathbf{A}_{n})_{n \in \N}$ be a sequence of independent copies of $\mathbf{A}$. We assume that $\mathbf{A}$ is centered and that $\cov(\mathbf{A})=I_d$; up to a linear change of coordinates and a possible reduction of the dimension, these assumptions do not restrict the generality. The rescaled sum
$$\XEC_n = \frac{\SEC_n}{n^{1/3}} =\frac{1}{n^{1/3}}\,\sum_{k=1}^n \mathbf{A}_k $$
is mod-Gaussian convergent in the Laplace sense on $\C^d$, with parameters $n^{1/3}I_d$ and limit 
$$\psi(\zec) = \exp\!\left(\frac{1}{6}\sum_{i,j,k=1}^d \esper[A\EX{i}A\EX{j}A\EX{k}]\,z\EX{i}z\EX{j}z\EX{k}\right).$$
By Theorem \ref{thm:largedeviation}, for any spherical sector $B=S\times [b,+\infty)$ with $S$ part of $\sph^{d-1}$ measurable by hypercubic facets,
\begin{align*}
&\proba[\SEC_n \in n^{2/3}B] \\
&\simeq\frac{n^{\frac{d-2}{6}}\E^{-\frac{n^{1/3}b^2}{2}}}{(2\pi)^{\frac{d}{2}}b}\left(\int_{S_b} \exp\!\left(\frac{1}{6}\sum_{i,j,k=1}^d \esper[A\EX{i}A\EX{j}A\EX{k}]\,x\EX{i}x\EX{j}x\EX{k}\right)\mu_{\mathrm{surface}}(\!\DD{\xec})\right).
\end{align*}
Hence, up to the scale $n^{2/3}$, the sum $\SEC_n$ of i.i.d.~vectors is described by the Gaussian approximation, and at the scale $n^{2/3}$, this normal approximation is corrected by the exponential $\exp(\frac{1}{6}\sum_{i,j,k=1}^d \esper[A\EX{i}A\EX{j}A\EX{k}]\,x\EX{i}x\EX{j}x\EX{k})$. \medskip

If $\mathbf{A}$ is symmetric in law ($\mathbf{A}$ and $-\mathbf{A}$ have the same law), then all the third moments $\esper[A\EX{i}A\EX{j}A\EX{k}]$ vanish, and one has to look at another renormalisation of the sum $\SEC_n$. Hence, for sums of independent symmetric random variables, the new rescaled sum
$$\YEC_n = \frac{\SEC_n}{n^{1/4}}=\frac{1}{n^{1/4}}\,\sum_{k=1}^n \mathbf{A}_k $$
is mod-Gaussian convergent in the Laplace sense, with parameters $n^{1/2}I_d$ and limit 
$$\psi(\zec) = \exp\!\left(\frac{1}{24}\sum_{i,j,k,l=1}^d \kappa(A\EX{i},A\EX{j},A\EX{k},A\EX{l})\,z\EX{i}z\EX{j}z\EX{k}z\EX{l}\right),$$
where 
\begin{align*}
\kappa(A\EX{i},A\EX{j},A\EX{k},A\EX{l}) &=\esper[A\EX{i}A\EX{j}A\EX{k}A\EX{l}] - \esper[A\EX{i}A\EX{j}]\,\esper[A\EX{k}A\EX{l}] \\ 
&\quad- \esper[A\EX{i}A\EX{k}]\,\esper[A\EX{j}A\EX{l}]- \esper[A\EX{i}A\EX{l}]\,\esper[A\EX{j}A\EX{k}].
\end{align*}
The quantities $\kappa(A\EX{i},A\EX{j},A\EX{k},A\EX{l})$ are the joint cumulants of order $4$ of the coordinates of $\mathbf{A}$; we shall detail this theory in Section \ref{subsec:cumulant}. In this setting, by Theorem \ref{thm:largedeviation}, for any spherical sector $B=S\times [b,+\infty)$,
\begin{align*}
&\proba[\SEC_n \in n^{3/4}B] \\
&\simeq\frac{n^{\frac{d-2}{4}}\E^{-\frac{n^{1/2}b^2}{2}}}{(2\pi)^{\frac{d}{2}}b}\left(\int_{S_b} \exp\!\left(\frac{1}{24}\sum_{i,j,k,l=1}^d \kappa(A\EX{i},A\EX{j},A\EX{k},A\EX{l})\,x\EX{i}x\EX{j}x\EX{k}x\EX{l}\right)\mu_{\mathrm{surface}}(\!\DD{\xec})\right).
\end{align*}
A simple consequence of these multi-dimensional results is the loss of symmetry of the random walks on $\Z^{d}$ conditioned to be far away from the origin; this loss of symmetry has also been brought out in dimension $2$ in \cite{Ben19}. Thus, consider the simple $2$-dimensional random walk $\SEC_{n}=\sum_{k=1}^{n}\mathbf{A}_{k}$, where $\mathbf{A}_{k}=(\pm 1,0)$ or $(0,\pm 1)$ with probability $\frac{1}{4}$ for each direction. The non-zero fourth cumulants of $\mathbf{A}$ are $$\kappa((\Re\,\mathbf{A})^{\otimes 4}) =\kappa((\Im\,\mathbf{A})^{\otimes 4}) =\kappa((\Re\,\mathbf{A})^{\otimes 2},(\Im\,\mathbf{A})^{\otimes 2})= -\frac{1}{4},$$
so one has mod-Gaussian convergence of $n^{-1/4}\,\mathbf{S}_n$ with parameters $\frac{n^{1/2}}{2}\,I_2$ and limiting function 
$$\psi(\mathbf{z})=\exp\!\left(-\frac{(z\EX{1})^4+(z\EX{2})^4+6(z\EX{1}z\EX{2})^2}{96}\right).$$ 
Therefore, for every cylindric sector $C(r,\theta_1,\theta_2)=\{R\E^{\I \theta} \in \C\,\,|\,\, R \geq r,\,\,\theta \in (\theta_1,\theta_2)\}$, Theorem \ref{thm:largedeviation} gives the estimate:
$$\proba\!\left[\mathbf{S}_n \in n^{3/4}\,C(r,\theta_1,\theta_2)\right] = \E^{-n^{1/2}\,r^2}\,\left(\int_{\theta_1}^{\theta_2} \psi(2r\E^{\I\theta})\,\frac{\!\DD{\theta}}{2\pi}\right)(1+o(1)).$$
This leads to the following limiting result: if $\mathbf{S}_{n}=R_{n}\,\E^{\I\theta_{n}}$ with $\theta_{n} \in [0,2\pi)$, then
$$
\lim_{n \to \infty} \proba\!\left[\theta_{n} \in (\theta_{1},\theta_{2})\,\big|\,R_{n}\geq rn^{3/4}\right]= \frac{\int_{\theta_1}^{\theta_2} \psi(2r\E^{\I\theta})\DD{\theta}}{\int_{0}^{2\pi} \psi(2r\E^{\I\theta})\DD{\theta}}= \int_{\theta_{1}}^{\theta_{2}}\!\! F(r,\theta)\DD{\theta} 
$$
with $F(r,\theta)=\frac{ \exp\left(-\frac{r^{4}\,(\sin 2\theta)^{2}}{6}\right)}{\int_{0}^{2\pi} \exp\left(-\frac{r^{4}\,(\sin 2\theta)^{2}}{6}\right)\DD{\theta}}$ drawn in Figure \ref{fig:losssymmetry}.
\begin{center}
\begin{figure}[ht]
\includegraphics[width=10cm]{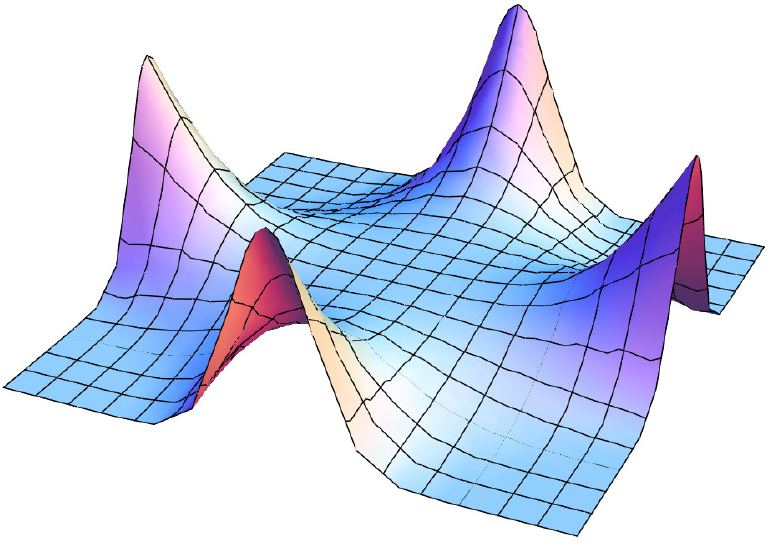}
\vspace{-5mm}
\caption{The function $F(r,\theta)$ measuring the loss of symmetry of the $2$-dimensional random walk $\SEC_{n}$ conditioned to have large radius $\|\SEC_{n}\|\geq rn^{3/4}$ (using \texttt{Mathematica}\textsuperscript{\textregistered}).\label{fig:losssymmetry}}
\end{figure}
\end{center}
\noindent This function gets concentrated around the two axes of $\R^{2}$ when $r \to \infty$, whence a loss of symmetry in comparison to the behavior of the $2$-dimensional Brownian motion (the scaling limit of the random walk). In dimension $d \geq 3$, one obtains the similar result
$$\lim_{n \to \infty} \proba\!\left[\frac{\SEC_{n}}{\|\SEC_{n}\|} \in S\,\bigg|\, \|\SEC_{n}\|\geq rn^{3/4} \right] =K(r)\int_{S} \exp\left(-\frac{r^{4}}{12\,d}\sum_{1 \leq i < j \leq d}(x\EX{i}x\EX{j})^{2}\right) \mu(\!\DD{\mathbf{x}})$$
for any set $S \subset \mathbb{S}^{d-1}$ that is measurable by hypercubic facets. The conditional probability is therefore concentrated around the axes of $\R^{d}$. 
\bigskip

On the other hand, Theorem \ref{thm:generalberryesseen} shows that if $\SEC_n = \sum_{k=1}^n \mathbf{A}_k$ and $\mathbf{A}$ is only assumed to be centered, non-degenerate ($\cov(\mathbf{A}) \in \mathrm{S}_+(d,\R)$) and with a third moment, then
$$\dconv\!\left(\frac{\SEC_n}{n^{1/2}}\,,\,\gauss{d}{\mathbf{0}}{\cov(\mathbf{A})}\right) = O\!\left(\frac{1}{n^{1/6}}\right).$$
This is not optimal, and we shall see in the next section that with the same hypotheses, one can prove a bound $O(\frac{1}{n^{1/2}})$ on the convex distance.
\end{example}

\begin{example}[Characteristic polynomials of random unitary matrices]
Let $U_n$ be a random unitary matrix in $\mathrm{U}(n)$ taken according to the Haar measure. We saw in Section \ref{subsec:modgauss} that the sequence $(\XEC_n = \log \det (I_n-U_n))_{n \in \N}$ is mod-Gaussian convergent on the strip $\mathcal{S}_{(-1,+\infty)}\times \C$, with parameters $\frac{\log n}{2}I_2$ and limit 
$$\psi(\zec)=\frac{G\!\left(1+\frac{z\EX{1}+\I z\EX{2}}{2}\right)\, G\!\left(1+\frac{z\EX{1}-\I z\EX{2}}{2}\right) }{ G(1+z\EX{1})}.$$ 
Therefore, for any $r<1$ and any circular sector $C(r,\theta_1,\theta_2)$,
$$\proba\!\left[\XEC_n \in \frac{\log n}{2}\,C(r,\theta_1,\theta_2)\right] = n^{-\frac{r^2}{4}}\,\left(\int_{\theta=\theta_1}^{\theta_2} \frac{G\!\left(1+\frac{r}{2}\E^{\I\theta}\right)\, G\!\left(1+\frac{r}{2}\E^{-\I\theta}\right)}{G(1+r\cos\theta)}\,\frac{\!\DD{\theta}}{2\pi}\right)(1+o(1)).$$
The function $H(r,\theta) = \frac{G(1+\frac{r}{2}\E^{\I\theta})\, G(1+\frac{r}{2}\E^{-\I\theta})}{G(1+r\cos\theta)}$ takes higher values for $\theta$ close to $\pi$, hence a loss of symmetry of $\XEC_n$ at scale $\log n$.
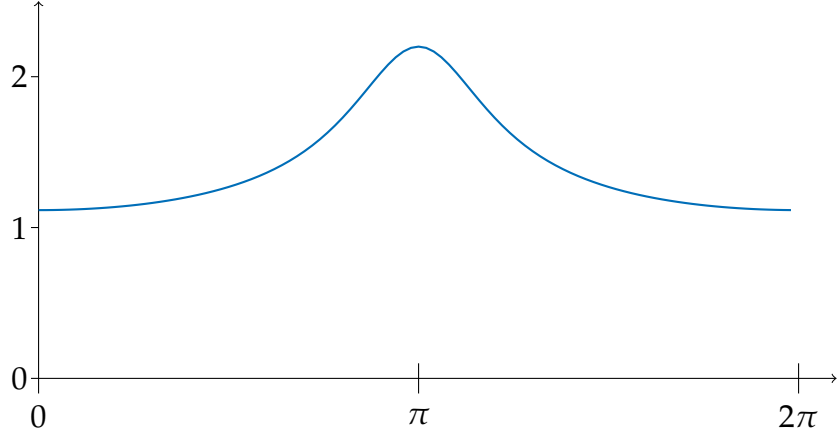
\begin{figure}[ht]
\begin{center}		
\begin{tikzpicture}[xscale=1,yscale=2]
\draw [->] (-0.1,0) -- (10.5,0);
\draw [->] (0,-0.1) -- (0,2.5);
\foreach \x in {5,10}
\draw (\x,-0.1) -- (\x,0.1);
\draw (0,-0.25) node {$0$};
\draw (5,-0.25) node {$\pi$};
\draw (10,-0.25) node {$2\pi$};
\draw (-0.1,1) -- (0,1);
\draw (-0.25,0) node {$0$};
\draw (-0.1,2) -- (0,2);
\draw (-0.25,1) node {$1$};
\draw (-0.25,2) node {$2$};
\draw [smooth,NavyBlue,thick] (.0, 1.1154) -- (.1, 1.1156) -- (.2, 1.1162) -- (.3, 1.1172) -- (.4, 1.1186) -- (.5, 1.1204) -- (.6, 1.1226) -- (.7, 1.1253) -- (.8, 1.1284) -- (.9, 1.1319) -- (1.0, 1.1359) -- (1.1, 1.1404) -- (1.2, 1.1453) -- (1.3, 1.1508) -- (1.4, 1.1568) -- (1.5, 1.1634) -- (1.6, 1.1706) -- (1.7, 1.1784) -- (1.8, 1.1868) -- (1.9, 1.1960) -- (2.0, 1.2059) -- (2.1, 1.2166) -- (2.2, 1.2282) -- (2.3, 1.2408) -- (2.4, 1.2543) -- (2.5, 1.2690) -- (2.6, 1.2849) -- (2.7, 1.3021) -- (2.8, 1.3207) -- (2.9, 1.3409) -- (3.0, 1.3629) -- (3.1, 1.3869) -- (3.2, 1.4130) -- (3.3, 1.4414) -- (3.4, 1.4725) -- (3.5, 1.5065) -- (3.6, 1.5436) -- (3.7, 1.5841) -- (3.8, 1.6283) -- (3.9, 1.6762) -- (4.0, 1.7280) -- (4.1, 1.7834) -- (4.2, 1.8419) -- (4.3, 1.9026) -- (4.4, 1.9641) --(4.5, 2.0242) -- (4.6, 2.0802) -- (4.7, 2.1289) -- (4.8, 2.1669) -- (4.9, 2.1911) -- (5.0, 2.1994) -- (5.1, 2.1911) -- (5.2, 2.1669) -- (5.3, 2.1289) -- (5.4, 2.0802) -- (5.5, 2.0242) -- (5.6, 1.9641) -- (5.7, 1.9026) -- (5.8, 1.8419) -- (5.9, 1.7834) -- (6.0, 1.7280) -- (6.1, 1.6762) -- (6.2, 1.6283) -- (6.3, 1.5841) -- (6.4, 1.5436) -- (6.5, 1.5065) -- (6.6, 1.4725) -- (6.7, 1.4414) -- (6.8, 1.4130) -- (6.9, 1.3869) -- (7.0, 1.3629) -- (7.1, 1.3409) -- (7.2, 1.3207) -- (7.3, 1.3021) -- (7.4, 1.2849) -- (7.5, 1.2690) -- (7.6, 1.2543) -- (7.7, 1.2408) -- (7.8, 1.2282) -- (7.9, 1.2166) -- (8.0, 1.2059) -- (8.1, 1.1960) -- (8.2, 1.1868) -- (8.3, 1.1784) -- (8.4, 1.1706) -- (8.5, 1.1634) -- (8.6, 1.1568) -- (8.7, 1.1508) -- (8.8, 1.1453) -- (8.9, 1.1404) -- (9.0, 1.1359) -- (9.1, 1.1319) -- (9.2, 1.1284) -- (9.3, 1.1253) -- (9.4, 1.1226) -- (9.5, 1.1204) -- (9.6, 1.1186) -- (9.7, 1.1172) -- (9.8, 1.1162) -- (9.9, 1.1156);
\end{tikzpicture}
\caption{The function $H(r,\theta)$ when $r=0.7$.}
\end{center}
\end{figure}

On the other hand, the gradient of $\psi$ at $\mathbf{0}$ is equal to $\mathbf{0}$, so by Theorem \ref{thm:modifiedberryesseen}, we have
$$\dconv\!\left(\sqrt{\frac{2}{\log n}}\,\XEC_n\,,\,\gauss{2}{\mathbf{0}}{I_2}\right) = o\!\left(\frac{1}{\sqrt{\log n}}\right).$$
A more precise analysis shows that this convex distance is actually a $O((\log n)^{-3/2})$; see the arguments of the next paragraph, and \cite[Section 3]{MN22} for the analogous result in dimension $1$ with the real part of the log-characteristic polynomial.
\end{example}
\medskip

\subsection{The method of cumulants}\label{subsec:cumulant}
In dimension $d=1$, assuming that the Laplace transform $\esper[\E^{zX_n}]$ is convergent on a disk around $z=0$, it is possible to reformulate the mod-Gaussian convergence of $(X_n)_{n \in \N}$ in terms of the behavior of the coefficients of the power series 
$$\log \esper[\E^{zX_n}]=\sum_{r=1}^\infty \frac{\kappa^{(r)}(X_n)}{r!}\,z^r,$$
 called the \emph{cumulants} of $(X_n)_{n \in \N}$. We refer to \cite[Chapters 5 and 9]{FMN16} and \cite[Section 4]{FMN19} for developments around this notion. Now, there is a notion of \emph{joint cumulants} (\emph{cf.}~\cite{LS59}) that allows one to generalise this method of cumulants, and to give a numerical criterion of mod-Gaussian convergence in arbitrary dimension $d\geq 1$.

\begin{definition}[Joint cumulant]
Given random variables $Y_1,\ldots,Y_r$ with convergent generating series $\esper[\E^{z_1Y_1+\cdots+z_rY_r}]$, their joint cumulant is
$$\kappa(Y_1,\ldots,Y_r) = \left.\frac{\partial^r}{\partial z_1 \cdots \partial z_r}\right|_{z_1=\cdots=z_r=0} \log\left(\esper[\E^{z_1Y_1+\cdots+z_rY_r}]\right).$$
\end{definition}
    
\noindent The joint cumulant is an homogeneous polynomial of degree $r$ in the joint moments of the variables $Y_1,\ldots,Y_r$. More precisely, if $\mathfrak{Q}_r$ is the set of set partitions of $\lle 1,r\rre$, then
$$\kappa(Y_1,\ldots,Y_r) = \sum_{\pi \in \mathfrak{Q}_r} \mu(\pi)\,\left(\prod_{j=1}^{\ell(\pi)} \,\esper\!\left[\prod_{i \in \pi_j} Y_j\right]\right),$$
where $\mu(\pi) = (-1)^{\ell-1}\,(\ell-1)!$ if $\pi = \pi_1 \sqcup \pi_2 \sqcup \cdots \sqcup \pi_\ell$ is a set partition with $\ell$ parts. For instance, $\kappa(X,Y)=\esper[XY]-\esper[X]\,\esper[Y]$ is the covariance, and
$$\kappa(X,Y,Z) = \esper[XYZ] - \esper[XY]\,\esper[Z] - \esper[XZ]\,\esper[Y] - \esper[YZ]\,\esper[Z] + 2\, \esper[X]\,\esper[Y]\,\esper[Z].$$
Let us recall the main properties of the joint cumulants, which follow readily from their definition:

\begin{proposition}[Properties of joint cumulants]\label{prop:propertiesjointcumulants} Let $Y_1,\ldots,Y_r$ be random variables with a convergent Laplace transform.
\begin{enumerate}
	\item The joint cumulants are multilinear and invariant by permutation of the random variables.
	\item If the variables $Y_1,\ldots,Y_r$ can be separated into two blocks of independent random variables, then $\kappa(Y_1,\ldots,Y_r)=0$.
	\item For any random variable $Y$, the classical $r$-th cumulant $\kappa^{(r)}(Y)$ is equal to the joint cumulant $\kappa(Y,\ldots,Y)$ (with $r$ occurrences of $Y$).
	\item For any random vector $\XEC$ in $\R^d$, assuming that the Laplace transform $\esper[\E^{\scal{\zec}{\XEC}}]$ is convergent, one has the expansion
\begin{align*}
\log \,\esper[\E^{\scal{\zec}{\XEC}}] &= \sum_{r \geq 1} \frac{1}{r!}\, \kappa^{(r)}(z\EX{1}X\EX{1}+\cdots + z\EX{d}X\EX{d}) \\
&=\sum_{r \geq 1} \sum_{(i_1,\ldots,i_r) \in \lle 1,d\rre^r} \frac{\kappa(X\EX{i_1},\ldots,X\EX{i_r})}{r!} \,z\EX{i_1}z\EX{i_2}\cdots z\EX{i_r}.
\end{align*}
\end{enumerate}
\end{proposition}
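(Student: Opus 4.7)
The plan is to work directly from the defining formula
\[
\kappa(Y_1,\ldots,Y_r) = \frac{\partial^r}{\partial z_1\cdots\partial z_r}\bigg|_{\zec=\mathbf{0}} F(\zec),\qquad F(\zec) = \log\esper\!\left[\E^{z_1Y_1+\cdots+z_rY_r}\right],
\]
using also the explicit moment-cumulant expansion $\kappa(Y_1,\ldots,Y_r)=\sum_{\pi\in\mathfrak{Q}_r}\mu(\pi)\prod_j\esper[\prod_{i\in\pi_j}Y_i]$ stated just above the proposition. The assumption that the joint Laplace transform converges near $\mathbf{0}$ makes $F$ jointly analytic on some polydisk around $\mathbf{0}$, which both legitimises Schwarz's theorem for exchanging mixed partials and identifies the coefficients of the multivariate Taylor series with the joint cumulants.

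For (1), multilinearity in each argument $Y_j$ is transparent from the moment expansion, since every joint moment $\esper[\prod_{i\in\pi_j}Y_i]$ is multilinear; permutation invariance then follows either from Schwarz's theorem applied to $F$, or directly from the manifest symmetry of the partition formula. For (2), suppose $(Y_1,\ldots,Y_k)$ is independent of $(Y_{k+1},\ldots,Y_r)$. The joint Laplace transform factors, hence
\[
F(\zec) = F_1(z_1,\ldots,z_k) + F_2(z_{k+1},\ldots,z_r).
\]
Each summand fails to depend on at least one of the variables $z_1,\ldots,z_r$, so the mixed derivative $\partial_{z_1}\cdots\partial_{z_r}$ annihilates both $F_1$ and $F_2$.

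For (3), specialising to $Y_1=\cdots=Y_r=Y$ gives $F(\zec)=f(z_1+\cdots+z_r)$ with $f(s)=\log\esper[\E^{sY}]$; the chain rule then yields $\partial_{z_1}\cdots\partial_{z_r}F(\zec)=f^{(r)}(z_1+\cdots+z_r)$, which evaluates at $\zec=\mathbf{0}$ to $f^{(r)}(0)=\kappa^{(r)}(Y)$. For (4), the scalar random variable $Y=\scal{\zec}{\XEC}$ admits the usual one-dimensional cumulant expansion, which is nothing but the rearrangement of the multivariate Taylor series of $F$ in $\zec$; then applying multilinearity from (1) to
\[
\kappa^{(r)}(\scal{\zec}{\XEC}) = \kappa\!\left(\textstyle\sum_{j=1}^d z\EX{j}X\EX{j},\ldots,\sum_{j=1}^d z\EX{j}X\EX{j}\right) = \sum_{(i_1,\ldots,i_r)\in\lle 1,d\rre^r} z\EX{i_1}\cdots z\EX{i_r}\,\kappa(X\EX{i_1},\ldots,X\EX{i_r})
\]
and summing over $r\geq 1$ divided by $r!$ gives the claim. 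No genuine obstacle arises; the only step deserving a brief mention is the joint analyticity of $F$ at $\mathbf{0}$, which underwrites the whole chain of elementary calculus manipulations and is guaranteed by the hypothesis on the Laplace transform.
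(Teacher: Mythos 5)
The paper gives no proof of this proposition, asserting only that the properties \enquote{follow readily from their definition}; your argument is correct and is precisely the standard derivation being implicitly invoked: differentiation of the log-Laplace transform (with analyticity near $\mathbf{0}$ justifying Schwarz's theorem and the Taylor identification), factorisation of the Laplace transform under independence for (2), the chain rule for (3), and multilinearity plus symmetry for (4). Nothing to correct.
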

\medskip

Let $(\SEC_n)_{n \in \N}$ be a sequence of random vectors in $\R^d$, and $K \in \mathrm{S}_{+}(d,\R)$. The following definition is a multi-dimensional analogue of \cite[Definition 28]{FMN19}:

\begin{definition}[Method of cumulants]\label{def:methodmulticum}
One says that $(\SEC_n)_{n \in \N}$ satisfies the hypotheses of the multi-dimensional me\-thod of cumulants with covariance matrix $K$, index $v\geq 3$ and positive parameters $(D_n,N_n,A)$ if, for any choice of coordinates:
\begin{enumerate}[label=(MC\arabic*)]
\item\label{hyp:MC1} The random vectors $\SEC_n$ are centered: $\esper[\SEC_n] = \mathbf{0}$.
\item\label{hyp:MC2} The covariance matrix of $\SEC_n$ is given by
$$\kappa(S_{n}\EX{i},S_{n}\EX{j})= N_n D_n\, K_{ij}\,\left(1+o\!\left(\left(\frac{D_n}{N_n}\right)^{1-\frac{2}{v}}\right)\right)$$
with $\lim_{n \to \infty} \frac{D_n}{N_n}=0$.
\item\label{hyp:MC3} For any $r \geq 3$, 
$$\left|\kappa(S_{n}\EX{i_1},S_{n}\EX{i_2},\ldots,S_{n}\EX{i_r})\right| \leq N_n\,(2D_n)^{r-1}\,A^r\,r^{r-2}.$$
\item\label{hyp:MC4} For any $r \in \lle 3,v-1\rre$, $\kappa(S_{n}\EX{i_1},S_{n}\EX{i_2},\ldots,S_{n}\EX{i_r})=0$.
\item\label{hyp:MC5} There exist limits
$$L_{i_1,i_2,\ldots,i_v} = \lim_{n \to \infty} \frac{\kappa(S_n\EX{i_1},S_n\EX{i_2},\ldots,S_n\EX{i_v})}{N_n\,(D_n)^{v-1}}.$$
\end{enumerate}
\end{definition}

\begin{remark}
The one-dimensional version of Definition \ref{def:methodmulticum} was given in \cite[Section 5.1]{FMN16} and \cite[Section 4.1]{FMN19}. The main difference is that we do not ask the covariance matrix of $\frac{\SEC_n}{N_n D_n}$ to be exactly equal to $K$. Indeed, though the coefficients of $\cov(\SEC_n)$ will have the same order of magnitude $N_n D_n$ in our examples, in general we shall not be able to write $\cov(\SEC_n) = N_nD_n\,K$ with $K$ constant matrix.
\end{remark} \medskip

In the last two paragraphs \ref{subsec:dependencygraph} and \ref{subsec:markov}, we shall see that many random models yield random vectors that satisfy the hypotheses of Definition \ref{def:methodmulticum}, and in particular the bound on cumulants \ref{hyp:MC3}. The purpose of this section is to give the theoretical consequences of the multi-dimensional method of cumulants. We start with the mod-Gaussian convergence and the implied large deviation results:

\begin{theorem}[Method of cumulants and mod-Gaussian convergence]\label{thm:multicumulant}
Let $(\SEC_n)_{n \in \N}$ be a sequence of random vectors that satisfies the hypotheses of the multi-dimensional method of cumulants. We set 
$$\XEC_n = \frac{1}{(N_n)^{1/v}\,(D_n)^{1-1/v}}\,\SEC_n\qquad;\qquad \YEC_n = \frac{1}{(N_nD_n)^{1/2}}\,\SEC_n.$$
\begin{enumerate}
	\item The sequence $(\XEC_n)_{n \in \N}$ is mod-Gaussian convergent in the Laplace sense, with parameters $(\frac{N_n}{D_n})^{1-\frac{2}{v}} K$ and limit
	$$\psi(\zec) = \exp\!\left(\frac{1}{v!}\sum_{i_1,\ldots,i_v=1}^d L_{i_1,\ldots,i_v}\,z\EX{i_1}z\EX{i_2}\cdots z\EX{i_v}\right).$$
	\item Therefore, we have the convergence in law $\YEC_n \rightharpoonup \gauss{d}{\mathbf{0}}{K}$.
	\item Consider an ellipsoidal sector $B = S \times [b,+\infty)$, where $S$ is a subset of $K^{1/2}(\sph^{d-1})$ that is measurable by hypercubic facets, and with non-zero surface measure. Then, 
$$\proba\!\left[\YEC_n  \in  u_n B\right] = \frac{(u_n)^{d-2} }{(2\pi)^{\frac{d}{2}}b}\, \E^{-\frac{(u_n b)^2}{2}} \left(\int_{(K^{-1/2}(S))_b} \psi(K^{-1/2}\mathbf{s})\,\mu_{\mathrm{surface}}(\!\DD{\mathbf{s}})\right)\,(1+o(1))$$
with $u_n = \sqrt{t_n} =  \left(\frac{N_n}{D_n}\right)^{\!\frac{1}{2}-\frac{1}{v}}$.
\end{enumerate}
\end{theorem}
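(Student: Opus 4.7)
The plan is to derive Part (1) by expanding the log-Laplace transform of $\XEC_n$ in joint cumulants and analyzing the expansion order by order; Parts (2) and (3) will then follow almost immediately from Proposition~\ref{prop:clt} and Theorem~\ref{thm:largedeviation} applied to $\XEC_n$.

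For Part (1), using Proposition~\ref{prop:propertiesjointcumulants}(4) and the rescaling of $\XEC_n$, I would write
$$\log\psi_n(\zec) = \sum_{r \geq 1}\frac{1}{r!}\sum_{i_1,\ldots,i_r=1}^d \frac{\kappa(S_n^{(i_1)},\ldots,S_n^{(i_r)})}{(N_n)^{r/v}(D_n)^{r(v-1)/v}}\,z^{(i_1)}\cdots z^{(i_r)} - t_n\,\frac{\zec^t K\zec}{2},$$
then examine each order in $r$. The $r=1$ term vanishes by \ref{hyp:MC1}. For $r=2$, the algebraic identity $N_nD_n/((N_n)^{2/v}(D_n)^{2(v-1)/v}) = (N_n/D_n)^{1-2/v} = t_n$ combined with \ref{hyp:MC2} shows that the second-order contribution equals $\frac{t_n}{2}\zec^t K\zec\,(1+o(1/t_n))$, exactly cancelling the Gaussian correction up to a vanishing remainder. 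Orders $3 \leq r \leq v-1$ vanish by \ref{hyp:MC4}, and the $r = v$ term converges locally uniformly in $\zec$ to $\frac{1}{v!}\sum L_{i_1,\ldots,i_v}\,z^{(i_1)}\cdots z^{(i_v)} = \log \psi(\zec)$ by \ref{hyp:MC5}, since the normalization $(N_n)^{v/v}(D_n)^{v(v-1)/v} = N_n(D_n)^{v-1}$ is precisely the one required.

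The main technical step is controlling the tail $r \geq v+1$, which rests on \ref{hyp:MC3}. After rescaling one obtains
$$\left|\kappa(X_n^{(i_1)},\ldots,X_n^{(i_r)})\right| \leq 2^{r-1} A^r r^{r-2}\,(D_n/N_n)^{r/v-1}.$$
Summing over the $d^r$ index choices, dividing by $r!$, and using Stirling in the form $r^{r-2}/r! \leq \E^r/(\sqrt{2\pi}\,r^{5/2})$, the tail contribution to $\log \psi_n(\zec)$ is majorized, uniformly for $\zec$ in a fixed compact $\{\|\zec\|_\infty \leq R\}$, by a constant times
$$\left(\frac{D_n}{N_n}\right)^{-1} \sum_{r \geq v+1} \frac{\bigl((2\E d A R)(D_n/N_n)^{1/v}\bigr)^r}{r^{5/2}}.$$
Since $D_n/N_n \to 0$, the ratio $(2\E d A R)(D_n/N_n)^{1/v}$ is eventually less than $1/2$, and a geometric-type estimate yields an overall bound of order $(D_n/N_n)^{1/v} \to 0$. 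Hence the tail converges to $0$ locally uniformly on $\C^d$, completing the proof of mod-Gaussian convergence in the Laplace sense.

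Parts (2) and (3) are then essentially cosmetic. Noting that $\YEC_n = \XEC_n/\sqrt{t_n}$ with $t_n = (N_n/D_n)^{1-2/v}$, Part (2) follows from Proposition~\ref{prop:clt}. For Part (3), one has the identity of events $\{\YEC_n \in u_n B\} = \{\XEC_n \in t_n B\}$ with $u_n = \sqrt{t_n}$, so Theorem~\ref{thm:largedeviation} applied to $(\XEC_n)_{n \in \N}$ yields the claimed asymptotics after the trivial rewritings $(t_n/2\pi)^{d/2}/(t_n b) = u_n^{d-2}/((2\pi)^{d/2} b)$ and $\E^{-t_nb^2/2} = \E^{-(u_nb)^2/2}$.
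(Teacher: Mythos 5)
Your proposal is correct and follows essentially the same route as the paper's proof: expand $\log\esper[\E^{\scal{\zec}{\XEC_n}}]$ in joint cumulants via Proposition \ref{prop:propertiesjointcumulants}(4), identify the $r=2$ and $r=v$ terms through \ref{hyp:MC2} and \ref{hyp:MC5}, kill the intermediate orders with \ref{hyp:MC4}, and bound the tail $r\geq v+1$ by \ref{hyp:MC3} to get a remainder of order $(D_n/N_n)^{1/v}$, after which parts (2) and (3) are immediate from Proposition \ref{prop:clt} and Theorem \ref{thm:largedeviation}. The only cosmetic difference is your use of $\|\zec\|_\infty$ with a $d^r$ counting factor where the paper works directly with $\|\zec\|_1$, which is equivalent up to constants.
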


\begin{proof}
The mod-Gaussian convergence follows readily from the hypotheses in Definition \ref{def:methodmulticum}, and from the fourth item of Proposition \ref{prop:propertiesjointcumulants}, which relates the joint cumulants of the coordinates of $\SEC_n$ to the Taylor expansion of the log-Laplace transform of $\SEC_n$. Indeed, 
\begin{align*}
\log \esper[\E^{\scal{\zec}{\XEC_n}}] 
&= \sum_{r \geq 1} \sum_{i_1,\ldots,i_r=1}^d \frac{\kappa(S_n\EX{i_1},\ldots,S_n\EX{i_r})}{r!\,N_n(D_n)^{r-1}}\,\left(\frac{D_n}{N_n}\right)^{\!\frac{r}{v}-1}\,z\EX{i_1}z\EX{i_2}\cdots z\EX{i_r} \\
&=\frac{t_n}{2} \sum_{i,j=1}^d \frac{\cov(S_n\EX{i},S_{n}\EX{j})}{N_nD_n}\,z\EX{i}z\EX{j} + \frac{1}{v!}\sum_{i_1,\ldots,i_v=1}^d \frac{\kappa(S_n\EX{i_1},\ldots,S_n\EX{i_v})}{N_n(D_n)^{v-1}}\,z\EX{i_1}\cdots z\EX{i_v} \\
&\quad+ \text{remainder}
\end{align*}
with a remainder smaller than
 \begin{align*}
 &\sum_{r \geq v+1} \sum_{i_1,\ldots,i_r=1}^d \frac{|\kappa(S_n\EX{i_1},\ldots,S_n\EX{i_r})|}{r!\,N_n(D_n)^{r-1}}\,\left(\frac{D_n}{N_n}\right)^{\!\frac{r}{v}-1}\,|z\EX{i_1}z\EX{i_2}\cdots z\EX{i_r}|\\
 &\leq \frac{N_n}{D_n} \sum_{r \geq v+1} \frac{2^{r-1}\,A^r\,r^{r-2}}{r!} \left(\left(\frac{D_n}{N_n}\right)^{\!\frac{1}{v}} \|\zec\|_1 \right)^r.
 \end{align*}
 The power series is convergent and a $O((D_n/N_n)^{1+\frac{1}{v}} (\|\zec\|_1)^{v+1})$; hence, the remainder goes to $0$. Now, the term with covariances is equivalent to $t_n\,\frac{\zec^t K \zec}{2}$ by Hypothesis \ref{hyp:MC2}, and the term of order $v$ is equivalent to 
 $$\frac{1}{v!}\sum_{i_1,\ldots,i_v=1}^d L_{i_1,\ldots,i_v}\,z\EX{i_1}\cdots z\EX{i_v}$$
 by Hypothesis \ref{hyp:MC5}. This ends the proof of the mod-Gaussian convergence, and the two other points are then immediate consequences of Proposition \ref{prop:clt} and Theorem \ref{thm:largedeviation}.
\end{proof}
\medskip

We now focus on the speed of convergence of $\YEC_n$ to $\gauss{d}{\mathbf{0}}{K}$. By Theorems \ref{thm:generalberryesseen} and \ref{thm:modifiedberryesseen}, under the assumptions \ref{hyp:MC1}-\ref{hyp:MC5}, 
$$\dconv\left(\YEC_n\,,\,\gauss{d}{\mathbf{0}}{K}\right) = o\!\left(\left(\frac{D_n}{N_n}\right)^{\!\frac{1}{2}-\frac{1}{v}}\right),$$
because $\nabla \psi(\mathbf{0})=\mathbf{0}$. Under a slightly stronger hypothesis, one can show a better bound:

\begin{theorem}[Method of cumulants and speed of convergence]\label{thm:superspeed}
Let $(\SEC_n)_{n \in \N}$ be a sequence of random vectors that satisfies \ref{hyp:MC1},
\begin{enumerate}[label=(MC\arabic*{'}),start=2]
	\item \label{hyp:MC2prime} The covariance matrix of $\SEC_n$ is given by
$$\kappa(S_{n}\EX{i},S_{n}\EX{j})= N_n D_n\, K_{ij}\,\left(1+O\!\left(\left(\frac{D_n}{N_n}\right)^{\frac{1}{2}}\right)\right).$$
\end{enumerate}
and \ref{hyp:MC3}. There exists a constant $C=C(d,K,A,B)$ that depends only on $d$, $K$, the constant $A$ of \ref{hyp:MC3} and the constant $B$ in the $O(\cdot)$ of \ref{hyp:MC2prime}, such that for $n$ large enough,
$$ \dconv(\YEC_n\,,\,\gauss{d}{\mathbf{0}}{K}) \leq C\,\sqrt{\frac{D_n}{N_n}}.$$
\end{theorem}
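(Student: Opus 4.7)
The plan is to apply Corollary \ref{cor:fouriertoconvdistance} with $\mu_n$ the law of $\YEC_n$ and $\nu = \gauss{d}{\mathbf{0}}{K}$, choosing the smoothing parameter $\eps = c\sqrt{D_n/N_n}$ with $c$ a constant to be fixed later. Since $\nu$ is regular with a constant controlled by $\rho(K^{-1})$, the boundary term $R\eps$ is automatically of the desired order; the whole work will be to show that
\[ \Delta_\eps(\widehat{\mu}_n,\widehat{\nu}) = O\!\left(\sqrt{D_n/N_n}\right) \]
with constant depending only on $d$, $K$, $A$, $B$, uniformly on the integration box $D^d_\eps$, whose side length grows like $\sqrt{N_n/D_n}$.

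Writing $\widehat{\mu}_n(\ZIEC) = \widehat{\nu}(\ZIEC)\,\Theta_n(\ZIEC)$ with $\Theta_n(\ZIEC) = \widehat{\mu}_n(\ZIEC)\,\E^{\ZIEC^t K\ZIEC/2}$, the factorisation $\widehat{\mu}_n - \widehat{\nu} = \widehat{\nu}\,(\Theta_n - 1)$ reduces the problem to controlling $\Theta_n - 1$ and its partial derivatives of order up to $d+1$. The cumulant expansion of $\log\widehat{\mu}_n$ combined with \ref{hyp:MC2prime} and \ref{hyp:MC3} gives
\[ \log\Theta_n(\ZIEC) = Q_n(\ZIEC) + \sum_{r\geq 3}\frac{\I^r}{r!}\sum_{i_1,\ldots,i_r=1}^d \frac{\kappa(S_n\EX{i_1},\ldots,S_n\EX{i_r})}{(N_nD_n)^{r/2}}\zeta\EX{i_1}\cdots\zeta\EX{i_r}, \]
where $Q_n$ is a quadratic form whose coefficients are $O(B\sqrt{D_n/N_n})$ by \ref{hyp:MC2prime}. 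By multilinearity of cumulants and \ref{hyp:MC3}, the $r$-th summand is bounded by $\frac{2^{r-1}A^r r^{r-2}}{r!}(D_n/N_n)^{r/2-1}\|\ZIEC\|_1^r$, and Stirling's formula shows that this series converges absolutely on the polydisk $\{\|\ZIEC\|_1 \leq R_n\}$ with $R_n = \alpha\sqrt{N_n/D_n}$, for any fixed $\alpha < (2\E A)^{-1}$. Choosing $c$ large enough (depending only on $d$ and $A$) ensures that $D^d_\eps \subset \{\|\ZIEC\|_1 \leq R_n\}$.

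On this box, extracting the factor $\sqrt{D_n/N_n}$ from $Q_n$ and from the $r=3$ cumulant term, and bounding the geometrically smaller tail coming from $r\geq 4$, one obtains
\[ |\log\Theta_n(\ZIEC)| \leq C_1\,\sqrt{D_n/N_n}\,(1+\|\ZIEC\|_1)^{3} \]
with $C_1 = C_1(d,K,A,B)$; in particular $|\log\Theta_n|$ is bounded by an absolute constant, so $|\Theta_n|\leq \E^{C_0}$ and $|\Theta_n - 1| \leq C_2\sqrt{D_n/N_n}\,(1+\|\ZIEC\|_1)^3$ via the inequality $|\E^x - 1|\leq |x|\E^{|x|}$. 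Differentiating the cumulant series of $\log\Theta_n$ term by term gives analogous bounds for its partial derivatives of order $|\boldsymbol\alpha|\leq d+1$, and Fa\`a di Bruno's formula applied to $\Theta_n = \exp(\log\Theta_n)$ then yields uniformly on $D^d_\eps$
\[ \left|\frac{\partial^{|\boldsymbol\alpha|}(\Theta_n - 1)}{\partial\ZIEC^{\boldsymbol\alpha}}(\ZIEC)\right| \leq C_{|\boldsymbol\alpha|}\,\sqrt{D_n/N_n}\,P_{|\boldsymbol\alpha|}(\|\ZIEC\|_1), \qquad |\boldsymbol\alpha|\leq d+1, \]
for some polynomials $P_{|\boldsymbol\alpha|}$ with coefficients depending only on $d$, $K$, $A$, $B$. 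Expanding $\partial^{|\boldsymbol\beta|}(\widehat{\mu}_n - \widehat{\nu})$ by Leibniz's rule exactly as in the proof of Lemma \ref{lem:technicdeltaeps} and bounding the derivatives of $\widehat{\nu}$ by Hermite polynomials times $\widehat{\nu}(\ZIEC) = \E^{-\ZIEC^tK\ZIEC/2}$, every integrand combines a Gaussian factor, a polynomial factor, and the prefactor $\sqrt{D_n/N_n}$; integrating over $D^d_\eps$ (or even over $\R^d$) yields $\Delta_\eps(\widehat{\mu}_n,\widehat{\nu}) = O(\sqrt{D_n/N_n})$, and Corollary \ref{cor:fouriertoconvdistance} concludes.

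The main obstacle is the third step: controlling the partial derivatives of $\Theta_n$ uniformly across the whole growing box $D^d_\eps$, in particular checking that the higher-order cumulant contributions in $\log\Theta_n$ and its derivatives do not blow up as $\|\ZIEC\|_1$ approaches the radius of convergence $R_n$. This is exactly where the geometric factor $(D_n/N_n)^{(r-2)/2}$ inherited from \ref{hyp:MC3} is essential: it guarantees that only the $r=3$ cumulant term dictates the final order $\sqrt{D_n/N_n}$, while the tail $\sum_{r\geq 4}$ remains a geometrically convergent series once $c$ has been chosen appropriately.
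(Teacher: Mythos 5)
Your overall strategy (smoothing inequality of Corollary \ref{cor:fouriertoconvdistance} with $\eps \asymp \sqrt{D_n/N_n}$, factorisation $\widehat{\mu}_n=\widehat{\nu}\,\Theta_n$, cumulant expansion of $\log\Theta_n$ starting effectively at order $3$ with the factor $(D_n/N_n)^{1/2}$, then Leibniz and Fa\`a di Bruno) is the same as the paper's, up to one organisational difference: the paper first replaces $\nu$ by the Gaussian $\gauss{d}{\mathbf{0}}{K_n}$ with the \emph{exact} covariance of $\SEC_n$ (Lemma \ref{lem:technicaldistancegaussian}) and treats the covariance mismatch separately, whereas you keep it as the quadratic term $Q_n$ inside $\log\Theta_n$; that choice is legitimate since the coefficients of $Q_n$ are $O(\sqrt{D_n/N_n})$.

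There is, however, a genuine gap in the middle of your argument. The box $D^d_\eps$ has side of order $\sqrt{N_n/D_n}$, so on it $\|\ZIEC\|_1$ ranges up to $c\,d\,\sqrt{N_n/D_n}$, and your own bound $|\log\Theta_n(\ZIEC)|\leq C_1\sqrt{D_n/N_n}\,(1+\|\ZIEC\|_1)^3$ reaches order $N_n/D_n\to\infty$ near the edge of the box (already the quadratic piece $Q_n$ reaches order $\sqrt{N_n/D_n}$ there). Hence the assertion ``in particular $|\log\Theta_n|$ is bounded by an absolute constant'' is false, and with it the uniform bounds $|\Theta_n-1|\leq C_2\sqrt{D_n/N_n}\,(1+\|\ZIEC\|_1)^3$ and the polynomial bounds $P_{|\boldsymbol\alpha|}$ on the derivatives of $\Theta_n$ with $n$-independent constants: all of these acquire an extra factor of the type $\exp\bigl(C\sqrt{D_n/N_n}\,\|\ZIEC\|^3\bigr)$, which is huge on part of the box. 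The argument can be repaired, but only by never separating $\Theta_n$ from the Gaussian factor: one must check, as in the paper's Lemma \ref{lem:headache}, that after choosing the smoothing constant $c$ large enough \emph{depending on $K$ as well as on $d$ and $A$} (not only on $d$ and $A$, as you state), one has $-\tfrac{1}{2}\ZIEC^tK\ZIEC + C\sqrt{D_n/N_n}\,\|\ZIEC\|^3 \leq -\tfrac{1}{4}\ZIEC^tK\ZIEC$ on $D^d_\eps$ for $n$ large, so that the product of the Gaussian with the exponential residue stays integrable; only after this absorption can you extract the prefactor $\sqrt{D_n/N_n}$ from the order-$3$ (and covariance-error) terms and integrate. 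Your closing paragraph correctly identifies this absorption as the crux, but the chain of uniform estimates you actually write down bypasses it and is therefore not valid as stated.
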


The strategy of proof of Theorem \ref{thm:superspeed} is the following:
\begin{itemize}
	\item We first deal with the fact that $K_n = \cov(\SEC_n)/(N_nD_n)$ is not exactly equal to $K$, by computing the convex distance between two Gaussian distributions (see Lemma \ref{lem:technicaldistancegaussian} below).
	\item Then, we prove a bound on $\Delta_{\eps}(\widehat{\mu}_n,\gauss{d}{\mathbf{0}}{K_n})$ with $\eps=O(\sqrt{D_n/N_n})$ (Lemma \ref{lem:headache}), by using the bounds on cumulants and the fast decay of $\E^{-\ZIEC^t K_n\ZIEC/2}$.
\end{itemize}

\begin{lemma}\label{lem:technicaldistancegaussian}
Let $K_1$ and $K_2$ be two positive-definite symmetric matrices; $\nu_1 = \gauss{d}{\mathbf{0}}{K_1}$ and $\nu_2 = \gauss{d}{\mathbf{0}}{K_2}$. If $K_1$ is fixed, then for any $K_2$ such that $K_2-K_1$ is sufficiently small, one has
$$\Delta_{\eps}(\widehat{\nu}_1,\widehat{\nu}_2) \leq 8\,\sqrt{(2\pi)^d (d+1)!}\,\frac{(\max(1, d\rho(K_1^{-1})))^{\frac{d+3}{2}}}{\sqrt{\det K_1}}\,\rho(K_2-K_1)$$
for any $\eps>0$.
\end{lemma}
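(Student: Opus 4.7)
The plan is to write $\widehat\nu_2-\widehat\nu_1$ as an integral along the straight path between the two covariance matrices, and then bound each term produced by Leibniz's formula using the same Hermite-polynomial change of variable that appeared in Lemma \ref{lem:technicdeltaeps}. Since the bound claimed does not depend on $\eps$, it suffices to prove the corresponding estimate on the whole $\R^d$ (the integrand decays like a Gaussian, so extending to $\R^d$ only helps).

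Set $\Delta=K_2-K_1$ and $K_t=K_1+t\Delta$ for $t\in[0,1]$. Assume from the start that $\rho(\Delta)\leq \frac{1}{2\rho(K_1^{-1})}$; this is the precise meaning of ``$K_2-K_1$ sufficiently small'' and it guarantees $K_t\succeq\frac{1}{2}K_1$, hence $\rho(K_t^{-1})\leq 2\rho(K_1^{-1})$ and $\det K_t\geq 2^{-d}\det K_1$ uniformly in $t\in[0,1]$. A direct differentiation under the integral sign gives
$$
\widehat\nu_2(\ZIEC)-\widehat\nu_1(\ZIEC)=-\int_0^1 \frac{\ZIEC^t\Delta\ZIEC}{2}\,\E^{-\frac{\ZIEC^tK_t\ZIEC}{2}}\DD{t},
$$
and hence for any multi-index $\boldsymbol\beta$ with $|\boldsymbol\beta|\leq d+1$,
$$
\frac{\partial^{|\boldsymbol\beta|}(\widehat\nu_2-\widehat\nu_1)}{\partial\ZIEC^{\boldsymbol\beta}}(\ZIEC) =-\int_0^1\sum_{\substack{\boldsymbol\alpha\leq\boldsymbol\beta \\ |\boldsymbol\alpha|\leq 2}}\binom{\boldsymbol\beta}{\boldsymbol\alpha}\,\frac{\partial^{|\boldsymbol\alpha|}}{\partial\ZIEC^{\boldsymbol\alpha}}\!\left(\tfrac{\ZIEC^t\Delta\ZIEC}{2}\right)\,\frac{\partial^{|\boldsymbol\beta-\boldsymbol\alpha|}\E^{-\ZIEC^tK_t\ZIEC/2}}{\partial\ZIEC^{\boldsymbol\beta-\boldsymbol\alpha}}\DD{t},
$$
the sum being restricted to $|\boldsymbol\alpha|\leq 2$ since the quadratic form has vanishing derivatives of higher order. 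The three kinds of factors coming from the quadratic form are bounded, using $|\Delta_{ij}|\leq\rho(\Delta)$ for all $i,j$, by $\rho(\Delta)\|\ZIEC\|^2/2$, $\rho(\Delta)\|\ZIEC\|_\infty$ and $\rho(\Delta)$ respectively.

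The next step is to estimate, for each $t\in[0,1]$ and each fixed $\boldsymbol\alpha\leq\boldsymbol\beta$ with $|\boldsymbol\alpha|\leq 2$, the integral
$$
I_{\boldsymbol\alpha,t}=\int_{\R^d}\|\ZIEC\|^{2-|\boldsymbol\alpha|}\left|\frac{\partial^{|\boldsymbol\beta-\boldsymbol\alpha|}\E^{-\ZIEC^tK_t\ZIEC/2}}{\partial\ZIEC^{\boldsymbol\beta-\boldsymbol\alpha}}\right|\DD{\ZIEC}.
$$
Performing the change of variables $\XIEC=K_t^{1/2}\ZIEC$ (with Jacobian $(\det K_t)^{-1/2}$) and expanding the partial derivatives via $\partial_{\zeta^{(i)}}=\sum_j(K_t^{1/2})_{ij}\partial_{\xi^{(j)}}$, one writes the result as a sum of terms $\prod_{i,k}|(K_t^{1/2})_{i,j_{i,k}}|\,|H_{\boldsymbol\gamma(\mathbf{j})}(\XIEC)|\,\E^{-\|\XIEC\|^2/2}$ times a monomial in $\XIEC$ coming from the factor $\|K_t^{-1/2}\XIEC\|^{2-|\boldsymbol\alpha|}$. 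Exactly as in the proof of Lemma \ref{lem:technicdeltaeps}, the Cauchy--Schwarz bound $\int|H_{\boldsymbol\gamma}||H_{\boldsymbol\delta}|\E^{-\|\XIEC\|^2/2}\leq\sqrt{(2\pi)^d|\boldsymbol\gamma|!|\boldsymbol\delta|!}$ together with $\max_i\sum_j|(K_t^{1/2})_{ij}|\leq\sqrt{d\,\rho(K_t^{-1})^{-1}}$ (and the symmetric statement for $K_t^{-1/2}$) gives
$$
I_{\boldsymbol\alpha,t}\leq\sqrt{(2\pi)^d(d+1)!}\,\frac{\bigl(d\,\rho(K_t^{-1})\bigr)^{(d+3-|\boldsymbol\alpha|)/2}}{\sqrt{\det K_t}}.
$$

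It remains to substitute the uniform bounds $\rho(K_t^{-1})\leq 2\rho(K_1^{-1})$ and $\det K_t\geq 2^{-d}\det K_1$, and to sum the contributions of the three values of $|\boldsymbol\alpha|\in\{0,1,2\}$ weighted by $\rho(\Delta)/2$, $\rho(\Delta)$ and $\rho(\Delta)$ respectively, together with the binomial coefficients $\binom{\boldsymbol\beta}{\boldsymbol\alpha}$ (whose total is bounded by $2^{|\boldsymbol\beta|}\leq 2^{d+1}$). The exponent $(d+3-|\boldsymbol\alpha|)/2$ is dominated by $(d+3)/2$, which is why the factor $\max(1,d\rho(K_1^{-1}))^{(d+3)/2}$ appears (the $\max$ with $1$ handles the case $d\rho(K_1^{-1})<1$, where the smaller powers give larger bounds). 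Collecting the universal numerical constants (the $2^d$ from $\det K_t$, the powers of $2$ from $\rho(K_t^{-1})$, and $2^{d+1}$ from the binomial sum) and simplifying, one obtains the announced bound
$$
\int_{\R^d}\left|\frac{\partial^{|\boldsymbol\beta|}(\widehat\nu_2-\widehat\nu_1)}{\partial\ZIEC^{\boldsymbol\beta}}\right|\DD{\ZIEC}\leq 8\sqrt{(2\pi)^d(d+1)!}\,\frac{\bigl(\max(1,d\rho(K_1^{-1}))\bigr)^{(d+3)/2}}{\sqrt{\det K_1}}\,\rho(\Delta),
$$
uniformly in $|\boldsymbol\beta|\leq d+1$. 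Taking the maximum over $\boldsymbol\beta$ and restricting the domain from $\R^d$ to $D_\eps^d$ can only decrease the integral, so the same inequality holds for $\Delta_\eps(\widehat\nu_1,\widehat\nu_2)$.

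The main bookkeeping obstacle is the very last step: one must check that the numerical constants produced by the Hermite estimates, the binomial sums over $\boldsymbol\alpha\leq\boldsymbol\beta$ with $|\boldsymbol\alpha|\leq 2$, and the powers of $2$ coming from the uniform control of $K_t$ all fit inside the single prefactor $8$. This is a matter of Stirling estimates completely parallel to Lemma \ref{lem:technicdeltaeps}, but with the mild additional care that the two powers $\rho(K_1^{-1})^{(d+1)/2}$ and $\rho(K_1^{-1})^{(d+3)/2}$ must be reconciled through the $\max(1,d\rho(K_1^{-1}))$ factor.
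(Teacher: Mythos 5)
Your strategy is genuinely different from the paper's: you interpolate along $K_t=K_1+t(K_2-K_1)$ and differentiate under the integral sign, whereas the paper first diagonalizes $(K_1)^{-1/2}K_2(K_1)^{-1/2}=RD^2R^t$, reduces to comparing $\E^{-\|\XIEC\|^2/2}$ with $\E^{-\|D\XIEC\|^2/2}$, and then telescopes coordinate by coordinate, interpolating only in each eigenvalue $\lambda\EX{j}$ via one-dimensional Hermite identities. Your scheme is structurally cleaner (no case analysis on the eigenvalues and on the multi-index), and the preparatory steps are sound: the identity $\widehat\nu_2-\widehat\nu_1=-\int_0^1\frac{\ZIEC^t\Delta\ZIEC}{2}\,\E^{-\ZIEC^tK_t\ZIEC/2}\DD{t}$, the restriction of Leibniz to $|\boldsymbol\alpha|\leq 2$, the smallness condition $\rho(\Delta)\leq\frac{1}{2\rho(K_1^{-1})}$ with the uniform controls $\rho(K_t^{-1})\leq 2\rho(K_1^{-1})$ and $\det K_t\geq 2^{-d}\det K_1$, and the reduction from $D^d_\eps$ to $\R^d$.

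The quantitative step fails, though. The inequality $\max_i\sum_j|(K_t^{1/2})_{ij}|\leq\sqrt{d\,\rho(K_t^{-1})^{-1}}$ is false: $\rho(K_t^{-1})^{-1}$ is the \emph{smallest} eigenvalue of $K_t$, and the correct bound is $\max_i\sum_j|(K_t^{1/2})_{ij}|\leq\sqrt{d}\,\rho(K_t^{1/2})=\sqrt{d\,\rho(K_t)}$ (try $K_t=\mathrm{diag}(100,1)$). Since each of the $|\boldsymbol\beta-\boldsymbol\alpha|\leq d+1$ transferred derivatives carries one such factor, what your computation actually yields is, up to small factors, $I_{\boldsymbol\alpha,t}\leq\sqrt{(2\pi)^d(d+1)!}\,(d\,\rho(K_t))^{|\boldsymbol\beta-\boldsymbol\alpha|/2}\,\rho(K_t^{-1})^{(2-|\boldsymbol\alpha|)/2}\,(\det K_t)^{-1/2}$, i.e.\ positive powers of $\rho(K_1)$, not the displayed $(d\,\rho(K_t^{-1}))^{(d+3-|\boldsymbol\alpha|)/2}$; consequently the final step "collect the constants into the prefactor $8$" cannot be carried out, and the first-derivative term should be bounded by $\rho(\Delta)\,\|\ZIEC\|_1$ rather than $\rho(\Delta)\,\|\ZIEC\|_\infty$. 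You should also not try to force the displayed constant: the paper's own proof bounds the row sums of $M=(K_1)^{-1/2}R$, although the chain rule for $\ZIEC=M\XIEC$ involves the entries of $K_1^{1/2}R$ rather than those of $M$, so the same inversion occurs there; and a direct check ($d=2$, $K_1=T\,I_2$ with $T$ large, $K_2=\mathrm{diag}(T+\delta,T)$, $\boldsymbol\beta=(3,0)$, $\eps$ small: the left-hand side is at least of order $\delta/\sqrt{T}$ while the right-hand side is of order $\delta/T$) shows the stated constant cannot hold without further normalisation of $K_1$. What your corrected interpolation argument does prove is $\Delta_\eps(\widehat\nu_1,\widehat\nu_2)\leq C(d,K_1)\,\rho(K_2-K_1)$ whenever $\rho(K_2-K_1)\leq\frac{1}{2\rho(K_1^{-1})}$, with an explicit $C(d,K_1)$ depending on $\rho(K_1)$, $\rho(K_1^{-1})$ and $\det K_1$; this is all that is used in Lemma \ref{lem:headache} and Theorem \ref{thm:superspeed}, where $K_1=K$ is fixed.
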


\begin{proof}
Since $(K_1)^{-1/2}K_2(K_1)^{-1/2}$ is a positive-definite symmetric matrix, there exists positive eigenvalues $\lambda\EX{1}, \ldots, \lambda\EX{d}$ and an orthogonal matrix $R \in \SO(\R^d)$, such that 
$$R^t (K_1)^{-1/2} K_2 (K_1)^{-1/2} R = D^2=\mathrm{diag}(\lambda\EX{1},\ldots,\lambda\EX{d}).$$
 Set $M=(K_1)^{-1/2}R$. If $\boldsymbol\beta$ is a multi-index of total weight $|\boldsymbol\beta|\leq d+1$, then
\begin{align*}
\int_{D^d_{\eps}}& \left|\frac{\partial^{|\boldsymbol\beta|}(\widehat{\nu}_1-\widehat{\nu}_2)(\ZIEC)}{\partial \ZIEC^{\boldsymbol\beta}}\right|\DD{\ZIEC} \\
&\leq \int_{\R^d} \left|\prod_{i=1}^d \left(\sum_{j=1}^d M_{ij} \frac{\partial}{\partial \xi\EX{j}} \right)^{\!\beta\EX{i}}  \left(\E^{-\frac{\ZIEC{}^t K_1 \ZIEC}{2}} - \E^{-\frac{\ZIEC{}^t K_2 \ZIEC}{2}} \right)\right|\DD{\ZIEC} \\
&\leq \frac{1}{\sqrt{\det K_1}}\,\left(\max_{i \in \lle 1,d\rre} \sum_{j=1}^d |M_{ij}|\right)^{\!|\boldsymbol\beta|} \,\max_{|\boldsymbol\alpha|=|\boldsymbol\beta|} \int_{\R^d} \left|\frac{\partial^{|\boldsymbol\alpha|}}{\partial\XIEC^{\boldsymbol\alpha}} \left(\E^{-\frac{\|\XIEC\|^2}{2}} - \E^{-\frac{\|D\XIEC\|^2}{2}} \right)\right|\DD{\XIEC}.
\end{align*}
We evaluate separately each term. Since $R$ is an orthogonal matrix and $(K_1)^{-1/2}$ is a symmetric matrix, 
\begin{align*}
\max_{i \in \lle 1,d\rre} \sum_{j=1}^d|M_{ij}| &= \max_{\|\mathbf{v} \|_\infty \leq 1} \| M\mathbf{v}\|_{\infty} \\
&\leq \max_{\|\mathbf{v}\|_2 \leq \sqrt{d} } \|M\mathbf{v}\|_2  = \max_{\|\mathbf{v}\|_2 \leq \sqrt{d} } \|(K_1)^{-1/2}\,\mathbf{v}\|_2  \leq \sqrt{d\,\rho(K_1^{-1})}.
\end{align*}
On the other hand,
\begin{align*}
I_{\boldsymbol\alpha}&=\int_{\R^d} \left|\frac{\partial^{|\boldsymbol\alpha|}}{\partial \XIEC^{\boldsymbol\alpha}} \left(\E^{-\frac{\|\XIEC\|^2}{2}} - \E^{-\frac{\|D\XIEC\|^2}{2}} \right)\right|\DD{\XIEC} \\
&= \int_{\R^d} \left|\left(\prod_{i=1}^d H_{\alpha\EX{i}}(\xi\EX{i})\,\E^{-\frac{(\xi\EX{i})^2}{2}}\right) - \left(\prod_{i=1}^d (\lambda\EX{i})^{\frac{\alpha\EX{i}}{2}}\, H_{\alpha\EX{i}}((\lambda\EX{i})^{\frac{1}{2}}\xi\EX{i})\,\E^{-\frac{\lambda\EX{i}(\xi\EX{i})^2}{2}}\right)\right|\DD{\XIEC}.
\end{align*}
 The multi-index $\boldsymbol\alpha=(\alpha\EX{1},\ldots,\alpha\EX{d})$ being fixed, we can assume without loss of generality that the eigenvalues $(\lambda\EX{i})_{i \in \lle 1,d\rre}$ are ordered as follows:
\begin{enumerate}
\setcounter{enumi}{-1}
	\item For $i \in \lle 1,d_0 \rre$, $\alpha\EX{i} =0$. Moreover, $\lambda\EX{1} \leq \lambda\EX{2} \leq \cdots \leq \lambda\EX{d_0}$, and we denote $e_0 \in \lle 0,d_0\rre$ the largest index such that $\lambda\EX{e_0}\leq 1$.
	\item For $i \in \lle d_0+1,d_1\rre$, $\alpha\EX{i}=1$. 
	\item For $i \in \lle d_1+1,d\rre$, $\alpha\EX{i}\geq 2$. Moreover, $\lambda\EX{d_1+1} \leq \lambda\EX{d_1+2} \leq \cdots \leq \lambda\EX{d}$, and we denote $e_2 \in \lle d_1,d\rre$ the largest index such that $\lambda\EX{e_2}\leq 1$.
\end{enumerate}
In the following, we shall use several times the inequality 
$$\int_{\R} |H_{\alpha}(\xi)\,H_{\beta}(\xi)|\,\E^{-\frac{\xi^2}{2}}\DD{\xi} \leq \sqrt{2\pi \,\alpha!\,\beta!},$$
and evaluate the terms of the sum in the right-hand side of
\begin{align*}
 I_{\boldsymbol\alpha} & \leq \sum_{j=1}^d \int_{\R^d} \left| \prod_{i=1}^{j-1} (\lambda\EX{i})^{\frac{\alpha\EX{i}}{2}}\, H_{\alpha\EX{i}}((\lambda\EX{i})^{\frac{1}{2}}\xi\EX{i})\,\E^{-\frac{\lambda\EX{i}(\xi\EX{i})^2}{2}} \prod_{i=j+1}^{d} H_{\alpha\EX{i}}(\xi\EX{i})\,\E^{-\frac{(\xi\EX{i})^2}{2}}\right| \\
 &\qquad\qquad\qquad\qquad \times\Delta(\xi\EX{j},\lambda\EX{j},\alpha\EX{j})\DD{\XIEC} \\
 &\leq \sum_{j=1}^d \left(\prod_{i \neq j}\sqrt{2\pi(\alpha\EX{i})!}\right)\left(\prod_{i<j} (\lambda\EX{i})^{\frac{\alpha\EX{i}-1}{2}}\right)\,\int_{\R} \Delta(\xi\EX{j},\lambda\EX{j},\alpha\EX{j})\DD{\xi\EX{j}},
\end{align*}
where $\Delta(\xi\EX{j},\lambda\EX{j},\alpha\EX{j}) = \left|H_{\alpha\EX{j}}(\xi\EX{j})\,\E^{-\frac{(\xi\EX{j})^2}{2}}-(\lambda\EX{j})^{\frac{\alpha\EX{j}}{2}}\,H_{\alpha\EX{j}}((\lambda\EX{j})^{\frac{1}{2}}\xi\EX{j})\,\E^{-\frac{\lambda\EX{j}(\xi\EX{j})^2}{2}}\right|$. \medskip

We abbreviate $(\lambda\EX{j})^{\frac{\alpha\EX{j}-1}{2}} = \rho\EX{j}$.
\begin{enumerate}
\setcounter{enumi}{-1}
	\item Terms with $j\in \lle 1,d_0\rre$. We then have $\int_{\R} \Delta(\xi\EX{j},\lambda\EX{j},0)\DD{\xi\EX{j}} = \sqrt{2\pi}\,|\rho\EX{j}-1|$, so, if $I_{\boldsymbol\alpha,0}$ is the part of the sum bounding $I_{\alpha}$ with indices $j \in \lle 1,d_0\rre$, then
	\begin{align*}
	I_{\boldsymbol\alpha,0} &= \left(\prod_{i=1}^d \sqrt{2\pi(\alpha\EX{i})!}\right)\left(\sum_{j=1}^{e_0} \left(\rho\EX{j}-1\right) \prod_{i<j} \rho\EX{i}+\sum_{j=e_0+1}^{d_0} \left(1-\rho\EX{j}\right) \prod_{i<j} \rho\EX{i}\right) \\
	&=\sqrt{(2\pi)^d \,\boldsymbol\alpha!}\,\left(2\,\rho\EX{1}\rho\EX{2}\cdots \rho\EX{e_0} - \rho\EX{1}\rho\EX{2}\cdots \rho\EX{d_0} -1\right).
	\end{align*}
	\item Terms with $j \in \lle d_0+1,d_1\rre$. Notice that $\prod_{i<j} \rho\EX{i} = \rho\EX{1}\rho\EX{2}\cdots\rho\EX{d_0} $ for all these indices. On the other hand, 
	$$\int_{\R} \Delta(\xi\EX{j},\lambda\EX{j},1)\DD{\xi\EX{j}} = 4\,\E^{-\frac{\log \lambda\EX{j}}{\lambda\EX{j}-1}}\,\left|(\lambda\EX{j})^{-1}-1\right| \leq 4 \,|(\lambda\EX{j})^{-1}-1|. $$
	Therefore, if $I_{\boldsymbol\alpha,1}$ is the part of the sum bounding $I_{\boldsymbol\alpha}$ with indices $j \in \lle d_0+1,d_1\rre$, then
	$$I_{\boldsymbol\alpha,1} \leq \sqrt{(2\pi)^d \,\boldsymbol\alpha!} \left(\frac{4}{\sqrt{2\pi}}\,\rho\EX{1}\rho\EX{2}\cdots \rho\EX{d_0}\right)\,\sum_{j =d_0+1}^{d_1} \left|(\lambda\EX{j})^{-1}-1\right|.$$
	\item Terms with $j \in \lle d_1+1,d\rre$. Denote $f_{\xi,\alpha}(\lambda) = \lambda^{\frac{\alpha}{2}} H_{\alpha}(\lambda^{\frac{1}{2}}\xi)\,\E^{-\frac{\lambda\xi^2}{2}}$; by using the relations $H_n'(x) = nH_{n-1}(x) = x\,H_{n}(x)-H_{n+1}(x)$, one computes its derivative 
\begin{align*}
f_{\xi,\alpha}'(\lambda) &= \frac{\lambda^{\frac{\alpha-2}{2}}}{2}\left((\alpha-x^2)\,H_{\alpha}(x) + \alpha\,x\,H_{\alpha-1}(x)\right) \E^{-\frac{x^2}{2}} \\
&=\frac{\lambda^{\frac{\alpha-2}{2}}}{2}\left((\alpha-1)\,H_{\alpha}(x) -H_2(x)\,H_{\alpha}(x) + \alpha\,H_1(x)\,H_{\alpha-1}(x)\right)\E^{-\frac{x^2}{2}},
\end{align*}
with $x=\lambda^{\frac{1}{2}}\xi$. As a consequence, assuming for instance $\lambda\EX{j} \geq 1$, one obtains
\begin{align*}
&\int_{\R} |\Delta(\xi\EX{j},\lambda\EX{j},\alpha\EX{j})|\DD{\xi\EX{j}} \\
&\leq \frac{1}{2}\int_{1}^{\lambda\EX{j}} \lambda^{\frac{\alpha\EX{j}-3}{2}}\int_{\R} \left(\substack{(\alpha\EX{j}-1)\,|H_{\alpha\EX{j}}(x)| + |H_2(x)\,H_{\alpha\EX{j}}(x)|\\ + \alpha\EX{j}\,|H_1(x)\,H_{\alpha\EX{j}-1}(x)|}\right)\,\E^{-\frac{x^2}{2}}\DD{x}\DD{\lambda} \\
&\leq \frac{1}{2}\,\sqrt{2\pi(\alpha\EX{j})!}\left((\alpha\EX{j}-1)+\sqrt{2}+\sqrt{\alpha\EX{j}}\right)\int_1^{\lambda\EX{j}} \lambda^{\frac{\alpha\EX{j}-3}{2}}\DD{\lambda} \\
&\leq \left(1+2\sqrt{2}\right)\,\sqrt{2\pi(\alpha\EX{j})!}\,\left|(\lambda\EX{j})^{\frac{\alpha\EX{j}-1}{2}}-1\right|.
\end{align*}
The same bound holds when $\lambda\EX{j} \leq 1$. Therefore, if $I_{\boldsymbol\alpha,2}$ is the part of the sum bounding $I_{\boldsymbol\alpha}$ with indices $j \in \lle d_1+1,d\rre$, then
$$I_{\boldsymbol\alpha,2} \leq \sqrt{(2\pi)^d\,\boldsymbol\alpha!} \, (1+2\sqrt{2})\,\left(\rho\EX{1}\rho\EX{2}\cdots \rho\EX{d_0} + \rho\EX{1}\rho\EX{2}\cdots \rho\EX{d} - 2\,\rho\EX{1}\rho\EX{2}\cdots \rho\EX{e_2}\right).$$
\end{enumerate}
\bigskip

\noindent To conclude, suppose that $\{\lambda\EX{1},\ldots,\lambda\EX{d}\} \subset [\lambda_-=1-\rho,\,\lambda_+=1+\rho]$. Then, 
\begin{align*}
\frac{I_{\boldsymbol\alpha,0}}{\prod_{i=1}^d \sqrt{2\pi(\alpha_i)!}} &\leq (\lambda_-)^{-d/2}\,(2-(\lambda_+)^{-d/2} - (\lambda_-)^{d/2}) \leq_{\rho \to 0} d\rho\\
\frac{I_{\boldsymbol\alpha,1}}{\prod_{i=1}^d \sqrt{2\pi(\alpha_i)!}} &\leq \frac{4d}{\sqrt{2\pi}}\, (\lambda_-)^{-d/2}\,\max(1-\lambda_+^{-1},\lambda_-^{-1}-1) \leq_{\rho \to 0} \frac{4}{\sqrt{2\pi}}\,d\rho\\
\frac{I_{\boldsymbol\alpha,2}}{\prod_{i=1}^d \sqrt{2\pi(\alpha_i)!}} &\leq (1+2\sqrt{2})\,(\lambda_-)^{-d/2} ((\lambda_+)^d-(\lambda_-)^d)\leq_{\rho \to 0} (2+4\sqrt{2})\,d\rho.
\end{align*}
Hence, if the spectral radius $\rho((K_1)^{-1/2}K_2(K_1)^{-1/2}-I_d)=\rho$ is small enough, then 
$$I_{\boldsymbol\alpha} \leq \sqrt{(2\pi)^d\,\boldsymbol\alpha!}\,\,8d \,\rho((K_1)^{-1/2}K_2(K_1)^{-1/2}-I_d).$$
The inequality follows by noticing that $\sqrt{\boldsymbol\alpha!} \leq \sqrt{(d+1)!}$ if $|\boldsymbol\alpha| \leq d+1$.
\end{proof}
\medskip

\begin{lemma}\label{lem:headache}
Fix a sequence of random vectors $(\SEC_n)_{n \in \N}$ as in Theorem \ref{thm:superspeed}, and denote $\mu_n$ the law of $\YEC_n=\SEC_n/(\sqrt{N_nD_n})$, and $\nu = \gauss{d}{\mathbf{0}}{K}$. There exist positive constants $C_1(d,K,A,B)$ and $C_2(d,K,A,B)$ such that, if $n$ is large enough, then
$$\left(\eps \geq C_1(d,K,A,B) \sqrt{\frac{D_n}{N_n}}\right)\quad \Rightarrow \quad \left(\Delta_\eps(\widehat{\mu}_n,\widehat{\nu}) \leq C_2(d,K,A,B)\sqrt{\frac{D_n}{N_n}} \right).$$
\end{lemma}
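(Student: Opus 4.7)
The plan is to decompose $\widehat{\mu}_n-\widehat{\nu}$ into a \emph{covariance discrepancy} and a \emph{higher--cumulant error}. Set $K_n = \cov(\SEC_n)/(N_nD_n)$ and $\widehat{\nu}_n(\ZIEC) = \E^{-\frac{1}{2}\ZIEC^t K_n \ZIEC}$; for $n$ large Hypothesis \ref{hyp:MC2prime} guarantees that $K_n$ is positive definite with $\rho(K_n-K) \leq B\sqrt{D_n/N_n}$. Writing $\widehat{\mu}_n - \widehat{\nu} = (\widehat{\mu}_n - \widehat{\nu}_n) + (\widehat{\nu}_n - \widehat{\nu})$, the second piece is immediately controlled by Lemma \ref{lem:technicaldistancegaussian} applied with $K_1=K$ and $K_2=K_n$: the bound furnished by that lemma is independent of $\eps$ and proportional to $\rho(K_n-K)$, hence $\Delta_\eps(\widehat{\nu}_n,\widehat{\nu}) \leq C\sqrt{D_n/N_n}$ with $C$ depending only on $d,K,B$.

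For the first piece I would factor $\widehat{\mu}_n(\ZIEC) = \widehat{\nu}_n(\ZIEC)\,\E^{R_n(\ZIEC)}$ on a suitable neighbourhood of the origin, where
$$R_n(\ZIEC) = \sum_{r \geq 3}\frac{\I^r}{r!}\sum_{i_1,\ldots,i_r=1}^d \kappa(S_n\EX{i_1},\ldots,S_n\EX{i_r})\,\frac{\zeta\EX{i_1}\cdots\zeta\EX{i_r}}{(N_nD_n)^{r/2}}$$
collects the cumulants of order $\geq 3$. Setting $u := 2A\|\ZIEC\|_1\sqrt{D_n/N_n}$ and bounding termwise with Hypothesis \ref{hyp:MC3},
$$|R_n(\ZIEC)| \leq \frac{N_n}{2D_n}\sum_{r \geq 3}\frac{r^{r-2}}{r!}\,u^r.$$
This series converges for $u \leq 1/(2\E)$ and is then $O(u^3\,N_n/D_n) = O(\|\ZIEC\|_1^3\sqrt{D_n/N_n})$. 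Choosing the threshold $C_1$ large enough (depending only on $d,A$) one ensures that every $\ZIEC \in D^d_\eps$ satisfies $u \leq 1/(2\E)$; enlarging $C_1$ a bit more one also secures $|R_n(\ZIEC)| \leq \tfrac14\,\ZIEC^t K_n \ZIEC$ on $D^d_\eps$, because $\|\ZIEC\|_1^3\sqrt{D_n/N_n}$ is dominated there by $\|\ZIEC\|_1^2 \cdot \big(\|\ZIEC\|_1\sqrt{D_n/N_n}\big)$, a factor that becomes arbitrarily small (and $\ZIEC^tK_n\ZIEC \geq \tfrac12 (\rho(K^{-1}))^{-1}\|\ZIEC\|^2$ for $n$ large). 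Termwise differentiation of the cumulant series yields the analogous bound
$$|\partial^{|\boldsymbol\delta|}R_n(\ZIEC)| \leq C_{|\boldsymbol\delta|}\big(1+\|\ZIEC\|_1^{\max(3-|\boldsymbol\delta|,0)}\big)\,\sqrt{D_n/N_n}$$
for every multi-index $|\boldsymbol\delta|\leq d+1$ and every $\ZIEC \in D^d_\eps$, with $C_{|\boldsymbol\delta|}$ depending only on $d,A$.

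I would then expand $\partial^{|\boldsymbol\beta|}\big(\widehat{\nu}_n(\E^{R_n}-1)\big)$ using Leibniz and Faà di Bruno. Each resulting term is a product of (i) a derivative of $\widehat{\nu}_n$, which equals a polynomial of degree $\leq d+1$ in $\ZIEC$ times $\widehat{\nu}_n$; (ii) a finite product of derivatives of $R_n$; and (iii) either $\E^{R_n}$ or the global prefactor $\E^{R_n}-1$. The inequality $|\E^{R_n}|\leq\E^{|R_n|}\leq\E^{\frac14\ZIEC^tK_n\ZIEC}$ absorbs the exponential against $\widehat{\nu}_n$ to leave the integrable Gaussian envelope $\E^{-\frac14\ZIEC^tK_n\ZIEC}$, which dominates any polynomial factor. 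Terms with at least one derivative of $R_n$ pick up a factor $\sqrt{D_n/N_n}$ from the previous step; the only remaining term, $\partial^{|\boldsymbol\beta|}\widehat{\nu}_n\cdot(\E^{R_n}-1)$, picks up the same factor through $|\E^{R_n}-1|\leq |R_n|\E^{|R_n|}$. Summing all Leibniz contributions gives $\|\partial^{|\boldsymbol\beta|}(\widehat{\mu}_n-\widehat{\nu}_n)\|_{\leb^1(D^d_\eps)} = O(\sqrt{D_n/N_n})$, with a constant depending only on $d,K,A$, and combining with the previous step proves the claim with $C_2=C_2(d,K,A,B)$. The main obstacle is the bookkeeping of the Leibniz/Faà di Bruno expansion: one must certify that \emph{every} term simultaneously gains a factor $\sqrt{D_n/N_n}$ and retains a strictly negative-definite Gaussian exponent, which is what forces the rather large initial choice of $C_1$.
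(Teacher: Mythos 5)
Your proposal is correct and follows essentially the same route as the paper's proof: the same splitting through the intermediate Gaussian $\widehat{\nu}_n$ with covariance $K_n$ (handled by Lemma \ref{lem:technicaldistancegaussian}), the same cumulant-series factorization $\widehat{\mu}_n=\widehat{\nu}_n\,\E^{R_n}$ bounded via \ref{hyp:MC3}, the same choice of threshold on $\eps$ so that the series is controlled and the exponent stays below $-\tfrac14$ of the quadratic form, and the same Leibniz/Fa\`a di Bruno bookkeeping showing every term gains a factor $\sqrt{D_n/N_n}$. The only difference is cosmetic: the paper changes variables to the standard Gaussian and uses Hermite polynomials, while you keep the $K_n$-Gaussian envelope directly.
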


\begin{proof}
Let $K_n = \cov(\SEC_n)/(N_nD_n)$, and $\nu_n = \gauss{d}{\mathbf{0}}{K_n}$. By Lemma \ref{lem:technicaldistancegaussian}, for $n$ large enough, $$\Delta_\eps(\widehat{\nu}_n,\widehat{\nu}) \leq C_{2,\nu}(d,K,A,B)\,\sqrt{\frac{D_n}{N_n}},$$ because $\rho(K_n-K) = O(\sqrt{D_n/N_n})$ by Hypothesis \ref{hyp:MC2prime}. Therefore, it suffices to find an upper bound for $\Delta_\eps(\widehat{\mu}_n,\widehat{\nu}_n)$. We set $M=(K_n)^{-1/2}$, and $\ZIEC=M\XIEC$. We have
\begin{align*}
&\widehat{\mu}_n(\ZIEC)-\widehat{\nu}_n(\ZIEC) \\
&= \E^{-\frac{1}{2}\sum_{i,j=1}^d (K_n)_{ij}\,\zeta\EX{i}\zeta\EX{j}}\,\left(\exp\!\left(\sum_{r \geq 3} \frac{1}{r!} \sum_{i_1,\ldots,i_r=1}^d \frac{(\I^r)\,\kappa(S_{n}\EX{i_1},\ldots,S_{n}\EX{i_r})}{(N_nD_n)^{\frac{r}{2}}}\,\zeta\EX{i_1}\cdots \zeta\EX{i_r}\right)-1\right)\\
&= \E^{-\frac{\|\XIEC\|^2}{2}}\,\left(\exp\left(\sum_{r \geq 3} \frac{1}{r!}\left(\frac{N_n}{D_n}\right)^{1-\frac{r}{2}} \sum_{j_1,\ldots,j_r=1}^d \delta(S_{n}\EX{j_1},\ldots,S_{n}\EX{j_r})\,\xi\EX{j_1}\cdots \xi\EX{j_r}\right)-1\right)\\
&=f(\XIEC),
\end{align*}
where $\delta(S_{n}\EX{j_1},\ldots,S_{n}\EX{j_r}) = \I^r \,\sum_{i_1,\ldots,i_r=1}^d M_{i_1j_1}\cdots M_{i_rj_r}\,\frac{\kappa(S_{n}\EX{i_1},\ldots,S_{n}\EX{i_r})}{N_n\,(D_n)^{r-1}}$. We can then write
\begin{align*}
\Delta_{\eps}(\widehat{\mu}_n,\widehat{\nu}_n) &=\max_{|\boldsymbol\beta| \in \lle 0,d+1\rre} \int_{D^d_{\eps}} \left|\frac{\partial^{|\boldsymbol\beta|} (\widehat{\mu}_n-\widehat{\nu}_n)(\ZIEC)}{\partial \ZIEC^{\boldsymbol\beta}}\right|\DD{\ZIEC} \\
&\leq  \max_{|\boldsymbol\alpha| \in \lle 0,d+1\rre} \,\left(\max_{i\in \lle 1,d\rre} \sum_{j=1}^d |M_{ij}| \right)^{|\boldsymbol\alpha|} \int_{M^{-1}(D^d_{\eps})} \left|\frac{\partial^{|\boldsymbol\alpha|} f(\XIEC)}{\partial \XIEC^{\boldsymbol\alpha}}\right|\DD{\XIEC} \\
&\leq \max_{|\boldsymbol\alpha| \in \lle 0,d+1\rre} \left(d\rho((K_n)^{-1})\right)^{\frac{|\boldsymbol\alpha|}{2}}\int_{M^{-1}(D^d_{\eps})} \left|\frac{\partial^{|\boldsymbol\alpha|} f(\XIEC)}{\partial \XIEC^{\boldsymbol\alpha}}\right|\DD{\XIEC} .
\end{align*}
Until the end of the proof, the multi-index $\boldsymbol\alpha$ is fixed, and we need to compute bounds on the derivatives of $f(\XIEC)$. We have
$$\left|\frac{\partial^{|\boldsymbol\alpha|} f(\XIEC)}{\partial \XIEC^{\boldsymbol\alpha}}\right| \leq \sum_{\boldsymbol\gamma \leq \boldsymbol\alpha} \binom{\boldsymbol\alpha}{\boldsymbol\gamma} \, |H_{\boldsymbol\alpha-\boldsymbol\gamma}(\XIEC)|\,\E^{-\frac{\|\XIEC\|^2}{2}}\,\left|\frac{\partial^{|\boldsymbol\gamma|} g(\XIEC)}{\partial \XIEC^{\boldsymbol\gamma}}\right|,$$
where 
$$g(\XIEC) = \exp\left(\sum_{r \geq 3} \frac{1}{r!}\left(\frac{N_n}{D_n}\right)^{1-\frac{r}{2}}  \sum_{j_1,\ldots,j_r=1}^d \delta(S_{n}\EX{j_1},\ldots,S_{n}\EX{j_r})\,\xi\EX{j_1}\cdots \xi\EX{j_r}\right)-1. $$
Suppose first that $\boldsymbol\gamma=\mathbf{0}$. Then, $|g(\XIEC)| \leq |h(\XIEC)|\,\exp|h(\XIEC)|$, where 
\begin{align*}
|h(\XIEC)| &\leq  \sum_{r=3}^\infty \frac{2^{r-1}r^{r-2}}{r!}\left(\frac{N_n}{D_n}\right)^{1-\frac{r}{2}} \sum_{i_1,\ldots,i_r=1}^d A^r \,\left|\zeta\EX{i_1}\cdots \zeta\EX{i_r}\right| \\
&\leq \frac{N_n}{4\E^3 D_n}\sum_{r=3}^\infty \left(2A\E\|\ZIEC\|_1\sqrt{\frac{D_n}{N_n}}\right)^r \leq 2A^3 \sqrt{\frac{D_n}{N_n}} \frac{(\|\ZIEC\|_1)^3}{1-2A\E\|\ZIEC\|_1\sqrt{\frac{D_n}{N_n}}}.
\end{align*}
Suppose that
$$\eps \geq \eps_1=2 A\E\,(2d)(2d+2)^{3/2}  \sqrt{\frac{D_n}{N_n}}.$$
Then, for any $\XIEC \in M^{-1}(D_\eps^d)$,
\begin{align*}
2A \E \|\ZIEC\|_1  \sqrt{\frac{D_n}{N_n}} &\leq  2d\,A\E \,\|\ZIEC\|_{\infty} \sqrt{\frac{D_n}{N_n}} \leq \frac{A\E (2d)(2d+2)^{3/2}}{\eps} \sqrt{\frac{D_n}{N_n}} \leq \frac{1}{2};\\
|X| &\leq 4A^3 \sqrt{\frac{D_n}{N_n}}\,(\|\ZIEC\|_1)^3 \leq \frac{(C\|\XIEC\|^3)}{2}\,\sqrt{\frac{D_n}{N_n}},
\end{align*}
where $C = 2A\sqrt{d\rho(K^{-1})}$.
More generally, denote $*$ any quantity with modulus smaller than $\E C$. Notice that 
\begin{align*}
|\delta(S_{n}\EX{j_1},\ldots,S_{n}\EX{j_r})| &= \|\delta\|_{\infty} \leq \|\delta\|_2 \leq \rho(M^{\otimes r})\,\left\| \frac{\kappa(S_{n}\EX{i_1},\ldots,S_{n}\EX{i_r})}{N_n\,(D_n)^{r-1}}\right\|_2\\
& \leq \frac{1}{2r^2}\left(2A\sqrt{d\rho(K^{-1})}\,r\right)^{\!r}\!,
\end{align*}
so each coefficient $\delta(S_{n}\EX{j_1},\ldots,S_{n}\EX{j_r})$ is equal to $\frac{1}{2r^2}\,(\frac{*r}{\E})^r$. Therefore, for $\boldsymbol\gamma>\mathbf{0}$,
$\frac{\partial^{|\boldsymbol\gamma|} g(\XIEC)}{\partial \XIEC^{\boldsymbol\gamma}} = \frac{\partial^{|\boldsymbol\gamma|} (\exp h(\XIEC))}{\partial \XIEC^{\boldsymbol\gamma}}$, with
\begin{align*}
h(\XIEC) &= \frac{N_n}{2D_n} \sum_{r=3}^\infty \frac{r^{r-2}}{\E^r\,r!} \sum_{j_1,\ldots,j_r=1}^d (*)^r\,\frac{\xi\EX{j_1} \cdots \xi\EX{j_r}}{(N_n/D_n)^{r/2}}; \\
|h(\XIEC)| &\leq \frac{(C\|\XIEC\|_1)^3}{4}\sqrt{\frac{D_n}{N_n}}\,\frac{1}{1-\E C \|\XIEC\|_1 \sqrt{\frac{D_n}{N_n}}}.
\end{align*}
Here and in the sequel, we assume that $\E C \|\XIEC\|_1 \sqrt{D_n/N_n}<1$; we shall give in a moment a sufficient condition for this inequality. By the multi-dimensional version of Fa\`a di Bruno's formula, 
$$\frac{\partial^c (\exp h(\XIEC))}{\partial \xi\EX{i_1}\cdots \partial \xi\EX{i_c}} = \left(\sum_{\pi \in \mathfrak{P}(c)} \prod_{j=1}^{\ell(\pi)} \frac{\partial^{|\pi_j|}  h(\XIEC)}{\prod_{k\in \pi_j} \partial \xi\EX{i_k}}\right)\exp(h(\XIEC)),$$
where $\mathfrak{P}(c)$ is the set of set partitions $\pi=\pi_1\sqcup \pi_2 \sqcup \cdots \sqcup \pi_{\ell}$ of $\lle 1,c\rre$. In this formula, writing $r^{\downarrow p} = r(r-1)\cdots(r-p+1)$,
\begin{align*}
\frac{\partial^{p}  h(\XIEC)}{\prod_{k\in \pi_j} \partial \xi\EX{i_k}} &= \frac{N_n}{2D_n} \sum_{r=\max(3,p)}^\infty \frac{r^{r-2}}{\E^r\,r!} \sum_{j_1,\ldots,j_{r-p}=1}^d \frac{(*)^r\,r^{\downarrow p}}{(N_n/D_n)^{r/2}}\,\xi\EX{j_1} \cdots \xi\EX{j_{r-p}};\\
\left|\frac{\partial^{p}  h(\XIEC)}{\prod_{k\in \pi_j} \partial \xi\EX{i_k}}\right| &\leq \frac{N_n}{4\E^3D_n} \sum_{r=\max(3,p)}^\infty  \frac{(\E C)^r\,(\|\XIEC\|_1)^{r-p}\,r^{\downarrow p}}{(N_n/D_n)^{r/2}}\\
&\leq \frac{(\E C)^{\max(3,p)}\,(\max(3,p))!\,(\|\XIEC\|_1)^{\max(3-p,0)} }{4\E^3\,(N_n/D_n)^{\frac{\max(3,p)}{2}-1} \,\left(1-\E C \|\XIEC\|_1 \sqrt{\frac{D_n}{N_n}} \right)^{p+1}},
\end{align*}
where for the last inequality one has to treat separately the cases $p\geq 3$ and $p=1 \text{ or }2$. \medskip

Suppose that $\ZIEC \in C_{(\mathbf{0},(2d+2)^{3/2}/\eps)}^{d}=D_\eps^d$. Then, 
\begin{align*}
&\|\XIEC\|_1 \leq \sqrt{d}\,\|\XIEC\|_2 \leq \sqrt{d\rho(K)}\, \|\ZIEC\|_2 \leq d\sqrt{\rho(K)}\,\|\ZIEC\|_\infty \leq \frac{(2d)(2d+2)^{3/2}\sqrt{\rho(K)}}{2\eps};\\
&\E C \|\XIEC\|_1 \sqrt{D_n/N_n} \leq \frac{\eps_1}{\eps} \sqrt{d\,\rho(K)\rho(K^{-1})}= \frac{\eps_1}{\eps}\, \sqrt{d\,\tau(K)} . 
\end{align*}
Set 
$$\eps_2 = 2\sqrt{d\,\tau(K)}\,\eps_1 = 16 A\E\,\sqrt{2(d(d+1))^3 \,\tau(K)}\,  \sqrt{\frac{D_n}{N_n}}. $$
Then, if $\eps\geq \eps_2$ (this condition is stronger than $\eps \geq \eps_1$) and $\ZIEC \in D_\eps^d$, we have for any $p \geq 1$:
\begin{align*}
\left|\frac{\partial^{p}  h(\XIEC)}{\prod_{k\in \pi_j} \partial \xi\EX{i_k}}\right| &\leq \frac{(\E C)^{\max(3,p)}\,2^{p+1}\,(\max(3,p))!\,(\|\XIEC\|_1)^{\max(3-p,0)} }{4\E^3\,(N_n/D_n)^{\frac{\max(3,p)}{2}-1} }.
\end{align*}
As a consequence, 
 \begin{align*}
 J_{\boldsymbol\alpha} &= \int_{M^{-1}(D^d_{\eps})} \left|\frac{\partial^{|\boldsymbol\alpha|} f(\XIEC)}{\partial \XIEC^{\boldsymbol\alpha}}\right|\DD{\XIEC} \leq \sum_{\boldsymbol\gamma \leq \boldsymbol\alpha} \sum_{\pi \in \mathfrak{P}(|\boldsymbol\gamma|)} \int_{M^{-1}(D^d_{\eps})} \E^{-\frac{\|\XIEC\|^2}{2} + \frac{(C\|\XIEC\|)^3}{2}\sqrt{\frac{D_n}{N_n}} }\, \frac{P_{\boldsymbol\gamma,\boldsymbol\alpha,\pi}(\XIEC)}{(N_n/D_n)^{e(\pi)}}\DD{\XIEC},
 \end{align*}
where each $P_{\boldsymbol\gamma,\boldsymbol\alpha,\pi}(\XIEC)$ is a quantity that is bounded by a polynomial in $\|\XIEC\|_1$, and with
$$e(\pi) = \sum_{i=1}^{\ell(\pi)} \left(\frac{\max(3,|\pi_i|)}{2}-1\right).$$
Note that if $\|\XIEC\|\sqrt{\frac{D_n}{N_n}}$ is small enough, then 
$$-\frac{\|\XIEC\|^2}{2} + \frac{(C\|\XIEC\|)^3}{2}\sqrt{\frac{D_n}{N_n}} \leq -\frac{\|\XIEC\|^2}{4}.$$
Therefore, one can find a positive constant $C_1(d,K,A,B) \geq \eps_2\sqrt{N_n/D_n}$ such that, if $\eps \geq C_1(d,K,A,B)\,\sqrt{D_n/N_n}$, then 
$$J_{\boldsymbol\alpha} \leq \sum_{\boldsymbol\gamma \leq \boldsymbol\alpha} \sum_{\pi \in \mathfrak{P}(|\boldsymbol\gamma|)} \frac{1}{(N_n/D_n)^{e(\pi)}} \int_{\R^d} \E^{-\frac{\|\XIEC\|^2}{4}}\, P_{\boldsymbol\gamma,\boldsymbol\alpha,\pi}(\XIEC)\DD{\XIEC}.$$
All the integrals appearing in the right-hand side of this inequality are convergent, and the main contribution to the sum is provided by set partitions such that $e(\pi)=\frac{1}{2}$ is minimal. This only happens when $\ell(\pi)=1$ and $|\boldsymbol\gamma| \leq 3$. Therefore, keeping only these terms, we obtain
\begin{align*}
J_{\boldsymbol\alpha}&\leq \sum_{\substack{\boldsymbol\gamma \leq \boldsymbol\alpha \\ |\boldsymbol\gamma| \leq 3}} \binom{\boldsymbol\alpha}{\boldsymbol\gamma} \int_{\R^d} \E^{-\frac{\|\XIEC\|^2}{2}}\,|H_{\boldsymbol\alpha-\boldsymbol\gamma}(\XIEC)|\,\frac{\partial^{|\boldsymbol\gamma|}h(\XIEC)}{\partial \XIEC^{\boldsymbol\gamma}}\, g(\XIEC)\DD{\XIEC} + O\!\left(\frac{D_n}{N_n}\right),\\
&\leq \sum_{\substack{\boldsymbol\gamma \leq \boldsymbol\alpha \\ |\boldsymbol\gamma| \leq 3}} \binom{\boldsymbol\alpha}{\boldsymbol\gamma} \int_{\R^d} \E^{-\frac{\|\XIEC\|^2}{4}}\,|H_{\boldsymbol\alpha-\boldsymbol\gamma}(\XIEC)|\,\frac{\partial^{|\boldsymbol\gamma|}h(\XIEC)}{\partial \XIEC^{\boldsymbol\gamma}}\DD{\XIEC} + O\!\left(\frac{D_n}{N_n}\right)
\end{align*}
where the constant in the $O$ only depends on $d,K,A,B$. Using the previous estimates on the derivatives of $h(\XIEC)$, we conclude that
$$J_{\boldsymbol\alpha} = O\!\left(\sqrt{\frac{D_n}{N_n}} \sum_{\substack{\boldsymbol\gamma \leq \boldsymbol\alpha \\ |\boldsymbol\gamma| \leq 3}} \binom{\boldsymbol\alpha}{\boldsymbol\gamma} \int_{\R^d} \E^{-\frac{\|\XIEC\|^2}{4}}\,|H_{\boldsymbol\alpha-\boldsymbol\gamma}(\XIEC)|\,(\|\XIEC\|_1)^{3-|\boldsymbol\gamma|}\DD{\XIEC}\right) + O\!\left(\frac{D_n}{N_n}\right).$$
Therefore, there exists a positive constant $C_{2,\mu}(d,K,A,B)$ such that, if $\eps \geq C_1(d,K,A,B)$, then
$$\Delta_{\eps}(\widehat{\mu}_n,\widehat{\nu}_n)\leq C_{2,\mu}(d,K,A,B)\,\sqrt{\frac{D_n}{N_n}}.$$
This ends the proof of the lemma, taking $C_{2} = C_{2,\mu} + C_{2,\nu}$.
\end{proof}
\medskip

\begin{proof}[Proof of Theorem \ref{thm:superspeed}]
We apply Corollary \ref{cor:fouriertoconvdistance} to the inequality of Lemma \ref{lem:headache}, with $\eps = C_1(d,K,A,B)\sqrt{D_n/N_n}$ and the regularity constant $R= 2\sqrt{(d+1)\rho(K^{-1})}$.
\end{proof}
\medskip

\subsection{Sparse dependency graphs}\label{subsec:dependencygraph}
A large set of applications of Theorems \ref{thm:multicumulant} and \ref{thm:superspeed} is provided by the theory of \emph{dependency graphs}, which has already been used successfully in dimension $d=1$ in \cite{FMN16,FMN19}. 

\begin{definition}[Dependency graph]
Let $(\mathbf{A}_v)_{v \in V_{\R^d}}$ be a family of random vectors in $\R^d$. A simple undirected graph $G_{\R^d}=(V_{\R^d},E_{\R^d})$ is called a dependency graph for this family if, for any disjoint subsets $V_1$ and $V_2$ of $V_{\R^d}$, if there is no edge $e=(v_1,v_2)$ in $E_{\R^d}$ that connects $v_1 \in V_1$ and $v_2 \in V_2$, then $(\mathbf{A}_v)_{v \in V_1}$ and $(\mathbf{A}_v)_{v \in V_2}$ are independent families of random vectors of $\R^d$.
\end{definition}

\noindent The parameters of a dependency graph $G_{\R^d}$ are:
\begin{enumerate}
	\item its \emph{size} $N = \card\,V_{\R^d}$.
	\item its \emph{maximal degree} $D$: every vertex $v \in V_{\R^d}$ has at most $D-1$ neighbors $w$ such that $\{v,w\} \in E_{\R^d}$. 
\end{enumerate}
In the sequel, we always put an index $\R^d$ on the dependency graph of a family of $d$-dimensional random vectors. The reason for this is the following. Fix a family $(\mathbf{A}_v)_{v \in V_{\R^d}}$ of random vectors in $\R^d$, a dependency graph $G_{\R^d}=(V_{\R^d},E_{\R^d})$ for this family, and denote $W = V_{\R^d} \times \lle 1,d\rre$. To any pair $w=(v,i) \in W$, we associate the real random variable 
$$A_w = A_v\EX{i}.$$
We then endow the vertices of $W$ with the following structure of graph $G=(W,E)$:
$$ (v_1,i_1) \sim_G (v_2,i_2) \quad\iff \quad (v_1 = v_2 \text{ and }i_1 \neq i_2) \,\text{ or }\, (v_1 \sim_{G_{\R^d}} v_2).$$
It is then easily seen that $(A_w)_{w\in W}$ is a family of real random variables with dependency graph $G=(W,E)$. Moreover, if $N$ and $D$ are the parameters of $G_{\R^d}$, then $dN$ and $dD$ are the parameters of $G$.
\medskip

\begin{theorem}[Upper bound on cumulants]\label{thm:upperboundcumulant}
Let 
$$\SEC=\sum_{v \in V_{\R^d}} \mathbf{A}_v$$
be a sum of random vectors in $\R^d$, such that $G_{\R^d} = (V_{\R^d},E_{\R^d})$ is a dependency graph for the family $(\mathbf{A}_v)_{v \in V_{\R^d}}$. We also suppose that $\|\mathbf{A}_v\|_{\infty} \leq A$ for any $v \in V_{\R^d}$. For any choice of indices $i_1,\ldots,i_r \in \lle 1,d\rre$, 
$$|\kappa(S\EX{i_1},\ldots,S\EX{i_r})| \leq N\,(2D)^{r-1}\,A^r\,r^{r-2},$$
where $N$ and $D$ are the parameters of the graph $G_{\R^d}$.
\end{theorem}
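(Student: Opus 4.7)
The plan is to mimic closely the proof of the one-dimensional analogue $|\kappa^{(r)}(S)| \leq N(2D)^{r-1} A^r\, r^{r-2}$ established in \cite[Theorem 9.1.7]{FMN16} (and refined in \cite[Section 4]{FMN19}), since every combinatorial argument used there is blind to the extraction of a single coordinate, so passing from $\kappa^{(r)}$ to the joint cumulant $\kappa(S\EX{i_1},\ldots,S\EX{i_r})$ will not alter the skeleton of the proof.

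First I would expand the joint cumulant via the multilinearity property from Proposition \ref{prop:propertiesjointcumulants}(1):
$$\kappa(S\EX{i_1},\ldots,S\EX{i_r}) = \sum_{(v_1,\ldots,v_r)\in(V_{\R^d})^r} \kappa(A_{v_1}\EX{i_1},\ldots,A_{v_r}\EX{i_r}).$$
Then I would invoke the vanishing property of Proposition \ref{prop:propertiesjointcumulants}(2): since $\mathbf{A}_{v_k}$ and $\mathbf{A}_{v_l}$ are independent whenever $v_k \neq v_l$ and $\{v_k,v_l\} \notin E_{\R^d}$, the term indexed by $(v_1,\ldots,v_r)$ vanishes unless the underlying vertex multiset induces a connected subgraph of $G_{\R^d}$. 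Observe that this connectedness condition is insensitive to the coordinates $i_k$.

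For each surviving tuple, I would apply the classical spanning-tree estimate for joint cumulants of bounded random variables: using only that $|A_{v_k}\EX{i_k}| \leq \|\mathbf{A}_{v_k}\|_\infty \leq A$, one obtains a bound of the form
$$|\kappa(A_{v_1}\EX{i_1},\ldots,A_{v_r}\EX{i_r})| \leq A^r\cdot 2^{r-1}\cdot \bigl(\text{number of spanning trees of the induced graph on }[r]\bigr).$$
It then remains to count the connected tuples by a walk-along-a-tree argument: by Cayley's formula there are $r^{r-2}$ labelled trees on $[r]$, and for each labelled tree $T$ the number of admissible embeddings $(v_1,\ldots,v_r)\in (V_{\R^d})^r$ is at most $N\cdot D^{r-1}$ (choose a root among $N$ vertices, then traverse $T$ with at most $D$ choices at each step: a neighbor in $G_{\R^d}$ or a coincidence with an already placed vertex). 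Combining these three estimates yields the desired inequality.

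The main obstacle is the careful bookkeeping required to obtain the sharp constant $(2D)^{r-1}$: one has to work directly with the family $(\mathbf{A}_v)_{v\in V_{\R^d}}$ and its dependency graph $G_{\R^d}$, rather than applying the one-dimensional result to the enlarged real-valued family $(A_{(v,i)})_{(v,i)\in V_{\R^d}\times\lle 1,d\rre}$, whose dependency graph has parameters $(dN,dD)$ and would produce the strictly worse bound $dN\,(2dD)^{r-1}A^r\, r^{r-2}$.
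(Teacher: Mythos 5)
Your proposal is correct and follows essentially the same route as the paper: multilinearity, the spanning-tree bound $2^{r-1}A^r\,\mathrm{ST}$ for each term (which the paper obtains by viewing the coordinates $A_v\EX{i}$ as a real-valued family with the enlarged dependency graph on $V_{\R^d}\times\lle 1,d\rre$ and citing \cite[Equation 9.9]{FMN16}), and then the Cayley-plus-tree-traversal count of tuples performed directly in $G_{\R^d}$, which is exactly how the paper secures the factor $N\,D^{r-1}$ rather than $dN\,(dD)^{r-1}$. Your closing remark about why one must count in $G_{\R^d}$ rather than apply the one-dimensional theorem to the enlarged family matches the paper's argument.
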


\begin{proof}
Let $G=(W,E)$ be the dependency graph on one-dimensional random variables $A_v\EX{i}$ that is obtained from $G_{\R^d}$ by the aforementioned construction. By \cite[Equation 9.9]{FMN16}, for any choice of vertices $v_{1},\ldots,v_r$,
$$\left|\kappa(A_{v_1}\EX{i_1},\ldots,A_{v_r}\EX{i_r})\right| \leq 2^{r-1}\,A^r\,\mathrm{ST}_H ,$$
where $H=G[(v_1,i_1),\ldots,(v_r,i_r)]$ is the (multi)graph induced by $G$ on the set of vertices $\{(v_j,i_j)\,|\,j \in \lle 1,r\rre\}$ (see \cite[Section 9.3.2]{FMN16}), and $\mathrm{ST}_H$ is the number of spanning trees of $H$ (hence, $0$ if $H$ is not connected). Therefore,
if we extend by multilinearity the joint cumulant of the coordinates of the sum $S$, then we obtain
\begin{align*}
\left|\kappa(S\EX{i_1},\ldots,S\EX{i_r})\right| &\leq \sum_{v_1,\ldots,v_r \in V_{\R^d}} \left|\kappa(A_{v_1}\EX{i_1},\ldots,A_{v_r}\EX{i_r})\right| \\
&\leq 2^{r-1} A^r\sum_{v_1,\ldots,v_r \in V_{\R^d}} \mathrm{ST}_{G[(v_1,i_1),\ldots,(v_r,i_r)]}.
\end{align*}
We now use an argument similar to the one of \cite[Lemma 9.3.5]{FMN16}. Consider a pair $(T,(v_1,\ldots,v_r))$ such that $v_1,\ldots,v_r \in V_{\R^d}$, and $T$ is a spanning tree on $\lle 1,r\rre$ included in the induced subgraph $G[(v_1,i_1),\ldots,(v_r,i_r)]$. By Cayley's formula for the number of spanning trees on $r$ vertices, there are $r^{r-2}$ possible choices for $T$. Then, to choose $(v_1,\ldots,v_r)$ such that $T \subset G[(v_1,i_1),\ldots,(v_r,i_r)]$, we proceed as follows. There are $N$ possible choices for the vector $v_1$. Then, for each $j$ such that $1 \sim j$ in $T$, there are at most $D$ possible choices for $v_j$, such that either $(v_1,i_1)=(v_j,i_j)$, or $(v_1,i_1) \sim_G (v_j,i_j)$. Indeed, one can only choose $v_j$ among the vertices $v_j$ with $v_1 \sim_{G_{\R^d}} v_j$, plus $v_1$. We pursue this reasoning with the neighbors of the neighbors $j$ of $1$ in $T$, performing a breadth-first search of $T$. We conclude that $T$ being fixed, the number of compatible choices for $v_1,\ldots,v_r$ is smaller than $N\,D^{r-1}$. This ends the proof of the theorem.
\end{proof}
\medskip

By combining Theorems \ref{thm:multicumulant}, \ref{thm:superspeed} and \ref{thm:upperboundcumulant}, we obtain the following general result on sums of random vectors with sparse dependency graphs:

\begin{theorem}[Sums of random vectors with sparse dependency graphs]\label{thm:sumsparsegraph}
Consider a sequence of sums $(\SEC_n = \sum_{i=1}^{N_n} \mathbf{A}_{i,n})_{n \in \N}$ of centered random vectors in $\R^d$. We endow each family $(\mathbf{A}_{i,n})_{i \in \lle 1,N_n\rre}$ with a dependency graph of parameters $N_n$ and $D_n$, and we make the following assumptions:
\begin{enumerate}
 	\item All the random vectors $\mathbf{A}_{i,n}$ are bounded by $A$ in norm $\|\cdot\|_\infty$.
 	\item The sequences of parameters $(N_n)_{n \in \N}$ and $(D_n)_{n \in \N}$ satisfy
	$$\lim_{n \to \infty} N_n = + \infty \qquad;\qquad \lim_{n \to \infty} \frac{D_n}{N_n} = 0.$$
	\item There exists a matrix $K \in \mathrm{S}_+(d,\R)$, and a family of real numbers $(L_{i,j,k})_{1\leq i,j,k\leq d}$, such that
	\begin{align*}
	\frac{\cov(\SEC_n)}{N_nD_n} & = K+J_n \quad\text{with }\rho(J_n) \leq B\,\sqrt{\frac{D_n}{N_n}}.
	\end{align*}
\end{enumerate} 
The sums $\SEC_n$ satisfy:
\begin{itemize}
		\item Central limit theorem: if $\YEC_n = \frac{\SEC_n}{\sqrt{N_nD_n}}$, then $\YEC_n \rightharpoonup \gauss{d}{\mathbf{0}}{K}$.
		\item Speed of convergence: there exists a constant $C(d,K,A,B)$ such that
		$$\dconv(\YEC_n\,,\,\gauss{d}{\mathbf{0}}{K}) \leq C(d,K,A,B)\,\sqrt{\frac{D_n}{N_n}}.$$
\end{itemize}	
If one adds the assumptions \ref{hyp:MC4} and \ref{hyp:MC5} on $\SEC_n$, then one also gets large deviation estimates as in Theorem \ref{thm:multicumulant}.
\end{theorem}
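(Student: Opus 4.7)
The plan is to reduce Theorem \ref{thm:sumsparsegraph} entirely to the abstract results already established. Specifically, I would verify that the sequence $(\SEC_n)_{n \in \N}$ satisfies hypotheses \ref{hyp:MC1}, \ref{hyp:MC2prime} and \ref{hyp:MC3} of Theorem \ref{thm:superspeed}; the speed bound $\dconv(\YEC_n\,,\,\gauss{d}{\mathbf{0}}{K}) \leq C(d,K,A,B)\sqrt{D_n/N_n}$ then drops out of that theorem directly. Hypothesis \ref{hyp:MC1} is the centering assumed in the statement, and \ref{hyp:MC2prime} is exactly the third bullet of our hypotheses, with the same constant $B$.

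The key step is hypothesis \ref{hyp:MC3}, the bound on joint cumulants of the coordinates of $\SEC_n$. Here Theorem \ref{thm:upperboundcumulant} applies verbatim: since each $\mathbf{A}_{i,n}$ is bounded by $A$ in $\|\cdot\|_\infty$ and the family $(\mathbf{A}_{i,n})_{i \in \lle 1,N_n\rre}$ admits a dependency graph of parameters $(N_n,D_n)$, one obtains
$$|\kappa(S_n\EX{i_1},\ldots,S_n\EX{i_r})| \leq N_n\,(2D_n)^{r-1}\,A^r\,r^{r-2}$$
for every $r \geq 3$ and every choice of coordinate indices. This is precisely \ref{hyp:MC3} with the same constant $A$.

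With \ref{hyp:MC1}, \ref{hyp:MC2prime} and \ref{hyp:MC3} thus verified, Theorem \ref{thm:superspeed} yields the claimed bound $\dconv(\YEC_n\,,\,\gauss{d}{\mathbf{0}}{K}) \leq C(d,K,A,B)\sqrt{D_n/N_n}$ for all sufficiently large $n$, which one extends to all $n$ at the cost of enlarging $C(d,K,A,B)$, since the convex distance is always bounded by $1$ and only finitely many small values of $n$ must be absorbed. Because $D_n/N_n \to 0$ by hypothesis, this forces $\dconv(\YEC_n,\gauss{d}{\mathbf{0}}{K}) \to 0$, and then Proposition \ref{prop:convexdistanceweaktopology} delivers the central limit theorem $\YEC_n \rightharpoonup \gauss{d}{\mathbf{0}}{K}$.

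For the large deviation addendum, the supplementary assumptions \ref{hyp:MC4} and \ref{hyp:MC5} at some order $v \geq 3$, combined with \ref{hyp:MC1}, \ref{hyp:MC2} (a weakening of \ref{hyp:MC2prime}) and \ref{hyp:MC3} already in hand, constitute the full list of hypotheses of Theorem \ref{thm:multicumulant}. That theorem provides mod-Gaussian convergence of the correctly rescaled $\XEC_n = \SEC_n/((N_n)^{1/v}(D_n)^{1-1/v})$ with parameters $(N_n/D_n)^{1-2/v}K$, and its third conclusion is exactly the large deviation estimate on ellipsoidal sectors announced in the theorem. No serious obstacle is expected, since all the analytic effort has been packaged into Theorems \ref{thm:upperboundcumulant}, \ref{thm:superspeed}, \ref{thm:multicumulant} and \ref{thm:largedeviation}; the only point worth attention is the matching of the constant $A$ between the $\|\cdot\|_\infty$-boundedness assumption and the constant appearing in \ref{hyp:MC3}, which is transparent from the statement of Theorem \ref{thm:upperboundcumulant}.
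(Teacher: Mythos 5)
Your proposal is correct and coincides with the paper's own treatment: the paper gives no separate argument for Theorem \ref{thm:sumsparsegraph}, stating it precisely as the combination of Theorem \ref{thm:upperboundcumulant} (which supplies \ref{hyp:MC3}), Theorem \ref{thm:superspeed} (speed of convergence, with the CLT then following as you note from the convex-distance bound and Proposition \ref{prop:convexdistanceweaktopology}), and Theorem \ref{thm:multicumulant} for the large-deviation addendum. The only point to watch is your parenthetical claim that \ref{hyp:MC2} is a weakening of \ref{hyp:MC2prime}, which is automatic only for $v=3$ (for $v\geq 4$ the error $O(\sqrt{D_n/N_n})$ need not be $o((D_n/N_n)^{1-2/v})$); this imprecision is, however, already implicit in the paper's own phrasing of the addendum.
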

\medskip

We conclude this section by examining three examples:
\begin{example}[Random walks with dependent steps, $d = 2$]
One can construct an example in dimension $d=2$ with almost arbitrary parameters $N_n \to +\infty$ and $D_n=o(N_n)$. If $\lambda$ is a positive parameter, denote $\mathcal{L}(\lambda)$ the law of the Brownian motion on the unit circle $\mathbb{S}^1=\{z \in \C\,|\,|z|=1\}$, taken at time $t=\lambda$. This law is given by the density 
$$\rho_\lambda(\theta) = \frac{1}{\sqrt{2\pi\lambda}} \sum_{k \in \Z} \E^{-\frac{(\theta-2k\pi)^2}{2\lambda}}$$
for $\theta \in (-\pi,\pi)$. Thus, if $\mathbf{X} = \E^{\I\theta}$ follows the law $\mathcal{L}(\lambda)$, then
$$\proba[\theta \in (a,b)] = \frac{1}{\sqrt{2\pi\lambda}} \sum_{k \in \Z}\int_{a}^b \E^{\frac{(\theta-2k\pi)^2}{2\lambda}}\DD{\theta} $$
for any $a,b$ such that $-\pi < a < b <\pi$. The density of the angle $\theta$ can be rewritten in Fourier series:
$$\rho_\lambda(\theta)\DD{\theta} = \left(\sum_{l\in \Z}\E^{-\frac{\lambda l^2}{2}}\,\E^{\I l\theta}\right)\,\frac{\!\DD{\theta}}{2\pi}.$$
The family of laws $(\mathcal{L}(\lambda))_{\lambda \in \R_+}$ is a semigroup of probability measures on the multiplicative semigroup $\mathbb{S}^1$. Thus, if $\mathbf{U}_1,\ldots,\mathbf{U}_D$ follow the same law $\mathcal{L}(\frac{\lambda}{D})$ and are independent, then the product $\mathbf{U}_1\mathbf{U}_2\cdots \mathbf{U}_D$ follows the law $\mathcal{L}(\lambda)$. \bigskip

Consider a family of independent random variables $(\mathbf{U}_i)_{i \in \lle 1,N_n\rre}$, all following the same law $\mathcal{L}(\frac{\lambda}{D_n})$. For $i \in \lle 1,N_n\rre$, we denote $\XEC_i=\mathbf{U}_{i}\mathbf{U}_{i+1}\cdots \mathbf{U}_{i+D_n-1}$, where the $\mathbf{U}_i$'s are labelled cyclically, that is to say that $\mathbf{U}_{N_n+k}=\mathbf{U}_{k}$. Two variables $\XEC_i$ and $\XEC_j$ are independent unless they share some common factor $\mathbf{U}_k$, which happens if and only if the distance between $i$ and $j$ in $\Z/N_n\Z$ is smaller than $D_n$. Thus, $(\XEC_i)_{i \in \lle 1,N_n\rre}$ is a family of dependent random variables in $\R^2 \supset \mathbb{S}^1$, with a dependency graph of parameters $N_n$ and $2D_n-1$. By the previous remark, each $\XEC_i$ follows the law $\mathcal{L}(\lambda)$; in particular,
$$\esper[\XEC_i] = \int_{-\pi}^{\pi} \E^{\I\theta}\,\rho_\lambda(\theta)\DD{\theta} = \sum_{l \in \Z} \E^{-\frac{\lambda l^2}{2}} \int_{-\pi}^{\pi} \E^{\I(l+1)\theta}\,\frac{\!\DD{\theta}}{2\pi} = \E^{-\frac{\lambda}{2}}.$$
On the other hand, if $\XEC_i$ and $\XEC_j$ are two variables with $d(i,j)=D<D_n$ in $\Z/N_n\Z$, then one can compute the covariance matrix of these two vectors of $\R^2$:
\begin{align*}
\esper[\Re(\XEC_i)\,\Re(\XEC_j)]&=\frac{\esper[\XEC_i\XEC_j+\overline{\XEC_i}\XEC_j + \XEC_i\overline{\XEC_j}+\overline{\XEC_i\XEC_j}]}{4} = \frac{\E^{-\frac{D}{D_n}\lambda}+\E^{-(2-\frac{D}{D_n})\lambda}}{2} ; \\
\esper[\Im(\XEC_i)\,\Im(\XEC_j)]&=\frac{\esper[-\XEC_i\XEC_j+\overline{\XEC_i}\XEC_j + \XEC_i\overline{\XEC_j}-\overline{\XEC_i\XEC_j}]}{4}=\frac{\E^{-\frac{D}{D_n}\lambda}-\E^{-(2-\frac{D}{D_n})\lambda}}{2};\\
\esper[\Re(\XEC_i)\,\Im(\XEC_j)]&=\esper[\Im(\XEC_i)\,\Re(\XEC_j)] = 0. 
\end{align*}
So,
\begin{align*}
\cov(\Re(\XEC_i),\Re(\XEC_j)) &= \frac{\E^{-\frac{D}{D_n}\lambda}+\E^{-(2-\frac{D}{D_n})\lambda}-2\E^{-\lambda}}{2} ;\\
\cov(\Im(\XEC_i),\Im(\XEC_j)) &= \frac{\E^{-\frac{D}{D_n}\lambda}-\E^{-(2-\frac{D}{D_n})\lambda}}{2}; \\
\cov(\Re(\XEC_i),\Im(\XEC_j)) &= \cov(\Im(\XEC_i),\Re(\XEC_j)) = 0.
\end{align*}
As a consequence, if $\SEC_n=\sum_{i=1}^{N_n} \XEC_i$, then
\begin{align*}
\frac{\cov(\SEC_n)}{(2D_n-1)\,N_n} &= \frac{1}{2(2D_n - 1)} \sum_{D=-(D_n-1)}^{D_n-1} \left(\begin{smallmatrix} \E^{-\frac{|D|}{D_n}\lambda}+\E^{-(2-\frac{|D|}{D_n})\lambda}-2\E^{-\lambda} & 0 \\ 0 & \E^{-\frac{|D|}{D_n}\lambda}-\E^{-(2-\frac{|D|}{D_n})\lambda}\end{smallmatrix}\right)\\
&=\frac{1}{2D_n-1} \left(\begin{smallmatrix} \frac{1-\E^{-\lambda}}{1-\E^{-\lambda/D_n}} + \E^{-2\lambda}\,\frac{\E^{\lambda}-1}{\E^{\lambda/D_n}-1} - 1 -(2D_n-1)\,\E^{-\lambda} & 0 \\
0& \frac{1-\E^{-\lambda}}{1-\E^{-\lambda/D_n}} - \E^{-2\lambda}\,\frac{\E^{\lambda}-1}{\E^{\lambda/D_n}-1}\end{smallmatrix}\right)\\
&= K+O\!\left(\frac{1}{D_n}\right)\qquad\text{with }K=\frac{1}{2\lambda}\begin{pmatrix} 1-2\lambda\E^{-\lambda}-\E^{-2\lambda} & 0 \\ 0 & (1-\E^{-\lambda})^2\end{pmatrix}.
\end{align*}
Therefore, one can apply Theorem \ref{thm:sumsparsegraph} if 
$ D_n = o(N_n)$ and $(N_n)^{1/3}= O(D_n)$.
In this situation, $\YEC_n=\frac{\SEC_n-N_n\,\E^{-\frac{\lambda}{2}}}{\sqrt{(2D_n-1)\,N_n}}$ converges in law towards $\mathcal{N}_{\R^2}(\mathbf{0},K)$, with
$$\dconv(\YEC_n,\,\gauss{2}{\mathbf{0}}{K}) = O\!\left(\sqrt{\frac{D_n}{N_n}}\right).$$
We have drawn on Figure \ref{fig:randomwalks} a random walk with steps $\XEC_i$, with $\lambda=1$, $N_n=1000$ and $D_n=40$ or $D_n=1$. The deviation of the whole sum $\SEC_n$ from its mean $1000\,\E^{-1/2} \simeq 600$ follows approximatively a (non-standard) Gaussian law of size $O(\sqrt{D_n\,N_n})=O(200)$.
\begin{center}
\begin{figure}[ht]
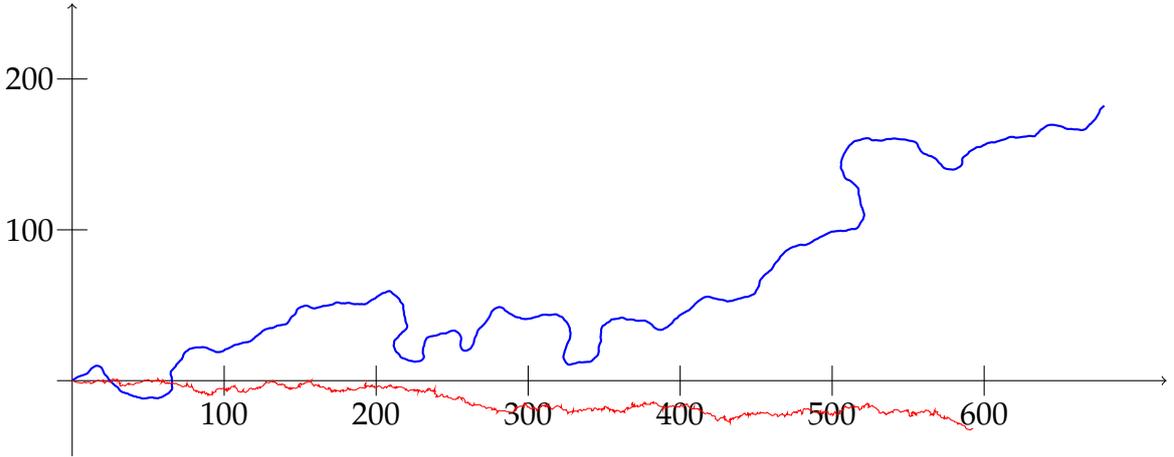


\caption{Random walks associated to a sum of dependent random vectors in $\R^2$ (thick blue line, $N_n=1000$ and $D_n=40$), and to a sum of independent random vectors (thin red line, $N_n=1000$ and $D_n=1$).\label{fig:randomwalks}}
\end{figure}
\end{center}
\end{example}

\begin{example}[Random walks with dependent steps, $d \geq 2$]
Let us describe a generalization of the previous example with an arbitrary dimension $d \geq 2$. Denote $p_{\SO(d),t}(g)$ the density with respect to the Haar measure of a Brownian motion traced on the special orthogonal group $G=\SO(d)$, taken at time $t>0$. It can be shown that, if $\widehat{G}$ is a set of representatives of the isomorphism classes of finite-dimensional irreducible representations of $G$, and if $\chi^\lambda$ and $d^\lambda$ denote respectively the normalised character and the dimension of an irreducible representation $\lambda \in \widehat{G}$, then there exists positive rational coefficients $c_\lambda$ such that
$$p_{\SO(d),t}(g) = \sum_{\lambda \in \widehat{G}} \E^{-\frac{c_\lambda t}{2}}\,(d^\lambda)^2\,\chi^\lambda(g).$$
We refer to \cite[Section 2]{Mel14} for precisions on Brownian motions on compact Lie groups and their symmetric quotients. The coefficients $c_\lambda$ can be computed by associating to each class of isomorphism $\lambda \in \widehat{G}$ the highest weight of the underlying representation, which sits in a part of the lattice of characters $\widehat{T}$ of a maximal connected torus $T$ in $\SO(d)$. Then, $c_{\lambda}=\scal{\lambda}{\lambda+2\rho}$, where $2\rho$ is the sum of all positive roots of $G$, and $\scal{\cdot}{\cdot}$ is an appropriate scalar product on $\widehat{T} \otimes_{\Z} \R$.\bigskip

We denote $\mathcal{L}_{\SO(d)}(t)$ the law on $\SO(d)$ with density $p_{\SO(d),t}(\cdot)$, and we consider a family $(u_i)_{i \in \lle 1,N_n\rre}$ of independent random variables with values in $\SO(d)$, all following the same law $\mathcal{L}_{\SO(d)}(\frac{\lambda}{D_n})$. We then set $g_i=u_iu_{i+1}\cdots u_{i+D_n-1}$, where as in the previous example, the $u_i$'s are labelled cyclically. Each random variable $g_i$ follows the same law $\mathcal{L}_{\SO(d)}(\lambda)$, and $g_i$ and $g_j$ are independent if and only if $i$ and $j$ are at distance larger than $D_n$ in $\Z/N_n\Z$. We finally set $\XEC_i=g_i(\mathbf{e}_d)$, where $\mathbf{e}_d=(0,\ldots,0,1) \in \R^d$. Each $\XEC_i$ follows the same law $\mathcal{L}_{\mathbb{S}^{d-1}}(\lambda)$, which is the law of a Brownian motion on the sphere $\mathbb{S}^{d-1}$ starting from the vector $\mathbf{e}_d$ and taken at time $t=\lambda$. The density of this law with respect to the Haar measure on the sphere $\mathbb{S}^{d-1}$ is given by the following formula:
$$p_{\mathbb{S}^{d-1},t}(\mathbf{x}) = \sum_{\lambda \in \widehat{G}^{K}} \E^{-\frac{c_{\lambda}t}{2}}\,d^\lambda\,\phi^{\lambda}(\mathbf{x}).$$
Here, $\widehat{G}^{K}$ is the subset of $\widehat{G}$ that consists in spherical representations, that is to say, irreducible representations of $G=\SO(d)$ that admits a (normalised) $K=\SO(d-1)$-invariant vector $\mathbf{e}^\lambda$. On the other hand, $\phi^\lambda(\xec) = \scal{\rho^{\lambda}(g)\,(\mathbf{e}^\lambda)}{\mathbf{e}^\lambda}$, where $g \in G$ is any element such that $g(\mathbf{e}_d)=\xec$; $\rho^\lambda$ is the defining morphism of the spherical representation $\lambda$; and $\scal{\cdot}{\cdot}$ is a $G$-invariant scalar product on the representation space, with $\scal{\mathbf{e}^\lambda}{\mathbf{e}^\lambda}=1$. Standard arguments of harmonic analysis on the real spheres (see \cite{AH10,Vol09}) ensure that there is a labeling of the spherical representations in $\widehat{G}^K$ by the set of natural integers $\N$. Indeed, in terms of highest weights, $\widehat{G}^K=\N \omega_0$, where $\omega_0$ is the highest weight associated to the geometric representation of $\SO(d)$ on $\C^d$. Then, $c_{k\omega_0} = k^2+(d-2)k$, $d^{k\omega_0}=\frac{2k+d-2}{k+d-2}\binom{k+d-2}{d-2}$ by Weyl's formula for dimensions of irreducible representations, and 
$$\phi^{k\omega_0}(\xec)=P^{d-1,k}(x\EX{d}),$$
where $(P^{d-1,k}(t))_{k \in \N}$ is the set of Legendre polynomials associated to the sphere $\mathbb{S}^{d-1}$: they are the orthogonal polynomials with respect to the weight 
$$ \frac{\Gamma(\frac{d}{2})}{\Gamma(\frac{1}{2})\,\Gamma(\frac{d-1}{2})}\,(1-t^2)^{\frac{d-3}{2}}\,1_{t\in [-1,1]}\DD{t} ,$$
normalised by the condition $P^{d-1,k}(1)=1$. So,
$$p_{\mathbb{S}^{d-1},t}(\xec) = \sum_{k=0}^\infty \E^{-\frac{(k^2+(d-2)k)t}{2}}\,\frac{2k+d-2}{k+d-2}\binom{k+d-2}{d-2}\,P^{d-1,k}(x\EX{d}).$$
The expectation of a variable $\XEC_i$ following the law $\mathcal{L}_{\mathbb{S}^{d-1}}(\lambda)$ is
$$\sum_{k=0}^\infty \E^{-\frac{(k^2+(d-2)k)t}{2}} \,d^{k\omega_0}\,\int_{\mathbb{S}^{d-1}} \xec\,\phi^{k\omega_0}(\xec)\,\mathrm{Haar}(\!\DD{\xec}).$$
For every coordinate $i \in \lle 1,d\rre$, $x\EX{i}$ is a coefficient of the geometric representation of $\SO(d)$. Since coefficients of distinct irreducible representations of $\SO(d)$ are orthogonal by Schur's lemma, the scalar product of $x\EX{i}$ with the spherical function $\phi^{k\omega_0}$ is thus equal to $0$, unless $k=1$. Then, $\phi^{\omega_0}(\xec)=x\EX{d}$, and
$$d^{\omega_0}\,\int_{\mathbb{S}^{d-1}} \xec\,x\EX{d}\,\mathrm{Haar}(\!\DD{\xec}) = \mathbf{e}_d.$$
Hence, $\esper[\XEC_i] = \E^{-\frac{(d-1)\lambda}{2}}\,\mathbf{e}_d$. Let us now compute the covariance matrix between two variables $\XEC_i$ and $\XEC_j$. We fix two coordinates $k$ and $l$ in $\lle 1,d\rre$, and consider $\esper[X_i\EX{k}X_j\EX{l}]$. Since $\XEC_i$ and $\XEC_j$ are independent if $i$ and $j$ are at distance larger than $D_n$ in $\Z/N_n\Z$, we can exclude this case and assume $d(i,j) =D < D_n$. Then, with $p_{\SO(d),t}=p_t$,
\begin{align*}
&\esper\!\left[X_i\EX{k}X_j\EX{l}\right] \\
&= \int_{G^3} p_{\frac{D}{D_n}\lambda}(f)\, p_{\frac{D_n-D}{D_n}\lambda}(g)\,p_{\frac{D}{D_n}\lambda}(h) \scal{(fg)(\mathbf{e}_d)}{\mathbf{e}_k}\,\scal{(gh)(\mathbf{e}_d)}{\mathbf{e}_l}\DD{f}\DD{g}\DD{h} \\
 &= \int_{G^3} p_{\frac{D}{D_n}\lambda}(f)\, p_{\frac{D_n-D}{D_n}\lambda}(g)\,p_{\frac{D}{D_n}\lambda}(h) \scal{g(\mathbf{e}_d)}{f^{-1}(\mathbf{e}_k)}\,\scal{h(\mathbf{e}_d)}{g^{-1}(\mathbf{e}_l)}\DD{f}\DD{g}\DD{h}\\
 &=\E^{-\frac{(d-1)D}{D_n}\lambda}\int_{G} p_{\frac{D_n-D}{D_n}\lambda}(g) \scal{g(\mathbf{e}_d)}{\mathbf{e}_k}\,\scal{g(\mathbf{e}_d)}{\mathbf{e}_l}\DD{g}
 \end{align*}
 by using on the second line the invariance of the scalar product under $G=\SO(d)$, and on the third line the computation of the expectation of the law $\mathcal{L}_{\mathbb{S}^{d-1}}(t)$. The remaining integral is the expectation of the product of matrix coefficients $g_{dk}g_{dl}$ under the law $\mathcal{L}_{\SO(d)}(\frac{D_n-D}{D_n}\lambda)$ of the Brownian motion on the group. This computation is performed in \cite[Section 4]{Mel14} (beware that the normalisation of time in this article differs by a factor $d$). Hence, at time $t$, one has
 \begin{align*}
 \esper[(g_{dd})^2] &= \frac{1}{d} + \left(1-\frac{1}{d}\right)\,\E^{-dt};\\
 \esper[(g_{dk})^2] &= \frac{1}{d}\left(1-\E^{-dt}\right) \quad\forall k \neq d;\\
 \esper[g_{dk}g_{dl}] &=0\quad \forall k \neq l. 
 \end{align*}
 Therefore,
 \begin{align*}
 \cov(X_i\EX{d},X_j\EX{d}) &= \frac{\E^{-\frac{(d-1)D}{D_n}\lambda} + (d-1)\E^{-(d-\frac{D}{D_n})\lambda} - d\E^{-(d-1)\lambda}}{d}; \\
 \cov(X_i\EX{k},X_j\EX{k}) &= \frac{\E^{-\frac{(d-1)D}{D_n}\lambda}-\E^{-(d-\frac{D}{D_n})\lambda}}{d}\quad\forall k \neq d;\\
 \cov(X_i\EX{k},X_j\EX{l}) &= 0 \quad\forall k \neq l.
 \end{align*}
 These formul{\ae} generalize indeed what has been obtained when $d=2$. Then, setting $\SEC_n=\sum_{i=1}^{N_n} \XEC_i$, we obtain:
 $$\frac{\cov(\SEC_n)}{(2D_n-1)\,N_n} = K + O\!\left(\frac{1}{D_n}\right),$$
 where $K$ is the diagonal matrix with 
 \begin{align*}
 K_{dd} &= \frac{1+d(d-2-(d-1)\lambda)\,\E^{-(d-1)\lambda}-(d-1)^2\,\E^{-d\lambda}}{d(d-1)\lambda};\\
 K_{kk} &=\frac{1-d\E^{-(d-1)\lambda}+(d-1)\E^{-d\lambda}}{d(d-1)\lambda} \quad \forall k \neq d.
 \end{align*}
 One can apply Theorem \ref{thm:sumsparsegraph} to the sum $\SEC_n=\sum_{i=1}^{N_n} \XEC_i$ if $D_n = o(N_n)$ and $(N_n)^{1/3}=O(D_n)$; then, $$\YEC_n=\frac{\SEC_n-N_n\,\E^{-\frac{(d-1)\lambda}{2}}\mathbf{e}_d}{\sqrt{(2D_n-1)\,N_n}}$$ converges in law towards $\mathcal{N}_{\R^d}(\mathbf{0},K)$, with
$$\dconv(\YEC_n,\,\mathcal{N}_{\R^d}(\mathbf{0},K)) = O\!\left(\sqrt{\frac{D_n}{N_n}}\right).$$
We have drawn in Figure \ref{fig:randomwalks3D} the projection of the two first coordinates of the random walk associated to the steps $\XEC_i$ when $d=3$, in the case of dependent and independent random vectors. As in the previous example, these two random walks do not share the same aspect and fluctuations.
\begin{center}
\begin{figure}[ht]
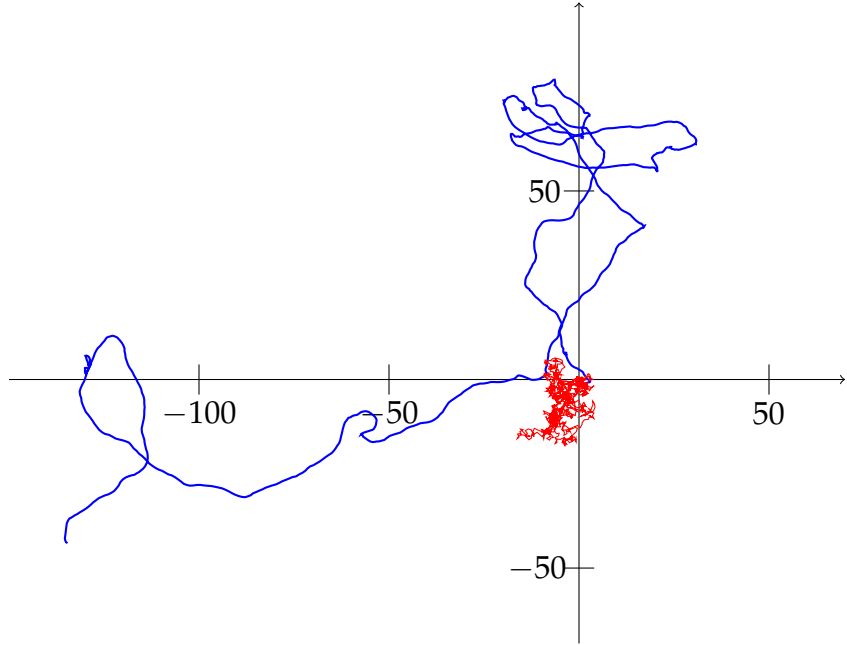


\caption{Projection of the two first coordinates of random walks associated to a sum of dependent random vectors in $\R^3$ (thick blue line, $N_n=1000$ and $D_n=40$), and to a sum of independent random vectors (thin red line, $N_n=1000$ and $D_n=1$).\label{fig:randomwalks3D}}
\end{figure}\end{center}
\end{example}

\begin{example}[Subgraph counts in random Erd\"os--R\'enyi graphs]
Let us examine an example where the theory of sparse dependency graphs applies to a sum of random vectors, but where the limiting law of the rescaled sum is a degenerate Gaussian distribution. We fix a parameter $p \in (0,1)$, and we consider the \emph{random Erd\"os--R\'enyi graph} $G_n=G(n,p)$, which is the random subgraph of the complete graph on $n$ vertices, such that the family of random variables 
$$ \left(1_{\{i,j\} \text{ is an edge of the graph}}\right)_{1 \leq i < j \leq n}$$
is a family of independent Bernoulli random variables with same parameter $p$. We have drawn in Figure \ref{fig:erdosrenyi} such a random graph with parameters $n=30$ and $p=0.15$. In \cite[Section 10]{FMN16}, it has been shown that if $H$ is a fixed graph (motive), then the number of such motives $H$ appearing in $G_n$ converges after renormalisation in the mod-Gaussian sense; see also \cite{FMN20} for a generalisation of this result to graph subcounts in graphon models. Theorem \ref{thm:upperboundcumulant} will enable us to deal with several motives at once, and to understand the fluctuations of a random vector of subgraph counts.
\begin{center}
\begin{figure}[ht]
\includegraphics[scale=0.7]{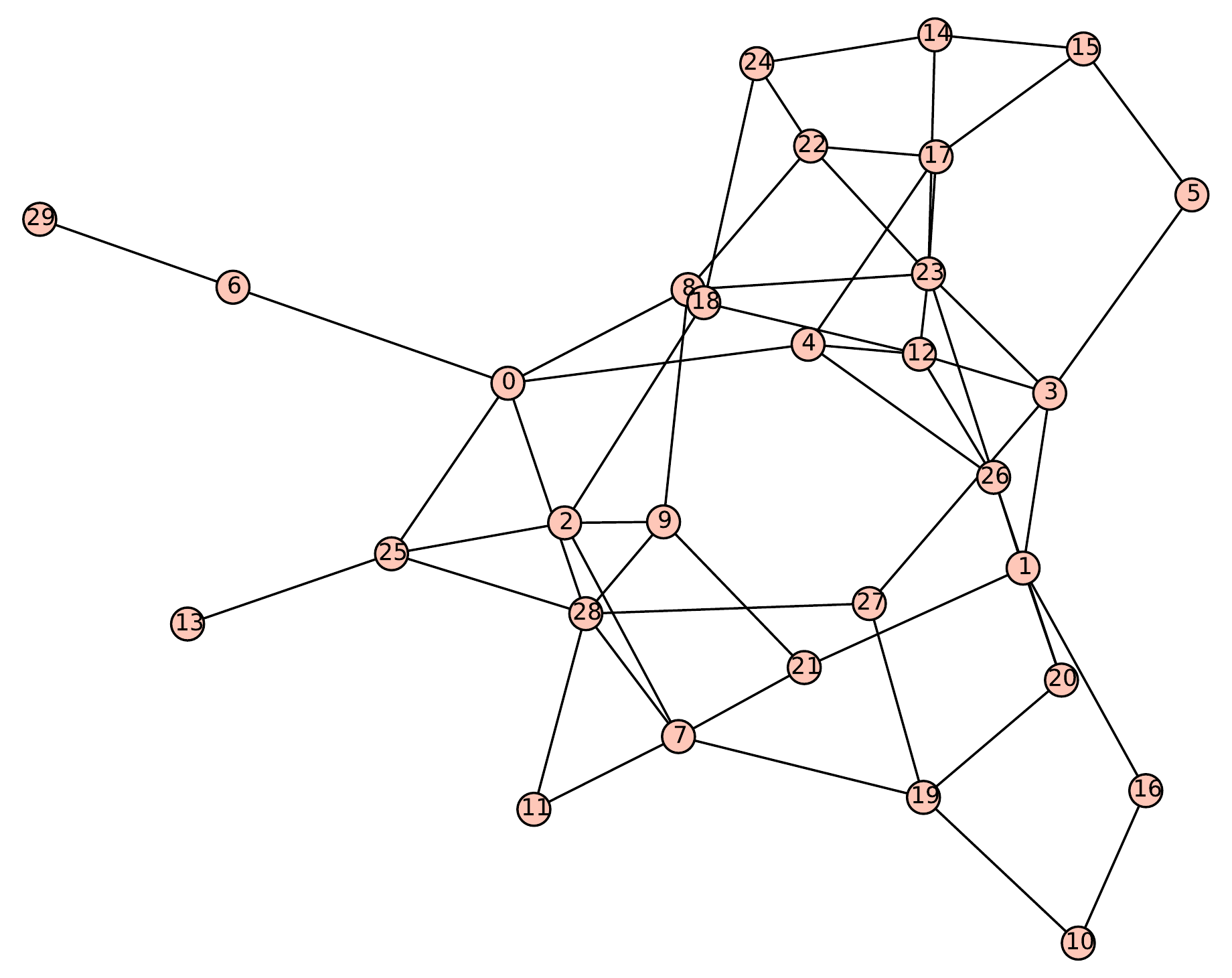}
\caption{Random Erd\"os-R\'enyi graph with $n=30$ vertices and parameter $p=0.15$.\label{fig:erdosrenyi}}
\end{figure}
\end{center}

We start by recalling the theory of subgraph counts in random graphs. If $G$ is a (undirected, simple) graph, we denote $V_G$ and $E_G$ its sets of vertices and of edges. If $H$ and $G$ are two graphs, the \emph{number of embeddings} of $H$ in $G$, denoted $I(H,G)$, is the number of injective maps $i:V_H \to V_G$ such that, if $\{v_1,v_2\} \in E_H$, then $\{i(v_1),i(v_2)\} \in E_G$. Fix a graph $H$ with vertex set $\lle 1,k\rre$. An embedding of $H$ into a graph $G_n$ with vertex set $\lle 1,n\rre$ corresponds a choice of an arrangement $A=(a_1,\ldots,a_k)$ of $k$ distinct elements of $\lle 1,n\rre$. Given such an arrangement $A$, we set
$$X_A(H,G) = \begin{cases}
	1 &\text{if the map $j\mapsto a_j$ yields an embedding of $H$ into $G$},\\
	0 &\text{otherwise}.
\end{cases}$$
With the graph of Figure \ref{fig:erdosrenyi}, $X_{(3,1,23)}(K_3,G)=1$, whereas $X_{(13,25,28)}(K_3,G)=0$. The number of embeddings of $H$ into $G$ is given by the sum 
$$I(H,G)=\sum_{A \in \mathfrak{A}(n,k)} X_A(H,G),$$ 
where $\mathfrak{A}(n,k)$ is the set of all arrangements of size $k$ in $\lle 1,n\rre$. \bigskip

Suppose now that $G=G_n$ is a random Erd\"os-R\'enyi graph of parameters $n$ and $p$. Then, assuming $k\leq n$, for every arrangement $A \in \mathfrak{A}(n,k)$, $X_A(H,G_n)$ is a random Bernoulli variable of expectation
$$\esper[X_A(H,G_n)] = p^h, \quad\text{with }h=\card\, E_H.$$
Indeed, one can write each $X_A(H,G_n)$ as the product
$$X_A(H,G_n) = \prod_{\{i,j\} \in E_H} 1_{\{a_i,a_j\} \in E_{G_n}},$$
and by assumption on the random Erd\"os-R\'enyi graph, the random variables $1_{\{a_i,a_j\} \in E_{G_n}}$ are independent Bernoulli random variables of parameter $p$. The same decomposition shows that, if $A$ and $B$ are two arrangements that do not share at least $2$ different points $a$ and $b$, then $X_A(H,G_n)$ and $X_B(H,G_n)$ can be written as products of variables falling into two independent families; therefore, they are independent. As a consequence, a dependency graph for the sum $I(H,G_n)$ is the graph: 		
\begin{itemize}
	\item whose vertices are the arrangements $A \in \mathfrak{A}(n,k)$; 
	\item with an edge between $A$ and $B$ if and only if $\card (A \cap B)\geq 2$.
\end{itemize}
The parameters of this dependency graph are the following. The number of arrangements in $\mathfrak{A}(n,k)$ is $N_n = n^{\downarrow k} = n(n-1)(n-2)\cdots (n-k+1) =n^k\,(1+O(\frac{1}{n}))$. On the other hand, if an arrangement $A$ is fixed, then there are less than
$$\binom{k}{2}^2\,2\,(n-2)(n-3)\cdots (n-k+1)$$
arrangements $B$ that share at least two points with $A$. So, one can take for parameter $$D_n = 2\,\binom{k}{2}^2\,(n-2)(n-3)\cdots (n-k+1) = \frac{k^2(k-1)^2}{2}\,n^{k-2}\,\left(1+O\!\left(\frac{1}{n}\right)\right).$$
Fix now several graphs $H\EX{1},H\EX{2},\ldots,H\EX{d}$. Notice that when dealing with the random vector $I(\mathbf{H},G_n) = (I(H\EX{1},G_n),I(H\EX{2},G_n),\ldots,I(H\EX{d},G_n))$, one can always assume without loss of generality that 
$ k = \card\, V_{H\EX{1}} = \card\,V_{H\EX{2}} = \cdots = \card\,V_{H\EX{d}}$.
Indeed, suppose that $V_H$ has cardinality $j \leq k$, and denote $H^{+k}$ the graph obtained from $H$ by adding $k-j$ disconnected vertices. Then, for $n\geq k$,
$$I(H^{+k},G_n) = (n-j)\cdots (n-k+1)\,I(H,G_n),$$ 
so the study of the fluctuations of $I(H,G_n)$ is equivalent to the study of the fluctuations of $I(H^{+k},G_n)$.  Setting $ X_A(\mathbf{H},G_n)=(X_A(H\EX{1},G_n),\ldots,X_A(H\EX{d},G_n))$, one can then write the random vector $I(\mathbf{H},G_n)$ as a sum of random vectors:
$$I(\mathbf{H},G_n) = \sum_{A \in \mathfrak{A}(n,k)} X_A(\mathbf{H},G_n).$$
Moreover, the random vectors $X_A(\mathbf{H},G_n)$ take their values in $[0,1]^d$, and they have the same dependency graph as described before. Consequently, the conclusions of Theorem \ref{thm:upperboundcumulant} hold:
$$\left|\kappa(I(H\EX{i_1},G_n),I(H\EX{i_2},G_n),\ldots,I(H\EX{i_r},G_n))\right|\leq r^{r-2}\,k^{4(r-1)}\,n^{k+(r-1)(k-2)}$$
for any choice of indices $i_1,\ldots,i_r$, since $N_n \leq n^k$ and $D_n \leq \frac{k^4}{2}\,n^{k-2}$. Unfortunately, one cannot apply Theorem \ref{thm:sumsparsegraph} to the recentred sum $\SEC_n=I(\mathbf{H},G_n)-\esper[I(\mathbf{H},G_n)]$, as the limiting covariance matrix of $\frac{\SEC_n}{n^{k-1}}$ is a \emph{degenerate} non-negative symmetric matrix. Indeed, given two arrangements $A$ and $B$, one has
\begin{align*}
&\cov(X_A(H\EX{1},G_n),X_B(H\EX{2},G_n)) \\
&= \esper\!\left[\prod_{\{i,j\} \in E_{H\EX{1}}} 1_{\{a_i,a_j\} \in E_{G_n}} \prod_{\{i,j\} \in E_{H\EX{2}}} 1_{\{b_i,b_j\} \in E_{G_n}} \right] - p^{h_1+h_2},
\end{align*}
where $h_1$ and $h_2$ are the numbers of edges of $H\EX{1}$ and $H\EX{2}$. If $A$ and $B$ do not share at least two points, one obtains $0$ as explained before. Suppose now that $A$ and $B$ share exactly two points: there are indices $\{f,g\}$ and $\{i,j\}$ such that $a_f=b_i$ and $a_g=b_j$, and all the other $a$'s are different from the other $b$'s. 
\begin{enumerate}
	\item If moreover $\{f,g\}$ and $\{i,j\}$ are edges respectively of $H\EX{1}$ and $H\EX{2}$, then the expectation of the product of Bernoulli variables is equal to $p^{h_1+h_2-1}$, since $1_{\{a_f,a_g\} \in E_{G_n}}=1_{\{b_i,b_j\} \in E_{G_n}}$. Thus, 
$$\cov(X_A(H\EX{1},G_n),X_B(H\EX{2},G_n)) = p^{h_1+h_2-1}-p^{h_1+h_2}$$
if $A\cap B = \{a_f,a_g\} = \{b_i,b_j\}$ has cardinality equal to two, and if this intersection comes from an edge of $H\EX{1}$ and an edge of $H\EX{2}$.
	\item On the other hand, if the intersection of cardinality two $A \cap B$ does not come from edges of $H\EX{1}$ and $H\EX{2}$, then the covariance is again equal to $0$.
\end{enumerate} 
The number of pair of arrangements $(A,B)$ that fulfills the first hypothesis is equal to $2h_1h_2 n^{\downarrow 2k-2}$. Thus, the pairs of arrangements $(A,B)$ with an intersection of cardinality $2$ yield a contribution
$$2(p^{-1}-1)\,h_1p^{h_1}\,h_2p^{h_2}\,n^{2k-2}\,\left(1+O\!\left(\frac{1}{n}\right)\right) $$
to $\cov(I(H\EX{1},G_n),I(H\EX{2},G_n))$. The other pairs of arrangements have an intersection of cardinality larger than $3$, hence, they yield a contribution which is a $O(n^{2k-3})$. We conclude that
\begin{align*}
\cov(I(H\EX{i},G_n),I(H\EX{j},G_n)) &= 2(p^{-1}-1)\,h_ip^{h_i}\,h_jp^{h_j}\,n^{2k-2}\,\left(1+O\!\left(\frac{1}{n}\right)\right);\\
\cov\!\left(\frac{\SEC_n}{n^{k-1}}\right) &= 2(p^{-1}-1) \mathbf{w}\mathbf{w}^t + O\!\left(\frac{1}{n}\right),
\end{align*}
where $\mathbf{w}=(h_1p^{h_1},\ldots,h_dp^{h_d})$. Thus, the limiting covariance is always a rank one symmetric matrix, and one cannot apply Theorem \ref{thm:sumsparsegraph}. These one-dimensional fluctuations are not very surprising: each fluctuation of a random number of embeddings $I(H\EX{i},G_n)$ is driven at first order by the fluctuations of the number of edges in $G_n$, and truly multi-dimensional fluctuations only occur at higher order.
In this setting, one can obtain an estimate of the speed of the convergence by projecting the random variable $\SEC_n$ to the vector line on which the Gaussian distribution with covariance matrix $\mathbf{w}\mathbf{w}^t$ is supported. Denote $S_n\EX{w} = \frac{\mathbf{w}^t\SEC_n}{\mathbf{w}^t\mathbf{w}}$, and consider the $\leb^2$-norm of $\SEC_n-S_n\EX{w}\mathbf{w}$:
\begin{align*}
\esper\!\left[\left\|\SEC_n-S_n\EX{w}\mathbf{w}\right\|^2\right] &= \esper\!\left[\|\SEC_n\|^2 - \frac{(\mathbf{w}^t\SEC_n)^2}{\|\mathbf{w}\|^2}\right] \\
&= \sum_{i=1}^d \esper[(S_n\EX{i})^2] - \frac{1}{\|\mathbf{w}\|^2} \sum_{i=1}^d\sum_{j=1}^d w\EX{i}w\EX{j}\,\esper[ S_n\EX{i} S_n\EX{j}] = O(n^{2k-3})
\end{align*}
by using the estimate $\esper[S_n\EX{i} S_n\EX{j}] = 2(p^{-1}-1)\,w\EX{i}w\EX{j} \,n^{2k-2} + O(n^{2k-3})$. Therefore,
$$\esper\!\left[\left\|\frac{\SEC_n-S_n\EX{w}\mathbf{w}}{n^{k-1}}\right\|^2\right] = O\!\left(\frac{1}{n}\right),$$
so in particular $\frac{\SEC_n-S_n\EX{w}\mathbf{w}}{n^{k-1}}$ converges in probability to $0$. Then, the projection $S_n\EX{w}$ can be shown to converge in the one-dimensional mod-Gaussian sense:
\begin{align*}
&\log \esper\!\left[\E^{\frac{z\,S_n\EX{w}}{n^{k-1}}}\right] = \log \esper\!\left[\E^{\frac{\scal{z\mathbf{w}}{\SEC_n}}{n^{k-1}\,\scal{\mathbf{w}}{\mathbf{w}}}}\right] \\
&= \sum_{r\geq 2} \frac{z^r}{\|w\|^{2r}\,r!\,n^{(k-1)r}} \sum_{i_1,\ldots,i_r=1}^d w\EX{i_1}\cdots w\EX{i_r}\, \kappa(S_n\EX{i_1},\ldots,S_n\EX{i_r})\\
&= z^2(p^{-1}-1) + O\!\left(\frac{|z|^2}{n}\right) + \sum_{r\geq 3} \frac{z^r}{\|\mathbf{w}\|^{2r}\,r!\,n^{(k-1)r}} \sum_{i_1,\ldots,i_r=1}^d w\EX{i_1}\cdots w\EX{i_r}\, \kappa(S_n\EX{i_1},\ldots,S_n\EX{i_r}).
\end{align*}
Using the bounds on joint cumulants previously described, we can write the following upper bound on the remainder of the series $R(z)$:
\begin{align*}
|w\EX{i_1}\cdots w\EX{i_r}\, \kappa(S_n\EX{i_1},\ldots,S_n\EX{i_r})| &\leq |w\EX{i_1}\cdots w\EX{i_r}|\,r^{r-2}\,k^{4(r-1)}\,n^{k+(r-1)(k-2)}; \\
\sum_{i_1,\ldots,i_r=1}^d |w\EX{i_1}\cdots w\EX{i_r}\, \kappa(S_n\EX{i_1},\ldots,S_n\EX{i_r})| &\leq (\|\mathbf{w}\|_1)^r\, r^{r-2}\,k^{4(r-1)}\,n^{k+(r-1)(k-2)}; \\
|R(z)|&\leq \frac{n^2}{k^4}\sum_{r\geq 3}  \frac{r^{r-2}}{r!} \left(\frac{|z|\,\|\mathbf{w}\|_1\,k^4}{n\,\|\mathbf{w}\|^2}\right)^r,
\end{align*}
and the last term is a $O(|z|^3/n)$ on a zone $z \in [-Dn,Dn]$ with $D >0$ fixed (depending only on $k$ and on $\|w\|$). Therefore, if $\mu_n$ is the law of $S_n\EX{w}/(n^{k-1}\,\sqrt{2(p^{-1}-1)})$ and $\nu$ is the standard one-dimensional Gaussian distribution, then
$$\widehat{\mu_n}(\zeta)-\widehat{\nu}(\zeta) = \E^{-\frac{\zeta^2}{2}}\,\left(\exp\!\left(O\!\left(\frac{|\zeta|^2+|\zeta|^3}{n}\right)\right)-1\right)$$
for $|\zeta| \leq Dn$ and a certain constant $D>0$. The arguments of \cite[Section 2]{FMN19}, which give estimates of the speed of convergence in the special case $d=1$, ensure then that
$$\dkol\!\left(\frac{S_n\EX{w}}{n^{k-1}\,\sqrt{2(p^{-1}-1)}},\,\gauss{}{0}{1}\right) = O\!\left(\frac{1}{n}\right).$$
Thus, if $\YEC_n=(Y_n\EX{1},\ldots,Y_n\EX{d})$ with 
$Y_n\EX{i} = \frac{I(H\EX{i},G_n) - n^{\downarrow k} p^{h_i}}{\sqrt{2(p^{-1}-1)}\,n^{k-1}}$, 
and if $Y_n\EX{w} = \frac{\scal{\YEC_n}{\mathbf{w}}}{\scal{\mathbf{w}}{\mathbf{w}}}$
 then:
\begin{enumerate}
	\item As $n$ goes to infinity, the difference between $\YEC_n$ and its projection $Y_n\EX{w}\mathbf{w}$ converges in probability to $0$, and more precisely,
	$$ \esper[\|\YEC_n-Y_n\EX{w}\mathbf{w}\|^2]=O\!\left(\frac{1}{n}\right).$$
	\item As $n$ goes to infinity, the random variable $Y_n\EX{w}$ converges in law towards a standard Gaussian distribution, and more precisely,
	$$\dkol(Y_n\EX{w},\,\mathcal{N}_{\R}(0,1))=O\!\left(\frac{1}{n}\right).$$
\end{enumerate}

\noindent Thus, when the limiting covariance is not a positive-definite matrix, one can usually still apply the theory of mod-Gaussian convergence to some projection of the random vectors, and on the other hand control the difference between the projections and the original random vectors.\bigskip
\end{example}

\subsection{Empirical measures of Markov chains}\label{subsec:markov} 
The interest of Definition \ref{def:methodmulticum} is that one can use it with sums of random vectors $\SEC_n = \sum_{i=1}^{N_n}\mathbf{A}_{i,n}$ even when there is no sparse dependency graph of parameters $N_n$ and $D_n$ for the random vectors $\mathbf{A}_{i,n}$ of the sums. In \cite[Section 5]{FMN19}, it was proven that the bounds on cumulants corresponding to Hypothesis \ref{hyp:MC3} are also true for:
\begin{itemize}
 	\item certain observables of models from statisticial mechanics;
 	\item the linear functionals of finite ergodic Markov chains.
 \end{itemize} 
 In this section, we explain how to use the theory of \cite[Sections 5.4 and 5.5]{FMN19} to study the fluctuations of the empirical measure of a finite ergodic Markov chain (instead of a linear form of this empirical measure). We fix a state space $\mathfrak{X} = \lle 1,d\rre$, and we consider a transition matrix $P$ on $\mathfrak{X}$, which is assumed ergodic (irreducible and aperiodic), with invariant probability measure $\boldsymbol\pi=(\pi(1),\pi(2),\ldots,\pi(d))$. Thus, $\pi P = \pi$, and if $(X_n)_{n \in \N}$ is a Markov chain on $\mathfrak{X}$ with transition matrix $P$ and initial distribution $\proba[X_0=i]=\pi(i)$, then $\proba[X_n=i]=\pi(i)$ for any $n \in \N$. We are interested in the \emph{empirical measure} of $(X_n)_{n \in \N}$, which is the random vector in $\R^d$
 $$\boldsymbol\pi_n = \frac{1}{n} \left(\sum_{t=1}^n \delta_{X_t}\right).$$
 The ergodic theorem ensures that $\boldsymbol\pi_n \to \boldsymbol\pi$ almost surely. Set $\SEC_n = n(\boldsymbol\pi_n-\boldsymbol\pi)$. We define a constant
$$\theta_P = \sqrt{\max\{|z|\,|\,z\neq 1,\,\,z\text{ eigenvalue of }PD^{-1}P^tD\}},$$
where $D = \mathrm{diag}(\boldsymbol\pi)$. This constant $\theta_P$ is strictly smaller than $1$, and when $P$ is reversible it is just the module of the second largest eigenvalue of $P$. It has been proven \cite{FMN19} that
 $$\left|\kappa(1_{X_{t_1}=i_1},\ldots, 1_{X_{t_r}=i_r})\right| \leq 2^{r-1}\sum_{T \in \mathrm{ST}(\lle 1,r\rre)} w(T),$$
 where the weight of a spanning tree on $r$ vertices is the products of the weights
 $w(i,j) = (\theta_P)^{|t_j-t_i|}$
 of its edges $(i,j) \in E_T$. As a consequence, for any $r \geq 2$ and any choice of indices $i_1,\ldots,i_r$,
 \begin{align*}
 \left|\kappa(S_n\EX{i_1},\ldots,S_n\EX{i_r})\right| &\leq 2^{r-1} \sum_{T \in \mathrm{ST}(\lle 1,r\rre)} \sum_{t_1,\ldots,t_r = 1}^n \prod_{(i,j)\in E_T} (\theta_P)^{|t_j-t_i|} \leq n\,r^{r-2}\,\left(2\,\frac{1+\theta_P}{1-\theta_P}\right)^{r-1},
 \end{align*}
 see \cite[Theorem 55]{FMN19}. Therefore, Hypothesis \ref{hyp:MC3} is satisfied with $N_n=n$, $D_n = \frac{1+\theta_P}{1-\theta_P}$ and $A=1$. Let us then check the other hypotheses of Definition \ref{def:methodmulticum}:
 \begin{enumerate}
 	\item By construction, $\esper[\SEC_n]=0$ (here and in the sequel we assume that $(X_n)_{n \in \N}$ is stationary, hence has initial distribution $\boldsymbol\pi$).
 	\item We compute
 	\begin{align*}
 	\cov(S_n\EX{i},S_n\EX{j}) &= \sum_{a,b=1}^n \proba[X_a=i\text{ and }X_b=j] - \proba[X_a=i]\,\proba[X_b=j]\\
 	&= n\,\delta_{i,j}\,\pi(i)(1-\pi(i)) + 2\,\sum_{k=1}^{n-1} (n-k)\pi(i)(P^{k}(i,j)-\pi(j)).
 	\end{align*}
 	The limit of $\frac{\cov(S_n\EX{i},S_n\EX{j})}{n}$ is
 	$$K_{i,j} = \pi(i)(\delta_{i,j}-\pi(j)) + 2 \pi(i) \sum_{k=1}^\infty R^k(i,j),$$
 	where $R = P-P^\infty$; $P^\infty(i,j) = \pi(j)$; and $R$ is a matrix with all its eigenvalues strictly smaller than $1$. Notice that $K$ has at most rank $(d-1)$: indeed, $\sum_{i=1}^d K_{i,j}=0$, this being a consequence of $$\sum_{i=1}^d S_n\EX{i} = 0\quad\text{almost surely}.$$
 	Therefore, we shall consider the random vectors $\SEC_n$ as elements of the $(d-1)$-dim\-ens\-ional space
 	$$H = \left\{\xec \in \R^d \,\,\big|\,\,\sum_{i=1}^d x\EX{i}=0 \right\}, $$ 
 	and the covariance matrix as a symmetric operator on this space. By the discussion of \cite[Proposition 59]{FMN19}, if the Markov chain is reversible, then $K$ has indeed rank $(d-1)$ and we shall be able to describe precisely the fluctuations of $(\SEC_n)_{n \in \N}$ in $H$.\medskip
 	
 	\noindent Notice that the difference between $\frac{\cov(\SEC_n)}{n}$ and $K$ is a $O(\frac{1}{n})$; therefore, Condition \ref{hyp:MC2prime} is satisfied.
 	\item Finally, Hypotheses \ref{hyp:MC4} and \ref{hyp:MC5} are discussed at the very end of \cite{FMN19}, and they are satisfied with $v=3$.
 \end{enumerate}
 We conclude:
 \begin{theorem}[Empirical measures of reversible Markov chains]
 Let $(X_n)_{n \in \N}$ be an ergodic and reversible Markov chain on a finite state space $\mathfrak{X}=\lle 1,d\rre$. We denote $\boldsymbol\pi$ the stationary measure and $\boldsymbol\pi_n$ the empirical measure; their difference is a random vector in the hyperplane $H$. If
 $$\XEC_n = \frac{\boldsymbol\pi_n - \boldsymbol\pi}{n^{1/3}}\qquad;\qquad \YEC_n = \frac{\boldsymbol\pi_n - \boldsymbol\pi}{\sqrt{n}},$$
 and
 $$K_{i,j} = \pi(i)(\delta_{i,j}-\pi(j)) + 2 \pi(i) \sum_{k=1}^\infty (P^k(i,j)-\pi(j)),$$
 then:
 \begin{enumerate}
 	\item The sequence $(\XEC_n)_{n \in \N}$ is mod-Gaussian convergent on $H$, with parameters $n^{1/3}K$ and limit
 	$$\psi(\zec) = \exp\left(\frac{1}{6} \sum_{i,j,k=1}^d \left(\sum_{a,b\in \Z}\kappa(1_{X_0}=i,1_{X_a}=j,1_{X_b}=k)\right)z\EX{i}z\EX{j}z\EX{k}\right).$$

 	\item We have a convergence in law $\YEC_n \rightharpoonup \mathcal{N}_H(\mathbf{0},K)$, with a convex distance between the two distributions on $H$ that is a $O(\frac{1}{\sqrt{n}})$. The constant in the $O(\cdot)$ only depends on $d$ and on the constant $\theta_P$ previously introduced.
 \end{enumerate}
 \end{theorem}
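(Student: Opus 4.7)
The plan is to recognize the recentred sum $\SEC_n = n(\boldsymbol\pi_n - \boldsymbol\pi) = \sum_{t=1}^n (\delta_{X_t} - \boldsymbol\pi)$, regarded as a random vector in the hyperplane $H = \{\xec \in \R^d \mid \sum_i x\EX{i} = 0\}$, as a sum of dependent bounded random vectors that satisfies the hypotheses of Definition \ref{def:methodmulticum} with $N_n = n$, $D_n = (1+\theta_P)/(1-\theta_P)$, $A = 1$ and $v = 3$. Once this is established, the two assertions of the theorem follow from Theorems \ref{thm:multicumulant} (mod-Gaussian convergence and the explicit limit $\psi$) and \ref{thm:superspeed} (convex distance bound $O(\sqrt{D_n/N_n}) = O(1/\sqrt{n})$).

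Three hypotheses must be checked. \ref{hyp:MC1} is immediate from stationarity. For \ref{hyp:MC3}, the tree-weighted bound recalled in the excerpt,
\[
|\kappa(1_{X_{t_1}=i_1},\ldots,1_{X_{t_r}=i_r})| \leq 2^{r-1}\sum_{T\in \mathrm{ST}(\lle 1,r\rre)}\prod_{(a,b)\in E_T}(\theta_P)^{|t_b-t_a|},
\]
together with Cayley's formula and the geometric sum $\sum_{t}(\theta_P)^{|t-s|}\leq 2/(1-\theta_P)$, yields after summation over $t_1,\ldots,t_r \in \lle 1,n\rre$ the required bound $n\,r^{r-2}(2(1+\theta_P)/(1-\theta_P))^{r-1}$. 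For \ref{hyp:MC2prime} I will compute $\cov(S_n\EX{i},S_n\EX{j}) = nK_{i,j} + O(1)$ directly from stationarity, the tail of $\pi(i)(P^k(i,j)-\pi(j))$ being controlled by the spectral gap (an $O(1)$ error is much stronger than the required $O(\sqrt{D_n/N_n})$). Finally, \ref{hyp:MC4} is vacuous since $v-1=2<3$, and \ref{hyp:MC5} is obtained by writing, via stationarity,
\[
\frac{\kappa(S_n\EX{i},S_n\EX{j},S_n\EX{k})}{n} = \frac{1}{n}\sum_{t_1,t_2,t_3=1}^n \kappa(1_{X_0=i},1_{X_{t_2-t_1}=j},1_{X_{t_3-t_1}=k}),
\]
then recognising that the inner sum over $(a,b) = (t_2-t_1, t_3-t_1) \in \Z^2$ converges absolutely to $L_{i,j,k} = \sum_{a,b \in \Z}\kappa(1_{X_0=i},1_{X_a=j},1_{X_b=k})$ thanks to the geometric bound of \ref{hyp:MC3}.

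The main obstacle, and the only place where the hypothesis of reversibility is really used, is that $K$ is degenerate on $\R^d$: the identity $\sum_i S_n\EX{i} = 0$ forces both $\SEC_n$ and $K$ to vanish along $(1,\ldots,1)$. The resolution is to work in the $(d-1)$-dimensional hyperplane $H$ equipped with some orthonormal basis; the cumulant bound of \ref{hyp:MC3} and the covariance asymptotics of \ref{hyp:MC2prime} are multilinear in the coordinates and transfer without change, while reversibility guarantees (through \cite[Proposition 59]{FMN19} alluded to in the introduction of this subsection) that the restricted matrix $K|_H$ lies in $\mathrm{S}_+(d-1,\R)$. Writing $\SEC_n$ in this basis, Theorem \ref{thm:multicumulant} delivers the mod-Gaussian convergence with parameters $n^{1/3}K$ and limiting function $\psi(\zec) = \exp(\frac{1}{6}\sum L_{i,j,k}z\EX{i}z\EX{j}z\EX{k})$, and Theorem \ref{thm:superspeed} gives $\dconv(\YEC_n,\mathcal{N}_H(\mathbf{0},K)) \leq C(d,K|_H,1,B)/\sqrt{n}$, with the constant depending on $d$ and $\theta_P$ only through $K|_H$ and the universal constants $A=1$, $B=O(1)$ controlled in the covariance step.
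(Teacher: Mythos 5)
Your proposal is correct and follows essentially the same route as the paper: it verifies hypotheses \ref{hyp:MC1}, \ref{hyp:MC2prime}, \ref{hyp:MC3} (with $N_n=n$, $D_n=\frac{1+\theta_P}{1-\theta_P}$, $A=1$), \ref{hyp:MC4}--\ref{hyp:MC5} with $v=3$ from the spanning-tree cumulant bound of \cite{FMN19}, handles the degeneracy by restricting to the hyperplane $H$ where reversibility (via \cite[Proposition 59]{FMN19}) gives $K$ full rank $d-1$, and then invokes Theorems \ref{thm:multicumulant} and \ref{thm:superspeed}. The only difference is that you spell out the summation details for \ref{hyp:MC3} and \ref{hyp:MC5} that the paper delegates to \cite{FMN19}, which is harmless.
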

\clearpage

\printbibliography

\end{document}